\newtheorem{theorem}{Theorem}
\theoremstyle{plain}
\newtheorem{acknowledgement}{Acknowledgement}
\newtheorem{axiom}{Axiom}
\newtheorem{conjecture}{Conjecture}
\newtheorem{corollary}{Corollary}
\newtheorem{definition}{Definition}
\newtheorem{example}{Example}
\newtheorem{exercise}{Exercise}
\newtheorem{lemma}{Lemma}
\newtheorem{proposition}{Proposition}
\newtheorem{remark}{Remark}
\numberwithin{equation}{section}
 \numberwithin{theorem}{section}
 \numberwithin{proposition}{section}
 \numberwithin{remark}{section}
 \numberwithin{definition}{section}
 \numberwithin{lemma}{section}
 \numberwithin{corollary}{section}
 \numberwithin{example}{section}
 \numberwithin{claim}{section}
\chardef\@x10\chardef\@xv60
\def\tcitime{
\def\@time{%
  \@minute\time\@hour\@minute\divide\@hour\@xv
  \ifnum\@hour<\@x 0\fi\the\@hour:%
  \multiply\@hour\@xv\advance\@minute-\@hour
  \ifnum\@minute<\@x 0\fi\the\@minute
  }}%
\def\QCTOpt[#1]#2{%
  \def\QCTOptB{#1}
  \def\QCTOptA{#2}
}
\def\QCTNOpt#1{%
  \def\QCTOptA{#1}
  \let\QCTOptB\empty
}
\def\Qct{%
  \@ifnextchar[{%
    \QCTOpt}{\QCTNOpt}
}
\def\QCBOpt[#1]#2{%
  \def\QCBOptB{#1}
  \def\QCBOptA{#2}
}
\def\QCBNOpt#1{%
  \def\QCBOptA{#1}
  \let\QCBOptB\empty
}
\def\Qcb{%
  \@ifnextchar[{%
    \QCBOpt}{\QCBNOpt}
}
\def\PrepCapArgs{%
  \ifx\QCBOptA\empty
    \ifx\QCTOptA\empty
      {}%
    \else
      \ifx\QCTOptB\empty
        {\QCTOptA}%
      \else
        [\QCTOptB]{\QCTOptA}%
      \fi
    \fi
  \else
    \ifx\QCBOptA\empty
      {}%
    \else
      \ifx\QCBOptB\empty
        {\QCBOptA}%
      \else
        [\QCBOptB]{\QCBOptA}%
      \fi
    \fi
  \fi
}
\def\GRAPHICSPS#1{%
 \ifcase\GRAPHICSTYPE
   \special{ps: #1}%
 \or
   \special{language "PS", include "#1"}%
 \fi
}%
\def\graffile#1#2#3#4{%
    \leavevmode
    \raise -#4 \BOXTHEFRAME{%
        \hbox to #2{\raise #3\hbox to #2{\null #1\hfil}}}%
}%
\def\draftbox#1#2#3#4{%
 \leavevmode\raise -#4 \hbox{%
  \frame{\rlap{\protect\tiny #1}\hbox to #2%
   {\vrule height#3 width\z@ depth\z@\hfil}%
  }%
 }%
}%
\newif\ifwasdraft
\def\GRAPHIC#1#2#3#4#5{%
 \ifnum\draft=\@ne\draftbox{#2}{#3}{#4}{#5}%
  \else\graffile{#1}{#3}{#4}{#5}%
  \fi
 }%
\def\addtoLaTeXparams#1{%
    \edef\LaTeXparams{\LaTeXparams #1}}%
\newif\ifBoxFrame \BoxFramefalse
\newif\ifOverFrame \OverFramefalse
\newif\ifUnderFrame \UnderFramefalse
\def\BOXTHEFRAME#1{%
   \hbox{%
      \ifBoxFrame
         \frame{#1}%
      \else
         {#1}%
      \fi
   }%
}
\def\doFRAMEparams#1{\BoxFramefalse\OverFramefalse\UnderFramefalse\readFRAMEparams#1\end}%
\def\readFRAMEparams#1{%
 \ifx#1\end%
  \let\next=\relax
  \else
  \ifx#1i\dispkind=\z@\fi
  \ifx#1d\dispkind=\@ne\fi
  \ifx#1f\dispkind=\tw@\fi
  \ifx#1t\addtoLaTeXparams{t}\fi
  \ifx#1b\addtoLaTeXparams{b}\fi
  \ifx#1p\addtoLaTeXparams{p}\fi
  \ifx#1h\addtoLaTeXparams{h}\fi
  \ifx#1X\BoxFrametrue\fi
  \ifx#1O\OverFrametrue\fi
  \ifx#1U\UnderFrametrue\fi
  \ifx#1w
    \ifnum\draft=1\wasdrafttrue\else\wasdraftfalse\fi
    \draft=\@ne
  \fi
  \let\next=\readFRAMEparams
  \fi
 \next
 }%
\def\IFRAME#1#2#3#4#5#6{%
      \bgroup
      \let\QCTOptA\empty
      \let\QCTOptB\empty
      \let\QCBOptA\empty
      \let\QCBOptB\empty
      #6%
      \parindent=0pt%
      \leftskip=0pt
      \rightskip=0pt
      \setbox0 = \hbox{\QCBOptA}%
      \@tempdima = #1\relax
      \ifOverFrame
          \typeout{This is not implemented yet}%
          \show\HELP
      \else
         \ifdim\wd0>\@tempdima
            \advance\@tempdima by \@tempdima
            \ifdim\wd0 >\@tempdima
               \textwidth=\@tempdima
               \setbox1 =\vbox{%
                  \noindent\hbox to \@tempdima{\hfill\GRAPHIC{#5}{#4}{#1}{#2}{#3}\hfill}\\%
                  \noindent\hbox to \@tempdima{\parbox[b]{\@tempdima}{\QCBOptA}}%
               }%
               \wd1=\@tempdima
            \else
               \textwidth=\wd0
               \setbox1 =\vbox{%
                 \noindent\hbox to \wd0{\hfill\GRAPHIC{#5}{#4}{#1}{#2}{#3}\hfill}\\%
                 \noindent\hbox{\QCBOptA}%
               }%
               \wd1=\wd0
            \fi
         \else
            \ifdim\wd0>0pt
              \hsize=\@tempdima
              \setbox1 =\vbox{%
                \unskip\GRAPHIC{#5}{#4}{#1}{#2}{0pt}%
                \break
                \unskip\hbox to \@tempdima{\hfill \QCBOptA\hfill}%
              }%
              \wd1=\@tempdima
           \else
              \hsize=\@tempdima
              \setbox1 =\vbox{%
                \unskip\GRAPHIC{#5}{#4}{#1}{#2}{0pt}%
              }%
              \wd1=\@tempdima
           \fi
         \fi
         \@tempdimb=\ht1
         \advance\@tempdimb by \dp1
         \advance\@tempdimb by -#2%
         \advance\@tempdimb by #3%
         \leavevmode
         \raise -\@tempdimb \hbox{\box1}%
      \fi
      \egroup%
}%
\def\DFRAME#1#2#3#4#5{%
 \begin{center}
     \let\QCTOptA\empty
     \let\QCTOptB\empty
     \let\QCBOptA\empty
     \let\QCBOptB\empty
     \ifOverFrame 
        #5\QCTOptA\par
     \fi
     \GRAPHIC{#4}{#3}{#1}{#2}{\z@}
     \ifUnderFrame 
        \nobreak\par #5\QCBOptA
     \fi
 \end{center}%
 }%
\def\FFRAME#1#2#3#4#5#6#7{%
 \begin{figure}[#1]%
  \let\QCTOptA\empty
  \let\QCTOptB\empty
  \let\QCBOptA\empty
  \let\QCBOptB\empty
  \ifOverFrame
    #4
    \ifx\QCTOptA\empty
    \else
      \ifx\QCTOptB\empty
        \caption{\QCTOptA}%
      \else
        \caption[\QCTOptB]{\QCTOptA}%
      \fi
    \fi
    \ifUnderFrame\else
      \label{#5}%
    \fi
  \else
    \UnderFrametrue%
  \fi
  \begin{center}\GRAPHIC{#7}{#6}{#2}{#3}{\z@}\end{center}%
  \ifUnderFrame
    #4
    \ifx\QCBOptA\empty
      \caption{}%
    \else
      \ifx\QCBOptB\empty
        \caption{\QCBOptA}%
      \else
        \caption[\QCBOptB]{\QCBOptA}%
      \fi
    \fi
    \label{#5}%
  \fi
  \end{figure}%
 }%
\def\makeactives{
  \catcode`\"=\active
  \catcode`\;=\active
  \catcode`\:=\active
  \catcode`\'=\active
  \catcode`\~=\active
}
   \gdef\activesoff{%
      \def"{\string"}
      \def;{\string;}
      \def:{\string:}
      \def'{\string'}
      \def~{\string~}
    }
\def\FRAME#1#2#3#4#5#6#7#8{%
 \bgroup
 \@ifundefined{bbl@deactivate}{}{\activesoff}
 \ifnum\draft=\@ne
   \wasdrafttrue
 \else
   \wasdraftfalse%
 \fi
 \def\LaTeXparams{}%
 \dispkind=\z@
 \def\LaTeXparams{}%
 \doFRAMEparams{#1}%
 \ifnum\dispkind=\z@\IFRAME{#2}{#3}{#4}{#7}{#8}{#5}\else
  \ifnum\dispkind=\@ne\DFRAME{#2}{#3}{#7}{#8}{#5}\else
   \ifnum\dispkind=\tw@
    \edef\@tempa{\noexpand\FFRAME{\LaTeXparams}}%
    \@tempa{#2}{#3}{#5}{#6}{#7}{#8}%
    \fi
   \fi
  \fi
  \ifwasdraft\draft=1\else\draft=0\fi{}%
  \egroup
 }%
\def\TEXUX#1{"texux"}
\def\func#1{\mathop{\rm #1}}%
\long\def\QQQ#1#2{%
     \long\expandafter\def\csname#1\endcsname{#2}}%
\long\def\QQA#1#2{}%
\def\QTR#1#2{{\csname#1\endcsname #2}}
\def\EXPAND#1[#2]#3{}%
\def\NOEXPAND#1[#2]#3{}%
\def\LaTeXparent#1{}%
\def\ChildStyles#1{}%
\def\ChildDefaults#1{}%
\def\QTagDef#1#2#3{}%
\def\QQfnmark#1{\footnotemark}
\def\makeatletter\input gnuindex.sty\makeatother\makeindex{\makeatletter\input gnuindex.sty\makeatother\makeindex}%
\def\initial#1{\bigbreak{\raggedright\large\bf #1}\kern 2\p@\penalty3000}}%
 \def\abstract{%
  \if@twocolumn
   \section*{Abstract (Not appropriate in this style!)}%
   \else \small 
   \begin{center}{\bf Abstract\vspace{-.5em}\vspace{\z@}}\end{center}%
   \quotation 
   \fi
  }%
   \def\registered{\relax\ifmmode{}\r@gistered
                    \else$\m@th\r@gistered$\fi}%
 \def\r@gistered{^{\ooalign
  {\hfil\raise.07ex\hbox{$\scriptstyle\rm\text{R}$}\hfil\crcr
  \mathhexbox20D}}}}{}%
\newdimen\theight
\def\Column{%
 \vadjust{\setbox\z@=\hbox{\scriptsize\quad\quad tcol}%
  \theight=\ht\z@\advance\theight by \dp\z@\advance\theight by \lineskip
  \kern -\theight \vbox to \theight{%
   \rightline{\rlap{\box\z@}}%
   \vss
   }%
  }%
 }%
\def\qed{%
 \ifhmode\unskip\nobreak\fi\ifmmode\ifinner\else\hskip5\p@\fi\fi
 \hbox{\hskip5\p@\vrule width4\p@ height6\p@ depth1.5\p@\hskip\p@}%
 }%
\def\miss{\hbox{\vrule height2\p@ width 2\p@ depth\z@}}%
\def\tcol#1{{\baselineskip=6\p@ \vcenter{#1}} \Column}  %
\def\newfmtname{LaTeX2e}
\def\chkcompat{%
   \if@compatibility
   \else
     \usepackage{latexsym}
   \fi
}
  \DeclareOldFontCommand{\rm}{\normalfont\rmfamily}{\mathrm}
  \DeclareOldFontCommand{\sf}{\normalfont\sffamily}{\mathsf}
  \DeclareOldFontCommand{\tt}{\normalfont\ttfamily}{\mathtt}
  \DeclareOldFontCommand{\bf}{\normalfont\bfseries}{\mathbf}
  \DeclareOldFontCommand{\it}{\normalfont\itshape}{\mathit}
  \DeclareOldFontCommand{\sl}{\normalfont\slshape}{\@nomath\sl}
  \DeclareOldFontCommand{\sc}{\normalfont\scshape}{\@nomath\sc}
\def\alpha{{\Greekmath 010B}}%
\def\beta{{\Greekmath 010C}}%
\def\gamma{{\Greekmath 010D}}%
\def\delta{{\Greekmath 010E}}%
\def\epsilon{{\Greekmath 010F}}%
\def\zeta{{\Greekmath 0110}}%
\def\eta{{\Greekmath 0111}}%
\def\theta{{\Greekmath 0112}}%
\def\iota{{\Greekmath 0113}}%
\def\kappa{{\Greekmath 0114}}%
\def\lambda{{\Greekmath 0115}}%
\def\mu{{\Greekmath 0116}}%
\def\nu{{\Greekmath 0117}}%
\def\xi{{\Greekmath 0118}}%
\def\pi{{\Greekmath 0119}}%
\def\rho{{\Greekmath 011A}}%
\def\sigma{{\Greekmath 011B}}%
\def\tau{{\Greekmath 011C}}%
\def\upsilon{{\Greekmath 011D}}%
\def\phi{{\Greekmath 011E}}%
\def\chi{{\Greekmath 011F}}%
\def\psi{{\Greekmath 0120}}%
\def\omega{{\Greekmath 0121}}%
\def\varepsilon{{\Greekmath 0122}}%
\def\vartheta{{\Greekmath 0123}}%
\def\varpi{{\Greekmath 0124}}%
\def\varrho{{\Greekmath 0125}}%
\def\varsigma{{\Greekmath 0126}}%
\def\varphi{{\Greekmath 0127}}%
\def\nabla{{\Greekmath 0272}}
\def\FindBoldGroup{%
   {\setbox0=\hbox{$\mathbf{x\global\edef\theboldgroup{\the\mathgroup}}$}}%
}
\def\Greekmath#1#2#3#4{%
    \if@compatibility
        \ifnum\mathgroup=\symbold
           \mathchoice{\mbox{\boldmath$\displaystyle\mathchar"#1#2#3#4$}}%
                      {\mbox{\boldmath$\textstyle\mathchar"#1#2#3#4$}}%
                      {\mbox{\boldmath$\scriptstyle\mathchar"#1#2#3#4$}}%
                      {\mbox{\boldmath$\scriptscriptstyle\mathchar"#1#2#3#4$}}%
        \else
           \mathchar"#1#2#3#4%
        \fi 
    \else 
        \FindBoldGroup
        \ifnum\mathgroup=\theboldgroup 
           \mathchoice{\mbox{\boldmath$\displaystyle\mathchar"#1#2#3#4$}}%
                      {\mbox{\boldmath$\textstyle\mathchar"#1#2#3#4$}}%
                      {\mbox{\boldmath$\scriptstyle\mathchar"#1#2#3#4$}}%
                      {\mbox{\boldmath$\scriptscriptstyle\mathchar"#1#2#3#4$}}%
        \else
           \mathchar"#1#2#3#4%
        \fi     	    
	  \fi}
\newif\ifGreekBold  \GreekBoldfalse
\let\SAVEPBF=\pbf
\def\pbf{\GreekBoldtrue\SAVEPBF}%
  \newcounter{equationnumber}  
  \def\mathletters{%
     \addtocounter{equation}{1}
     \edef\@currentlabel{\theequation}%
     \setcounter{equationnumber}{\c@equation}
     \setcounter{equation}{0}%
     \edef\theequation{\@currentlabel\noexpand\alph{equation}}%
  }
    \def\BibTeX{{\rm B\kern-.05em{\sc i\kern-.025em b}\kern-.08em
                 T\kern-.1667em\lower.7ex\hbox{E}\kern-.125emX}}}{}%
\def\AmS{{\protect\usefont{OMS}{cmsy}{m}{n}%
                A\kern-.1667em\lower.5ex\hbox{M}\kern-.125emS}}}{}%
\let\DOTSI\relax
\def\RIfM@{\relax\ifmmode}%
\def\FN@{\futurelet\next}%
\def\iint{\DOTSI\intno@\tw@\FN@\ints@}%
\def\iiint{\DOTSI\intno@\thr@@\FN@\ints@}%
\def\iiiint{\DOTSI\intno@4 \FN@\ints@}%
\def\idotsint{\DOTSI\intno@\z@\FN@\ints@}%
\def\ints@{\findlimits@\ints@@}%
\newif\iflimtoken@
\newif\iflimits@
\def\findlimits@{\limtoken@true\ifx\next\limits\limits@true
 \else\ifx\next\nolimits\limits@false\else
 \limtoken@false\ifx\ilimits@\nolimits\limits@false\else
 \ifinner\limits@false\else\limits@true\fi\fi\fi\fi}%
\def\multint@{\int\ifnum\intno@=\z@\intdots@                          
 \else\intkern@\fi                                                    
 \ifnum\intno@>\tw@\int\intkern@\fi                                   
 \ifnum\intno@>\thr@@\int\intkern@\fi                                 
 \int}
\def\multintlimits@{\intop\ifnum\intno@=\z@\intdots@\else\intkern@\fi
 \ifnum\intno@>\tw@\intop\intkern@\fi
 \ifnum\intno@>\thr@@\intop\intkern@\fi\intop}%
\def\intic@{%
    \mathchoice{\hskip.5em}{\hskip.4em}{\hskip.4em}{\hskip.4em}}%
\def\negintic@{\mathchoice
 {\hskip-.5em}{\hskip-.4em}{\hskip-.4em}{\hskip-.4em}}%
\def\ints@@{\iflimtoken@                                              
 \def\ints@@@{\iflimits@\negintic@
   \mathop{\intic@\multintlimits@}\limits                             
  \else\multint@\nolimits\fi                                          
  \eat@}
 \else                                                                
 \def\ints@@@{\iflimits@\negintic@
  \mathop{\intic@\multintlimits@}\limits\else
  \multint@\nolimits\fi}\fi\ints@@@}%
\def\intkern@{\mathchoice{\!\!\!}{\!\!}{\!\!}{\!\!}}%
\def\plaincdots@{\mathinner{\cdotp\cdotp\cdotp}}%
\def\intdots@{\mathchoice{\plaincdots@}%
 {{\cdotp}\mkern1.5mu{\cdotp}\mkern1.5mu{\cdotp}}%
 {{\cdotp}\mkern1mu{\cdotp}\mkern1mu{\cdotp}}%
 {{\cdotp}\mkern1mu{\cdotp}\mkern1mu{\cdotp}}}%
\def\RIfM@{\relax\protect\ifmmode}
\def\text{\RIfM@\expandafter\text@\else\expandafter\mbox\fi}
\let\nfss@text\text
\def\text@#1{\mathchoice
   {\textdef@\displaystyle\f@size{#1}}%
   {\textdef@\textstyle\tf@size{\firstchoice@false #1}}%
   {\textdef@\textstyle\sf@size{\firstchoice@false #1}}%
   {\textdef@\textstyle \ssf@size{\firstchoice@false #1}}%
   \glb@settings}
\def\textdef@#1#2#3{\hbox{{%
                    \everymath{#1}%
                    \let\f@size#2\selectfont
                    #3}}}
\newif\iffirstchoice@
\def\Let@{\relax\iffalse{\fi\let\\=\cr\iffalse}\fi}%
\def\vspace@{\def\vspace##1{\crcr\noalign{\vskip##1\relax}}}%
\def\multilimits@{\bgroup\vspace@\Let@
 \baselineskip\fontdimen10 \scriptfont\tw@
 \advance\baselineskip\fontdimen12 \scriptfont\tw@
 \lineskip\thr@@\fontdimen8 \scriptfont\thr@@
 \lineskiplimit\lineskip
 \vbox\bgroup\ialign\bgroup\hfil$\m@th\scriptstyle{##}$\hfil\crcr}%
\def\Sb{_\multilimits@}%
\def\endSb{\crcr\egroup\egroup\egroup}%
\def\Sp{^\multilimits@}%
\newdimen\ex@
\def\rightarrowfill@#1{$#1\m@th\mathord-\mkern-6mu\cleaders
 \hbox{$#1\mkern-2mu\mathord-\mkern-2mu$}\hfill
 \mkern-6mu\mathord\rightarrow$}%
\def\leftarrowfill@#1{$#1\m@th\mathord\leftarrow\mkern-6mu\cleaders
 \hbox{$#1\mkern-2mu\mathord-\mkern-2mu$}\hfill\mkern-6mu\mathord-$}%
\def\leftrightarrowfill@#1{$#1\m@th\mathord\leftarrow
\mkern-6mu\cleaders
 \hbox{$#1\mkern-2mu\mathord-\mkern-2mu$}\hfill
 \mkern-6mu\mathord\rightarrow$}%
\def\overrightarrow{\mathpalette\overrightarrow@}%
\def\overrightarrow@#1#2{\vbox{\ialign{##\crcr\rightarrowfill@#1\crcr
 \noalign{\kern-\ex@\nointerlineskip}$\m@th\hfil#1#2\hfil$\crcr}}}%
\def\overleftarrow{\mathpalette\overleftarrow@}%
\def\overleftarrow@#1#2{\vbox{\ialign{##\crcr\leftarrowfill@#1\crcr
 \noalign{\kern-\ex@\nointerlineskip}$\m@th\hfil#1#2\hfil$\crcr}}}%
\def\overleftrightarrow{\mathpalette\overleftrightarrow@}%
\def\overleftrightarrow@#1#2{\vbox{\ialign{##\crcr
   \leftrightarrowfill@#1\crcr
 \noalign{\kern-\ex@\nointerlineskip}$\m@th\hfil#1#2\hfil$\crcr}}}%
\def\underrightarrow{\mathpalette\underrightarrow@}%
\def\underrightarrow@#1#2{\vtop{\ialign{##\crcr$\m@th\hfil#1#2\hfil
  $\crcr\noalign{\nointerlineskip}\rightarrowfill@#1\crcr}}}%
\def\underleftarrow{\mathpalette\underleftarrow@}%
\def\underleftarrow@#1#2{\vtop{\ialign{##\crcr$\m@th\hfil#1#2\hfil
  $\crcr\noalign{\nointerlineskip}\leftarrowfill@#1\crcr}}}%
\def\underleftrightarrow{\mathpalette\underleftrightarrow@}%
\def\underleftrightarrow@#1#2{\vtop{\ialign{##\crcr$\m@th
  \hfil#1#2\hfil$\crcr
 \noalign{\nointerlineskip}\leftrightarrowfill@#1\crcr}}}%
\def\qopnamewl@#1{\mathop{\operator@font#1}\nlimits@}
\let\nlimits@\displaylimits
\def\setboxz@h{\setbox\z@\hbox}
\def\varlim@#1#2{\mathop{\vtop{\ialign{##\crcr
 \hfil$#1\m@th\operator@font lim$\hfil\crcr
 \noalign{\nointerlineskip}#2#1\crcr
 \noalign{\nointerlineskip\kern-\ex@}\crcr}}}}
 \def\rightarrowfill@#1{\m@th\setboxz@h{$#1-$}\ht\z@\z@
  $#1\copy\z@\mkern-6mu\cleaders
  \hbox{$#1\mkern-2mu\box\z@\mkern-2mu$}\hfill
  \mkern-6mu\mathord\rightarrow$}
\def\leftarrowfill@#1{\m@th\setboxz@h{$#1-$}\ht\z@\z@
  $#1\mathord\leftarrow\mkern-6mu\cleaders
  \hbox{$#1\mkern-2mu\copy\z@\mkern-2mu$}\hfill
  \mkern-6mu\box\z@$}
\def\projlim{\qopnamewl@{proj\,lim}}
\def\injlim{\qopnamewl@{inj\,lim}}
\def\varinjlim{\mathpalette\varlim@\rightarrowfill@}
\def\varprojlim{\mathpalette\varlim@\leftarrowfill@}
\def\varliminf{\mathpalette\varliminf@{}}
\def\varliminf@#1{\mathop{\underline{\vrule\@depth.2\ex@\@width\z@
   \hbox{$#1\m@th\operator@font lim$}}}}
\def\varlimsup{\mathpalette\varlimsup@{}}
\def\varlimsup@#1{\mathop{\overline
  {\hbox{$#1\m@th\operator@font lim$}}}}
\def\dfrac#1#2{{\displaystyle {#1 \over #2}}}%
\def\align{\@verbatim \frenchspacing\@vobeyspaces \@alignverbatim
You are using the "align" environment in a style in which it is not defined.}
\let\csname endalign*\endcsname =\endtrivlist
\def\alignat{\@verbatim \frenchspacing\@vobeyspaces \@alignatverbatim
You are using the "alignat" environment in a style in which it is not defined.}
\let\csname endalignat*\endcsname =\endtrivlist
\def\xalignat{\@verbatim \frenchspacing\@vobeyspaces \@xalignatverbatim
You are using the "xalignat" environment in a style in which it is not defined.}
\let\csname endxalignat*\endcsname =\endtrivlist
\def\gather{\@verbatim \frenchspacing\@vobeyspaces \@gatherverbatim
You are using the "gather" environment in a style in which it is not defined.}
\let\csname endgather*\endcsname =\endtrivlist
\def\multiline{\@verbatim \frenchspacing\@vobeyspaces \@multilineverbatim
You are using the "multiline" environment in a style in which it is not defined.}
\let\csname endmultiline*\endcsname =\endtrivlist
\def\arrax{\@verbatim \frenchspacing\@vobeyspaces \@arraxverbatim
You are using a type of "array" construct that is only allowed in AmS-LaTeX.}
\def\tabulax{\@verbatim \frenchspacing\@vobeyspaces \@tabulaxverbatim
You are using a type of "tabular" construct that is only allowed in AmS-LaTeX.}
\let\csname endarrax*\endcsname =\endtrivlist
\let\csname endtabulax*\endcsname =\endtrivlist
\def\@@eqncr{\let\@tempa\relax
    \ifcase\@eqcnt \def\@tempa{& & &}\or \def\@tempa{& &}%
      \else \def\@tempa{&}\fi
     \@tempa
     \if@eqnsw
        \iftag@
           \@taggnum
        \else
           \@eqnnum\stepcounter{equation}%
        \fi
     \fi
     \global\tag@false
     \global\@eqnswtrue
     \global\@eqcnt\z@\cr}
 \def\endequation{%
     \ifmmode\ifinner 
      \iftag@
        \addtocounter{equation}{-1} 
        $\hfil
           \displaywidth\linewidth\@taggnum\egroup \endtrivlist
        \global\tag@false
        \global\@ignoretrue   
      \else
        $\hfil
           \displaywidth\linewidth\@eqnnum\egroup \endtrivlist
        \global\tag@false
        \global\@ignoretrue 
      \fi
     \else   
      \iftag@
        \addtocounter{equation}{-1} 
        \eqno \hbox{\@taggnum}
        \global\tag@false%
        $$\global\@ignoretrue
      \else
        \eqno \hbox{\@eqnnum}
        $$\global\@ignoretrue
      \fi
     \fi\fi
 } 
 \newif\iftag@ \tag@false
 \def\tag{\@ifnextchar*{\@tagstar}{\@tag}}
 \def\@tag#1{%
     \global\tag@true
     \global\def\@taggnum{(#1)}}
 \def\@tagstar*#1{%
     \global\tag@true
     \global\def\@taggnum{#1}%
}
\begin{document}
\title[Convergence rates in deterministic homogenization]{Approximation of
homogenized coefficients in deterministic homogenization and convergence
rates in the asymptotic almost periodic setting}
\author{Willi J\"{a}ger}
\curraddr{W. J\"{a}ger, Interdisciplinary Center for Scientific Computing
(IWR), University of Heidelberg, Im Neuenheimer Feld 205, 69120 Heidelberg,
Germany.}
\email{wjaeger@iwr.uni-heidelberg.de}
\author{Antoine Tambue}
\curraddr{A. Tambue, a) Department of Computing Mathematics and Physics,
Western Norway University of Applied Sciences, Inndalsveien 28, Bergen 5063,
Norway; b) Center for Research in Computational and Applied Mechanics
(CERECAM), and Department of Mathematics and Applied Mathematics, University
of Cape Town, Rondebosch 7701, South Africa; c) The African Institute for
Mathematical Sciences (AIMS) and Stellenbosch University, 6--8 Melrose Road,
Muizenberg 7945, South Africa}
\email{antonio@aims.ac.za}
\author{Jean Louis Woukeng}
\curraddr{J. L. Woukeng, Interdisciplinary Center for Scientific Computing
(IWR), University of Heidelberg, Im Neuenheimer Feld 205, 69120 Heidelberg,
Germany.}
\email{jwoukeng@yahoo.fr}
\date{2022}
\subjclass[2000]{35B40; 46J10 }
\keywords{Rates of convergence; corrector; deterministic homogenization}
\dedicatory{Dedicated to the memory of V.V. Zhikov, 1940-2017.}

\begin{abstract}
For a homogenization problem associated to a linear elliptic operator, we
prove the existence of a distributional corrector and we find an
approximation scheme for the homogenized coefficients. We also study the
convergence rates in the asymptotic almost periodic setting, and we show
that the rates of convergence for the zero order approximation, are near
optimal. The results obtained constitute a step towards the numerical
implementation of results from the deterministic homogenization theory
beyond the periodic setting. To illustrate this, numerical simulations based
on finite volume method are provided to sustain our theoretical results.
\end{abstract}

\maketitle

\section{Introduction}

The purpose of this work is to establish the existence of a distributional
corrector in the deterministic homogenization theory for a family of second
order elliptic equations in divergence form with rapidly oscillating
coefficients, and find an approximation scheme for the homogenized
coefficients, without smoothness assumption on the coefficients. Under
additional condition, we also study the convergence rates in the asymptotic
almost periodic setting. We start with the statement of the problem (\ref%
{1.1}).

Let $\mathcal{A}$ be an algebra with mean value on $\mathbb{R}^{d}$, that
is, a closed subalgebra of the $\mathcal{C}^{\ast }$-algebra of bounded
uniformly continuous real-valued functions on $\mathbb{R}^{d}$, $\mathrm{BUC}%
(\mathbb{R}^{d})$, which contains the constants, is translation invariant
and is such that any of its elements possesses a mean value in the following
sense: for every $u\in \mathcal{A}$, the sequence $(u^{\varepsilon
})_{\varepsilon >0}$ ($u^{\varepsilon }(x)=u(x/\varepsilon )$) weakly$\ast $%
-converges in $L^{\infty }(\mathbb{R}^{d})$ to some real number $M(u)$
(called the mean value of $u$) as $\varepsilon \rightarrow 0$. The mean
value expresses as 
\begin{equation}
M(u)=\lim_{R\rightarrow \infty }%
\mathchoice {{\setbox0=\hbox{$\displaystyle{\textstyle -}{\int}$ } \vcenter{\hbox{$\textstyle -$
}}\kern-.6\wd0}}{{\setbox0=\hbox{$\textstyle{\scriptstyle -}{\int}$ } \vcenter{\hbox{$\scriptstyle -$
}}\kern-.6\wd0}}{{\setbox0=\hbox{$\scriptstyle{\scriptscriptstyle -}{\int}$
} \vcenter{\hbox{$\scriptscriptstyle -$
}}\kern-.6\wd0}}{{\setbox0=\hbox{$\scriptscriptstyle{\scriptscriptstyle
-}{\int}$ } \vcenter{\hbox{$\scriptscriptstyle -$ }}\kern-.6\wd0}}%
\!\int_{B_{R}}u(y)dy\text{ for }u\in \mathcal{A}  \label{0.1}
\end{equation}%
where we have set $%
\mathchoice {{\setbox0=\hbox{$\displaystyle{\textstyle
-}{\int}$ } \vcenter{\hbox{$\textstyle -$
}}\kern-.6\wd0}}{{\setbox0=\hbox{$\textstyle{\scriptstyle -}{\int}$ }
\vcenter{\hbox{$\scriptstyle -$
}}\kern-.6\wd0}}{{\setbox0=\hbox{$\scriptstyle{\scriptscriptstyle -}{\int}$
} \vcenter{\hbox{$\scriptscriptstyle -$
}}\kern-.6\wd0}}{{\setbox0=\hbox{$\scriptscriptstyle{\scriptscriptstyle
-}{\int}$ } \vcenter{\hbox{$\scriptscriptstyle -$ }}\kern-.6\wd0}}%
\!\int_{B_{R}}=\frac{1}{\left\vert B_{R}\right\vert }\int_{B_{R}}$.

For $1\leq p<\infty $, we define the Marcinkiewicz space $\mathfrak{M}^{p}(%
\mathbb{R}^{d})$ to be the set of functions $u\in L_{loc}^{p}(\mathbb{R}%
^{d}) $ such that 
\begin{equation*}
\underset{R\rightarrow \infty }{\lim \sup }%
\mathchoice {{\setbox0=\hbox{$\displaystyle{\textstyle -}{\int}$ } \vcenter{\hbox{$\textstyle -$
}}\kern-.6\wd0}}{{\setbox0=\hbox{$\textstyle{\scriptstyle -}{\int}$ } \vcenter{\hbox{$\scriptstyle -$
}}\kern-.6\wd0}}{{\setbox0=\hbox{$\scriptstyle{\scriptscriptstyle -}{\int}$
} \vcenter{\hbox{$\scriptscriptstyle -$
}}\kern-.6\wd0}}{{\setbox0=\hbox{$\scriptscriptstyle{\scriptscriptstyle
-}{\int}$ } \vcenter{\hbox{$\scriptscriptstyle -$ }}\kern-.6\wd0}}%
\!\int_{B_{R}}\left\vert u(y)\right\vert ^{p}dy<\infty .
\end{equation*}%
Then $\mathfrak{M}^{p}(\mathbb{R}^{d})$ is a complete seminormed space
endowed with the seminorm 
\begin{equation*}
\left\Vert u\right\Vert _{p}=\left( \underset{R\rightarrow \infty }{\lim
\sup }%
\mathchoice {{\setbox0=\hbox{$\displaystyle{\textstyle -}{\int}$ } \vcenter{\hbox{$\textstyle -$
}}\kern-.6\wd0}}{{\setbox0=\hbox{$\textstyle{\scriptstyle -}{\int}$ } \vcenter{\hbox{$\scriptstyle -$
}}\kern-.6\wd0}}{{\setbox0=\hbox{$\scriptstyle{\scriptscriptstyle -}{\int}$
} \vcenter{\hbox{$\scriptscriptstyle -$
}}\kern-.6\wd0}}{{\setbox0=\hbox{$\scriptscriptstyle{\scriptscriptstyle
-}{\int}$ } \vcenter{\hbox{$\scriptscriptstyle -$ }}\kern-.6\wd0}}%
\!\int_{B_{R}}\left\vert u(y)\right\vert ^{p}dy\right) ^{1/p}.
\end{equation*}%
We denote by $B_{\mathcal{A}}^{p}(\mathbb{R}^{d})$ ($1\leq p<\infty $) the
closure of $\mathcal{A}$ in $\mathfrak{M}^{p}(\mathbb{R}^{d})$. Then for any 
$u\in B_{\mathcal{A}}^{p}(\mathbb{R}^{d})$ we have that 
\begin{equation}
\left\Vert u\right\Vert _{p}=\left( \lim_{R\rightarrow \infty }%
\mathchoice {{\setbox0=\hbox{$\displaystyle{\textstyle -}{\int}$ } \vcenter{\hbox{$\textstyle -$
}}\kern-.6\wd0}}{{\setbox0=\hbox{$\textstyle{\scriptstyle -}{\int}$ } \vcenter{\hbox{$\scriptstyle -$
}}\kern-.6\wd0}}{{\setbox0=\hbox{$\scriptstyle{\scriptscriptstyle -}{\int}$
} \vcenter{\hbox{$\scriptscriptstyle -$
}}\kern-.6\wd0}}{{\setbox0=\hbox{$\scriptscriptstyle{\scriptscriptstyle
-}{\int}$ } \vcenter{\hbox{$\scriptscriptstyle -$ }}\kern-.6\wd0}}%
\!\int_{B_{R}}\left\vert u(y)\right\vert ^{p}dy\right) ^{\frac{1}{p}%
}=(M(\left\vert u\right\vert ^{p}))^{\frac{1}{p}}.  \label{0.2}
\end{equation}%
Consider the space $B_{\mathcal{A}}^{1,p}(\mathbb{R}^{d})=\{u\in B_{\mathcal{%
A}}^{p}(\mathbb{R}^{d}):\nabla _{y}u\in (B_{\mathcal{A}}^{p}(\mathbb{R}%
^{d}))^{d}\}$ which is a complete seminorned space with respect to the
seminorm 
\begin{equation*}
\left\Vert u\right\Vert _{1,p}=\left( \left\Vert u\right\Vert
_{p}^{p}+\left\Vert \nabla _{y}u\right\Vert _{p}^{p}\right) ^{\frac{1}{p}}.
\end{equation*}%
The Banach counterpart of the previous spaces are defined as follows. We set 
$\mathcal{B}_{\mathcal{A}}^{p}(\mathbb{R}^{d})=B_{\mathcal{A}}^{p}(\mathbb{R}%
^{d})/\mathcal{N}$ where $\mathcal{N}=\{u\in B_{\mathcal{A}}^{p}(\mathbb{R}%
^{d}):\left\Vert u\right\Vert _{p}=0\}$. We define $\mathcal{B}_{\mathcal{A}%
}^{1,p}(\mathbb{R}^{d})$ mutatis mutandis: replace $B_{\mathcal{A}}^{p}(%
\mathbb{R}^{d})$ by $\mathcal{B}_{\mathcal{A}}^{p}(\mathbb{R}^{d})$ and $%
\partial /\partial y_{i}$ by $\overline{\partial }/\partial y_{i}$, where $%
\overline{\partial }/\partial y_{i}$ is defined by 
\begin{equation}
\frac{\overline{\partial }}{\partial y_{i}}(u+\mathcal{N}):=\frac{\partial u%
}{\partial y_{i}}+\mathcal{N}\text{ for }u\in B_{\mathcal{A}}^{1,p}(\mathbb{R%
}^{d}).  \label{0.3}
\end{equation}%
It is important to note that $\overline{\partial }/\partial y_{i}$ is also
defined as the infinitesimal generator in the $i$th direction coordinate of
the strongly continuous group $\mathcal{T}(y):\mathcal{B}_{\mathcal{A}}^{p}(%
\mathbb{R}^{d})\rightarrow \mathcal{B}_{\mathcal{A}}^{p}(\mathbb{R}^{d});\ 
\mathcal{T}(y)(u+\mathcal{N})=u(\cdot +y)+\mathcal{N}$. Let us denote by $%
\varrho :B_{\mathcal{A}}^{p}(\mathbb{R}^{d})\rightarrow \mathcal{B}_{%
\mathcal{A}}^{p}(\mathbb{R}^{d})=B_{\mathcal{A}}^{p}(\mathbb{R}^{d})/%
\mathcal{N}$, $\varrho (u)=u+\mathcal{N}$, the canonical surjection. Remark: 
$u\in B_{\mathcal{A}}^{1,p}(\mathbb{R}^{d})$ implies $\varrho (u)\in 
\mathcal{B}_{\mathcal{A}}^{1,p}(\mathbb{R}^{d})$ and observing (\ref{0.3}), $%
\frac{\overline{\partial }\varrho (u)}{\partial y_{i}}=\varrho \left( \frac{%
\partial u}{\partial y_{i}}\right) $.

We assume in the sequel that the algebra $\mathcal{A}$ is ergodic, that is,
any $u\in \mathcal{B}_{\mathcal{A}}^{p}(\mathbb{R}^{d})$ that is invariant
under $(\mathcal{T}(y))_{y\in \mathbb{R}^{d}}$ is a constant in $\mathcal{B}%
_{\mathcal{A}}^{p}(\mathbb{R}^{d})$, i.e., if $\left\Vert \mathcal{T}%
(y)u-u\right\Vert _{p}=0$ for every $y\in \mathbb{R}^{d}$, then $\left\Vert
u-c\right\Vert _{p}=0$, $c$ a constant. Let us also recall the following
property \cite{CMP, NA}:

\begin{itemize}
\item[(\textbf{1)}] The mean value $M$ viewed as defined on $\mathcal{A}$,
extends by continuity to a non negative continuous linear form (still
denoted by $M$) on $B_{\mathcal{A}}^{p}(\mathbb{R}^{d})$. For each $u\in B_{%
\mathcal{A}}^{p}(\mathbb{R}^{d})$ and all $a\in \mathbb{R}^{d}$, we have $%
M(u(\cdot +a))=M(u)$, and $\left\Vert u\right\Vert _{p}=\left( M(\left\vert
u\right\vert ^{p})\right) ^{1/p}$.
\end{itemize}

To the space $B_{\mathcal{A}}^{p}(\mathbb{R}^{d})$ we also attach the
following \textit{corrector} space 
\begin{equation*}
B_{\#\mathcal{A}}^{1,p}(\mathbb{R}^{d})=\{u\in W_{loc}^{1,p}(\mathbb{R}%
^{d}):\nabla u\in B_{\mathcal{A}}^{p}(\mathbb{R}^{d})^{d}\text{ and }%
M(\nabla u)=0\}\text{.}
\end{equation*}%
In $B_{\#\mathcal{A}}^{1,p}(\mathbb{R}^{d})$ we identify two elements by
their gradients: $u=v$ in $B_{\#\mathcal{A}}^{1,p}(\mathbb{R}^{d})$ iff $%
\nabla (u-v)=0$, i.e. $\left\Vert \nabla (u-v)\right\Vert _{p}=0$. We equip $%
B_{\#\mathcal{A}}^{1,p}(\mathbb{R}^{d})$ with the gradient norm $\left\Vert
u\right\Vert _{\#,p}=\left\Vert \nabla u\right\Vert _{p}$ and obtain a
Banach space \cite[Theorem 3.12]{Casado} containing $B_{\mathcal{A}}^{1,p}(%
\mathbb{R}^{d})$.

We recall the $\Sigma $-convergence. A sequence $(u_{\varepsilon
})_{\varepsilon >0}\subset L^{p}(\Omega )$ ($1\leq p<\infty $) is said to:

\begin{itemize}
\item[(i)] \emph{weakly }$\Sigma $\emph{-converge} in $L^{p}(\Omega )$ to $%
u_{0}\in L^{p}(\Omega ;\mathcal{B}_{\mathcal{A}}^{p}(\mathbb{R}^{d}))$ if,
as $\varepsilon \rightarrow 0$, 
\begin{equation}
\int_{\Omega }u_{\varepsilon }(x)f\left( x,\frac{x}{\varepsilon }\right)
dx\rightarrow \int_{\Omega }M(u_{0}(x,\cdot )f(x,\cdot ))dx  \label{3.1}
\end{equation}%
for any $f\in L^{p^{\prime }}(\Omega ;\mathcal{A})$ ($p^{\prime }=p/(p-1)$)$%
; $

\item[(ii)] \emph{strongly }$\Sigma $\emph{-converge} in $L^{p}(\Omega )$ to 
$u_{0}\in L^{p}(\Omega ;\mathcal{B}_{\mathcal{A}}^{p}(\mathbb{R}^{d}))$ if (%
\ref{3.1}) holds and further $\left\Vert u_{\varepsilon }\right\Vert
_{L^{p}(\Omega )}\rightarrow \left\Vert u_{0}\right\Vert _{L^{p}(\Omega ;%
\mathcal{B}_{\mathcal{A}}^{p}(\mathbb{R}^{d}))}$.
\end{itemize}

We denote (i) by "$u_{\varepsilon }\rightarrow u_{0}$ in $L^{p}(\Omega )$%
-weak $\Sigma $", and (ii) by "$u_{\varepsilon }\rightarrow u_{0}$ in $%
L^{p}(\Omega )$-strong $\Sigma $".

The main properties of the above concept are:

\begin{itemize}
\item Every bounded sequence in $L^{p}(\Omega )$ ($1<p<\infty $) possesses a
subsequence that weakly $\Sigma $-converges in $L^{p}(\Omega )$.

\item If $(u_{\varepsilon })_{\varepsilon \in E}$ is a bounded sequence in $%
W^{1,p}(\Omega )$, then there exist a subsequence $E^{\prime }$ of $E$ and a
couple $(u_{0},u_{1})\in W^{1,p}(\Omega )\times L^{p}(\Omega ;B_{\#\mathcal{A%
}}^{1,p}(\mathbb{R}^{d}))$ such that 
\begin{align*}
u_{\varepsilon }& \rightarrow u_{0}\text{ in }W^{1,p}(\Omega )\text{-weak} \\
\frac{\partial u_{\varepsilon }}{\partial x_{j}}& \rightarrow \frac{\partial
u_{0}}{\partial x_{j}}+\frac{\partial u_{1}}{\partial y_{j}}\text{ in }%
L^{p}(\Omega )\text{-weak }\Sigma \ \ (1\leq j\leq d)
\end{align*}

\item If $u_{\varepsilon }\rightarrow u_{0}$ in $L^{p}(\Omega )$-weak $%
\Sigma $ and $v_{\varepsilon }\rightarrow v_{0}$ in $L^{q}(\Omega )$-strong $%
\Sigma $, then $u_{\varepsilon }v_{\varepsilon }\rightarrow u_{0}v_{0}$ in $%
L^{r}(\Omega )$-weak $\Sigma $, where $1\leq p,q,r<\infty $ and $\frac{1}{p}+%
\frac{1}{q}=\frac{1}{r}$.
\end{itemize}

Our aim is to study the following problem 
\begin{equation}
-\nabla \cdot \left( A\left( x,\frac{x}{\varepsilon }\right) \nabla
u_{\varepsilon }\right) =f\text{\ in }\Omega \text{, }u_{\varepsilon }\in
H_{0}^{1}(\Omega )  \label{1.1}
\end{equation}%
where $\varepsilon >0$ is a small parameter, $f\in L^{2}(\Omega )$, $\Omega $
is an open bounded set of $\mathbb{R}^{d}$ (integer $d\geq 1$) with smooth
boundary $\partial \Omega $, and $A\in \mathcal{C}(\overline{\Omega }%
;L^{\infty }(\mathbb{R}^{d})^{d\times d})$ is a symmetric matrix satisfying 
\begin{equation}
\alpha \left\vert \lambda \right\vert ^{2}\leq A(x,y)\lambda \cdot \lambda
\leq \beta \left\vert \lambda \right\vert ^{2}\text{ for all }(x,\lambda
)\in \overline{\Omega }\times \mathbb{R}^{d}\text{ and a.e. }y\in \mathbb{R}%
^{d};  \label{1.2}
\end{equation}%
\begin{equation}
A(x,\cdot )\in (B_{\mathcal{A}}^{2}(\mathbb{R}^{d}))^{d\times d}\text{ for
all }x\in \overline{\Omega }  \label{1.3}
\end{equation}%
where $\alpha $ and $\beta $ are two positive real numbers.

It is well-known that under assumptions (\ref{1.2}), problem (\ref{1.1})
uniquely determines a function $u_{\varepsilon }\in H_{0}^{1}(\Omega )$.
Under the additional assumption (\ref{1.3}), the following result holds.

\begin{theorem}
\label{t1.1}There exists $u_{0}\in H_{0}^{1}(\Omega )$ such that $%
u_{\varepsilon }\rightarrow u_{0}$ weakly in $H_{0}^{1}(\Omega )$ and
strongly in $L^{2}(\Omega )$ (as $\varepsilon \rightarrow 0$) and $u_{0}$
solves uniquely the problem 
\begin{equation}
-\nabla \cdot (A^{\ast }(x)\nabla u_{0})=f\text{ in }\Omega ,  \label{1.4}
\end{equation}%
$A^{\ast }$ being the homogenized matrix defined by 
\begin{equation}
A^{\ast }(x)=M\left( A(x,\cdot )(I_{d}+\nabla _{y}\chi (x,\cdot ))\right)
\label{1.5}
\end{equation}%
where, $\chi =(\chi _{j})_{1\leq j\leq d}\in \mathcal{C}(\overline{\Omega }%
;B_{\#\mathcal{A}}^{1,2}(\mathbb{R}^{d})^{d})$ is such that, for any $x\in
\Omega $, $\chi _{j}(x,\cdot )$ is the unique solution (up to an additive
constant depending on $x$) of the problem 
\begin{equation}
\nabla _{y}\cdot \left( A(x,\cdot )(e_{j}+\nabla _{y}\chi _{j}(x,\cdot
))\right) =0\text{ in }\mathbb{R}^{d}.  \label{1.6}
\end{equation}%
If we set $u_{1}(x,y)=\nabla u_{0}(x)\chi (x,y)=\sum_{i=1}^{d}\frac{\partial
u_{0}}{\partial x_{i}}(x)\chi _{i}(x,y)$ and assume that $u_{1}\in
H^{1}(\Omega ;\mathcal{A}^{1})$ ($\mathcal{A}^{1}=\{v\in \mathcal{A}:\nabla
_{y}v\in (\mathcal{A})^{d}\}$), then, as $\varepsilon \rightarrow 0$, 
\begin{equation}
u_{\varepsilon }-u_{0}-\varepsilon u_{1}^{\varepsilon }\rightarrow 0\text{
in }H^{1}(\Omega )\text{ strongly}  \label{1.7}
\end{equation}%
where $u_{1}^{\varepsilon }(x)=u_{1}(x,x/\varepsilon )$ for a.e. $x\in
\Omega $.
\end{theorem}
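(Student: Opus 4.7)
My plan is to apply the $\Sigma$-convergence compactness results recalled in the excerpt together with Tartar's oscillating test-function method. Ellipticity (\ref{1.2}) tested against $u_\varepsilon$ in (\ref{1.1}) gives a uniform bound $\|u_\varepsilon\|_{H^1_0(\Omega)}\le C\|f\|_{L^2(\Omega)}$, so up to a subsequence the compactness properties listed above yield $u_0\in H^1_0(\Omega)$ and $u_1\in L^2(\Omega;B^{1,2}_{\#\mathcal{A}}(\mathbb{R}^d))$ such that $u_\varepsilon\rightarrow u_0$ weakly in $H^1_0(\Omega)$ (hence strongly in $L^2(\Omega)$ by Rellich--Kondrachov) and $\partial u_\varepsilon/\partial x_j\rightarrow \partial u_0/\partial x_j+\partial u_1/\partial y_j$ weakly $\Sigma$ in $L^2(\Omega)$.

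To identify $(u_0,u_1)$, I test (\ref{1.1}) against $\Phi_\varepsilon(x)=\varphi_0(x)+\varepsilon\varphi_1(x,x/\varepsilon)$ for $\varphi_0\in C_c^\infty(\Omega)$ and $\varphi_1\in C_c^\infty(\Omega)\otimes\mathcal{A}^\infty$, where $\mathcal{A}^\infty$ denotes the smooth subalgebra of $\mathcal{A}$. Since $A$ is continuous in $x$ with values in $(B^2_\mathcal{A}(\mathbb{R}^d))^{d\times d}$ by (\ref{1.3}), $A(\cdot,\cdot/\varepsilon)\nabla\Phi_\varepsilon$ strongly $\Sigma$-converges to $A(x,y)(\nabla\varphi_0(x)+\nabla_y\varphi_1(x,y))$ in $L^2(\Omega)^d$. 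The product rule of $\Sigma$-convergence then combines this with the weak $\Sigma$-limit of $\nabla u_\varepsilon$ to yield
$$\int_\Omega M\bigl(A(x,\cdot)(\nabla u_0(x)+\nabla_y u_1(x,\cdot))\cdot(\nabla\varphi_0(x)+\nabla_y\varphi_1(x,\cdot))\bigr)\,dx=\int_\Omega f\,\varphi_0\,dx.$$
Choosing $\varphi_0\equiv 0$ decouples the cell problem, which by Lax--Milgram in $\mathcal{B}^{1,2}_{\#\mathcal{A}}(\mathbb{R}^d)$ (with constants-in-gradient quotiented out) has, for each $x$, a unique solution linear in $\nabla u_0(x)$. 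Writing it as $u_1(x,y)=\sum_i\partial_i u_0(x)\chi_i(x,y)$ identifies $\chi_j$ as the solution of (\ref{1.6}); the continuity $\chi\in\mathcal{C}(\overline\Omega;B^{1,2}_{\#\mathcal{A}}(\mathbb{R}^d)^d)$ follows from the $x$-continuity of $A$ and the continuity of the Lax--Milgram solution map. Substituting back with $\varphi_1\equiv 0$ yields (\ref{1.4})--(\ref{1.5}); uniqueness of $u_0$ for the homogenized problem upgrades the convergence from subsequences to the whole sequence.

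For the corrector (\ref{1.7}), I use the energy method. Under the hypothesis $u_1\in H^1(\Omega;\mathcal{A}^1)$, the pointwise evaluation $u_1^\varepsilon(x)=u_1(x,x/\varepsilon)$ is well defined, $(\nabla_y u_1)^\varepsilon$ strongly $\Sigma$-converges to $\nabla_y u_1$, and $\varepsilon(\nabla_x u_1)^\varepsilon\rightarrow 0$ in $L^2(\Omega)^d$. Setting $r_\varepsilon=u_\varepsilon-u_0-\varepsilon u_1^\varepsilon$ and expanding
$$I_\varepsilon=\int_\Omega A\bigl(x,\tfrac{x}{\varepsilon}\bigr)\nabla r_\varepsilon\cdot\nabla r_\varepsilon\,dx,$$
I use (\ref{1.1}) together with the two-scale limit identity of the previous paragraph to pass to the limit in every term; the quadratic, cross, and forcing contributions cancel, so $I_\varepsilon\rightarrow 0$. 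Ellipticity (\ref{1.2}) then forces $\nabla r_\varepsilon\rightarrow 0$ in $L^2(\Omega)^d$, which combined with $r_\varepsilon\rightarrow 0$ weakly in $H^1(\Omega)$ gives (\ref{1.7}).

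The main obstacle is that $\varepsilon u_1^\varepsilon$ does not vanish on $\partial\Omega$, so the energy computation must be corrected near the boundary: one multiplies $u_1^\varepsilon$ by a cutoff supported in a subdomain of $\Omega$ that avoids an $\varepsilon$-neighborhood of $\partial\Omega$ and controls the error via the $H^1$-regularity of $u_0$ and $u_1$; alternatively, one uses density to first reduce to $u_0\in C_c^\infty(\Omega)$. Implementing this boundary correction rigorously in the algebra-valued framework, while simultaneously ensuring the strong $\Sigma$-convergence of $(\nabla_y u_1)^\varepsilon$---which is precisely what the hypothesis $u_1\in H^1(\Omega;\mathcal{A}^1)$ guarantees, since $\mathcal{A}^1$-valued functions admit pointwise traces along the diagonal $y=x/\varepsilon$---constitutes the technical heart of the proof.
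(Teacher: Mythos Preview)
Your overall strategy coincides with the paper's: $\Sigma$-convergence compactness for $(u_\varepsilon)$, oscillating test functions $\Phi_\varepsilon=\psi_0+\varepsilon\psi_1(\cdot,\cdot/\varepsilon)$, decoupling into the cell problem and the macroscopic equation, and an energy argument for the corrector. Two points of execution differ.

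First, for the cell problem you invoke Lax--Milgram in the quotient $\mathcal{B}^{1,2}_{\#\mathcal{A}}(\mathbb{R}^d)$, whereas the paper invokes its Theorem~\ref{t4.1}, which is proved separately by approximating with the regularized correctors $\chi_{T,j}$ (solutions of $-\nabla\cdot A(e_j+\nabla\chi_{T,j})+T^{-2}\chi_{T,j}=0$) and passing $T\to\infty$. This gives a genuine distributional representative $\chi_j(x,\cdot)\in H^1_{loc}(\mathbb{R}^d)$ and yields the $x$-continuity by a direct estimate; your Lax--Milgram route only produces an equivalence class in the Besicovitch quotient, so you would still need a lifting argument to make sense of $\chi(x,x/\varepsilon)$.

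Second, your claim that boundary correction is ``the main obstacle'' and ``the technical heart of the proof'' is misplaced. The paper performs no cutoff. It writes
\[
\alpha\int_\Omega|\nabla(u_\varepsilon-\Phi_\varepsilon)|^2\le\int_\Omega A^\varepsilon\nabla(u_\varepsilon-\Phi_\varepsilon)\cdot\nabla(u_\varepsilon-\Phi_\varepsilon)
=\int_\Omega f(u_\varepsilon-\Phi_\varepsilon)-\int_\Omega A^\varepsilon\nabla\Phi_\varepsilon\cdot\nabla(u_\varepsilon-\Phi_\varepsilon),
\]
and then argues both right-hand terms vanish: $u_\varepsilon-\Phi_\varepsilon\to 0$ in $L^2(\Omega)$, while $\nabla\Phi_\varepsilon\to\nabla u_0+\nabla_y u_1$ in $L^2(\Omega)$-strong $\Sigma$ (here the hypothesis $u_1\in H^1(\Omega;\mathcal{A}^1)$ is used), so $\nabla(u_\varepsilon-\Phi_\varepsilon)\to 0$ weakly $\Sigma$, and the weak-$\Sigma\times$strong-$\Sigma$ product rule (with $A$ as test function) kills the second integral. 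No integration by parts against $u_\varepsilon-\Phi_\varepsilon$ is needed, hence no boundary issue arises. Your cutoff proposal would also work, but it is unnecessary machinery here.
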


\begin{remark}
\label{r1.1'}\emph{Problem (\ref{1.6}) is the }corrector problem\emph{. It
helps to obtain a first order approximation }$u_{\varepsilon }(x)\approx
u_{0}(x)+\varepsilon u_{1}(x,x/\varepsilon )$\emph{\ of }$u_{\varepsilon }$%
\emph{\ as seen in (\ref{1.7}). Its solvability is addressed in the
following result, which is the first main result of this work.}
\end{remark}

\begin{theorem}
\label{t4.1}Let $\xi \in \mathbb{R}^{d}$ and $x\in \overline{\Omega }$ be
fixed. There exists a unique (up to an additive function of $x$) function $%
v_{\xi }\in \mathcal{C}(\overline{\Omega };H_{loc}^{1}(\mathbb{R}^{d}))$
such that $\nabla _{y}v_{\xi }\in \mathcal{C}(\overline{\Omega };B_{\mathcal{%
A}}^{2}(\mathbb{R}^{d})^{d})$ and $M(\nabla _{y}v_{\xi }(x,\cdot ))=0$,
which solves the equation 
\begin{equation}
\nabla _{y}\cdot \left( A(x,\cdot )(\xi +\nabla _{y}v_{\xi }(x,\cdot
))\right) =0\text{ in }\mathbb{R}^{d}.  \label{4.1}
\end{equation}
\end{theorem}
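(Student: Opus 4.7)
The plan is to cast \eqref{4.1} as a coercive variational problem on the Banach (in fact Hilbert) space $B_{\#\mathcal{A}}^{1,2}(\mathbb{R}^{d})$ and appeal to Lax--Milgram, then upgrade the result to a distributional identity on $\mathbb{R}^{d}$ using the abstract structure of $\mathcal{A}$.

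First, I fix $x \in \overline{\Omega}$ and $\xi \in \mathbb{R}^{d}$ and work on $B_{\#\mathcal{A}}^{1,2}(\mathbb{R}^{d})$ endowed with the inner product $\langle u , v \rangle_{\#,2} = M(\nabla_{y} u \cdot \nabla_{y} v)$, which makes it a Hilbert space after the identification of elements whose gradients agree in $B_{\mathcal{A}}^{2}$. I introduce
\[
a_{x}(u,\phi) = M\bigl(A(x,\cdot)\nabla_{y} u \cdot \nabla_{y}\phi\bigr), \qquad \ell_{x}(\phi) = - M\bigl(A(x,\cdot)\xi \cdot \nabla_{y}\phi\bigr).
\]
Cauchy--Schwarz combined with the upper bound in \eqref{1.2} gives $|a_{x}(u,\phi)| \le \beta \|u\|_{\#,2}\|\phi\|_{\#,2}$ and $|\ell_{x}(\phi)| \le \beta|\xi|\,\|\phi\|_{\#,2}$; the ellipticity in \eqref{1.2} together with the positivity/monotonicity of $M$ (property \textbf{1}) gives the coercivity $a_{x}(u,u) \ge \alpha\, M(|\nabla_{y}u|^{2}) = \alpha \|u\|_{\#,2}^{2}$. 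Hence Lax--Milgram yields a unique $v_{\xi}(x,\cdot) \in B_{\#\mathcal{A}}^{1,2}(\mathbb{R}^{d})$ satisfying
\[
M\bigl(A(x,\cdot)(\xi + \nabla_{y} v_{\xi}(x,\cdot)) \cdot \nabla_{y}\phi\bigr) = 0 \qquad \forall \phi \in B_{\#\mathcal{A}}^{1,2}(\mathbb{R}^{d}).
\]
By the very definition of $B_{\#\mathcal{A}}^{1,2}(\mathbb{R}^{d})$, $v_{\xi}(x,\cdot) \in W_{loc}^{1,2}(\mathbb{R}^{d}) = H_{loc}^{1}(\mathbb{R}^{d})$, $\nabla_{y} v_{\xi}(x,\cdot) \in B_{\mathcal{A}}^{2}(\mathbb{R}^{d})^{d}$ and $M(\nabla_{y} v_{\xi}(x,\cdot)) = 0$; uniqueness up to an additive function of $x$ follows from the Lax--Milgram uniqueness combined with the identification of functions in $B_{\#\mathcal{A}}^{1,2}$ by their gradient.

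The delicate step is upgrading the abstract variational identity, which is phrased via the global functional $M$, to the local distributional PDE \eqref{4.1} on $\mathbb{R}^{d}$. Since any $\varphi \in C_{c}^{\infty}(\mathbb{R}^{d})$ satisfies $M(|\nabla\varphi|^{2}) = 0$, direct testing in the variational formulation is vacuous, so a separate lifting argument is needed. The plan is to pass to the Gelfand spectrum $K = \Delta(\mathcal{A})$ of the algebra: via the isometric identification $\mathcal{B}_{\mathcal{A}}^{2} \cong L^{2}(K,\mu)$, with $\mu$ the invariant probability measure representing $M$, and via $\overline{\partial}/\partial y_{i}$ acting as the generator in direction $i$ of the $\mathbb{R}^{d}$-action on $K$, the problem lifts to an elliptic corrector equation on $K$ whose Lax--Milgram solution pulls back to a distributional solution on $\mathbb{R}^{d}$ along the orbit of a generic point $\omega_{0} \in K$, the ergodicity of $\mathcal{A}$ guaranteeing that for $\mu$-a.e.\ $\omega_{0}$ this pullback is in $H_{loc}^{1}(\mathbb{R}^{d})$ and satisfies \eqref{4.1} in $\mathcal{D}'(\mathbb{R}^{d})$. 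This step is the main technical obstacle.

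Finally, continuity in $x$ follows from the regularity assumption $A \in \mathcal{C}(\overline{\Omega}; L^{\infty}(\mathbb{R}^{d})^{d\times d})$ together with \eqref{1.3}: both $a_{x}$ and $\ell_{x}$ depend continuously on $x$ in the relevant operator norms on $B_{\#\mathcal{A}}^{1,2}(\mathbb{R}^{d})$, and the standard perturbation estimate for Lax--Milgram solutions, namely $\|v_{\xi}(x_{1},\cdot) - v_{\xi}(x_{2},\cdot)\|_{\#,2} \lesssim \|A(x_{1},\cdot) - A(x_{2},\cdot)\|_{L^{\infty}}$, transfers this into continuity of $x \mapsto v_{\xi}(x,\cdot)$ with values in $B_{\#\mathcal{A}}^{1,2}(\mathbb{R}^{d})$, hence into $H_{loc}^{1}(\mathbb{R}^{d})$ and, for the gradient, into $B_{\mathcal{A}}^{2}(\mathbb{R}^{d})^{d}$, which is exactly the regularity asserted.
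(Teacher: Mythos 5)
Your Lax--Milgram step is fine as far as it goes, but it proves a weaker statement than the theorem asserts, and the ``lifting'' step you flag as the main technical obstacle is a genuine gap that your sketch does not close -- and is in fact the whole point of the paper's detour through the regularized correctors of Lemma~\ref{l4.1}.

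Concretely: Lax--Milgram on $\mathcal{B}_{\#\mathcal{A}}^{1,2}(\mathbb{R}^{d})$ produces a unique \emph{equivalence class} of functions, where two elements are identified whenever the Besicovitch seminorm of the difference of their gradients vanishes. This equivalence relation is strictly coarser than equality Lebesgue-a.e.; the kernel $\mathcal{N}$ contains nontrivial $W^{1,2}_{loc}$-functions (e.g.\ any compactly supported smooth function lies in it). Thus the variational identity $M\bigl(A(x,\cdot)(\xi+\nabla_{y}v_{\xi})\cdot\nabla_{y}\phi\bigr)=0$ for $\phi\in B_{\#\mathcal{A}}^{1,2}$ does not single out any concrete representative in $H^{1}_{loc}(\mathbb{R}^{d})$, and, as you observe, testing with $\mathcal{C}_{c}^{\infty}$ functions is vacuous because they all have zero seminorm. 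So the abstract solution simply cannot be read off as a distributional solution of \eqref{4.1}, and no amount of ``operator-norm continuity in $x$'' recovers this. Your proposed remedy -- lift to the Gelfand spectrum $K=\Delta(\mathcal{A})$, solve there, and pull back along a generic orbit -- is precisely the random/stationary-ergodic argument, and the paper explicitly points out (in the paragraph after Theorem~\ref{t4.1}) that this transfer is \emph{not} available in the general deterministic setting, in particular because the algebra is not assumed ergodic in the probabilistic sense and because the isomorphism $\mathcal{B}_{\mathcal{A}}^{2}\cong L^{2}(K,\mu)$ collapses exactly the kernel $\mathcal{N}$ that one would need to control on the way back down.

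What the paper actually does is construct the solution from the other side. Starting from genuine $H^{1}_{0}(B_{R})$-solutions of the regularized equation $-\nabla\cdot A\nabla v_{R}+T^{-2}v_{R}=\nabla\cdot(Ae_{j})$, it obtains a distributional solution $\chi_{T,j}\in H^{1}_{loc}(\mathbb{R}^{d})$ of \eqref{4.2}, and only \emph{afterwards} proves, via the test-function rescaling $\psi(y)=\varphi(\varepsilon y)\phi(y)$ and $\Sigma$-convergence, that this concrete function satisfies the Besicovitch variational identity \eqref{4.7} and hence lies in $B_{\mathcal{A}}^{1,2}(\mathbb{R}^{d})$. Then letting $T\to\infty$, compactness gives a weak $L^{2}_{loc}$-limit $\nabla\chi_{j}$ which (being a weak limit of distributional solutions) still solves \eqref{4.1} distributionally, is curl-free so is a gradient, and inherits membership of $B_{\mathcal{A}}^{2}(\mathbb{R}^{d})^{d}$ and $M(\nabla\chi_{j})=0$. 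Uniqueness is also handled differently and more strongly than via Lax--Milgram: the paper combines the variational identity with the Shen-type decay estimate \eqref{04} to conclude $\nabla_{y}\chi_{j}=0$ a.e., not merely $M(|\nabla_{y}\chi_{j}|^{2})=0$. So the Lax--Milgram route you propose, while it would suffice in the periodic or probabilistically ergodic case, cannot by itself deliver the distributional solvability on $\mathbb{R}^{d}$ claimed by Theorem~\ref{t4.1}; you need some analogue of the regularized-corrector construction to produce a concrete $H^{1}_{loc}$ representative.
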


The proof of Theorem \ref{t4.1} will be obtained as a consequence of Lemma %
\ref{l4.1} in Section 2 below. The progress compared to the previously known
results exists in the solution of the corrector problem: it is obtained by
approximation with distributional solutions of partial differential
equations in sufficiently large balls. Since the approximation can be
quantitatively controlled, this method also provides a basis for the
numerical calculation. Theorem \ref{t4.1} is well known in the random
stationary ergodic environment. However for the general deterministic
setting, we believe that a detailed proof must be provided since it also
covers the non ergodic algebras framework.

The next step consists in finding an approximation scheme for the
homogenized matrix $A^{\ast }$ (see (\ref{1.5})). This problem has been
solved (for (\ref{1.1})) in the periodic setting, since under the periodic
assumption, the corrector problem is posed on a bounded domain (namely the
periodic cell $Y=(0,1)^{d}$) since in that case, the solution $\chi _{j}$ is
periodic. A huge contrast between the periodic setting and the general
deterministic setting (as considered in this work) is that in the latter,
the corrector problem is posed on the whole space $\mathbb{R}^{d}$, and
cannot be reduced (as in the periodic framework) to a problem on a bounded
domain. As a result, the solution of the corrector problem (\ref{1.6}) (and
hence the homogenized matrix which depends on this solution) can not be
computed directly. Therefore, as in the random setting (see e.g. \cite%
{BP2004}), truncations of (\ref{1.6}) must be considered, particularly on
large domains $(-R,R)^{d}$ with appropriate boundary conditions, and the
homogenized coefficients will therefore be captured in the asymptotic
regime. This is done in Theorem \ref{t3.1} (see Section 3). We then find the
rate of convergence for the approximation scheme (see Theorem \ref{t3.2}).
It is natural to determine the convergence rates for the approximation (\ref%
{1.7}) setting in two cases:

\begin{itemize}
\item[1)] the asymptotic periodic one represented by the algebra $\mathcal{A}%
=\mathcal{C}_{0}(\mathbb{R}^{d})+\mathcal{C}_{per}(Y)$;

\item[2)] the asymptotic almost periodic one represented by the algebra $%
\mathcal{A}=\mathcal{C}_{0}(\mathbb{R}^{d})+AP(\mathbb{R}^{d})$.
\end{itemize}

In case 1), the corrector function $\chi _{j}(x,\cdot )$ (solution of (\ref%
{1.6})) belongs to the Sobolev-Besicovitch space $B_{\mathcal{A}}^{1,2}(%
\mathbb{R}^{d})$ associated to the algebra $\mathcal{A}$ and is bounded in $%
L^{\infty }(\mathbb{R}^{d})$. As a result, we proceed as in the well-known
periodic setting. In contrast with case 1), the corrector function in case
2) does not (in general) belong to the associated Sobolev-Besicovitch space $%
B_{\mathcal{A}}^{1,2}(\mathbb{R}^{d})$, but rather to $B_{\#\mathcal{A}%
}^{1,2}(\mathbb{R}^{d})$. So information is available mainly for the
gradient of the corrector. To address this issue, we use the approximate
corrector $\chi _{T,j}$, distributional solution to $-\nabla \cdot
A(e_{j}+\nabla \chi _{T,j})+T^{-2}\chi _{T,j}=0$ in $\mathbb{R}^{d}$, which
belongs to $B_{\mathcal{A}}^{1,2}(\mathbb{R}^{d})$ as shown in Section 2.
This leads to the following result, which is one of the main result of the
work.

\begin{theorem}
\label{t1.4}Let $\Omega $ be a $\mathcal{C}^{1,1}$ bounded domain in $%
\mathbb{R}^{d}$. Suppose that the matrix $A(x,y)\equiv A(y)$ and is
asymptotic almost periodic. Assume that $A$ satisfies \emph{(\ref{1.2})}.
For $f\in L^{2}(\Omega )$, let $u_{\varepsilon }$ and $u_{0}$ be the weak
solutions of Dirichlet problems \emph{(\ref{1.1})} and \emph{(\ref{1.4})}
respectively. Then there exists a function $\eta :(0,1]\rightarrow \lbrack
0,\infty )$ depending on $A$ with $\lim_{t\rightarrow 0}\eta (t)=0$ such
that 
\begin{equation}
\left\Vert u_{\varepsilon }-u_{0}-\varepsilon \chi _{T}^{\varepsilon }\nabla
u_{0}\right\Vert _{H^{1}(\Omega )}\leq C\eta (\varepsilon )\left\Vert
f\right\Vert _{L^{2}(\Omega )}  \label{Eq03}
\end{equation}%
and 
\begin{equation}
\left\Vert u_{\varepsilon }-u_{0}\right\Vert _{L^{2}(\Omega )}\leq C\left[
\eta (\varepsilon )\right] ^{2}\left\Vert f\right\Vert _{L^{2}(\Omega )}
\label{Eq02}
\end{equation}%
where $T=\varepsilon ^{-1}$ and $\chi _{T}$ is the approximate corrector
defined by \emph{(\ref{11.5})}, and $C=C(\Omega ,A,d)$.
\end{theorem}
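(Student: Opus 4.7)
Since $A(x,y)\equiv A(y)$, the approximate corrector $\chi_T = (\chi_{T,j})_{1\le j\le d}$ depends only on $y$ and, by Section~2, belongs to $B_{\mathcal{A}}^{1,2}(\mathbb{R}^d)^d$. With $T=\varepsilon^{-1}$, define the two-scale error
\begin{equation*}
w_{\varepsilon}(x) := u_{\varepsilon}(x) - u_{0}(x) - \varepsilon\,\chi_{T}^{\varepsilon}(x)\cdot\bigl(S_{\varepsilon}\nabla u_{0}\bigr)(x)\,\zeta_\varepsilon(x),
\end{equation*}
where $S_{\varepsilon}$ is a Steklov-type smoothing of $\nabla u_{0}$ at scale $\varepsilon$ and $\zeta_\varepsilon$ is a cutoff vanishing in a $\sqrt{\varepsilon}$-neighbourhood of $\partial\Omega$. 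A direct calculation using the approximate corrector equation $-\nabla\!\cdot\!A(e_j+\nabla\chi_{T,j})+T^{-2}\chi_{T,j}=0$ together with \eqref{1.4} gives
\begin{equation*}
-\nabla\!\cdot\!\bigl(A^{\varepsilon}\nabla w_{\varepsilon}\bigr) = -\nabla\!\cdot F_{\varepsilon} + T^{-2}\chi_{T}^{\varepsilon}\!\cdot\!(S_{\varepsilon}\nabla u_{0})\zeta_\varepsilon + R^{\mathrm{bl}}_{\varepsilon},
\end{equation*}
where $R^{\mathrm{bl}}_{\varepsilon}$ is supported in $\{\mathrm{dist}(x,\partial\Omega)\le c\sqrt{\varepsilon}\}$, and $F_{\varepsilon}$ splits into (i) a cell-problem residue $\bigl(A^{\ast}-A^{\ast}_{T}\bigr)S_{\varepsilon}\nabla u_{0}$, with $A^{\ast}_{T}:=M(A(I_d+\nabla\chi_T))$, and (ii) oscillating mean-zero pieces $\widehat{B}_T^{\varepsilon}S_{\varepsilon}\nabla u_{0}$, where $\widehat{B}_T := A(I_d+\nabla\chi_T)-A^{\ast}_T$.

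\textbf{Flux corrector and energy estimate.} To handle (ii) I would construct an antisymmetric flux potential $\Phi_T=(\Phi_{T,j}^{ik})\in B_{\mathcal{A}}^{1,2}(\mathbb{R}^d)^{d\times d\times d}$ with $\widehat{B}_{T,j}^{i} = \sum_{k}\partial_{k}\Phi_{T,j}^{ik}$ modulo $T^{-2}\chi_T$-terms, obtained by solving a Poisson-type equation in the Besicovitch setting---the vector-potential analogue of Theorem~\ref{t4.1}. Inserting this representation converts $\widehat{B}_T^{\varepsilon}S_\varepsilon\nabla u_0$ into $\varepsilon\nabla\!\cdot\!(\Phi_T^{\varepsilon}S_\varepsilon\nabla u_{0})$ plus lower-order remainders controlled by $\varepsilon\|\Phi_T^{\varepsilon}\|_{L^2_{\mathrm{loc}}}\|\nabla^2 u_0\|_{L^2}$. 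Testing the equation for $w_{\varepsilon}$ against $w_{\varepsilon}$ and using ellipticity \eqref{1.2} together with $\|u_0\|_{H^2(\Omega)}\le C\|f\|_{L^2(\Omega)}$ (from the $\mathcal{C}^{1,1}$-regularity of $\Omega$) yields
\begin{equation*}
\|\nabla w_{\varepsilon}\|_{L^{2}(\Omega)} \le C\,\eta(\varepsilon)\|f\|_{L^{2}(\Omega)},
\end{equation*}
once $\eta(\varepsilon)$ is defined as the sum of $|A^{\ast}_T-A^{\ast}|$, the normalised sublinearity $\varepsilon T^{-1}\bigl(\|\chi_T\|_{B_{\mathcal{A}}^{2}}+\|\Phi_T\|_{B_{\mathcal{A}}^{2}}\bigr)$, a $\sqrt{\varepsilon}$-boundary-layer contribution, and the $T^{-2}$-zeroth-order remainder. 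Each ingredient tends to $0$ as $\varepsilon\to 0$ by the ergodicity of $\mathcal{A}$ and the qualitative convergence $\chi_T\to\chi$ in the appropriate seminorm established in Section~2. This yields \eqref{Eq03}.

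\textbf{$L^{2}$-estimate via duality.} For \eqref{Eq02} I would apply the Aubin--Nitsche method. Given $g\in L^{2}(\Omega)$ with $\|g\|_{L^{2}}\le 1$, let $\psi_{0}\in H_{0}^{1}(\Omega)\cap H^{2}(\Omega)$ solve $-\nabla\!\cdot\!(A^{\ast}\nabla\psi_{0})=g$ and let $\psi_{\varepsilon}\in H_0^1(\Omega)$ solve $-\nabla\!\cdot\!(A^{\varepsilon}\nabla\psi_{\varepsilon})=g$ (the symmetry of $A$, hence of $A^{\ast}$, is used here). Applying \eqref{Eq03} both to $u_{\varepsilon}-u_{0}$ and to $\psi_{\varepsilon}-\psi_{0}$, subtracting the two asymptotic expansions, and integrating by parts produces
\begin{equation*}
\left|\int_{\Omega}(u_{\varepsilon}-u_{0})\,g\,dx\right| \le C\,[\eta(\varepsilon)]^{2}\|f\|_{L^{2}(\Omega)},
\end{equation*}
which, upon taking the supremum over $g$, gives \eqref{Eq02}.

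\textbf{Main obstacle.} The technical heart is the quantitative control of $(\chi_T,\Phi_T)$ in the asymptotic almost periodic algebra $\mathcal{A}=\mathcal{C}_{0}(\mathbb{R}^{d})+\mathrm{AP}(\mathbb{R}^{d})$. Without Diophantine hypotheses, neither $\chi_T$ nor $\Phi_T$ is uniformly bounded in $T$: one only has sublinear growth such as $T^{-1}\|\chi_T\|_{L^{2}(B_T)}\to 0$, and the rate $|A^{\ast}_T-A^{\ast}|$ can be arbitrarily slow. This is precisely why $\eta(\varepsilon)$ must remain qualitative. The $\mathcal{C}_{0}(\mathbb{R}^d)$-part of $A$ is then handled as a decaying perturbation of the almost periodic part, via the density and ergodicity properties of $\mathcal{A}$ from Section~2.
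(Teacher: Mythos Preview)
Your overall architecture is sound and parallels the paper's: a two-scale expansion with the approximate corrector $\chi_T$, a flux-potential representation of the mean-zero oscillating flux, a boundary-layer device, and Aubin--Nitsche duality for the $L^2$ rate. The paper uses a Dirichlet boundary corrector $z_\varepsilon$ (solving $-\nabla\cdot(A^\varepsilon\nabla z_\varepsilon)=0$ in $\Omega$ with $z_\varepsilon=\varepsilon\chi_T^\varepsilon\nabla u_0$ on $\partial\Omega$) in place of your Steklov smoothing and cutoff, but either mechanism can in principle be made to work.

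The genuine gap is in the control of $\chi_T$ and the flux potential. You measure these in the Besicovitch seminorm $\|\cdot\|_{B^2_{\mathcal A}}$ and assert that the resulting $\eta(\varepsilon)$ tends to zero ``by the ergodicity of $\mathcal A$ and the qualitative convergence $\chi_T\to\chi$.'' But the Besicovitch norm does \emph{not} bound $\|\chi_T^\varepsilon\phi\|_{L^2(\Omega)}$ or $\|\Phi_T^\varepsilon\phi\|_{L^2(\Omega)}$: for that one needs a uniform local (Stepanov-type) norm at scale~$1$, or an $L^\infty$ bound. Section~2 only yields $T^{-1}\|\chi_T\|_{L^\infty(\mathbb R^d)}\le C$ (Lemma~\ref{l11.1}), which after rescaling gives $\varepsilon\|\chi_T^\varepsilon\|_{L^\infty}\le C$---bounded, not decaying. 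The paper's proof rests instead on the \emph{refined} $L^\infty$ estimates
\[
T^{-1}\|\chi_T\|_{L^\infty(\mathbb R^d)}\le C_\sigma\,\Theta_\sigma(T),\qquad
T^{-1}\|\nabla f_{ij}\|_{L^\infty(\mathbb R^d)}\le C_\sigma\,\Theta_\sigma(T),
\]
where $\Theta_\sigma(T)=\inf_{0<L<T}\bigl(\rho(L,3T)+(L/T)^\sigma\bigr)$ and $\rho$ is the modulus of asymptotic almost periodicity of $A$ (Definition~\ref{d5.1}). These are obtained via De~Giorgi--Nash and Green-function arguments combined with a comparison of translates $\chi_T(\cdot+y)-\chi_T(\cdot+z)$ (Theorems~\ref{t11.1}--\ref{t11.2}, Lemma~\ref{l11.2}); they exploit the asymptotic almost periodic structure specifically and are not consequences of ergodicity. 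Your flux potential $\Phi_T$ is likewise underspecified: an ``analogue of Theorem~\ref{t4.1}'' would only place $\nabla\Phi_T$ in $B^2_{\mathcal A}$, which is insufficient. The paper instead solves the regularized scalar equation $-\Delta f_{ij}+T^{-2}f_{ij}=a_{ij}$ (Lemma~\ref{l11.2}) precisely because the Green function of $-\Delta+T^{-2}$ delivers pointwise control of both $f_{ij}$ and $\nabla f_{ij}$, with decay governed by $\Theta_\sigma(T)$; this, together with $\|\nabla\chi-\nabla\chi_{\varepsilon^{-1}}\|_2$, is what actually defines the paper's $\eta(\varepsilon)$.
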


The precise convergence rates in case 1) are presented in the following
result.

\begin{theorem}
\label{t5.1}Suppose that $A$ is asymptotic periodic and satisfies
ellipticity conditions \emph{(\ref{1.2})} and \emph{(\ref{2.2})}. Assume $%
\Omega $, $f$, $u_{\varepsilon }$ and $u_{0}$ are as in Theorem \emph{\ref%
{t1.4}}. Denoting by $\chi $ the corrector defined by \emph{(\ref{1.6})},
there exists $C=C(\Omega ,A,d)>0$ such that 
\begin{equation}
\left\Vert u_{\varepsilon }-u_{0}-\varepsilon \chi ^{\varepsilon }\nabla
u_{0}\right\Vert _{H^{1}(\Omega )}\leq C\varepsilon ^{\frac{1}{2}}\left\Vert
f\right\Vert _{L^{2}(\Omega )}  \label{5.8}
\end{equation}%
and 
\begin{equation}
\left\Vert u_{\varepsilon }-u_{0}\right\Vert _{L^{2}(\Omega )}\leq
C\varepsilon \left\Vert f\right\Vert _{L^{2}(\Omega )}.  \label{1.14}
\end{equation}
\end{theorem}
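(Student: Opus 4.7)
The plan is to follow the classical Tartar-style energy method, adapted to the asymptotic periodic framework via a flux corrector and a boundary cut-off, and then upgrade the $H^1$ estimate to an $L^2$ estimate by the Aubin--Nitsche duality trick. First, I note that in the asymptotic periodic setting $A=A_{\mathrm{per}}+A_0$ with $A_0\in\mathcal{C}_0(\mathbb{R}^d)$, so that the corrector $\chi_j$ from (\ref{1.6}) decomposes as $\chi_j=\chi_{j,\mathrm{per}}+\chi_{j,0}$ with $\chi_{j,\mathrm{per}}$ the usual periodic corrector and $\chi_{j,0}$ a decaying perturbation; in particular $\chi\in L^\infty(\mathbb{R}^d)^d$, which is the key structural fact that distinguishes this case from the almost periodic one.

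Set the two-scale error $w_\varepsilon=u_\varepsilon-u_0-\varepsilon\,\chi^\varepsilon\nabla u_0$, modified by a cut-off near $\partial\Omega$. More precisely, I choose $\psi_\varepsilon\in\mathcal{C}_c^\infty(\Omega)$ with $\psi_\varepsilon\equiv 1$ on $\Omega_{2\varepsilon}:=\{x\in\Omega:\mathrm{dist}(x,\partial\Omega)>2\varepsilon\}$, $0\le\psi_\varepsilon\le 1$, and $|\nabla\psi_\varepsilon|\lesssim\varepsilon^{-1}$, and define
\begin{equation*}
\widetilde{w}_\varepsilon\;=\;u_\varepsilon-u_0-\varepsilon\,\psi_\varepsilon\,\chi^\varepsilon\nabla u_0.
\end{equation*}
A direct computation, using (\ref{1.1}), (\ref{1.4}) and the corrector equation (\ref{1.6}), shows that
\begin{equation*}
-\nabla\cdot\bigl(A^\varepsilon\nabla\widetilde{w}_\varepsilon\bigr)\;=\;\nabla\cdot\bigl[\psi_\varepsilon(A^\ast-A^\varepsilon-(A\nabla\chi)^\varepsilon)\nabla u_0\bigr]+R_\varepsilon,
\end{equation*}
where $R_\varepsilon$ collects lower-order terms involving $\nabla\psi_\varepsilon$, $\varepsilon\nabla^2 u_0$, and products $\chi^\varepsilon\otimes\nabla u_0$, all of which are supported either in the $O(\varepsilon)$-boundary strip $\Omega\setminus\Omega_{2\varepsilon}$ or carry an explicit $\varepsilon$ factor.

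The main obstacle is to rewrite the principal term as the divergence of a quantity of size $O(\varepsilon)$, because the matrix $M(x,\cdot):=A^\ast-A(x,\cdot)-A(x,\cdot)\nabla_y\chi(x,\cdot)$ has mean zero in $y$ but is not itself small. For this I construct the flux corrector: a tensor $B=(B_{kij})$ with $B_{kij}=-B_{ikj}$ and $\partial_{y_k}B_{kij}=M_{ij}$. In the purely periodic case $B_{\mathrm{per}}$ is obtained by solving a periodic Poisson problem for the potential and then taking the appropriate curl; for the asymptotic perturbation I solve $-\Delta_y\phi_{ij,0}=M_{ij,0}$ on $\mathbb{R}^d$ with the decaying right-hand side, using Riesz-potential estimates and the decay of $A_0,\chi_0$ to guarantee $\nabla\phi_0\in L^\infty$, and then set $B=B_{\mathrm{per}}+B_0$. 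The additional regularity hypothesis (\ref{2.2}) is used precisely to ensure $B\in L^\infty(\mathbb{R}^d)^{d\times d\times d}$. Once $B$ is bounded, the skew-symmetry yields the crucial identity
\begin{equation*}
\partial_{x_i}\bigl[\psi_\varepsilon M_{ij}^\varepsilon(\partial_{x_j}u_0)\bigr]\;=\;\varepsilon\,\partial_{x_i}\bigl[\psi_\varepsilon B_{kij}^\varepsilon\partial_{x_k}\partial_{x_j}u_0\bigr]+\text{boundary/lower-order terms},
\end{equation*}
i.e. the right-hand side is $\nabla\cdot(\varepsilon G_\varepsilon)$ with $G_\varepsilon\in L^2(\Omega)$ uniformly in $\varepsilon$.

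Testing the equation for $\widetilde{w}_\varepsilon\in H^1_0(\Omega)$ against itself and using ellipticity (\ref{1.2}) yields
\begin{equation*}
\|\nabla\widetilde{w}_\varepsilon\|_{L^2(\Omega)}\;\lesssim\;\varepsilon\|G_\varepsilon\|_{L^2(\Omega)}+\|R_\varepsilon\|_{H^{-1}(\Omega)}.
\end{equation*}
Elliptic regularity for (\ref{1.4}) on the $\mathcal{C}^{1,1}$ domain $\Omega$ gives $\|u_0\|_{H^2(\Omega)}\lesssim\|f\|_{L^2(\Omega)}$, so $\varepsilon\|G_\varepsilon\|_{L^2(\Omega)}=O(\varepsilon\|f\|_{L^2})$. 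The terms supported in $\Omega\setminus\Omega_{2\varepsilon}$ are estimated by $\|\chi\|_{L^\infty}\|\nabla u_0\|_{L^2(\Omega\setminus\Omega_{2\varepsilon})}\lesssim\varepsilon^{1/2}\|u_0\|_{H^2(\Omega)}$ via the standard boundary-strip trace inequality, which is where the $\varepsilon^{1/2}$ loss arises. Comparing $\widetilde{w}_\varepsilon$ with $w_\varepsilon=u_\varepsilon-u_0-\varepsilon\chi^\varepsilon\nabla u_0$ contributes another term of the same order, and combining everything yields (\ref{5.8}).

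Finally, to prove (\ref{1.14}), I apply duality: for $g\in L^2(\Omega)$ let $\varphi_\varepsilon,\varphi_0\in H_0^1(\Omega)$ solve $-\nabla\cdot(A^\varepsilon\nabla\varphi_\varepsilon)=g$ and $-\nabla\cdot(A^\ast\nabla\varphi_0)=g$ respectively. Since $A$ is symmetric, the same estimate (\ref{5.8}) holds with $(u_\varepsilon,u_0,f)$ replaced by $(\varphi_\varepsilon,\varphi_0,g)$. Writing
\begin{equation*}
\int_\Omega (u_\varepsilon-u_0)\,g\,dx\;=\;\int_\Omega A^\varepsilon\nabla(u_\varepsilon-u_0-\varepsilon\chi^\varepsilon\nabla u_0)\cdot\nabla\varphi_\varepsilon\,dx+(\text{cross terms}),
\end{equation*}
symmetrizing the cross terms so that both $u_\varepsilon-u_0-\varepsilon\chi^\varepsilon\nabla u_0$ and $\varphi_\varepsilon-\varphi_0-\varepsilon\chi^\varepsilon\nabla\varphi_0$ appear, and applying (\ref{5.8}) to both factors, produces an extra $\varepsilon^{1/2}$ and hence the overall rate $\varepsilon$. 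The main difficulty in the whole argument is constructing and bounding the flux corrector $B$ in the asymptotic periodic setting; everything else follows the periodic template modulo the boundary-layer bookkeeping.
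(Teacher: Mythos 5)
Your proposal is correct and tracks the paper's argument in all essentials: an $L^\infty$ bound on the asymptotic periodic corrector (the paper's Lemma~\ref{l1.2}), a skew-symmetric flux corrector with bounded entries (the paper's Corollary~\ref{c1.1}, via Lemma~\ref{l1.4}), an $O(\varepsilon^{1/2})$ boundary-layer loss obtained from a trace estimate on an $O(\varepsilon)$-collar of $\partial\Omega$ (the paper's Lemma~\ref{l5.4}), and an Aubin--Nitsche duality argument doubling the rate to $O(\varepsilon)$ in $L^2$. The single bookkeeping difference is how the non-vanishing trace of $\varepsilon\chi^\varepsilon\nabla u_0$ on $\partial\Omega$ is handled: you insert a cut-off $\psi_\varepsilon$ directly into the two-scale expansion so that $\widetilde w_\varepsilon\in H_0^1(\Omega)$ from the start, whereas the paper introduces an auxiliary Dirichlet problem (\ref{5.3}) for a boundary corrector $z_\varepsilon$, sets $w_\varepsilon=u_\varepsilon-v_\varepsilon+z_\varepsilon\in H_0^1(\Omega)$, and then estimates $\|z_\varepsilon\|_{H^1}$ in Lemma~\ref{l5.2'} by constructing an explicit $H^1$-extension of the boundary data using precisely the cut-off $\theta_\delta$ you use. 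The two formulations are equivalent and give the same $\varepsilon^{1/2}$ and $\varepsilon$ rates; your version simply folds the two steps into one.

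One small imprecision worth flagging: you attribute the boundedness of the flux corrector $B$ to ``the additional regularity hypothesis (\ref{2.2})'', but (\ref{2.2}) is only the ellipticity of $A_{\mathrm{per}}$. What actually gives $a_j=\nabla_y\cdot G_j$ with $G_j\in L^\infty$ is the $L^\infty$ bound on $\nabla_y\chi$, and for that the paper's Lemma~\ref{l1.2} additionally assumes $A$ and $A_{\mathrm{per}}$ H\"older continuous (a Schauder / De Giorgi--Nash ingredient, not an ellipticity one); the passage from $a_j\in L^\infty$ to $G_j\in L^\infty$ then goes through the Newtonian-potential estimate in Lemma~\ref{l1.4}. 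Your Riesz-potential sketch is on the right track but should invoke those regularity hypotheses explicitly rather than (\ref{2.2}).
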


Theorem \ref{t5.1} can be obtained as a special case of Theorem \ref{t1.4}.
However we provide an independent proof since we do not need the approximate
corrector in this special situation. Estimate (\ref{1.14}) is optimal.

The above results generalize the well known ones in the periodic and the
uniformly almost periodic settings as considered in \cite{Shen}. In Theorem %
\ref{t5.1} we assume that the matrix $A$ has the form $A=A_{0}+A_{per}$
where $A_{0}$ has entries in $L^{2}(\Omega )$ and $A_{per}$ is periodic. In
Theorem \ref{t1.4}, we do not make any restriction on $A_{0}$ as above.
Also, the estimate (\ref{Eq02}) is near optimal. The assumptions will be
made precise in the latter sections.

The problem considered in Theorems \ref{t1.4} and \ref{t5.1} has been
firstly addressed in the periodic framework by Avellaneda and Lin \cite{AL87}
(see also \cite{Jikov}), and in the random setting (that is, for second
order linear elliptic equations with random coefficients) by Yurinskii \cite%
{Yu86}, Pozhidaev and Yurinskii \cite{Po-Yu89}, and Bourgeat and Piatnitski 
\cite{BP2004} (see also a recent series of works by Gloria and Otto \cite%
{Gloria, GNO14, GNO15}, and the recent monograph \cite{Armstrong1}).
Although it is shown in \cite{24'''} that deterministic homogenization
theory can be seen as a special case of random homogenization theory at
least as far as the qualitative study is concerned, we can not expect to use
this random formulation to address the issues of rate of convergence in the
deterministic setting. Indeed, in the random framework, the rate of
convergence relies systematically on the \emph{uniform mixing} property (see
e.g. \cite{BP2004, Po-Yu89, Yu86}) of the coefficients of the equation. As
proved by Bondarenko et al. \cite{BBMM05}, the almost periodic operators do
not satisfy the uniform mixing property. As a result, we can not use the
random framework to address the issue in the general deterministic setting.
We therefore need to elaborate a new framework for solving the underlying
problem. Beyond the periodic (but non-random) setting Kozlov \cite{Koz79}
determined the rates of convergence in almost periodic homogenization by
using almost periodic coefficients satisfying a \textit{frequency condition}
(see e.g. (\ref{FC})). In the same vein, Bondarenko et al. \cite{BBMM05}
derived the rates of convergence by considering a perturbation of periodic
coefficients (in dimension $d=1$). The very first works that use the general
almost periodicity assumption are a recent series of work by Shen et al. 
\cite{AS2016, Shen, Shen1} in which they treated second order linear
elliptic systems in divergence form. They used approximate correctors to
derive the rates of convergence. A reason to use approximate correctors is
the lack of sufficient knowledge on the corrector itself. Indeed in that
case it is known that the gradient of the corrector is almost periodic.
However it is not known in general whether the corrector itself is almost
periodic. Under certain conditions, it is shown in \cite{Armstrong, Shen1}
that the corrector is almost periodic. But the approximate corrector is in
general almost periodic together with its gradient.

It seems necessary to compare ours results in Theorems \ref{t1.4} and \ref%
{t5.1} with the existing ones in the literature. First of all, it is worth
noting that the algebra of continuous asymptotic almost periodic functions
is included in the Banach space of Weyl almost periodic functions; see e.g. 
\cite{Besicovitch}. Thus the results obtained in \cite{Shen1} can be seen as
generalizing those in Theorems \ref{t1.4} and \ref{t5.1}. However it is not
exactly the case. Indeed in \cite{Shen1}, the rates of convergence are found
in terms of the modulus of Weyl-almost periodicity of the matrix $A$, that
is, in terms of the function 
\begin{equation*}
\rho _{A}^{1}(R,L)=\sup_{y\in \mathbb{R}^{d}}\inf_{\left\vert z\right\vert
\leq R}\left( \sup_{x\in \mathbb{R}^{d}}%
\mathchoice {{\setbox0=\hbox{$\displaystyle{\textstyle
-}{\int}$ } \vcenter{\hbox{$\textstyle -$
}}\kern-.6\wd0}}{{\setbox0=\hbox{$\textstyle{\scriptstyle -}{\int}$ } \vcenter{\hbox{$\scriptstyle -$
}}\kern-.6\wd0}}{{\setbox0=\hbox{$\scriptstyle{\scriptscriptstyle -}{\int}$
} \vcenter{\hbox{$\scriptscriptstyle -$
}}\kern-.6\wd0}}{{\setbox0=\hbox{$\scriptscriptstyle{\scriptscriptstyle
-}{\int}$ } \vcenter{\hbox{$\scriptscriptstyle -$ }}\kern-.6\wd0}}%
\!\int_{B_{L}(x)}\left\vert A(t+y)-A(t+z)\right\vert ^{2}dt\right) ^{\frac{1%
}{2}}\text{ for }R,L>0
\end{equation*}%
where $B_{L}(x)$ stands for the open ball \ in $\mathbb{R}^{d}$ centered at $%
x$ and of radius $L>0$. In our work, we distinguish two cases: 1) the
asymptotic periodic case in which we show that the rate of convergence is
optimal, that $\left\Vert u_{\varepsilon }-u_{0}\right\Vert _{L^{2}(\Omega
)}=O(\varepsilon )$; 2) In the general continuous asymptotic almost periodic
setting, we show as in \cite{Shen1}, that the rate of convergence depends on
the modulus of asymptotic almost periodicity defined by 
\begin{equation*}
\rho _{A}(R,L)=\sup_{y\in \mathbb{R}^{d}}\inf_{\left\vert z\right\vert \leq
R}\left\Vert A(\cdot +y)-A(\cdot +z)\right\Vert _{L^{\infty }(B_{L}(0))}.
\end{equation*}%
As it is easily seen, the comparison between $\rho _{A}^{1}(R,L)$ and $\rho
_{A}(R,L)$ is not straightforward. So our result in Theorem \ref{t5.1} does
not follows directly from its counterpart Theorem 1.4 in \cite{Shen1}.

Our work combines the framework of \cite{Shen} with the general
deterministic homogenization theory introduced by Zhikov and Krivenko \cite%
{Zhikov4} and Nguetseng \cite{Hom1}. Furthermore, numerical simulations
based on finite volume method are provided to sustain our main theoretical
results.

The further investigation is organized as follows. Section 2 is devoted to
the proof of Theorems \ref{t1.1} and \ref{t4.1}. Section 3 deals with the
approximation of the homogenized coefficients. In Section 4, we prove
Theorems \ref{t1.4} while in Section 5 we prove Theorem \ref{t5.1}. In
Section 6, we provide some examples of concrete algebras and functions for
which the results, in particular those of Theorems \ref{t3.2}, \ref{t1.4}
and \ref{t5.1} apply. Finally, in Section 7 we present numerical results
illustrating the method and supporting the proposed procedure.

\section{Existence result for the corrector equation}

Let the matrix $A$ satisfy (\ref{1.2}) and (\ref{1.3}). Our aim is to solve
the corrector problem (\ref{1.6}). Let $B_{\mathcal{A}}^{2,\infty }(\mathbb{R%
}^{d})=B_{\mathcal{A}}^{2}(\mathbb{R}^{d})\cap L^{\infty }(\mathbb{R}^{d})$,
which is a Banach space under the $L^{\infty }(\mathbb{R}^{d})$-norm.

\begin{lemma}
\label{l4.1}Let $h\in \mathcal{C}(\overline{\Omega };B_{\mathcal{A}%
}^{2,\infty }(\mathbb{R}^{d}))$ and $H\in \mathcal{C}(\overline{\Omega };B_{%
\mathcal{A}}^{2,\infty }(\mathbb{R}^{d})^{d})$. For any $T>0$, there exists
a unique function $u\in \mathcal{C}(\overline{\Omega };B_{\mathcal{A}}^{1,2}(%
\mathbb{R}^{d}))$ such that 
\begin{equation}
-\nabla _{y}\cdot \left( A(x,\cdot )\nabla _{y}u(x,\cdot )\right)
+T^{-2}u(x,\cdot )=h(x,\cdot )+\nabla _{y}\cdot H(x,\cdot )\text{ in }%
\mathbb{R}^{d}  \label{4.2}
\end{equation}%
for any fixed $x\in \overline{\Omega }$. The solution $u$ satisfies further 
\begin{equation}
\sup_{z\in \mathbb{R}^{d}}%
\mathchoice {{\setbox0=\hbox{$\displaystyle{\textstyle -}{\int}$ } \vcenter{\hbox{$\textstyle -$
}}\kern-.6\wd0}}{{\setbox0=\hbox{$\textstyle{\scriptstyle -}{\int}$ } \vcenter{\hbox{$\scriptstyle -$
}}\kern-.6\wd0}}{{\setbox0=\hbox{$\scriptstyle{\scriptscriptstyle -}{\int}$
} \vcenter{\hbox{$\scriptscriptstyle -$
}}\kern-.6\wd0}}{{\setbox0=\hbox{$\scriptscriptstyle{\scriptscriptstyle
-}{\int}$ } \vcenter{\hbox{$\scriptscriptstyle -$ }}\kern-.6\wd0}}%
\!\int_{B_{R}(z)}\left( T^{-2}\left\vert u(x,y)\right\vert ^{2}+\left\vert
\nabla u(x,y)\right\vert ^{2}\right) dy\leq C\sup_{z\in \mathbb{R}^{d}}%
\mathchoice {{\setbox0=\hbox{$\displaystyle{\textstyle -}{\int}$ } \vcenter{\hbox{$\textstyle -$
}}\kern-.6\wd0}}{{\setbox0=\hbox{$\textstyle{\scriptstyle -}{\int}$ } \vcenter{\hbox{$\scriptstyle -$
}}\kern-.6\wd0}}{{\setbox0=\hbox{$\scriptstyle{\scriptscriptstyle -}{\int}$
} \vcenter{\hbox{$\scriptscriptstyle -$
}}\kern-.6\wd0}}{{\setbox0=\hbox{$\scriptscriptstyle{\scriptscriptstyle
-}{\int}$ } \vcenter{\hbox{$\scriptscriptstyle -$ }}\kern-.6\wd0}}%
\!\int_{B_{R}(z)}(\left\vert H(x,y)\right\vert ^{2}+T^{2}\left\vert
h(x,y)\right\vert ^{2})dy  \label{4.3}
\end{equation}%
for any $R\geq T$ and all $x\in \overline{\Omega }$, where the constant $C$
depends only on $d$, $\alpha $ and $\beta $.
\end{lemma}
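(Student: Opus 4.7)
The plan is to fix $x \in \overline{\Omega}$ and construct $u(x, \cdot)$ as a limit of Dirichlet solutions on large balls, exploiting the coercivity furnished by the massive term $T^{-2}u$. For each $R > T$, Lax--Milgram in $H_0^1(B_R)$---with the bilinear form $(u,v) \mapsto \int_{B_R} A(x,y) \nabla u \cdot \nabla v \, dy + T^{-2}\int_{B_R} uv \, dy$, coercive by (\ref{1.2})---produces a unique $u_R \in H_0^1(B_R)$ solving the truncated equation. The goal is to pass to the limit $R \to \infty$ and identify the limit as the desired solution with full regularity.

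The key technical step is an $R$-uniform local estimate. I would test against $u_R \varphi^2$, where $\varphi$ is a smooth cutoff with $\varphi \equiv 1$ on $B_\rho(z)$, supported in $B_{\rho + T}(z)$, and $|\nabla \varphi| \leq C/T$ (the case $B_{\rho+T}(z) \not\subset B_R$ is trivial, as $u_R$ vanishes on $\partial B_R$). Expanding $\nabla(u_R \varphi^2) = \varphi^2 \nabla u_R + 2 u_R \varphi \nabla \varphi$, applying (\ref{1.2}) to the principal part, and using Young's inequality to absorb the cross terms, I obtain a Caccioppoli-type bound
\[
\int_{B_\rho(z)} \bigl(T^{-2}|u_R|^2 + |\nabla u_R|^2\bigr) dy \leq C \int_{B_{\rho+T}(z)} \bigl(|H|^2 + T^{2}|h|^2\bigr) dy + CT^{-2} \int_{B_{\rho+T}(z) \setminus B_\rho(z)} |u_R|^2 \, dy,
\]
and a hole-filling iteration over annular shells of width $T$ (feasible since $\rho \geq T$) absorbs the tail term and delivers (\ref{4.3}) for $u_R$ with constant depending only on $d$, $\alpha$, $\beta$. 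A diagonal extraction yields $u_R \rightharpoonup u(x, \cdot)$ weakly in $H^1_{\mathrm{loc}}(\mathbb{R}^d)$; the limit solves (\ref{4.2}) distributionally and inherits (\ref{4.3}).

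The main obstacle is showing $u(x, \cdot) \in B_{\mathcal{A}}^{1,2}(\mathbb{R}^d)$, i.e., that both $u$ and $\nabla u$ lie in the Marcinkiewicz closure of $\mathcal{A}$. My plan is a density argument paired with an abstract Lax--Milgram formulation in the Banach counterpart $\mathcal{B}_{\mathcal{A}}^{1,2}(\mathbb{R}^d)$: approximate $A(x, \cdot)$, $h(x, \cdot)$, $H(x, \cdot)$ in the Marcinkiewicz topology by sequences with entries in $\mathcal{A}$. On $\mathcal{B}_{\mathcal{A}}^{1,2}$ the bilinear form $a(u, v) = M(A(x,\cdot)\nabla u \cdot \nabla v) + T^{-2} M(uv)$ is Hilbert-coercive (by (\ref{1.2}) and property (\textbf{1})), so Lax--Milgram produces a unique solution $\tilde u \in \mathcal{B}_{\mathcal{A}}^{1,2}$ for each regularised datum. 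Lifting via $\varrho$ and using the stability provided by (\ref{4.3}) on differences, the sequence of regularised solutions is Cauchy in the Marcinkiewicz seminorm, and its limit---which lies in $B_{\mathcal{A}}^{1,2}$---coincides by local uniqueness with the function constructed above. Uniqueness of $u$ then follows by applying (\ref{4.3}) with $h = H = 0$, and continuity of $x \mapsto u(x, \cdot)$ reduces, via the same local bound applied to differences $u(x_1,\cdot) - u(x_2,\cdot)$, to the assumed continuity of $A$, $h$, $H$ in $x$.
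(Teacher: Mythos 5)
Your construction of the distributional solution is sound and uses a genuinely different technique from the paper. Where the paper chooses exponential weights $\eta_{z}(y)=\exp(-c|y-z|)$ in the variational form of the truncated problem on $B_{R}$ and tunes the constants $c$ and $k$ to obtain (\ref{4.5}), you replace this with a Caccioppoli inequality on nested balls followed by a hole-filling iteration over annular shells of width $T$. Since $u_{R}$ vanishes outside $B_{R}$ and the contraction factor $\theta=C/(1+C)<1$ beats the polynomial growth $(1+j)^{d}$ of the volume ratios $|B_{\rho+jT}|/|B_{\rho}|$, the geometric series closes and (\ref{4.3}) follows with a constant depending only on $d,\alpha,\beta$. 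This is a clean, equally valid route to the same uniform estimate.

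The membership step, however, has a genuine gap. Your argument runs: solve the variational problem $a(u,\phi)=\ell(\phi)$ via Lax--Milgram in the quotient Hilbert space $\mathcal{B}_{\mathcal{A}}^{1,2}(\mathbb{R}^{d})$, lift via $\varrho$, and identify the lifted function with the distributional solution $v$ "by local uniqueness." But the Lax--Milgram problem in $\mathcal{B}_{\mathcal{A}}^{1,2}$ is the mean-value equation (\ref{4.7}), a global statement involving $M(\cdot)$, not the local distributional PDE (\ref{4.2}). Choosing a representative $\tilde{w}\in B_{\mathcal{A}}^{1,2}$ of the equivalence class gives you a concrete function, but you have established nothing about whether $\tilde{w}$ satisfies $-\nabla\cdot(A\nabla\tilde{w})+T^{-2}\tilde{w}=h+\nabla\cdot H$ in $\mathcal{D}'(\mathbb{R}^{d})$. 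The uniqueness statement you invoke (estimate (\ref{4.3}) with $h=H=0$) compares two \emph{distributional} solutions in $H^{1}_{\mathrm{loc}}$; it does not apply to a pair consisting of one distributional solution and one solution of a mean-value variational problem. The regularisation of the data $A,h,H$ by elements of $\mathcal{A}$ does not resolve this: the same bridging problem recurs at each approximation level (and in fact the bilinear form $a$ is already well-defined and coercive with the original $A\in B_{\mathcal{A}}^{2}\cap L^{\infty}$, so the approximation buys you nothing here).

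The missing ingredient, which is the heart of the paper's Step~2, is the passage \emph{from} the distributional equation \emph{to} the mean-value formulation. The paper tests (\ref{4.2}) against $\psi(y)=\varphi(\varepsilon y)\phi(y)$ with $\varphi\in\mathcal{C}_{0}^{\infty}(\mathbb{R}^{d})$ and $\phi\in B_{\mathcal{A}}^{1,2}(\mathbb{R}^{d})$, rescales $t=\varepsilon y$, and lets $\varepsilon\to 0$ so that the oscillating products $A^{\varepsilon}(\nabla v)^{\varepsilon}\cdot(\nabla\phi)^{\varepsilon}$, $v^{\varepsilon}\phi^{\varepsilon}$ weak-$\ast$ converge to their mean values. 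This is what links the two formulations and allows the Lax--Milgram well-posedness in the quotient space to pin down $v$ up to a null function. Without an argument of this type, "coincides by local uniqueness" is unjustified. You would need either to supply this oscillating-test-function passage, or to establish directly that a lift of the Lax--Milgram solution satisfies (\ref{4.2}) in the distributional sense (which is a non-trivial converse statement).

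Your treatment of existence on bounded domains, the passage $R\to\infty$, the uniqueness from (\ref{4.3}) with zero data, and the continuity in $x$ via applying (\ref{4.3}) to the difference $u(x_{1},\cdot)-u(x_{2},\cdot)$ all match the paper's scheme and are fine.
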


\begin{proof}
Since the variable $x$ in (\ref{4.2}) behaves as a parameter, we drop it
throughout the proof of the existence and uniqueness. Thus, in what follows,
we keep using the symbol $\nabla $ instead of $\nabla _{y}$ to denote the
gradient with respect to $y$, if there is no danger of confusion.\medskip

1. \textit{Existence}. Fix $R>0$ and define $v_{T,R}\equiv v_{R}\in
H_{0}^{1}(B_{R})$ as the unique solution of 
\begin{equation*}
-\nabla \cdot A\nabla v_{R}+T^{-2}v_{R}=h+\nabla \cdot H\text{ in }B_{R}.
\end{equation*}%
Extending $v_{R}$ by $0$ off $B_{R}$, we obtain a sequence $(v_{R})_{R}$ in $%
H_{loc}^{1}(\mathbb{R}^{d})$. Let us show that the sequence $(v_{R})_{R}$ is
bounded in $H_{loc}^{1}(\mathbb{R}^{d})$. We proceed as in \cite{Gloria}
(see also \cite{Po-Yu89}). In the variational formulation of the above
equation, we choose as test function, the function $\eta _{z}^{2}v_{R}$,
where $\eta _{z}(y)=\exp (-c\left\vert y-z\right\vert )$ for a fixed $z\in 
\mathbb{R}^{d}$, $c>0$ to be chosen later. We get 
\begin{align*}
\int_{B_{R}}\eta _{z}^{2}A\nabla v_{R}\cdot \nabla
v_{R}+T^{-2}\int_{B_{R}}\eta _{z}^{2}v_{R}^{2}& =-2\int_{B_{R}}\eta
_{z}v_{R}A\nabla v_{R}\cdot \nabla \eta _{z}-2\int_{B_{R}}\eta
_{z}v_{R}H\cdot \nabla \eta _{z} \\
& -\int_{B_{R}}\eta _{z}^{2}H\cdot \nabla v_{R}+\int_{B_{R}}h\eta
_{z}^{2}v_{R} \\
& =I_{1}+I_{2}+I_{3}+I_{4}.
\end{align*}%
The left-hand side of the above equality is bounded from below by 
\begin{equation*}
\alpha \int_{B_{R}}\eta _{z}^{2}\left\vert \nabla v_{R}\right\vert
^{2}+T^{-2}\int_{B_{R}}\eta _{z}^{2}v_{R}^{2},
\end{equation*}%
while for the right-hand side, we have the following bounds (after using the
Young's inequality and the bounds on $A$): 
\begin{align*}
\left\vert I_{1}\right\vert & \leq \frac{\alpha \beta T^{-2}}{k}%
\int_{B_{R}}v_{R}^{2}\left\vert \nabla \eta _{z}\right\vert ^{2}+\frac{%
T^{2}\beta k}{\alpha }\int_{B_{R}}\eta _{z}^{2}\left\vert \nabla
v_{R}\right\vert ^{2}, \\
\left\vert I_{2}\right\vert & \leq \frac{\alpha \beta T^{-2}}{k}%
\int_{B_{R}}v_{R}^{2}\left\vert \nabla \eta _{z}\right\vert ^{2}+\frac{T^{2}k%
}{\alpha \beta }\int_{B_{R}}\eta _{z}^{2}\left\vert H\right\vert ^{2}, \\
\left\vert I_{3}\right\vert & \leq \frac{T^{2}\beta k}{\alpha }%
\int_{B_{R}}\eta _{z}^{2}\left\vert \nabla v_{R}\right\vert ^{2}+\frac{%
T^{-2}\alpha }{4k}\int_{B_{R}}\eta _{z}^{2}\left\vert H\right\vert ^{2}, \\
\left\vert I_{4}\right\vert & \leq \frac{\alpha \beta T^{-2}c^{2}}{k}%
\int_{B_{R}}v_{R}^{2}\eta _{z}^{2}+\frac{T^{2}k}{4\alpha \beta c^{2}}%
\int_{B_{R}}\eta _{z}^{2}\left\vert h\right\vert ^{2}
\end{align*}%
where $k>0$ is to be chosen later. Noticing that $\left\vert \nabla \eta
_{z}\right\vert =c\eta _{z}$, we readily get after using the series of
inequalities above, 
\begin{eqnarray*}
&&\int_{B_{R}}\eta _{z}^{2}\left( \alpha -2\frac{T^{2}\beta k}{\alpha }%
\right) \left\vert \nabla v_{R}\right\vert ^{2}+T^{-2}\int_{B_{R}}\eta
_{z}^{2}\left( 1-3\frac{\alpha \beta c^{2}}{k}\right) v_{R}^{2} \\
&\leq &\int_{B_{R}}\left[ \left( \frac{T^{2}k}{\alpha \beta }+\frac{%
T^{-2}\alpha }{4\beta k}\right) \left\vert H\right\vert ^{2}+\frac{kT^{2}}{%
4\alpha \beta c^{2}}\left\vert h\right\vert ^{2}\right] \eta _{z}^{2}.
\end{eqnarray*}%
Choosing therefore $k=\frac{\alpha ^{2}}{4\beta T^{2}}$ and $c=\frac{1}{%
2\beta T}\left( \frac{\alpha }{6}\right) ^{1/2}$, we obtain the estimate 
\begin{equation}
\alpha \int_{B_{R}}\eta _{z}^{2}\left\vert \nabla v_{R}\right\vert
^{2}+T^{-2}\int_{B_{R}}\eta _{z}^{2}v_{R}^{2}\leq \int_{B_{R}}\left[ \left( 
\frac{\alpha }{4\beta ^{2}}+\frac{1}{\alpha }\right) \left\vert H\right\vert
^{2}+\frac{3}{2}T^{2}\left\vert h\right\vert ^{2}\right] \eta _{z}^{2}.
\label{4.5}
\end{equation}%
The inequality (\ref{4.5}) above shows that the sequence $(v_{R})$ is
bounded in $H_{loc}^{1}(\mathbb{R}^{d})$; indeed, for any compact subset $K$
in $\mathbb{R}^{d}$, the left-hand side of (\ref{4.5}) is bounded from below
by $c_{K}(\alpha \int_{B_{R}}\left\vert \nabla v_{R}\right\vert
^{2}+T^{-2}\int_{B_{R}}v_{R}^{2})$ where $c_{K}=\min_{K}\eta _{z}^{2}>0$
while the right-hand side is bounded from above by $C\int_{\mathbb{R}%
^{d}}\eta _{z}^{2}$ where 
\begin{equation*}
C=\left( \frac{\alpha }{4\beta ^{2}}+\frac{1}{\alpha }\right) \left\Vert
H\right\Vert _{\mathcal{C}(\overline{\Omega };L^{\infty }(\mathbb{R}%
^{d}))}^{2}+\frac{3}{2}T^{2}\left\Vert h\right\Vert _{\mathcal{C}(\overline{%
\Omega };L^{\infty }(\mathbb{R}^{d}))}^{2}.
\end{equation*}%
Hence there exist a subsequence of $(v_{R})$ and a function $v\in
H_{loc}^{1}(\mathbb{R}^{d})$ such that the above mentioned subsequence
weakly converges in $H_{loc}^{1}(\mathbb{R}^{d})$ to $v$, and it is easy to
see that $v$ is a distributional solution of (\ref{4.2}) in $\mathbb{R}^{d}$%
. Taking the $\lim \inf_{R\rightarrow \infty }$ in (\ref{4.5}) yields 
\begin{equation}
\alpha \int_{\mathbb{R}^{d}}\eta _{z}^{2}\left\vert \nabla v_{R}\right\vert
^{2}+T^{-2}\int_{\mathbb{R}^{d}}\eta _{z}^{2}v_{R}^{2}\leq \int_{\mathbb{R}%
^{d}}\left[ \left( \frac{\alpha }{4\beta ^{2}}+\frac{1}{\alpha }\right)
\left\vert H\right\vert ^{2}+\frac{3}{2}T^{2}\left\vert h\right\vert ^{2}%
\right] \eta _{z}^{2}.  \label{4.6}
\end{equation}%
We infer from (\ref{4.6}) that 
\begin{equation}
\sup_{z\in \mathbb{R}^{d}}%
\mathchoice {{\setbox0=\hbox{$\displaystyle{\textstyle -}{\int}$ } \vcenter{\hbox{$\textstyle -$
}}\kern-.6\wd0}}{{\setbox0=\hbox{$\textstyle{\scriptstyle -}{\int}$ } \vcenter{\hbox{$\scriptstyle -$
}}\kern-.6\wd0}}{{\setbox0=\hbox{$\scriptstyle{\scriptscriptstyle -}{\int}$
} \vcenter{\hbox{$\scriptscriptstyle -$
}}\kern-.6\wd0}}{{\setbox0=\hbox{$\scriptscriptstyle{\scriptscriptstyle
-}{\int}$ } \vcenter{\hbox{$\scriptscriptstyle -$ }}\kern-.6\wd0}}%
\!\int_{B_{R}(z)}(\left\vert \nabla v\right\vert ^{2}+T^{-2}v^{2})\leq C
\label{e2.4}
\end{equation}%
where $C$ does not depend on $z$, but on $T$. Estimate (\ref{4.3}) (for $R=T$%
) follows from \cite{Po-Yu89} while the case $R>T$ is a consequence of
Caccioppoli's inequality; see \cite[Lemma 3.2]{Shen1}.

Let us show that $v\in B_{\mathcal{A}}^{1,2}(\mathbb{R}^{d})$. It suffices
to check that $v$ solves the equation 
\begin{equation}
M(A(\xi +\nabla v)\cdot \nabla \phi +T^{-2}v\phi )=M(h\phi -H\cdot \nabla
\phi )\text{, all }\phi \in B_{\mathcal{A}}^{1,2}(\mathbb{R}^{d}).
\label{4.7}
\end{equation}%
To this end, let $\varphi \in \mathcal{C}_{0}^{\infty }(\mathbb{R}^{d})$ and 
$\phi \in B_{\mathcal{A}}^{1,2}(\mathbb{R}^{d})$. Define (for fixed $%
\varepsilon >0$), $\psi (y)=\varphi (\varepsilon y)\phi (y)$. Choose $\psi $
as test function in the variational form of (\ref{4.2}) and get 
\begin{align*}
& \int_{\mathbb{R}^{d}}\left[ A\nabla u\cdot (\varepsilon \phi \nabla
\varphi (\varepsilon \cdot )+\varphi (\varepsilon \cdot )\nabla \phi
)+T^{-2}u\varphi (\varepsilon \cdot )\phi \right] dy \\
& =\int_{\mathbb{R}^{d}}\left[ h\varphi (\varepsilon \cdot )\phi -H\cdot
(\varepsilon \phi \nabla \varphi (\varepsilon \cdot )+\varphi (\varepsilon
\cdot )\nabla \phi )\right] dy.
\end{align*}%
The change of variables $t=\varepsilon y$ leads (after multiplication by $%
\varepsilon ^{d}$) to 
\begin{align*}
& \int_{\mathbb{R}^{d}}\left[ A^{\varepsilon }(\nabla _{y}u)^{\varepsilon
}\cdot (\varepsilon \phi ^{\varepsilon }\nabla \varphi +\varphi (\nabla
_{y}\phi )^{\varepsilon })+T^{-2}u^{\varepsilon }\varphi \phi ^{\varepsilon }%
\right] dt \\
& =\int_{\mathbb{R}^{d}}\left[ h^{\varepsilon }\phi ^{\varepsilon }\varphi
-H^{\varepsilon }\cdot (\varepsilon \phi ^{\varepsilon }\nabla \varphi
+\varphi (\nabla _{y}\phi )^{\varepsilon })\right] dt
\end{align*}%
where $w^{\varepsilon }(t)=w(t/\varepsilon )$ for a given $w$. Letting $%
\varepsilon \rightarrow 0$ above yields 
\begin{align*}
\int_{\mathbb{R}^{d}}M(A\nabla u\cdot \nabla \phi +T^{-2}u\phi )\varphi dt&
=\int_{\mathbb{R}^{d}}M(h\phi -H\cdot \nabla \phi )\varphi dt \\
\text{for all }\varphi & \in \mathcal{C}_{0}^{\infty }(\mathbb{R}^{d})\text{
and }\phi \in B_{\mathcal{A}}^{1,2}(\mathbb{R}^{d}).
\end{align*}%
which amounts to (\ref{4.7}). So, we have just shown that, if $v\in
H_{loc}^{1}(\mathbb{R}^{d})$ solves (\ref{4.2}) in the sense of
distributions in $\mathbb{R}^{d}$, then it satisfies (\ref{4.7}). Before we
proceed any further, let us first show that (\ref{4.7}) possesses a unique
solution in $B_{\mathcal{A}}^{1,2}(\mathbb{R}^{d})$ up to an additive
function $w\in B_{\mathcal{A}}^{1,2}(\mathbb{R}^{d})$ satisfying $%
M(\left\vert w\right\vert ^{2})=0$. First and foremost, we recall that the
space $\mathcal{B}_{\mathcal{A}}^{1,2}(\mathbb{R}^{d})=B_{\mathcal{A}}^{1,2}(%
\mathbb{R}^{d})/\mathcal{N}$ (where $\mathcal{N}=\{u\in B_{\mathcal{A}%
}^{1,2}(\mathbb{R}^{d}):\left\Vert u\right\Vert _{1,2}=0\}$) is a Hilbert
space with inner product 
\begin{equation*}
(u+\mathcal{N},v+\mathcal{N})_{1,2}=M(uv+\nabla u\cdot \nabla v)\text{ for }%
u,v\in B_{\mathcal{A}}^{1,2}(\mathbb{R}^{d}).
\end{equation*}%
If $w\in \mathcal{N}$ then $M(w)=0$, since $\left\vert M(w)\right\vert \leq
M(\left\vert w\right\vert )\leq (M(\left\vert w\right\vert
^{2}))^{1/2}=\left\Vert w\right\Vert _{2}=0$, so that $\left( ,\right)
_{1,2} $ is well defined. Now, (\ref{4.7}) is equivalent to $a(v,\phi )=\ell
(\phi ) $ for all $\phi \in B_{\mathcal{A}}^{1,2}(\mathbb{R}^{d})$ where 
\begin{equation*}
a(v,\phi )=M(T^{-2}v\phi +A\nabla v\cdot \nabla \phi ),\ \ell (\phi
)=M(h\phi -H\cdot \nabla \phi ).
\end{equation*}%
$a(\cdot ,\cdot )$ defines a continuous coercive bilinear form on $\mathcal{B%
}_{\mathcal{A}}^{1,2}(\mathbb{R}^{d})$; $\ell $ is a continuous linear form
on $\mathcal{B}_{\mathcal{A}}^{1,2}(\mathbb{R}^{d})$. Lax-Milgram theorem
implies that $v+\mathcal{N}$ is a unique solution of (\ref{4.7}). This
yields $v\in B_{\mathcal{A}}^{1,2}(\mathbb{R}^{d})$.\medskip

2. \textit{Uniqueness}. The uniqueness of the solution amounts to consider (%
\ref{4.2}) with $h=0$ and $H=0$. We derive from (\ref{4.6}) 
\begin{equation*}
\alpha \int_{\mathbb{R}^{d}}\eta _{z}^{2}\left\vert \nabla v\right\vert
^{2}+T^{-2}\int_{\mathbb{R}^{d}}\eta _{z}^{2}v^{2}=0,
\end{equation*}%
so that $v=0$ for the corresponding equation.\medskip

3. \textit{Continuity}. To investigate the continuity of $v$ with respect to 
$x$, we fix $x_{0}\in \overline{\Omega }$ \ and we let $w(x)=v(x,\cdot
)-v(x_{0},\cdot )$. Then $w(x)\in B_{\mathcal{A}}^{1,2}(\mathbb{R}^{d})$ and 
\begin{eqnarray*}
-\nabla \cdot A(x,\cdot )\nabla w(x)+T^{-2}w(x) &=&h(x,\cdot )-h(x_{0},\cdot
)+\nabla \cdot (H(x,\cdot )-H(x_{0},\cdot )) \\
&&+\nabla \cdot (A(x,\cdot )-A(x_{0},\cdot ))\nabla v(x_{0},\cdot ),
\end{eqnarray*}%
so that, using estimate (\ref{4.3}), we find (for any $R\geq T$) 
\begin{eqnarray*}
\sup_{z\in \mathbb{R}^{d}}%
\mathchoice {{\setbox0=\hbox{$\displaystyle{\textstyle -}{\int}$ } \vcenter{\hbox{$\textstyle -$
}}\kern-.6\wd0}}{{\setbox0=\hbox{$\textstyle{\scriptstyle -}{\int}$ } \vcenter{\hbox{$\scriptstyle -$
}}\kern-.6\wd0}}{{\setbox0=\hbox{$\scriptstyle{\scriptscriptstyle -}{\int}$
} \vcenter{\hbox{$\scriptscriptstyle -$
}}\kern-.6\wd0}}{{\setbox0=\hbox{$\scriptscriptstyle{\scriptscriptstyle
-}{\int}$ } \vcenter{\hbox{$\scriptscriptstyle -$ }}\kern-.6\wd0}}%
\!\int_{B_{R}(z)}\left( T^{-2}\left\vert w(x)\right\vert ^{2}+\left\vert
\nabla w(x)\right\vert ^{2}\right) dy &\leq &CT^{2}\sup_{z\in \mathbb{R}^{d}}%
\mathchoice {{\setbox0=\hbox{$\displaystyle{\textstyle -}{\int}$ } \vcenter{\hbox{$\textstyle -$
}}\kern-.6\wd0}}{{\setbox0=\hbox{$\textstyle{\scriptstyle -}{\int}$ } \vcenter{\hbox{$\scriptstyle -$
}}\kern-.6\wd0}}{{\setbox0=\hbox{$\scriptstyle{\scriptscriptstyle -}{\int}$
} \vcenter{\hbox{$\scriptscriptstyle -$
}}\kern-.6\wd0}}{{\setbox0=\hbox{$\scriptscriptstyle{\scriptscriptstyle
-}{\int}$ } \vcenter{\hbox{$\scriptscriptstyle -$ }}\kern-.6\wd0}}%
\!\int_{B_{R}(z)}\left\vert h(x,y)-h(x_{0},y)\right\vert ^{2}dy \\
&&+C\sup_{z\in \mathbb{R}^{d}}%
\mathchoice {{\setbox0=\hbox{$\displaystyle{\textstyle -}{\int}$ } \vcenter{\hbox{$\textstyle -$
}}\kern-.6\wd0}}{{\setbox0=\hbox{$\textstyle{\scriptstyle -}{\int}$ } \vcenter{\hbox{$\scriptstyle -$
}}\kern-.6\wd0}}{{\setbox0=\hbox{$\scriptstyle{\scriptscriptstyle -}{\int}$
} \vcenter{\hbox{$\scriptscriptstyle -$
}}\kern-.6\wd0}}{{\setbox0=\hbox{$\scriptscriptstyle{\scriptscriptstyle
-}{\int}$ } \vcenter{\hbox{$\scriptscriptstyle -$ }}\kern-.6\wd0}}%
\!\int_{B_{R}(z)}\left\vert H(x,y)-H(x_{0},y)\right\vert ^{2}dy \\
&&+C\sup_{z\in \mathbb{R}^{d}}%
\mathchoice {{\setbox0=\hbox{$\displaystyle{\textstyle -}{\int}$ } \vcenter{\hbox{$\textstyle -$
}}\kern-.6\wd0}}{{\setbox0=\hbox{$\textstyle{\scriptstyle -}{\int}$ } \vcenter{\hbox{$\scriptstyle -$
}}\kern-.6\wd0}}{{\setbox0=\hbox{$\scriptstyle{\scriptscriptstyle -}{\int}$
} \vcenter{\hbox{$\scriptscriptstyle -$
}}\kern-.6\wd0}}{{\setbox0=\hbox{$\scriptscriptstyle{\scriptscriptstyle
-}{\int}$ } \vcenter{\hbox{$\scriptscriptstyle -$ }}\kern-.6\wd0}}%
\!\int_{B_{R}(z)}\left\vert A(x,y)-A(x_{0},y)\right\vert ^{2}\left\vert
\nabla v(x_{0},y)\right\vert ^{2}dy \\
&\leq &CT^{2}\left\Vert h(x,\cdot )-h(x_{0},\cdot )\right\Vert _{L^{\infty }(%
\mathbb{R}^{d})}^{2} \\
&&+C\left\Vert H(x,\cdot )-H(x_{0},\cdot )\right\Vert _{L^{\infty }(\mathbb{R%
}^{d})}^{2} \\
&&+C\left\Vert A(x,\cdot )-A(x_{0},\cdot )\right\Vert _{L^{\infty }(\mathbb{R%
}^{d})}^{2}.
\end{eqnarray*}%
Continuity is a consequence of the following estimate 
\begin{eqnarray*}
&&T^{-2}\left\Vert v(x,\cdot )-v(x_{0},\cdot )\right\Vert
_{2}^{2}+\left\Vert \nabla v(x,\cdot )-\nabla v(x_{0},\cdot )\right\Vert
_{2}^{2} \\
&\equiv &\lim_{R\rightarrow \infty }%
\mathchoice {{\setbox0=\hbox{$\displaystyle{\textstyle -}{\int}$ } \vcenter{\hbox{$\textstyle -$
}}\kern-.6\wd0}}{{\setbox0=\hbox{$\textstyle{\scriptstyle -}{\int}$ } \vcenter{\hbox{$\scriptstyle -$
}}\kern-.6\wd0}}{{\setbox0=\hbox{$\scriptstyle{\scriptscriptstyle -}{\int}$
} \vcenter{\hbox{$\scriptscriptstyle -$
}}\kern-.6\wd0}}{{\setbox0=\hbox{$\scriptscriptstyle{\scriptscriptstyle
-}{\int}$ } \vcenter{\hbox{$\scriptscriptstyle -$ }}\kern-.6\wd0}}%
\!\int_{B_{R}(z)}T^{-2}\left\vert w(x)\right\vert ^{2}+\left\vert \nabla
w(x)\right\vert ^{2}dy \\
&\leq &CT^{2}\left\Vert h(x,\cdot )-h(x_{0},\cdot )\right\Vert _{L^{\infty }(%
\mathbb{R}^{d})}^{2}+C\left\Vert H(x,\cdot )-H(x_{0},\cdot )\right\Vert
_{L^{\infty }(\mathbb{R}^{d})}^{2} \\
&&+C\left\Vert A(x,\cdot )-A(x_{0},\cdot )\right\Vert _{L^{\infty }(\mathbb{R%
}^{d})}^{2}.
\end{eqnarray*}
\end{proof}

\begin{proof}[Proof of Theorem \protect\ref{t4.1}]
1. \textit{Existence and continuity}. Let us denote by $(\chi _{T,j}(x,\cdot
))_{T\geq 1}$ (for fixed $1\leq j\leq d$) the sequence constructed in Lemma %
\ref{l4.1} and corresponding to $h=0$ and $H=Ae_{j}$, $e_{j}$ being denoting
the $j$th vector of the canonical basis of $\mathbb{R}^{d}$. It satisfies (%
\ref{4.3}), so that by the weak compactness, the sequence $(\nabla \chi
_{T,j}(x,\cdot ))_{T\geq 1}$ weakly converges in $L_{loc}^{2}(\mathbb{R}%
^{d})^{d}$ (up to extraction of a subsequence) to some $V_{j}(x,\cdot )\in
L_{loc}^{2}(\mathbb{R}^{d})^{d}$. From the equality $\partial ^{2}\chi
_{T,j}(x,\cdot )/\partial y_{i}\partial y_{l}=\partial ^{2}\chi
_{T,j}(x,\cdot )/\partial y_{l}\partial y_{i}$, a limit passage in the
distributional sense yields $\partial V_{j,i}(x,\cdot )/\partial
y_{l}=\partial V_{j,l}(x,\cdot )/\partial y_{i}$, where $V_{j}=(V_{j,i})_{1%
\leq i\leq d}$. This implies $V_{j}(x,\cdot )=\nabla \chi _{j}(x,\cdot )$
for some $\chi _{j}(x,\cdot )\in H_{loc}^{1}(\mathbb{R}^{d})$. Using the
boundedness of $(T^{-1}\chi _{T,j}(x,\cdot ))_{T\geq 1}$ in $L_{loc}^{2}(%
\mathbb{R}^{d})$, we pass to the limit in the variational formulation of (%
\ref{4.2}) (as $T\rightarrow \infty $) to get that $\chi _{j}$ solves (\ref%
{4.1}). Arguing exactly as in the proof of (\ref{4.7}) (in Lemma \ref{l4.1}%
), we arrive at $V_{j}(x,\cdot )\in B_{\mathcal{A}}^{2}(\mathbb{R}^{d})^{d}$%
. Also, since $\chi _{T,j}(x,\cdot )\in B_{\mathcal{A}}^{1,2}(\mathbb{R}%
^{d}) $, we have $M(\nabla \chi _{T,j}(x,\cdot ))=0$, hence $M(\nabla \chi
_{j}(x,\cdot ))=0$. We repeat the proof of the Part 3. in the previous lemma
to find that $\nabla _{y}\chi _{j}\in \mathcal{C}(\overline{\Omega };B_{%
\mathcal{A}}^{2}(\mathbb{R}^{d})^{d})$.\medskip

2. \textit{Uniqueness} (of $\nabla _{y}\chi _{j}$). Fix $x\in \overline{%
\Omega }$ and assume that $\chi _{j}(x,\cdot )\in H_{loc}^{1}(\mathbb{R}%
^{d}) $ is such that $-\func{div}(A(x,\cdot )\nabla _{y}\chi _{j}(x,\cdot
))=0$ in $\mathbb{R}^{d}$ and $\nabla _{y}\chi _{j}(x,\cdot )\in B_{\mathcal{%
A}}^{2}(\mathbb{R}^{d})^{d}$. Then it follows from \cite[Property (3.10)]%
{Shen} that, given $0<\sigma <1$, there exists $C_{\sigma }>0$ independent
from $r$ and $R$\ such that 
\begin{equation}
\mathchoice {{\setbox0=\hbox{$\displaystyle{\textstyle
-}{\int}$ } \vcenter{\hbox{$\textstyle -$
}}\kern-.6\wd0}}{{\setbox0=\hbox{$\textstyle{\scriptstyle -}{\int}$ } \vcenter{\hbox{$\scriptstyle -$
}}\kern-.6\wd0}}{{\setbox0=\hbox{$\scriptstyle{\scriptscriptstyle -}{\int}$
} \vcenter{\hbox{$\scriptscriptstyle -$
}}\kern-.6\wd0}}{{\setbox0=\hbox{$\scriptscriptstyle{\scriptscriptstyle
-}{\int}$ } \vcenter{\hbox{$\scriptscriptstyle -$ }}\kern-.6\wd0}}%
\!\int_{B_{r}}\left\vert \nabla _{y}\chi _{j}(x,y)\right\vert ^{2}dy\leq
C_{\sigma }\left( \frac{r}{R}\right) ^{\sigma }%
\mathchoice {{\setbox0=\hbox{$\displaystyle{\textstyle
-}{\int}$ } \vcenter{\hbox{$\textstyle -$
}}\kern-.6\wd0}}{{\setbox0=\hbox{$\textstyle{\scriptstyle -}{\int}$ } \vcenter{\hbox{$\scriptstyle -$
}}\kern-.6\wd0}}{{\setbox0=\hbox{$\scriptstyle{\scriptscriptstyle -}{\int}$
} \vcenter{\hbox{$\scriptscriptstyle -$
}}\kern-.6\wd0}}{{\setbox0=\hbox{$\scriptscriptstyle{\scriptscriptstyle
-}{\int}$ } \vcenter{\hbox{$\scriptscriptstyle -$ }}\kern-.6\wd0}}%
\!\int_{B_{R}}\left\vert \nabla _{y}\chi _{j}(x,y)\right\vert ^{2}dy\text{
for all }0<r<R.  \label{04}
\end{equation}%
Next, since $-\func{div}(A(x,\cdot )\nabla _{y}\chi _{j}(x,\cdot ))=0$ in $%
\mathbb{R}^{d}$ and $\nabla _{y}\chi _{j}(x,\cdot )\in B_{\mathcal{A}}^{2}(%
\mathbb{R}^{d})^{d}$, we show as for (\ref{4.7}) that 
\begin{equation}
M(A(x,\cdot )\nabla _{y}\chi _{j}(x,\cdot )\cdot \nabla _{y}\phi )=0\text{
for all }\phi \in B_{\#\mathcal{A}}^{1,2}(\mathbb{R}^{d}).  \label{05}
\end{equation}%
Choosing $\phi =\chi _{j}(x,\cdot )$ in (\ref{05}), and using the
ellipticity of $A$, it emerges $M(\left\vert \nabla _{y}\chi _{j}(x,\cdot
)\right\vert ^{2})=0$, that is, $\lim_{R\rightarrow \infty }%
\mathchoice {{\setbox0=\hbox{$\displaystyle{\textstyle
-}{\int}$ } \vcenter{\hbox{$\textstyle -$
}}\kern-.6\wd0}}{{\setbox0=\hbox{$\textstyle{\scriptstyle -}{\int}$ } \vcenter{\hbox{$\scriptstyle -$
}}\kern-.6\wd0}}{{\setbox0=\hbox{$\scriptstyle{\scriptscriptstyle -}{\int}$
} \vcenter{\hbox{$\scriptscriptstyle -$
}}\kern-.6\wd0}}{{\setbox0=\hbox{$\scriptscriptstyle{\scriptscriptstyle
-}{\int}$ } \vcenter{\hbox{$\scriptscriptstyle -$ }}\kern-.6\wd0}}%
\!\int_{B_{R}}\left\vert \nabla _{y}\chi _{j}(x,y)\right\vert ^{2}dy=0$.
Coming back to (\ref{04}) and letting there $R\rightarrow \infty $, we are
led to $\int_{B_{r}}\left\vert \nabla _{y}\chi _{j}(x,y)\right\vert ^{2}dy=0$
for all $r>0$. This gives $\nabla _{y}\chi _{j}(x,\cdot )=0$.
\end{proof}

We can now prove Theorem \ref{t1.1}.

\begin{proof}[Proof of Theorem \protect\ref{t1.1}]
Let $\Phi _{\varepsilon }=\psi _{0}+\varepsilon \psi _{1}^{\varepsilon }$
with $\psi _{1}^{\varepsilon }(x)=\psi _{1}(x,x/\varepsilon )$ ($x\in \Omega 
$), where $\psi _{0}\in \mathcal{C}_{0}^{\infty }(\Omega )$ and $\psi
_{1}\in \mathcal{C}_{0}^{\infty }(\Omega )\otimes \mathcal{A}^{\infty }$, $%
\mathcal{A}^{\infty }=\{u\in \mathcal{A}:D^{\alpha }u\in \mathcal{A}$ for
all $\alpha \in \mathbb{N}^{d}\}$. Taking $\Phi _{\varepsilon }$ (wich
belongs to $\mathcal{C}_{0}^{\infty }(\Omega )$) as a test function in the
variational formulation of (\ref{1.1}) yields 
\begin{equation}
\int_{\Omega }A^{\varepsilon }\nabla u_{\varepsilon }\cdot \nabla \Phi
_{\varepsilon }dx=\int_{\Omega }f\Phi _{\varepsilon }dx.  \label{4.6'}
\end{equation}%
It is not difficult to see that the sequence $(u_{\varepsilon
})_{\varepsilon >0}$ is bounded in $H_{0}^{1}(\Omega )$, so that,
considering an ordinary sequence $E\subset \mathbb{R}_{+}^{\ast }$, there
exist a couple $(u_{0},u_{1})\in H_{0}^{1}(\Omega )\times L^{2}(\Omega ;B_{\#%
\mathcal{A}}^{1,2}(\mathbb{R}^{d}))$ and a subsequence $E^{\prime }$ of $E$
such that, as $E^{\prime }\ni \varepsilon \rightarrow 0$, 
\begin{equation*}
u_{\varepsilon }\rightarrow u_{0}\text{ in }H_{0}^{1}(\Omega )\text{-weak
and in }L^{2}(\Omega )\text{-strong}
\end{equation*}%
\begin{equation}
\nabla u_{\varepsilon }\rightarrow \nabla u_{0}+\nabla _{y}u_{1}\text{ in }%
L^{2}(\Omega )^{d}\text{-weak }\Sigma .\ \ \ \ \ \ \ \ \ \   \label{4.7'}
\end{equation}%
On the other hand 
\begin{equation}
\nabla \Phi _{\varepsilon }=\nabla \psi _{0}+(\nabla _{y}\psi
_{1})^{\varepsilon }+\varepsilon (\nabla \psi _{1})^{\varepsilon
}\rightarrow \nabla \psi _{0}+\nabla _{y}\psi _{1}\text{ in }L^{2}(\Omega
)^{d}\text{-strong }\Sigma .  \label{4.8'}
\end{equation}%
This yields in (\ref{4.6'}) the following limit problem 
\begin{equation}
\int_{\Omega }M\left( A(\nabla u_{0}+\nabla _{y}u_{1})\cdot (\nabla \psi
_{0}+\nabla _{y}\psi _{1})\right) dx=\int_{\Omega }f\psi _{0}dx\ \ \forall
(\psi _{0},\psi _{1})\in \mathcal{C}_{0}^{\infty }(\Omega )\times (\mathcal{C%
}_{0}^{\infty }(\Omega )\otimes \mathcal{A}^{\infty }).  \label{4.9'}
\end{equation}%
Problem (\ref{4.9'}) above is equivalent to the system 
\begin{equation}
\int_{\Omega }M\left( A(\nabla u_{0}+\nabla _{y}u_{1})\cdot \nabla \psi
_{0}\right) dx=\int_{\Omega }f\psi _{0}dx\ \ \forall \psi _{0}\in \mathcal{C}%
_{0}^{\infty }(\Omega )  \label{4.10'}
\end{equation}%
\begin{equation}
\int_{\Omega }M\left( A(\nabla u_{0}+\nabla _{y}u_{1})\cdot \nabla _{y}\psi
_{1}\right) dx=0\ \ \forall \psi _{1}\in \mathcal{C}_{0}^{\infty }(\Omega
)\otimes \mathcal{A}^{\infty }.  \label{4.11'}
\end{equation}%
Taking in (\ref{4.11'}) $\psi _{1}(x,y)=\varphi (x)v(y)$ with $\varphi \in 
\mathcal{C}_{0}^{\infty }(\Omega )$ and $v\in \mathcal{A}^{\infty }$, we get 
\begin{equation}
M\left( A(x,\cdot )(\nabla u_{0}+\nabla _{y}u_{1})\cdot \nabla _{y}v\right)
=0\ \ \forall v\in \mathcal{A}^{\infty },x\in \overline{\Omega },
\label{4.12'}
\end{equation}%
which is, thanks to the density of $\mathcal{A}^{\infty }$ in $B_{\mathcal{A}%
}^{1,2}(\mathbb{R}^{d})$, the weak form of 
\begin{equation}
\nabla _{y}\cdot \left( A(x,\cdot )(\nabla u_{0}+\nabla _{y}u_{1})\right) =0%
\text{ in }\mathbb{R}^{d}\text{ (for all fixed }x\in \overline{\Omega }\text{%
),}  \label{4.13'}
\end{equation}%
with respect to the duality defined by (\ref{4.12'}). So fix $\xi \in 
\mathbb{R}^{d}$ and consider the problem 
\begin{equation}
\nabla _{y}\cdot \left( A(x,\cdot )(\xi +\nabla _{y}v_{\xi }(x,\cdot
))\right) =0\text{ in }\mathbb{R}^{d};\ v_{\xi }(x,\cdot )\in B_{\#\mathcal{A%
}}^{1,2}(\mathbb{R}^{d}).  \label{4.14'}
\end{equation}%
Thanks to Theorem \ref{t4.1}, Eq. (\ref{4.14'}) possesses a unique solution $%
v_{\xi }$ (up to an additive constant depending on $x$) in $\mathcal{C}(%
\overline{\Omega };B_{\#\mathcal{A}}^{1,2}(\mathbb{R}^{d}))$. Choosing there 
$\xi =\nabla u_{0}(x)$, the uniqueness of the solution implies $%
u_{1}(x,y)=\chi (x,y)\cdot \nabla u_{0}(x)$ where $\chi =(\chi _{j})_{1\leq
j\leq d}$ with $\chi _{j}=v_{e_{j}}$, $e_{j}$ the $j$th vector of the
canonical basis of $\mathbb{R}^{d}$. Replacing in (\ref{4.10'}) $u_{1}$ by $%
\chi \cdot \nabla u_{0}$, we get 
\begin{equation*}
\int_{\Omega }(M(A(I+\nabla _{y}\chi )\nabla u_{0})\cdot \nabla \psi
_{0}dx=\int_{\Omega }f\psi _{0}dx\ \ \forall \psi _{0}\in \mathcal{C}%
_{0}^{\infty }(\Omega ),
\end{equation*}%
that is, $-\nabla \cdot A^{\ast }(x)\nabla u_{0}=f$ in $\Omega $.

It remains to verify (\ref{1.7}). Define $\Phi _{\varepsilon
}(x)=u_{0}(x)+\varepsilon u_{1}(x,x/\varepsilon )$. Then using (\ref{1.2})
we obtain 
\begin{align*}
\alpha \int_{\Omega }\left\vert \nabla u_{\varepsilon }-\nabla \Phi
_{\varepsilon }\right\vert ^{2}dx& \leq \int_{\Omega }A^{\varepsilon }\nabla
(u_{\varepsilon }-\Phi _{\varepsilon })\cdot \nabla (u_{\varepsilon }-\Phi
_{\varepsilon })dx \\
& =\int_{\Omega }f(u_{\varepsilon }-\Phi _{\varepsilon })dx-\int_{\Omega
}A^{\varepsilon }\nabla \Phi _{\varepsilon }\cdot \nabla (u_{\varepsilon
}-\Phi _{\varepsilon })dx.
\end{align*}%
Since $u_{1}\in L^{2}(\Omega ;\mathcal{A}^{1})$, we have that $\int_{\Omega
}f(u_{\varepsilon }-\Phi _{\varepsilon })dx\rightarrow 0$. Indeed $\Phi
_{\varepsilon }\rightarrow u_{0}$ in $L^{2}(\Omega )$ (and hence $%
u_{\varepsilon }-\Phi _{\varepsilon }\rightarrow 0$ in $L^{2}(\Omega )$).
Next observe that $\nabla \Phi _{\varepsilon }\rightarrow \nabla
u_{0}+\nabla _{y}u_{1}$ in $L^{2}(\Omega )$-strong $\Sigma $; in fact, $%
\nabla \Phi _{\varepsilon }=\nabla u_{0}+\varepsilon (\nabla
u_{1})^{\varepsilon }+(\nabla _{y}u_{1})^{\varepsilon }$, and since $\nabla
_{y}u_{1}\in L^{2}(\Omega ;\mathcal{A})$, we obtain $(\nabla
_{y}u_{1})^{\varepsilon }\rightarrow \nabla _{y}u_{1}$ in $L^{2}(\Omega )$%
-strong $\Sigma $. One gets readily $\nabla u_{\varepsilon }-\nabla \Phi
_{\varepsilon }\rightarrow 0$ in $L^{2}(\Omega )$-weak $\Sigma $. Using $A$
as a test function, $\int_{\Omega }A^{\varepsilon }\nabla \Phi _{\varepsilon
}\cdot \nabla (u_{\varepsilon }-\Phi _{\varepsilon })dx\rightarrow 0$. We
have just shown that $u_{\varepsilon }-u_{0}-\varepsilon u_{1}^{\varepsilon
}\rightarrow 0$ in $L^{2}(\Omega )$ and $\nabla (u_{\varepsilon
}-u_{0}-\varepsilon u_{1}^{\varepsilon })=\nabla u_{\varepsilon }-\nabla
\Phi _{\varepsilon }\rightarrow 0$ in $L^{2}(\Omega )$. This proves (\ref%
{1.7}) and completes the proof of Theorem \ref{t1.1}.
\end{proof}

We assume henceforth that the matrix $A$ does not depend on $x$, that is, $%
A(x,y)=A(y)$. Let $\chi _{T}=(\chi _{T,j})_{1\leq j\leq d}$ be defined by (%
\ref{e01}).

\begin{lemma}
\label{l11.1}Let $T\geq 1$ and $\sigma \in (0,1)$. Assume that $A\in (%
\mathcal{A})^{d\times d}$. There exist positive numbers $C=C(A,d)$ and $%
C_{\sigma }=C_{\sigma }(d,\sigma ,A)$ such that 
\begin{equation}
T^{-1}\left\Vert \chi _{T}\right\Vert _{L^{\infty }(\mathbb{R}^{d})}\leq C,
\label{e5.6}
\end{equation}%
\begin{equation}
\sup_{x\in \mathbb{R}^{d}}\left( 
\mathchoice {{\setbox0=\hbox{$\displaystyle{\textstyle -}{\int}$ } \vcenter{\hbox{$\textstyle -$
}}\kern-.6\wd0}}{{\setbox0=\hbox{$\textstyle{\scriptstyle -}{\int}$ } \vcenter{\hbox{$\scriptstyle -$
}}\kern-.6\wd0}}{{\setbox0=\hbox{$\scriptstyle{\scriptscriptstyle -}{\int}$
} \vcenter{\hbox{$\scriptscriptstyle -$
}}\kern-.6\wd0}}{{\setbox0=\hbox{$\scriptscriptstyle{\scriptscriptstyle
-}{\int}$ } \vcenter{\hbox{$\scriptscriptstyle -$ }}\kern-.6\wd0}}%
\!\int_{B_{r}(x)}\left\vert \nabla \chi _{T}\right\vert ^{2}dy\right) ^{%
\frac{1}{2}}\leq C_{\sigma }\left( \frac{T}{r}\right) ^{\sigma }\text{ for }%
0<r\leq T,  \label{e5.7}
\end{equation}%
\begin{equation}
\left\vert \chi _{T}(x)-\chi _{T}(y)\right\vert \leq C_{\sigma }T^{1-\sigma
}\left\vert x-y\right\vert ^{\sigma }\text{ for }\left\vert x-y\right\vert
\leq T.  \label{e5.8}
\end{equation}
\end{lemma}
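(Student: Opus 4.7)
\textbf{Plan of proof for Lemma~\ref{l11.1}.} I would establish the three estimates in sequence; the substantive work lies in (\ref{e5.7}), from which (\ref{e5.8}) is a standard consequence, while (\ref{e5.6}) is a scaling observation.

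For (\ref{e5.6}), the plan is a rescaling argument. Setting $w(x):=T^{-1}\chi_{T}(Tx)$, a direct computation from $-\nabla\cdot A(\nabla\chi_{T}+e_{j})+T^{-2}\chi_{T}=0$ shows
\[
-\nabla_{x}\cdot\bigl(A(Tx)\nabla_{x}w\bigr)+w=\nabla_{x}\cdot\bigl(A(Tx)e_{j}\bigr)\quad\text{in }\mathbb{R}^{d}.
\]
The rescaled coefficient matrix $A(T\cdot)$ is uniformly elliptic with the same constants $\alpha,\beta$, and the divergence-form source is uniformly bounded. Since the zero-order term has the right (positive) sign, the classical $L^{\infty}$ estimate (a global Moser iteration on $\mathbb{R}^{d}$, or the De Giorgi--Nash--Moser maximum principle for equations of this form) yields $\|w\|_{L^{\infty}}\leq C(d,\alpha,\beta)$, which is exactly (\ref{e5.6}).

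For (\ref{e5.7}), at the threshold scale $r=T$ the bound follows directly from the energy estimate (\ref{4.3}) applied with $h=0$ and $H=Ae_{j}$, giving $\sup_{z}\fint_{B_{T}(z)}|\nabla\chi_{T}|^{2}\leq C$. For smaller scales $r<T$, I would run a Campanato-type iteration patterned on property (\ref{04}). Fix $x_{0}$ and $R\leq T$ and split $\chi_{T}=v+\psi$ on $B_{R}(x_{0})$, where $v$ solves the homogeneous problem $\nabla\cdot(A(\nabla v+e_{j}))=0$ with boundary data $v=\chi_{T}$ on $\partial B_{R}(x_{0})$, and $\psi=\chi_{T}-v\in H_{0}^{1}(B_{R}(x_{0}))$ solves $-\nabla\cdot(A\nabla\psi)=-T^{-2}\chi_{T}$. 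Since $v+e_{j}\cdot(y-x_{0})$ is $A$-harmonic, applying (\ref{04}) gives the decay
\[
\fint_{B_{r}(x_{0})}|\nabla v+e_{j}|^{2}\leq C_{\sigma}(r/R)^{\sigma}\fint_{B_{R}(x_{0})}|\nabla v+e_{j}|^{2}
\]
for any $\sigma\in(0,1)$. Testing the equation for $\psi$ with $\psi$ itself, combined with Poincar\'e's inequality on $B_{R}(x_{0})$ and the $L^{\infty}$ bound (\ref{e5.6}), controls the perturbation by $\fint_{B_{R}(x_{0})}|\nabla\psi|^{2}\leq C(R/T)^{2}$. Summing these two contributions at each step and iterating geometrically from $R=T$ down to scale $r$ yields $(\fint_{B_{r}(x)}|\nabla\chi_{T}|^{2})^{1/2}\leq C_{\sigma}(T/r)^{\sigma}$ uniformly in $x\in\mathbb{R}^{d}$.

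For (\ref{e5.8}), I would invoke the Campanato characterization of H\"older spaces. Replacing $\sigma$ by $1-\sigma$ in (\ref{e5.7}) and combining with Poincar\'e's inequality gives
\[
\fint_{B_{r}(x)}\bigl|\chi_{T}-(\chi_{T})_{B_{r}(x)}\bigr|^{2}\leq Cr^{2}\fint_{B_{r}(x)}|\nabla\chi_{T}|^{2}\leq C_{\sigma}T^{2(1-\sigma)}r^{2\sigma}
\]
for $0<r\leq T$, and Campanato's embedding delivers $|\chi_{T}(x)-\chi_{T}(y)|\leq C_{\sigma}T^{1-\sigma}|x-y|^{\sigma}$ on the range $|x-y|\leq T$.

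\textbf{Main obstacle.} The delicate point is the iteration in Step 2: the decay (\ref{04}) is a feature of \emph{homogeneous} $A$-harmonic functions in the (asymptotic) almost periodic setting, and one has to absorb the inhomogeneity created by the massive term $T^{-2}\chi_{T}$ at each level of the iteration without degrading the exponent $\sigma$. Uniformity of the constants in $T$ across all scales $r\in(0,T]$ is what ultimately closes the argument, and this is where the sharp interplay between the bound (\ref{e5.6}) and the energy estimate (\ref{4.3}) is essential.
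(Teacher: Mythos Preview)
Your treatment of (\ref{e5.6}) is fine in spirit; note only that the ``global Moser iteration'' still requires a uniform local $L^2$ input, namely $\sup_z\fint_{B_2(z)}|w|^2\leq C$, which after undoing the rescaling is precisely the bound (\ref{4.3}). The paper does essentially the same thing without rescaling: it applies De Giorgi--Nash to $u(y)=\chi_{T,j}(y)+y_j-z_j$, which solves $\nabla\cdot(A\nabla u)=T^{-2}\chi_{T,j}$, on $B_T(z)$, feeding in (\ref{e5.9}).

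The genuine gap is in your Campanato iteration for (\ref{e5.7}). The decay you invoke,
\[
\fint_{B_r(x_0)}|\nabla v+e_j|^2\leq C_\sigma\Bigl(\frac{r}{R}\Bigr)^{\sigma}\fint_{B_R(x_0)}|\nabla v+e_j|^2,
\]
is \emph{false} for $A$-harmonic functions defined only on a ball: take $A=I_d$ and $w(y)=y_1$, so that $\fint_{B_r}|\nabla w|^2=1$ for every $r$, and the inequality fails as $r/R\to 0$. Property (\ref{04}) in the paper is asserted for the global corrector $\chi_j$ on all of $\mathbb{R}^d$ (it is a large-scale/Liouville-type statement drawn from \cite{Shen}), not as a local estimate applicable to your comparison function $v$ living only on $B_R(x_0)$. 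With the wrong decay at the heart of the iteration, the scheme does not close.

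The paper avoids this by reversing your order. It first proves (\ref{e5.8}) directly from the classical Schauder/De Giorgi--Nash H\"older estimate (\ref{e5.10}), applied with $v=\chi_{T,j}$, $h=T^{-2}\chi_{T,j}$, $H=Ae_j$ and $2R=T$, using only the $L^\infty$ bound (\ref{e5.6}). It then deduces (\ref{e5.7}) from (\ref{e5.8}) via Caccioppoli on $B_{2r}(x)$:
\[
\fint_{B_r(x)}|\nabla\chi_{T}|^2\leq Cr^{-2}\fint_{B_{2r}(x)}|\chi_{T}-\chi_{T}(x)|^2+C\leq C\bigl(T^{1-\sigma}r^{\sigma-1}\bigr)^2,
\]
followed by the substitution $\sigma\mapsto 1-\sigma$. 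Your route (\ref{e5.7})$\Rightarrow$(\ref{e5.8}) via Campanato embedding is correct, so the two estimates are equivalent once one is established; the point is that (\ref{e5.8}) is the one that falls out of standard \emph{local} regularity, whereas a correct Campanato iteration aimed directly at (\ref{e5.7}) would have to track oscillations of $\chi_T$ rather than averages of $|\nabla\chi_T+e_j|^2$, and that amounts to reproving (\ref{e5.10}).
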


\begin{proof}
Let us first check (\ref{e5.6}). From the inequality (\ref{4.3}), we deduce
that 
\begin{equation}
\sup_{z\in \mathbb{R}^{d},R\geq T}\left( 
\mathchoice {{\setbox0=\hbox{$\displaystyle{\textstyle -}{\int}$ } \vcenter{\hbox{$\textstyle -$
}}\kern-.6\wd0}}{{\setbox0=\hbox{$\textstyle{\scriptstyle -}{\int}$ } \vcenter{\hbox{$\scriptstyle -$
}}\kern-.6\wd0}}{{\setbox0=\hbox{$\scriptstyle{\scriptscriptstyle -}{\int}$
} \vcenter{\hbox{$\scriptscriptstyle -$
}}\kern-.6\wd0}}{{\setbox0=\hbox{$\scriptscriptstyle{\scriptscriptstyle
-}{\int}$ } \vcenter{\hbox{$\scriptscriptstyle -$ }}\kern-.6\wd0}}%
\!\int_{B_{R}(z)}\left\vert \chi _{T}\right\vert ^{2}\right) ^{\frac{1}{2}%
}\leq CT  \label{e5.9}
\end{equation}%
where $C$ depends only on $d$, $\alpha $ and $\beta $. Now fix $%
z=(z_{i})_{1\leq i\leq d}$ in $\mathbb{R}^{d}$ and define 
\begin{equation}
u(y)=\chi _{T,j}(y)+y_{j}-z_{j}\text{, }y\in \mathbb{R}^{d}.  \label{e5.11}
\end{equation}%
Then $u$ solves the equation 
\begin{equation}
\nabla \cdot (A\nabla u)=T^{-2}\chi _{T,j}\text{ in }\mathbb{R}^{d}.
\label{e5.12}
\end{equation}%
Using the De Giorgi-Nash estimates, we obtain 
\begin{eqnarray*}
\sup_{B_{T}(z)}\left\vert u\right\vert &\leq &C\left[ \left( 
\mathchoice {{\setbox0=\hbox{$\displaystyle{\textstyle -}{\int}$ } \vcenter{\hbox{$\textstyle -$
}}\kern-.6\wd0}}{{\setbox0=\hbox{$\textstyle{\scriptstyle -}{\int}$ } \vcenter{\hbox{$\scriptstyle -$
}}\kern-.6\wd0}}{{\setbox0=\hbox{$\scriptstyle{\scriptscriptstyle -}{\int}$
} \vcenter{\hbox{$\scriptscriptstyle -$
}}\kern-.6\wd0}}{{\setbox0=\hbox{$\scriptscriptstyle{\scriptscriptstyle
-}{\int}$ } \vcenter{\hbox{$\scriptscriptstyle -$ }}\kern-.6\wd0}}%
\!\int_{B_{2T}(z)}\left\vert u\right\vert ^{2}\right) ^{\frac{1}{2}%
}+T^{2}\left( 
\mathchoice {{\setbox0=\hbox{$\displaystyle{\textstyle -}{\int}$ } \vcenter{\hbox{$\textstyle -$
}}\kern-.6\wd0}}{{\setbox0=\hbox{$\textstyle{\scriptstyle -}{\int}$ } \vcenter{\hbox{$\scriptstyle -$
}}\kern-.6\wd0}}{{\setbox0=\hbox{$\scriptstyle{\scriptscriptstyle -}{\int}$
} \vcenter{\hbox{$\scriptscriptstyle -$
}}\kern-.6\wd0}}{{\setbox0=\hbox{$\scriptscriptstyle{\scriptscriptstyle
-}{\int}$ } \vcenter{\hbox{$\scriptscriptstyle -$ }}\kern-.6\wd0}}%
\!\int_{B_{2T}(z)}\left\vert T^{-2}\chi _{T,j}\right\vert ^{2}\right) ^{%
\frac{1}{2}}\right] \\
&\leq &CT+C\sup_{x\in \mathbb{R}^{d}}\left( 
\mathchoice {{\setbox0=\hbox{$\displaystyle{\textstyle -}{\int}$ } \vcenter{\hbox{$\textstyle -$
}}\kern-.6\wd0}}{{\setbox0=\hbox{$\textstyle{\scriptstyle -}{\int}$ } \vcenter{\hbox{$\scriptstyle -$
}}\kern-.6\wd0}}{{\setbox0=\hbox{$\scriptstyle{\scriptscriptstyle -}{\int}$
} \vcenter{\hbox{$\scriptscriptstyle -$
}}\kern-.6\wd0}}{{\setbox0=\hbox{$\scriptscriptstyle{\scriptscriptstyle
-}{\int}$ } \vcenter{\hbox{$\scriptscriptstyle -$ }}\kern-.6\wd0}}%
\!\int_{B_{2T}(x)}\left\vert \chi _{T,j}\right\vert ^{2}\right) ^{\frac{1}{2}%
}\leq CT
\end{eqnarray*}%
where $C=C(d,A)$. It follows that $\left\vert \chi _{T,j}(z)\right\vert \leq
CT$. Whence (\ref{e5.6}). Now, concerning (\ref{e5.8}), one uses Schauder
estimates: if $v\in H_{loc}^{1}(\mathbb{R}^{d})$ is a weak solution of $%
-\nabla \cdot (A\nabla v)=h+\nabla \cdot H$ in $B_{2R}(x_{0})$, then for
each $\sigma \in (0,1)$ and for all $x,y\in B_{R}(x_{0})$, 
\begin{eqnarray}
\left\vert v(x)-v(y)\right\vert &\leq &C\left\vert x-y\right\vert ^{\sigma } 
\left[ R^{-\sigma }\left( 
\mathchoice {{\setbox0=\hbox{$\displaystyle{\textstyle -}{\int}$ } \vcenter{\hbox{$\textstyle -$
}}\kern-.6\wd0}}{{\setbox0=\hbox{$\textstyle{\scriptstyle -}{\int}$ } \vcenter{\hbox{$\scriptstyle -$
}}\kern-.6\wd0}}{{\setbox0=\hbox{$\scriptstyle{\scriptscriptstyle -}{\int}$
} \vcenter{\hbox{$\scriptscriptstyle -$
}}\kern-.6\wd0}}{{\setbox0=\hbox{$\scriptscriptstyle{\scriptscriptstyle
-}{\int}$ } \vcenter{\hbox{$\scriptscriptstyle -$ }}\kern-.6\wd0}}%
\!\int_{B_{2R}(x_{0})}\left\vert v\right\vert ^{2}\right) ^{\frac{1}{2}%
}+\sup _{\substack{ z\in B_{R}(x_{0})  \\ 0<r<R}}r^{2-\sigma }\left( 
\mathchoice {{\setbox0=\hbox{$\displaystyle{\textstyle -}{\int}$ } \vcenter{\hbox{$\textstyle -$
}}\kern-.6\wd0}}{{\setbox0=\hbox{$\textstyle{\scriptstyle -}{\int}$ } \vcenter{\hbox{$\scriptstyle -$
}}\kern-.6\wd0}}{{\setbox0=\hbox{$\scriptstyle{\scriptscriptstyle -}{\int}$
} \vcenter{\hbox{$\scriptscriptstyle -$
}}\kern-.6\wd0}}{{\setbox0=\hbox{$\scriptscriptstyle{\scriptscriptstyle
-}{\int}$ } \vcenter{\hbox{$\scriptscriptstyle -$ }}\kern-.6\wd0}}%
\!\int_{B_{r}(z)}\left\vert h\right\vert ^{2}\right) ^{\frac{1}{2}}\right.
\label{e5.10} \\
&&\left. +\sup_{\substack{ z\in B_{R}(x_{0})  \\ 0<r<R}}r^{1-\sigma }\left( 
\mathchoice {{\setbox0=\hbox{$\displaystyle{\textstyle -}{\int}$ } \vcenter{\hbox{$\textstyle -$
}}\kern-.6\wd0}}{{\setbox0=\hbox{$\textstyle{\scriptstyle -}{\int}$ } \vcenter{\hbox{$\scriptstyle -$
}}\kern-.6\wd0}}{{\setbox0=\hbox{$\scriptstyle{\scriptscriptstyle -}{\int}$
} \vcenter{\hbox{$\scriptscriptstyle -$
}}\kern-.6\wd0}}{{\setbox0=\hbox{$\scriptscriptstyle{\scriptscriptstyle
-}{\int}$ } \vcenter{\hbox{$\scriptscriptstyle -$ }}\kern-.6\wd0}}%
\!\int_{B_{r}(z)}\left\vert H\right\vert ^{2}\right) ^{\frac{1}{2}}\right] 
\notag
\end{eqnarray}%
where $C=C(\sigma ,A)$ (see e.g. \cite{Giaquinta} or \cite[Theorem 3.4]{Shen}%
). Assume $x,y\in \mathbb{R}^{d}$ with $\left\vert x-y\right\vert \leq T$.
Applying (\ref{e5.10}) with $2R=T$, $h=T^{-2}\chi _{T,j}$, $H=Ae_{j}$, $%
v=\chi _{T,j}$ and $x_{0}=0$, 
\begin{eqnarray*}
\left\vert \chi _{T,j}(x)-\chi _{T,j}(y)\right\vert &\leq &C\left\vert
x-y\right\vert ^{\sigma }(T^{-\sigma }\left\Vert \chi _{T,j}\right\Vert
_{L^{\infty }}+T^{2-\sigma }\left\Vert T^{-2}\chi _{T,j}\right\Vert
_{L^{\infty }}+T^{1-\sigma }\left\Vert A\right\Vert _{L^{\infty }}) \\
&\leq &CT^{1-\sigma }\left\vert x-y\right\vert ^{\sigma },
\end{eqnarray*}%
where we have used (\ref{e5.6}) for the last inequality above. To obtain (%
\ref{e5.7}), we use Caccioppoli's inequality for $-\nabla \cdot (A\nabla
\chi _{T,j})+T^{-2}\chi _{T,j})=\nabla \cdot (Ae_{j})$ in $B_{2r}(x)$ and (%
\ref{e5.8}) to get 
\begin{eqnarray*}
\mathchoice {{\setbox0=\hbox{$\displaystyle{\textstyle -}{\int}$ } \vcenter{\hbox{$\textstyle -$
}}\kern-.6\wd0}}{{\setbox0=\hbox{$\textstyle{\scriptstyle -}{\int}$ } \vcenter{\hbox{$\scriptstyle -$
}}\kern-.6\wd0}}{{\setbox0=\hbox{$\scriptstyle{\scriptscriptstyle -}{\int}$
} \vcenter{\hbox{$\scriptscriptstyle -$
}}\kern-.6\wd0}}{{\setbox0=\hbox{$\scriptscriptstyle{\scriptscriptstyle
-}{\int}$ } \vcenter{\hbox{$\scriptscriptstyle -$ }}\kern-.6\wd0}}%
\!\int_{B_{r}(x)}\left\vert \nabla \chi _{T,j}(y)\right\vert ^{2}dy &\leq
&Cr^{-2}%
\mathchoice {{\setbox0=\hbox{$\displaystyle{\textstyle -}{\int}$ } \vcenter{\hbox{$\textstyle -$
}}\kern-.6\wd0}}{{\setbox0=\hbox{$\textstyle{\scriptstyle -}{\int}$ } \vcenter{\hbox{$\scriptstyle -$
}}\kern-.6\wd0}}{{\setbox0=\hbox{$\scriptstyle{\scriptscriptstyle -}{\int}$
} \vcenter{\hbox{$\scriptscriptstyle -$
}}\kern-.6\wd0}}{{\setbox0=\hbox{$\scriptscriptstyle{\scriptscriptstyle
-}{\int}$ } \vcenter{\hbox{$\scriptscriptstyle -$ }}\kern-.6\wd0}}%
\!\int_{B_{2r}(x)}\left\vert \chi _{T,j}(y)-\chi _{T,j}(x)\right\vert
^{2}dy+C%
\mathchoice {{\setbox0=\hbox{$\displaystyle{\textstyle -}{\int}$ } \vcenter{\hbox{$\textstyle -$
}}\kern-.6\wd0}}{{\setbox0=\hbox{$\textstyle{\scriptstyle -}{\int}$ } \vcenter{\hbox{$\scriptstyle -$
}}\kern-.6\wd0}}{{\setbox0=\hbox{$\scriptstyle{\scriptscriptstyle -}{\int}$
} \vcenter{\hbox{$\scriptscriptstyle -$
}}\kern-.6\wd0}}{{\setbox0=\hbox{$\scriptscriptstyle{\scriptscriptstyle
-}{\int}$ } \vcenter{\hbox{$\scriptscriptstyle -$ }}\kern-.6\wd0}}%
\!\int_{B_{2r}(x)}\left\vert A\right\vert ^{2}dy \\
&\leq &Cr^{-2}(T^{1-\sigma }r^{\sigma })^{2}+C\leq C\left( \frac{T^{1-\sigma
}}{r^{1-\sigma }}\right) ^{2}\text{ since }0<r\leq T\text{.}
\end{eqnarray*}%
(\ref{e5.7}) follows by replacing $\sigma $ by $1-\sigma $. This finishes
the proof.
\end{proof}

The next result will be used in the forthcoming sections. It involves
Green's function $G:\mathbb{R}^{d}\times \mathbb{R}^{d}\rightarrow \mathbb{R}
$ solution of 
\begin{equation}
-\nabla _{x}\cdot \left( A(x)\nabla _{x}G(x,y)\right) =\delta _{y}(x)\text{
in }\mathbb{R}^{d}.  \label{10.1}
\end{equation}%
The properties of the function $G$ require the definition of the weak-$L^{2}$
space denoted by $L^{2,\infty }(\mathbb{R}^{d})$ (see \cite[Chapter 1]{BL76}
for its definition) together with its topological dual denoted by $L^{2,1}(%
\mathbb{R}^{d})$ (see \cite{Tar07} for its definition).

\begin{proposition}
\label{p10.1}Assume the matrix $A\in L^{\infty }(\mathbb{R}^{d})^{d\times d}$
is uniformly elliptic (see \emph{(\ref{1.2})}) and symmetric. Then equation 
\emph{(\ref{10.1})} has a unique solution in $L^{\infty }(\mathbb{R}%
_{y}^{d};W_{loc}^{1,1}(\mathbb{R}_{x}^{d}))$ satisfying:

\begin{itemize}
\item[(i)] $G(\cdot ,y)\in W_{loc}^{1,2}(\mathbb{R}^{d}\backslash \{y\})$
for all $y\in \mathbb{R}^{d};$

\item[(ii)] There exists $C=C(d)>0$ such that%
\begin{equation}
\left\Vert \nabla _{y}G(x,\cdot )\right\Vert _{L^{2,\infty }(\mathbb{R}%
^{d})}\leq C,  \label{10.2}
\end{equation}%
\begin{equation}
\left\vert G(x,y)\right\vert \leq \left\{ 
\begin{array}{l}
C(1+\left\vert \log \left\vert x-y\right\vert \right\vert )\text{ if }d=2 \\ 
C\left\vert x-y\right\vert ^{2-d}\text{ if }d\geq 3%
\end{array}%
\right. \text{, all }x,y\in \mathbb{R}^{d}\text{ with }x\neq y,  \label{10.3}
\end{equation}%
\begin{equation}
\int_{B_{2R}(x)\backslash B_{R}(x)}\left\vert \nabla _{y}G(x,y)\right\vert
^{q}dy\leq \frac{C}{R^{N(q-1)-q}}\text{ for all }R>0\text{ and }1\leq q\leq
2.  \label{2.8}
\end{equation}%
\noindent If $A$ has H\"{o}lder continuous entries, then for $d\geq 3$ and
for all $x,y\in \mathbb{R}^{d}$ with $x\neq y,$%
\begin{equation}
\left\vert \nabla _{y}G(x,y)\right\vert \leq C\left\vert x-y\right\vert
^{1-d}.  \label{10.4}
\end{equation}
\end{itemize}
\end{proposition}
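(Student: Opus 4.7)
\textbf{Proof plan for Proposition \ref{p10.1}.} I would build $G$ by the classical Littman--Stampacchia--Weinberger approximation: for each $y \in \mathbb{R}^d$ and $\rho > 0$, let $G_\rho(\cdot,y)$ solve
\begin{equation*}
-\nabla_x \cdot (A(x)\nabla_x G_\rho(x,y)) = |B_\rho(y)|^{-1}\chi_{B_\rho(y)}(x) \quad \text{in } \mathbb{R}^d.
\end{equation*}
When $d \geq 3$ I would realize this problem in the homogeneous space $\dot H^1(\mathbb{R}^d)$ via the Lax--Milgram lemma (coercivity coming from ellipticity (\ref{1.2}) combined with the Sobolev embedding $\dot H^1 \hookrightarrow L^{2d/(d-2)}$). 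For $d=2$, where that embedding fails, I would first solve the problem on a ball $B_R$ with zero Dirichlet data, renormalize by subtracting the average on an annulus $B_{2R}(y) \setminus B_R(y)$, and then send $R \to \infty$. Uniqueness in $L^\infty_y W^{1,1}_{x,\mathrm{loc}}$ with the asserted decay follows from an energy argument using ellipticity and the decay class.

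The key bounds follow from the Stampacchia truncation method applied to $G_\rho$. Testing the approximate equation with truncations $(G_\rho - t)_+$ and using Sobolev's inequality gives a decay estimate for the level sets: $|\{|G_\rho(\cdot,y)| > t\}| \leq C t^{-d/(d-2)}$ for $d \geq 3$ (respectively the logarithmic analogue for $d=2$). Combined with Moser's local boundedness applied on balls $B_{|x-y|/2}(x)$ disjoint from the source, this yields the pointwise bound (\ref{10.3}) uniformly in $\rho$. The same distribution-function computation applied to $\nabla G_\rho(\cdot,y)$ delivers $|\{|\nabla_y G_\rho(x,\cdot)| > t\}| \leq C t^{-2}$, which is precisely (\ref{10.2}). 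A weak-$\star$ compactness argument in $L^{2,\infty}$ combined with local weak convergence in $W^{1,2}_{\mathrm{loc}}(\mathbb{R}^d \setminus \{y\})$ then lets me pass to $\rho \to 0$ to obtain $G$ with properties (i) and (ii).

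The annulus estimate (\ref{2.8}) I would obtain directly from (\ref{10.3}): on $B_{2R}(x) \setminus B_R(x)$ the function $G(x,\cdot)$ solves the homogeneous equation $-\nabla_y\cdot(A(y)\nabla_y G(x,\cdot))=0$, so Caccioppoli's inequality on a slightly larger annulus yields
\begin{equation*}
\int_{B_{2R}(x)\setminus B_R(x)} |\nabla_y G(x,y)|^2 dy \leq \frac{C}{R^2} \int_{B_{5R/2}(x)\setminus B_{R/2}(x)} |G(x,y)|^2 dy \leq C R^{-(d-2)}\cdot R^{d-4}\cdot R^d,
\end{equation*}
which matches (\ref{2.8}) for $q=2$; the general range $1 \leq q \leq 2$ follows from H\"older's inequality combined with the measure of the annulus. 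Finally, under H\"older continuity of $A$, estimate (\ref{10.4}) follows from Schauder's interior $C^{1,\alpha}$ estimate applied to $G(x,\cdot)$ on $B_{|x-y|/2}(y)$, where (\ref{10.3}) controls the $L^\infty$ norm and hence $|\nabla_y G(x,y)| \leq C|x-y|^{-1}\sup_{B_{|x-y|/2}(y)}|G(x,\cdot)| \leq C|x-y|^{1-d}$.

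The principal obstacle is the derivation of the pointwise size bound (\ref{10.3}) and the weak-$L^2$ gradient bound (\ref{10.2}) with only bounded measurable coefficients, since there is no explicit fundamental solution to lean on; one must delicately combine the Stampacchia truncation/distribution-function machinery with De Giorgi--Nash--Moser regularity, and treat $d=2$ separately because the standard Sobolev embedding is unavailable, forcing the renormalized construction and the weaker logarithmic size bound.
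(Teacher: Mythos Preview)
Your proof strategy is sound and essentially reconstructs what the cited references do. Note, however, that the paper does not actually prove Proposition~\ref{p10.1}: it simply records the result and defers to the literature, writing that properties (\ref{10.3}) and (\ref{10.4}) are classical (citing Gr\"uter--Widman \cite{Widman}) and that (\ref{2.8}) is proved in \cite[Lemma~4.2]{Lebris}. So there is nothing to compare against beyond the observation that your Littman--Stampacchia--Weinberger/Gr\"uter--Widman outline is precisely the machinery those references develop, and your derivation of (\ref{2.8}) via Caccioppoli on annuli plus H\"older interpolation is the same argument as in \cite{Lebris}.

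One small cosmetic point: in your display for (\ref{2.8}) the chain of exponents $R^{-(d-2)}\cdot R^{d-4}\cdot R^d$ is garbled (it sums to $R^{d-2}$ rather than $R^{2-d}$). The correct bookkeeping is $R^{-2}\cdot R^{2(2-d)}\cdot R^{d} = R^{2-d}$, which does match (\ref{2.8}) at $q=2$ once one reads $N$ there as $d$. Also, your application of Caccioppoli in the $y$-variable implicitly uses the symmetry $G(x,y)=G(y,x)$ (valid because $A$ is symmetric) to know that $G(x,\cdot)$ solves the homogeneous equation away from $x$; it would be worth stating this explicitly.
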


Properties (\ref{10.3}) and (\ref{10.4}) are classical; see e.g. \cite[%
Theorems 1.1 and 3.3]{Widman}. (\ref{2.8}) is proved in \cite[Lemma 4.2]%
{Lebris}.

\section{Approximation of homogenized coefficients: quantitative estimates}

To simplify the presentation of the results, we assume from now on that $%
A(x,y)=A(y)$. We henceforth denote the mean value by $\left\langle \cdot
\right\rangle $.

\subsection{Approximation by Dirichlet problem}

In the preceding section, we saw that the corrector problem is posed on the
whole of $\mathbb{R}^{d}$. However, if the coefficients of our problem are
periodic (say the function $y\mapsto A(y)$ is $Y$-periodic ($%
Y=(-1/2,1/2)^{d} $), then this problem reduces to another one posed on the
bounded subset $Y$ of $\mathbb{R}^{d}$, and this yields coefficients that
are computable. Contrasting with the periodic setting, the corrector problem
in the general deterministic framework cannot be reduced to a problem on a
bounded domain. Therefore, truncations must be considered, particularly on
large domains like $Q_{R}$ (the closed cube centered at the origin and of
side length $R$) with appropriate boundary conditions. We proceed exactly as
in the random setting (see \cite{BP2004}). We consider the equation 
\begin{equation}
-\nabla _{y}\cdot \left( A(e_{j}+\nabla _{y}\chi _{j,R})\right) =0\text{ in }%
Q_{R},\ \ \chi _{j,R}\in H_{0}^{1}(Q_{R}),  \label{3.3}
\end{equation}%
which possesses a unique solution satisfying 
\begin{equation}
\left( 
\mathchoice {{\setbox0=\hbox{$\displaystyle{\textstyle -}{\int}$ } \vcenter{\hbox{$\textstyle -$
}}\kern-.6\wd0}}{{\setbox0=\hbox{$\textstyle{\scriptstyle -}{\int}$ } \vcenter{\hbox{$\scriptstyle -$
}}\kern-.6\wd0}}{{\setbox0=\hbox{$\scriptstyle{\scriptscriptstyle -}{\int}$
} \vcenter{\hbox{$\scriptscriptstyle -$
}}\kern-.6\wd0}}{{\setbox0=\hbox{$\scriptscriptstyle{\scriptscriptstyle
-}{\int}$ } \vcenter{\hbox{$\scriptscriptstyle -$ }}\kern-.6\wd0}}%
\!\int_{Q_{R}}\left\vert \nabla _{y}\chi _{j,R}\right\vert ^{2}dy\right) ^{%
\frac{1}{2}}\leq C\text{ for any }R\geq 1  \label{i}
\end{equation}%
where $C$ is independent of $R$. Set $\chi _{R}=(\chi _{j,R})_{1\leq j\leq
d} $. We define the effective and approximate effective matrices $A^{\ast }$
and $A_{R}^{\ast }$ respectively, as follows 
\begin{equation}
A^{\ast }=\left\langle A(I+\nabla _{y}\chi )\right\rangle \text{ and }%
A_{R}^{\ast }=%
\mathchoice {{\setbox0=\hbox{$\displaystyle{\textstyle
-}{\int}$ } \vcenter{\hbox{$\textstyle -$
}}\kern-.6\wd0}}{{\setbox0=\hbox{$\textstyle{\scriptstyle -}{\int}$ } \vcenter{\hbox{$\scriptstyle -$
}}\kern-.6\wd0}}{{\setbox0=\hbox{$\scriptstyle{\scriptscriptstyle -}{\int}$
} \vcenter{\hbox{$\scriptscriptstyle -$
}}\kern-.6\wd0}}{{\setbox0=\hbox{$\scriptscriptstyle{\scriptscriptstyle
-}{\int}$ } \vcenter{\hbox{$\scriptscriptstyle -$ }}\kern-.6\wd0}}%
\!\int_{Q_{R}}A(y)(I+\nabla _{y}\chi _{R}(y))dy.  \label{eq5}
\end{equation}

\begin{theorem}
\label{t3.1}The generalized sequence of matrices $A_{R}^{\ast }$ converges,
as $R\rightarrow \infty $, to the homogenized matrix $A^{\ast }$.
\end{theorem}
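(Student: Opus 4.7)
The plan is to rescale the cell problem (\ref{3.3}) onto the unit cube so that $A_R^{\ast}$ becomes the macroscopic flux of a family of solutions to a standard deterministic homogenization problem, and then to identify the two-scale limit via Theorems \ref{t1.1} and \ref{t4.1}.

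First I would rescale. With $\varepsilon = 1/R$, $Q = (-1/2, 1/2)^d$, and
$$w_j^{\varepsilon}(x) = \varepsilon \chi_{j,R}(x/\varepsilon), \quad v_j^{\varepsilon}(x) = w_j^{\varepsilon}(x) + x_j, \quad x \in Q,$$
a direct computation shows that $w_j^{\varepsilon} \in H_0^1(Q)$ and that $v_j^{\varepsilon}$ solves $-\nabla \cdot (A^{\varepsilon} \nabla v_j^{\varepsilon}) = 0$ in $Q$ with $v_j^{\varepsilon} = x_j$ on $\partial Q$, where $A^{\varepsilon}(x) = A(x/\varepsilon)$. The change of variables $y = x/\varepsilon$ turns (\ref{i}) into the uniform $L^2$ bound $\Vert \nabla w_j^{\varepsilon}\Vert_{L^2(Q)} \leq C$ and yields the key identity
$$A_R^{\ast} e_j = \int_Q A^{\varepsilon}(x)(e_j + \nabla w_j^{\varepsilon}(x))\, dx = \int_Q A^{\varepsilon} \nabla v_j^{\varepsilon}\, dx.$$

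Next I would pass to the two-scale limit. By the compactness of weak $\Sigma$-convergence recalled in the introduction, along a subsequence $\varepsilon \to 0$ one has $v_j^{\varepsilon} \to v_{0,j}$ in $H^1(Q)$-weak and $\nabla v_j^{\varepsilon} \to \nabla v_{0,j} + \nabla_y u_1^j$ in $L^2(Q)^d$-weak $\Sigma$ for some $(v_{0,j}, u_1^j) \in H^1(Q) \times L^2(Q; B_{\#\mathcal{A}}^{1,2}(\mathbb{R}^d))$, and the trace $v_{0,j} = x_j$ on $\partial Q$ is preserved under weak $H^1$ convergence. Testing the equation against $\Phi_{\varepsilon} = \psi_0 + \varepsilon \psi_1^{\varepsilon}$ with $\psi_0 \in \mathcal{C}_0^{\infty}(Q)$ and $\psi_1 \in \mathcal{C}_0^{\infty}(Q) \otimes \mathcal{A}^{\infty}$, and repeating the passage to the limit from the proof of Theorem \ref{t1.1}, produces the two-scale system. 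Its microscopic component together with Theorem \ref{t4.1} and linearity forces $u_1^j(x, y) = \chi(y) \cdot \nabla v_{0,j}(x)$, while its macroscopic component reads $-\nabla \cdot (A^{\ast} \nabla v_{0,j}) = 0$ in $Q$ with $v_{0,j} = x_j$ on $\partial Q$; by uniqueness $v_{0,j}(x) = x_j$, so $\nabla v_{0,j} = e_j$ and $u_1^j(x,y) = \chi_j(y)$.

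To conclude, since $A \in B_{\mathcal{A}}^2(\mathbb{R}^d)^{d \times d} \cap L^{\infty}(\mathbb{R}^d)^{d\times d}$ is independent of $x$, a direct verification from the definitions gives $A^{\varepsilon} \to A$ in $L^2(Q)^{d\times d}$-strong $\Sigma$; combined with the weak $\Sigma$-convergence of $\nabla v_j^{\varepsilon}$ via the product property of $\Sigma$-convergence listed in the introduction, one obtains $A^{\varepsilon} \nabla v_j^{\varepsilon} \to A(e_j + \nabla_y \chi_j)$ in $L^1(Q)^d$-weak $\Sigma$. Testing against the constant function $1$ in the identity from the rescaling step yields
$$A_R^{\ast} e_j \longrightarrow \int_Q M(A(e_j + \nabla_y \chi_j))\, dx = M(A(e_j + \nabla_y \chi_j)) = A^{\ast} e_j,$$
and since this limit is independent of the extracted subsequence, the full net $(A_R^{\ast})_R$ converges to $A^{\ast}$. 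The main subtlety I expect lies in the two-scale limit step: Theorem \ref{t1.1} is written for the homogeneous Dirichlet problem with $L^2$ forcing, whereas the auxiliary problem here has zero forcing and non-homogeneous data $v_j^{\varepsilon} = x_j$. The limit argument transfers unchanged because it only uses $H_0^1(Q)$ test functions, and the correct boundary value of $v_{0,j}$ comes from weak $H^1$ convergence; the crucial input that makes the subsequential limit independent of the chosen subsequence is the uniqueness of the corrector in $B_{\#\mathcal{A}}^{1,2}(\mathbb{R}^d)$ granted by Theorem \ref{t4.1}.
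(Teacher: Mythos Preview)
Your proposal is correct and follows essentially the same route as the paper: rescale the corrector problem on $Q_R$ to a fixed cube, invoke the $\Sigma$-convergence compactness from the introduction, identify the two-scale limit through the corrector uniqueness of Theorem \ref{t4.1}, deduce that the macroscopic limit is affine (so the flux limit is $A^{\ast}e_j$), and conclude by the uniqueness of the limit that the whole sequence converges. The only cosmetic differences are that the paper works directly with $w_j^R\in H_0^1$ rather than the shifted $v_j^{\varepsilon}=w_j^{\varepsilon}+x_j$, and that the paper obtains the flux convergence $A_R(e_j+\nabla w_j^R)\to A^{\ast}(e_j+\nabla w_j)$ weakly in $L^2$ by testing against $A(y)\Phi(x)$, whereas you pass through the strong/weak $\Sigma$ product rule in $L^1$; both arguments are equivalent here.
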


\begin{proof}
We set, for $x\in Q_{1}$, $w_{j}^{R}(x)=\frac{1}{R}\chi _{j,R}(Rx)$, $%
A_{R}(x)=A(Rx)$ and consider the re-scaled version of (\ref{3.3}) whose $%
w_{j}^{R}$ is solution. It reads as 
\begin{equation}
-\nabla \cdot (A_{R}(e_{j}+\nabla w_{j}^{R}))=0\text{ in }Q_{1}\text{, \ }%
w_{j}^{R}=0\text{ on }\partial Q_{1}.  \label{3.6}
\end{equation}%
Then (\ref{3.6}) possesses a unique solution $w_{j}^{R}\in H_{0}^{1}(Q_{1})$
satisfying the estimate 
\begin{equation}
\left\Vert \nabla w_{j}^{R}\right\Vert _{L^{2}(Q_{1})}\leq C\ \ \ (1\leq
j\leq d)  \label{3.7}
\end{equation}%
where $C>0$ is independent of $R>0$. Proceeding as in the proof of Theorem %
\ref{t1.1}, we derive the existence of $w_{j}\in H_{0}^{1}(Q_{1})$ and $%
w_{j,1}\in L^{2}(Q_{1};B_{\#\mathcal{A}}^{1,2}(\mathbb{R}^{d}))$ such that,
up to a subsequence not relabeled, 
\begin{equation}
w_{j}^{R}\rightarrow w_{j}\text{ in }H_{0}^{1}(Q_{1})\text{-weak and }\nabla
w_{j}^{R}\rightarrow \nabla w_{j}+\nabla _{y}w_{j,1}\text{ in }%
L^{2}(Q_{1})^{d}\text{-weak }\Sigma  \label{4.00}
\end{equation}%
and the couple $(w_{j},w_{j,1})$ solves the equation 
\begin{equation}
\int_{Q_{1}}\left\langle A(e_{j}+\nabla w_{j}+\nabla _{y}w_{j,1})\cdot
(\nabla \psi _{0}+\nabla _{y}\psi _{1})\right\rangle dx=0\ \ \forall (\psi
_{0},\psi _{1})\in \mathcal{C}_{0}^{\infty }(Q_{1})\times (\mathcal{C}%
_{0}^{\infty }(Q_{1})\otimes \mathcal{A}^{\infty }),  \label{4.100}
\end{equation}%
which can be rewritten in the following equivalent form (\ref{4.101})-(\ref%
{4.102}) 
\begin{equation}
\int_{Q_{1}}\left\langle A(e_{j}+\nabla w_{j}+\nabla
_{y}w_{j,1})\right\rangle \cdot \nabla \psi _{0}dx=0\ \ \forall \psi _{0}\in 
\mathcal{C}_{0}^{\infty }(Q_{1})  \label{4.101}
\end{equation}%
and 
\begin{equation}
\left\langle A(e_{j}+\nabla w_{j}+\nabla _{y}w_{j,1})\cdot \nabla
_{y}v\right\rangle dx=0\ \ \forall v\in \mathcal{A}^{\infty }.  \label{4.102}
\end{equation}%
To solve (\ref{4.102}), we consider its weak distributional form 
\begin{equation}
\nabla _{y}\cdot \left( A(e_{j}+\nabla w_{j}+\nabla _{y}w_{j,1})\right) =0%
\text{ in }\mathbb{R}^{d}\text{.}  \label{4.103}
\end{equation}%
So fix $\xi \in \mathbb{R}^{d}$ and consider the problem 
\begin{equation}
\nabla _{y}\cdot \left( A(e_{j}+\xi +\nabla _{y}\pi _{j}(\xi )\right) =0%
\text{ in }\mathbb{R}^{d};\ \pi _{j}(\xi )\in B_{\#\mathcal{A}}^{1,2}(%
\mathbb{R}^{d}).  \label{4.104}
\end{equation}%
Then $\pi _{j}(\xi )$ has the form $\pi _{j}(\xi )=\chi _{j}+\theta _{j}(\xi
)$ where $\chi _{j}$ is the solution of the corrector problem (\ref{1.6})
and $\theta _{j}(\xi )$ solves the equation 
\begin{equation}
\nabla _{y}\cdot \left( A(\xi +\nabla _{y}\theta _{j}(\xi )\right) =0\text{
in }\mathbb{R}^{d};\ \theta _{j}(\xi )\in B_{\#\mathcal{A}}^{1,2}(\mathbb{R}%
^{d}),  \label{4.105}
\end{equation}%
that is, $\theta _{j}(\xi )=\xi \cdot \chi $ where $\chi =(\chi _{k})_{1\leq
k\leq d}$ with $\chi _{k}$ being the solution of (\ref{1.6}) corresponding
to $j=k$ therein. It follows that $\pi _{j}(\xi )=\chi _{j}+\xi \cdot \chi $%
, so that the function $w_{j,1}$, which corresponds to $\pi _{j}(\nabla
w_{j})$, has the form $w_{j,1}=\chi _{j}+\chi \cdot \nabla w_{j}$. Coming
back to (\ref{4.101}) and replacing there $w_{j,1}$ by $\chi _{j}+\chi \cdot
\nabla w_{j}$, we obtain 
\begin{equation}
\int_{Q_{1}}\left\langle A(I+\nabla _{y}\chi )\right\rangle (e_{j}+\nabla
w_{j})\cdot \nabla \psi _{0}dx=0\ \ \forall \psi _{0}\in \mathcal{C}%
_{0}^{\infty }(Q_{1})\text{.}  \label{4.106}
\end{equation}%
This shows that $w_{j}\in H_{0}^{1}(Q_{1})$ solves uniquely the equation 
\begin{equation}
-\nabla \cdot (A^{\ast }(e_{j}+\nabla w_{j}))=0\text{ in }Q_{1}\text{,}
\label{3.9}
\end{equation}%
and further we have, as $R\rightarrow \infty $, 
\begin{equation}
A_{R}(e_{j}+\nabla w_{j}^{R})\rightarrow A^{\ast }(e_{j}+\nabla w_{j})\text{
in }L^{2}(Q_{1})^{d}\text{-weak.}  \label{3.10}
\end{equation}%
To see (\ref{3.10}), we observe that the sequence $(A_{R}(e_{j}+\nabla
w_{j}^{R}))_{R}$ is bounded in $L^{2}(Q_{1})^{d}$ and we choose a test
function $\Phi \in \mathcal{C}_{0}^{\infty }(Q_{1})^{d}$; then by the
sigma-convergence (where we take $A(y)\Phi (x)$ as a test function) we have
from the second convergence result in (\ref{4.00}) that 
\begin{eqnarray*}
\int_{Q_{1}}A_{R}(e_{j}+\nabla w_{j}^{R})\cdot \Phi dx &\rightarrow
&\int_{Q_{1}}\left\langle A(e_{j}+\nabla w_{j}+\nabla _{y}w_{j,1})\cdot \Phi
\right\rangle dx \\
&=&\int_{Q_{1}}\left\langle A(e_{j}+\nabla w_{j}+\nabla
_{y}w_{j,1})\right\rangle \cdot \Phi dx.
\end{eqnarray*}%
But according to (\ref{4.106}), we see that 
\begin{equation*}
\left\langle A(e_{j}+\nabla w_{j}+\nabla _{y}w_{j,1})\right\rangle
=\left\langle A(I+\nabla _{y}\chi )\right\rangle (e_{j}+\nabla
w_{j})=A^{\ast }(e_{j}+\nabla w_{j}).
\end{equation*}%
Now, since (\ref{3.9}) has the form $-\nabla \cdot (A^{\ast }\nabla w_{j})=0$
in $Q_{1}$, ($A^{\ast }$ has constant entries) we infer from the ellipticity
property of $A^{\ast }$ and the uniqueness of the solution to $-\nabla \cdot
(A^{\ast }\nabla w_{j})=0$ in $H_{0}^{1}(Q_{1})$ that $w=(w_{1},...,w_{d})=0$%
. Hence the whole sequence $(w_{j}^{R})_{R}$ weakly converges towards $0$ in 
$H_{0}^{1}(Q_{1})$. Therefore, integrating (\ref{3.10}) over $Q_{1}$, we
readily get (denoting $w^{R}=(w_{1}^{R},...,w_{d}^{R})$)%
\begin{equation*}
A_{R}^{\ast }=%
\mathchoice {{\setbox0=\hbox{$\displaystyle{\textstyle
-}{\int}$ } \vcenter{\hbox{$\textstyle -$
}}\kern-.6\wd0}}{{\setbox0=\hbox{$\textstyle{\scriptstyle -}{\int}$ } \vcenter{\hbox{$\scriptstyle -$
}}\kern-.6\wd0}}{{\setbox0=\hbox{$\scriptstyle{\scriptscriptstyle -}{\int}$
} \vcenter{\hbox{$\scriptscriptstyle -$
}}\kern-.6\wd0}}{{\setbox0=\hbox{$\scriptscriptstyle{\scriptscriptstyle
-}{\int}$ } \vcenter{\hbox{$\scriptscriptstyle -$ }}\kern-.6\wd0}}%
\!\int_{Q_{1}}A(I+\nabla w^{R})dx\rightarrow 
\mathchoice {{\setbox0=\hbox{$\displaystyle{\textstyle -}{\int}$ } \vcenter{\hbox{$\textstyle -$
}}\kern-.6\wd0}}{{\setbox0=\hbox{$\textstyle{\scriptstyle -}{\int}$ } \vcenter{\hbox{$\scriptstyle -$
}}\kern-.6\wd0}}{{\setbox0=\hbox{$\scriptstyle{\scriptscriptstyle -}{\int}$
} \vcenter{\hbox{$\scriptscriptstyle -$
}}\kern-.6\wd0}}{{\setbox0=\hbox{$\scriptscriptstyle{\scriptscriptstyle
-}{\int}$ } \vcenter{\hbox{$\scriptscriptstyle -$ }}\kern-.6\wd0}}%
\!\int_{Q_{1}}A^{\ast }(I+\nabla w)dx=A^{\ast }
\end{equation*}%
as $R\rightarrow \infty $, where $I$ is the $d\times d$ identity matrix.
This completes the proof.
\end{proof}

\subsection{Quantitative estimates}

We study the rate of convergence for the approximation scheme of the
previous subsection, under the assumption that the corrector lies in $B_{%
\mathcal{A}}^{2}(\mathbb{R}^{d})$. To this end, instead of considering the
corrector problem (\ref{1.6}) we rather consider its regularized version (%
\ref{4.2}) which we recall here below: 
\begin{equation*}
-\nabla \cdot A(y)(e_{j}+\nabla \chi _{T,j})+T^{-2}\chi _{T,j}=0\text{ in }%
\mathbb{R}^{d}.
\end{equation*}%
We define the regularized homogenized matrix by 
\begin{equation}
A_{T}^{\ast }=\left\langle A(I+\nabla \chi _{T})\right\rangle ,\ \ \chi
_{T}=(\chi _{T,j})_{1\leq j\leq d}  \label{3.11}
\end{equation}%
Recalling that the homogenized matrix has the form $A^{\ast }=\left\langle
A(I+\nabla \chi )\right\rangle $, we show in (\ref{3.19}) below that $%
\left\vert A^{\ast }-A_{T}^{\ast }\right\vert \leq CT^{-1}$, so that $%
A_{T}^{\ast }\rightarrow A^{\ast }$ as $T\rightarrow \infty $.

With this in mind, we define the approximate regularized coefficients 
\begin{equation}
A_{R,T}^{\ast }=%
\mathchoice {{\setbox0=\hbox{$\displaystyle{\textstyle
-}{\int}$ } \vcenter{\hbox{$\textstyle -$
}}\kern-.6\wd0}}{{\setbox0=\hbox{$\textstyle{\scriptstyle -}{\int}$ } \vcenter{\hbox{$\scriptstyle -$
}}\kern-.6\wd0}}{{\setbox0=\hbox{$\scriptstyle{\scriptscriptstyle -}{\int}$
} \vcenter{\hbox{$\scriptscriptstyle -$
}}\kern-.6\wd0}}{{\setbox0=\hbox{$\scriptscriptstyle{\scriptscriptstyle
-}{\int}$ } \vcenter{\hbox{$\scriptscriptstyle -$ }}\kern-.6\wd0}}%
\!\int_{Q_{R}}A(I+\nabla \chi _{T}^{R}),\ \ \chi _{T}^{R}=(\chi
_{T,j}^{R})_{1\leq j\leq d}  \label{3.12}
\end{equation}%
where $\chi _{T,j}^{R}$ (the regularized approximate corrector) solves the
problem 
\begin{equation}
-\nabla \cdot A(e_{j}+\nabla \chi _{T,j}^{R})+T^{-2}\chi _{T,j}^{R}=0\text{
in }Q_{R},\ \chi _{T,j}^{R}\in H_{0}^{1}(Q_{R}).  \label{3.13}
\end{equation}%
Then 
\begin{equation*}
A_{R,T}^{\ast }\underset{(\ast )}{\overset{R\rightarrow \infty }{\rightarrow 
}}A_{T}^{\ast }\underset{(\ast \ast )}{\overset{T\rightarrow \infty }{%
\rightarrow }}A^{\ast }.
\end{equation*}%
Convergence ($\ast \ast $) will result from (\ref{3.19}) below, while for
convergence ($\ast $), we proceed exactly as in the proof of Theorem \ref%
{t3.1}.

The aim here is to estimate the expression $\left\vert A^{\ast
}-A_{R,T}^{\ast }\right\vert $ in terms of $R$ and $T$, and next take $R=T$
to get the suitable rate of convergence. The following theorem is the main
result of this section.

\begin{theorem}
\label{t3.2}Suppose $\chi \in B_{\mathcal{A}}^{2}(\mathbb{R}^{d})^{d}$. Let $%
\delta \in (0,1)$. There exist $C=C(d,\delta ,A)$ and a continuous function $%
\eta _{\delta }:[1,\infty )\rightarrow \lbrack 0,\infty )$, which depends
only on $A$ and $\delta $, such that $\lim_{t\rightarrow \infty }\eta
_{\delta }(t)=0$ and 
\begin{equation}
\left\vert A^{\ast }-A_{T,T}^{\ast }\right\vert \leq C\eta _{\delta }(T)%
\text{ for all }T\geq 1.  \label{3.17}
\end{equation}
\end{theorem}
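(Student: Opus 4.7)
The approach is the triangle inequality
$$|A^{\ast} - A_{T,T}^{\ast}| \leq |A^{\ast} - A_{T}^{\ast}| + |A_{T}^{\ast} - A_{T,T}^{\ast}|,$$
where the first (\emph{regularization}) term compares the corrector equation (\ref{1.6}) with its $T^{-2}$-regularized version, and the second (\emph{truncation}) term compares the regularized problem on $\mathbb{R}^{d}$ with its Dirichlet realization on $Q_{T}$. The hypothesis $\chi \in B_{\mathcal{A}}^{2}(\mathbb{R}^{d})^{d}$ is needed only in the first step, where it yields the algebraic rate $T^{-1}$; the modulus $\eta_{\delta}$ is forced by the second step.

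For the regularization term I would set $w_{j} = \chi_{T,j} - \chi_{j}$. Subtracting (\ref{1.6}) from the equation defining $\chi_{T,j}$ shows that
$$-\nabla \cdot (A \nabla w_{j}) + T^{-2} w_{j} = -T^{-2} \chi_{j} \text{ in } \mathbb{R}^{d}.$$
Applying estimate (\ref{4.3}) of Lemma \ref{l4.1} with $h = -T^{-2}\chi_{j}$ and $H = 0$, and letting $R \to \infty$, one obtains $\|\nabla w_{j}\|_{2} \leq C T^{-1} \|\chi_{j}\|_{2}$. Since $A^{\ast} - A_{T}^{\ast} = \langle A(\nabla \chi - \nabla \chi_{T})\rangle$ and $|A|\leq \beta$, this gives $|A^{\ast} - A_{T}^{\ast}| \leq C T^{-1}$.

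For the truncation term I would insert the spatial average over $Q_{T}$ as an intermediate quantity:
$$A_{T,T}^{\ast} - A_{T}^{\ast} = \mathcal{E}_{1}(T) + \mathcal{E}_{2}(T),$$
where
$$\mathcal{E}_{1}(T) = \frac{1}{|Q_{T}|}\int_{Q_{T}} A(\nabla \chi_{T}^{T} - \nabla \chi_{T})\, dy, \qquad \mathcal{E}_{2}(T) = \frac{1}{|Q_{T}|}\int_{Q_{T}} A(I+\nabla \chi_{T})\, dy - A_{T}^{\ast}.$$
By Lemma \ref{l4.1}, $A(I+\nabla \chi_{T}) \in B_{\mathcal{A}}^{2}(\mathbb{R}^{d})^{d \times d}$, so $\mathcal{E}_{2}(T) \to 0$ from the mean-value property (\ref{0.2}). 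For $\mathcal{E}_{1}(T)$ I would set $v_{j} = \chi_{T,j} - \chi_{T,j}^{T}$, which in $Q_{T}$ solves the homogeneous equation $-\nabla \cdot (A\nabla v_{j}) + T^{-2} v_{j} = 0$ with Dirichlet data $v_{j} = \chi_{T,j}$ on $\partial Q_{T}$. An integration by parts recasts $\mathcal{E}_{1}(T)$ as a boundary integral on $\partial Q_{T}$, which I would control using the $L^{\infty}$ bound $\|\chi_{T}\|_{\infty} \leq CT$ and the Hölder estimates of Lemma \ref{l11.1} (both valid for any $\delta \in (0,1)$, which is where the exponent in $\eta_{\delta}$ ultimately originates), combined with the weighted energy inequalities developed in the proof of Lemma \ref{l4.1}.

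The principal obstacle is the absence of a universal algebraic rate for spatial averages to converge to the mean value in $B_{\mathcal{A}}^{2}$: the rate for $\mathcal{E}_{2}(T) \to 0$ depends on both the function $A(I+\nabla \chi_{T})$ and the algebra $\mathcal{A}$, so no explicit power of $T$ is available in general. I would therefore extract a single continuous modulus $\eta_{\delta}(T) \to 0$ dominating all error terms uniformly in $T$, leveraging the uniform $B_{\mathcal{A}}^{2}$-boundedness of the family $\{A(I+\nabla \chi_{T})\}_{T \geq 1}$ (supplied by (\ref{4.3})) together with a diagonal/compactness argument inside $B_{\mathcal{A}}^{2}$. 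Choosing $\eta_{\delta}(T)$ to be a continuous majorant of $T^{-1}$ plus the moduli of $\mathcal{E}_{1}(T)$ and $\mathcal{E}_{2}(T)$ then yields (\ref{3.17}).
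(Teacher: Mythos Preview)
Your decomposition and your treatment of the regularization term $|A^\ast - A_T^\ast|$ are essentially the paper's Lemma~\ref{l3.3}, so that part is fine.

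The truncation term is where your proposal has a genuine gap. Your split $A_{T,T}^\ast - A_T^\ast = \mathcal{E}_1(T) + \mathcal{E}_2(T)$ is not how the paper proceeds, and your plan for $\mathcal{E}_1(T)$ does not work as stated. Integration by parts in $\int_{Q_T} A\,\nabla v_j\,dy$ does \emph{not} produce a pure boundary integral unless $A$ is constant; you are left with a bulk term $-\int_{Q_T} (\nabla\cdot A)\,v_j\,dy$ that requires smoothness of $A$ and has no reason to decay. Even setting that aside, the boundary contribution is of order $\|\chi_T\|_\infty \cdot |\partial Q_T|/|Q_T| \sim T\cdot T^{-1}=O(1)$, not $o(1)$. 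A straight energy estimate for $v_j=\chi_{T,j}-\chi_{T,j}^T$ (testing against $v_j-\chi_{T,j}\in H_0^1(Q_T)$) likewise yields only $|Q_T|^{-1}\int_{Q_T}|\nabla v_j|^2\le C$, so $|\mathcal{E}_1(T)|\le C$ with no decay. The H\"older estimates of Lemma~\ref{l11.1} do not help here, and the exponent $\delta$ does not originate from them.

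What you are missing is the mechanism the paper actually uses: the zeroth-order term $T^{-2}$ makes the Green function of $-\nabla\cdot A\nabla + T^{-2}$ decay \emph{exponentially} on scale $T$, so the Dirichlet boundary data $\chi_{T,j}|_{\partial Q_R}$ (of size $O(T)$) influences the interior of $Q_R$ only through an exponentially thin boundary layer. This is the content of the estimate the paper imports from \cite{BP2004},
\[
|A_T^\ast - A_{R,T}^\ast|^2 \le C\bigl(T^2\exp(-c_1 T R^{\delta}) + R^{\delta-1}\bigr),
\]
where $\delta\in(0,1)$ is the boundary-layer splitting parameter --- that is the true source of the $\delta$ in $\eta_\delta$. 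Setting $R=T$ then gives the explicit modulus $\eta_\delta(t)=t^{-1}+t\exp(-\tfrac{c_1}{2}t^{1+\delta})+t^{(\delta-1)/2}$. Your weighted-energy remark (the exponential weights in Lemma~\ref{l4.1}) is pointing in the right direction, but you need to actually carry out the Green-function or exponential-weight argument for the \emph{homogeneous} problem with nonzero boundary data, not merely invoke it.

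As for $\mathcal{E}_2(T)$: since the integrand $A(I+\nabla\chi_T)$ depends on $T$, a compactness argument in $B_{\mathcal{A}}^2$ is neither available nor needed. If you wanted to salvage this piece, the clean route is to write $A(I+\nabla\chi_T)=A(I+\nabla\chi)+A\nabla(\chi_T-\chi)$ and use the uniform bound $\sup_x\bigl(|Q_T(x)|^{-1}\int_{Q_T(x)}|\nabla(\chi_T-\chi)|^2\bigr)^{1/2}\le CT^{-1}$ together with Lemma~\ref{l3.2}; but at that point you are essentially reproducing the paper's Lemma~\ref{l3.3} and still need the Green-function argument for the remaining comparison with $A_{T,T}^\ast$.
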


The proof breaks down into several steps which are of independent interest.

\begin{lemma}
\label{l3.2}Let $u\in B_{\mathcal{A}}^{2}(\mathbb{R}^{d})$. For any $%
0<R<\infty $, 
\begin{equation}
\left\vert 
\mathchoice {{\setbox0=\hbox{$\displaystyle{\textstyle
-}{\int}$ } \vcenter{\hbox{$\textstyle -$
}}\kern-.6\wd0}}{{\setbox0=\hbox{$\textstyle{\scriptstyle -}{\int}$ } \vcenter{\hbox{$\scriptstyle -$
}}\kern-.6\wd0}}{{\setbox0=\hbox{$\scriptstyle{\scriptscriptstyle -}{\int}$
} \vcenter{\hbox{$\scriptscriptstyle -$
}}\kern-.6\wd0}}{{\setbox0=\hbox{$\scriptscriptstyle{\scriptscriptstyle
-}{\int}$ } \vcenter{\hbox{$\scriptscriptstyle -$ }}\kern-.6\wd0}}%
\!\int_{Q_{R}}u-\left\langle u\right\rangle \right\vert \leq \sup_{y\in 
\mathbb{R}^{d}}%
\mathchoice {{\setbox0=\hbox{$\displaystyle{\textstyle
-}{\int}$ } \vcenter{\hbox{$\textstyle -$
}}\kern-.6\wd0}}{{\setbox0=\hbox{$\textstyle{\scriptstyle -}{\int}$ } \vcenter{\hbox{$\scriptstyle -$
}}\kern-.6\wd0}}{{\setbox0=\hbox{$\scriptstyle{\scriptscriptstyle -}{\int}$
} \vcenter{\hbox{$\scriptscriptstyle -$
}}\kern-.6\wd0}}{{\setbox0=\hbox{$\scriptscriptstyle{\scriptscriptstyle
-}{\int}$ } \vcenter{\hbox{$\scriptscriptstyle -$ }}\kern-.6\wd0}}%
\!\int_{Q_{R}}\left\vert u(t+y)-u(t)\right\vert dt.  \label{3.18}
\end{equation}
\end{lemma}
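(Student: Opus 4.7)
The plan is to introduce the auxiliary function
\[
v(y) \;=\;
\mathchoice {{\setbox0=\hbox{$\displaystyle{\textstyle -}{\int}$ } \vcenter{\hbox{$\textstyle -$
}}\kern-.6\wd0}}{{\setbox0=\hbox{$\textstyle{\scriptstyle -}{\int}$ } \vcenter{\hbox{$\scriptstyle -$
}}\kern-.6\wd0}}{{\setbox0=\hbox{$\scriptstyle{\scriptscriptstyle -}{\int}$
} \vcenter{\hbox{$\scriptscriptstyle -$
}}\kern-.6\wd0}}{{\setbox0=\hbox{$\scriptscriptstyle{\scriptscriptstyle
-}{\int}$ } \vcenter{\hbox{$\scriptscriptstyle -$ }}\kern-.6\wd0}}
\!\int_{Q_R} u(t+y)\,dt,
\]
which is just $u$ averaged against a normalized indicator of $Q_R$, and to split the proof into two claims: (a) $\langle v\rangle = \langle u\rangle$, and (b) $|v(0)-v(y)|\le S$ for every $y\in\mathbb{R}^d$, where $S$ denotes the right-hand side $\sup_{y}\fint_{Q_R}|u(t+y)-u(t)|\,dt$. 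Since $v(0)=\fint_{Q_R} u$, combining (a) and (b) yields the lemma at once.

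Claim (b) is immediate: for every $y\in\mathbb{R}^d$,
\[
|v(0)-v(y)| \;=\; \Bigl|
\mathchoice {{\setbox0=\hbox{$\displaystyle{\textstyle -}{\int}$ } \vcenter{\hbox{$\textstyle -$
}}\kern-.6\wd0}}{{\setbox0=\hbox{$\textstyle{\scriptstyle -}{\int}$ } \vcenter{\hbox{$\scriptstyle -$
}}\kern-.6\wd0}}{{\setbox0=\hbox{$\scriptstyle{\scriptscriptstyle -}{\int}$
} \vcenter{\hbox{$\scriptscriptstyle -$
}}\kern-.6\wd0}}{{\setbox0=\hbox{$\scriptscriptstyle{\scriptscriptstyle
-}{\int}$ } \vcenter{\hbox{$\scriptscriptstyle -$ }}\kern-.6\wd0}}
\!\int_{Q_R}[u(t)-u(t+y)]\,dt\Bigr| \;\le\;
\mathchoice {{\setbox0=\hbox{$\displaystyle{\textstyle -}{\int}$ } \vcenter{\hbox{$\textstyle -$
}}\kern-.6\wd0}}{{\setbox0=\hbox{$\textstyle{\scriptstyle -}{\int}$ } \vcenter{\hbox{$\scriptstyle -$
}}\kern-.6\wd0}}{{\setbox0=\hbox{$\scriptstyle{\scriptscriptstyle -}{\int}$
} \vcenter{\hbox{$\scriptscriptstyle -$
}}\kern-.6\wd0}}{{\setbox0=\hbox{$\scriptscriptstyle{\scriptscriptstyle
-}{\int}$ } \vcenter{\hbox{$\scriptscriptstyle -$ }}\kern-.6\wd0}}
\!\int_{Q_R}|u(t+y)-u(t)|\,dt \;\le\; S.
\]
Averaging this over $y\in Q_L$ and using $|v(0)-\fint_{Q_L}v|\le\fint_{Q_L}|v(0)-v(y)|\,dy\le S$, then letting $L\to\infty$, we obtain $|v(0)-\langle v\rangle|\le S$.

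For claim (a), I would apply Fubini to write, for each $L>0$,
\[
\mathchoice {{\setbox0=\hbox{$\displaystyle{\textstyle -}{\int}$ } \vcenter{\hbox{$\textstyle -$
}}\kern-.6\wd0}}{{\setbox0=\hbox{$\textstyle{\scriptstyle -}{\int}$ } \vcenter{\hbox{$\scriptstyle -$
}}\kern-.6\wd0}}{{\setbox0=\hbox{$\scriptstyle{\scriptscriptstyle -}{\int}$
} \vcenter{\hbox{$\scriptscriptstyle -$
}}\kern-.6\wd0}}{{\setbox0=\hbox{$\scriptscriptstyle{\scriptscriptstyle
-}{\int}$ } \vcenter{\hbox{$\scriptscriptstyle -$ }}\kern-.6\wd0}}
\!\int_{Q_L} v(y)\,dy \;=\;
\mathchoice {{\setbox0=\hbox{$\displaystyle{\textstyle -}{\int}$ } \vcenter{\hbox{$\textstyle -$
}}\kern-.6\wd0}}{{\setbox0=\hbox{$\textstyle{\scriptstyle -}{\int}$ } \vcenter{\hbox{$\scriptstyle -$
}}\kern-.6\wd0}}{{\setbox0=\hbox{$\scriptstyle{\scriptscriptstyle -}{\int}$
} \vcenter{\hbox{$\scriptscriptstyle -$
}}\kern-.6\wd0}}{{\setbox0=\hbox{$\scriptscriptstyle{\scriptscriptstyle
-}{\int}$ } \vcenter{\hbox{$\scriptscriptstyle -$ }}\kern-.6\wd0}}
\!\int_{Q_R}\Bigl[
\mathchoice {{\setbox0=\hbox{$\displaystyle{\textstyle -}{\int}$ } \vcenter{\hbox{$\textstyle -$
}}\kern-.6\wd0}}{{\setbox0=\hbox{$\textstyle{\scriptstyle -}{\int}$ } \vcenter{\hbox{$\scriptstyle -$
}}\kern-.6\wd0}}{{\setbox0=\hbox{$\scriptstyle{\scriptscriptstyle -}{\int}$
} \vcenter{\hbox{$\scriptscriptstyle -$
}}\kern-.6\wd0}}{{\setbox0=\hbox{$\scriptscriptstyle{\scriptscriptstyle
-}{\int}$ } \vcenter{\hbox{$\scriptscriptstyle -$ }}\kern-.6\wd0}}
\!\int_{t+Q_L}u(s)\,ds\Bigr]dt,
\]
and argue that the inner average $\fint_{t+Q_L}u$ converges to $\langle u\rangle$ as $L\to\infty$ uniformly for $t\in Q_R$; once this is in place the outer integral converges to $\langle u\rangle$, giving $\langle v\rangle=\langle u\rangle$. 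The uniformity is the only real obstacle: one must control the discrepancy $\fint_{t+Q_L}u-\fint_{Q_L}u$ caused by the translation. I would bound it using the symmetric difference $(t+Q_L)\triangle Q_L$, whose measure is at most $C_d L^{d-1}|t|$, together with the Cauchy--Schwarz estimate
\[
\Bigl|\frac{1}{|Q_L|}\int_{(t+Q_L)\triangle Q_L}u\Bigr|\;\le\;\Bigl(\frac{|(t+Q_L)\triangle Q_L|}{|Q_L|}\Bigr)^{\!1/2}\Bigl(
\mathchoice {{\setbox0=\hbox{$\displaystyle{\textstyle -}{\int}$ } \vcenter{\hbox{$\textstyle -$
}}\kern-.6\wd0}}{{\setbox0=\hbox{$\textstyle{\scriptstyle -}{\int}$ } \vcenter{\hbox{$\scriptstyle -$
}}\kern-.6\wd0}}{{\setbox0=\hbox{$\scriptstyle{\scriptscriptstyle -}{\int}$
} \vcenter{\hbox{$\scriptscriptstyle -$
}}\kern-.6\wd0}}{{\setbox0=\hbox{$\scriptscriptstyle{\scriptscriptstyle
-}{\int}$ } \vcenter{\hbox{$\scriptscriptstyle -$ }}\kern-.6\wd0}}
\!\int_{Q_{2L}}|u|^2\Bigr)^{\!1/2},
\]
which is $O(|t|^{1/2}L^{-1/2})\|u\|_2$; for $t\in Q_R$ (fixed, bounded) this tends to $0$ as $L\to\infty$ uniformly in $t$. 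Combined with $\fint_{Q_L}u\to\langle u\rangle$, this yields the required uniform convergence.

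The main technical step is this uniformity argument in (a); the rest is algebraic manipulation. Note that claim (b) could alternatively be phrased as saying that $v=\rho_R\ast u$ (with $\rho_R=|Q_R|^{-1}\mathbf{1}_{Q_R}$) is Lipschitz with respect to the translation semi-distance induced by $S$, but for the proof the direct estimate above is simplest.
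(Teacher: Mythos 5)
Your proof is correct, but the route you take is genuinely different from the paper's, and a bit heavier. The paper observes that $Q_{kR}$ tiles \emph{exactly} into $k^d$ translates $Q_R(x_i)$ of $Q_R$, which gives the exact discrete-average identity
\begin{equation*}
\mathchoice {{\setbox0=\hbox{$\displaystyle{\textstyle -}{\int}$ } \vcenter{\hbox{$\textstyle -$
}}\kern-.6\wd0}}{{\setbox0=\hbox{$\textstyle{\scriptstyle -}{\int}$ } \vcenter{\hbox{$\scriptstyle -$
}}\kern-.6\wd0}}{{\setbox0=\hbox{$\scriptstyle{\scriptscriptstyle -}{\int}$
} \vcenter{\hbox{$\scriptscriptstyle -$
}}\kern-.6\wd0}}{{\setbox0=\hbox{$\scriptscriptstyle{\scriptscriptstyle
-}{\int}$ } \vcenter{\hbox{$\scriptscriptstyle -$ }}\kern-.6\wd0}}
\!\int_{Q_{kR}}u\;=\;\frac{1}{k^{d}}\sum_{i=1}^{k^{d}}
\mathchoice {{\setbox0=\hbox{$\displaystyle{\textstyle -}{\int}$ } \vcenter{\hbox{$\textstyle -$
}}\kern-.6\wd0}}{{\setbox0=\hbox{$\textstyle{\scriptstyle -}{\int}$ } \vcenter{\hbox{$\scriptstyle -$
}}\kern-.6\wd0}}{{\setbox0=\hbox{$\scriptstyle{\scriptscriptstyle -}{\int}$
} \vcenter{\hbox{$\scriptscriptstyle -$
}}\kern-.6\wd0}}{{\setbox0=\hbox{$\scriptscriptstyle{\scriptscriptstyle
-}{\int}$ } \vcenter{\hbox{$\scriptscriptstyle -$ }}\kern-.6\wd0}}
\!\int_{Q_{R}(x_{i})}u .
\end{equation*}
Your claim (b) is exactly the paper's bound $\left\vert\fint_{Q_R(x_i)}u-\fint_{Q_R}u\right\vert\le S$; the paper then concludes immediately by the triangle inequality and $k\to\infty$, with no extra lemma. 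You instead replace the discrete average by the continuous average of $v(y)=\fint_{Q_R(y)}u$ over $y\in Q_L$. That is where the extra work appears: unlike the tiling, $\fint_{Q_L}v$ is not identically $\fint_{Q_L}u$ — there are boundary effects — so you must separately show $\fint_{Q_L}v\to\langle u\rangle$, which you do via Fubini plus the symmetric-difference/Cauchy--Schwarz estimate. That estimate is correct in spirit (you drop a harmless factor $\approx 2^{d/2}$ in the Cauchy--Schwarz step, since $|Q_{2L}|=2^d|Q_L|$, but the $O(|t|^{1/2}L^{-1/2})$ rate is right), and it does use $u\in B_{\mathcal A}^2$ essentially; the paper's tiling argument only uses the existence of the mean value and would work equally well in $B_{\mathcal A}^1$. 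In short: both are valid, the paper's discrete tiling is the more elementary and slightly more general argument, while your continuous-averaging version is a clean alternative at the cost of the auxiliary uniform-convergence lemma.
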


\begin{proof}
Let $u\in B_{\mathcal{A}}^{2}(\mathbb{R}^{d})$. We know that, for any $y\in 
\mathbb{R}^{d}$, 
\begin{equation*}
\mathchoice {{\setbox0=\hbox{$\displaystyle{\textstyle -}{\int}$ } \vcenter{\hbox{$\textstyle -$
}}\kern-.6\wd0}}{{\setbox0=\hbox{$\textstyle{\scriptstyle -}{\int}$ } \vcenter{\hbox{$\scriptstyle -$
}}\kern-.6\wd0}}{{\setbox0=\hbox{$\scriptstyle{\scriptscriptstyle -}{\int}$
} \vcenter{\hbox{$\scriptscriptstyle -$
}}\kern-.6\wd0}}{{\setbox0=\hbox{$\scriptscriptstyle{\scriptscriptstyle
-}{\int}$ } \vcenter{\hbox{$\scriptscriptstyle -$ }}\kern-.6\wd0}}%
\!\int_{Q_{R}(y)}u-%
\mathchoice {{\setbox0=\hbox{$\displaystyle{\textstyle
-}{\int}$ } \vcenter{\hbox{$\textstyle -$
}}\kern-.6\wd0}}{{\setbox0=\hbox{$\textstyle{\scriptstyle -}{\int}$ } \vcenter{\hbox{$\scriptstyle -$
}}\kern-.6\wd0}}{{\setbox0=\hbox{$\scriptstyle{\scriptscriptstyle -}{\int}$
} \vcenter{\hbox{$\scriptscriptstyle -$
}}\kern-.6\wd0}}{{\setbox0=\hbox{$\scriptscriptstyle{\scriptscriptstyle
-}{\int}$ } \vcenter{\hbox{$\scriptscriptstyle -$ }}\kern-.6\wd0}}%
\!\int_{Q_{R}}u=%
\mathchoice {{\setbox0=\hbox{$\displaystyle{\textstyle
-}{\int}$ } \vcenter{\hbox{$\textstyle -$
}}\kern-.6\wd0}}{{\setbox0=\hbox{$\textstyle{\scriptstyle -}{\int}$ } \vcenter{\hbox{$\scriptstyle -$
}}\kern-.6\wd0}}{{\setbox0=\hbox{$\scriptstyle{\scriptscriptstyle -}{\int}$
} \vcenter{\hbox{$\scriptscriptstyle -$
}}\kern-.6\wd0}}{{\setbox0=\hbox{$\scriptscriptstyle{\scriptscriptstyle
-}{\int}$ } \vcenter{\hbox{$\scriptscriptstyle -$ }}\kern-.6\wd0}}%
\!\int_{Q_{R}}\left( u(t+y)-u(t)\right) dt.
\end{equation*}%
Now, let $k>1$ be an integer; we have $Q_{kR}=\cup
_{i=1}^{k^{d}}Q_{R}(x_{i}) $ for some $x_{i}\in \mathbb{R}^{d}$, so that 
\begin{equation*}
\left\vert 
\mathchoice {{\setbox0=\hbox{$\displaystyle{\textstyle -}{\int}$
} \vcenter{\hbox{$\textstyle -$
}}\kern-.6\wd0}}{{\setbox0=\hbox{$\textstyle{\scriptstyle -}{\int}$ } \vcenter{\hbox{$\scriptstyle -$
}}\kern-.6\wd0}}{{\setbox0=\hbox{$\scriptstyle{\scriptscriptstyle -}{\int}$
} \vcenter{\hbox{$\scriptscriptstyle -$
}}\kern-.6\wd0}}{{\setbox0=\hbox{$\scriptscriptstyle{\scriptscriptstyle
-}{\int}$ } \vcenter{\hbox{$\scriptscriptstyle -$ }}\kern-.6\wd0}}%
\!\int_{Q_{kR}}u-%
\mathchoice {{\setbox0=\hbox{$\displaystyle{\textstyle
-}{\int}$ } \vcenter{\hbox{$\textstyle -$
}}\kern-.6\wd0}}{{\setbox0=\hbox{$\textstyle{\scriptstyle -}{\int}$ } \vcenter{\hbox{$\scriptstyle -$
}}\kern-.6\wd0}}{{\setbox0=\hbox{$\scriptstyle{\scriptscriptstyle -}{\int}$
} \vcenter{\hbox{$\scriptscriptstyle -$
}}\kern-.6\wd0}}{{\setbox0=\hbox{$\scriptscriptstyle{\scriptscriptstyle
-}{\int}$ } \vcenter{\hbox{$\scriptscriptstyle -$ }}\kern-.6\wd0}}%
\!\int_{Q_{R}}u\right\vert \leq \frac{1}{k^{d}}\sum_{i=1}^{k^{d}}\left\vert 
\mathchoice {{\setbox0=\hbox{$\displaystyle{\textstyle -}{\int}$ } \vcenter{\hbox{$\textstyle -$
}}\kern-.6\wd0}}{{\setbox0=\hbox{$\textstyle{\scriptstyle -}{\int}$ } \vcenter{\hbox{$\scriptstyle -$
}}\kern-.6\wd0}}{{\setbox0=\hbox{$\scriptstyle{\scriptscriptstyle -}{\int}$
} \vcenter{\hbox{$\scriptscriptstyle -$
}}\kern-.6\wd0}}{{\setbox0=\hbox{$\scriptscriptstyle{\scriptscriptstyle
-}{\int}$ } \vcenter{\hbox{$\scriptscriptstyle -$ }}\kern-.6\wd0}}%
\!\int_{Q_{R}(x_{i})}u-%
\mathchoice {{\setbox0=\hbox{$\displaystyle{\textstyle -}{\int}$ } \vcenter{\hbox{$\textstyle -$
}}\kern-.6\wd0}}{{\setbox0=\hbox{$\textstyle{\scriptstyle -}{\int}$ } \vcenter{\hbox{$\scriptstyle -$
}}\kern-.6\wd0}}{{\setbox0=\hbox{$\scriptstyle{\scriptscriptstyle -}{\int}$
} \vcenter{\hbox{$\scriptscriptstyle -$
}}\kern-.6\wd0}}{{\setbox0=\hbox{$\scriptscriptstyle{\scriptscriptstyle
-}{\int}$ } \vcenter{\hbox{$\scriptscriptstyle -$ }}\kern-.6\wd0}}%
\!\int_{Q_{R}}u\right\vert \leq \sup_{y\in \mathbb{R}^{d}}\left\vert 
\mathchoice {{\setbox0=\hbox{$\displaystyle{\textstyle -}{\int}$ } \vcenter{\hbox{$\textstyle -$
}}\kern-.6\wd0}}{{\setbox0=\hbox{$\textstyle{\scriptstyle -}{\int}$ } \vcenter{\hbox{$\scriptstyle -$
}}\kern-.6\wd0}}{{\setbox0=\hbox{$\scriptstyle{\scriptscriptstyle -}{\int}$
} \vcenter{\hbox{$\scriptscriptstyle -$
}}\kern-.6\wd0}}{{\setbox0=\hbox{$\scriptscriptstyle{\scriptscriptstyle
-}{\int}$ } \vcenter{\hbox{$\scriptscriptstyle -$ }}\kern-.6\wd0}}%
\!\int_{Q_{R}(y)}u-%
\mathchoice {{\setbox0=\hbox{$\displaystyle{\textstyle
-}{\int}$ } \vcenter{\hbox{$\textstyle -$
}}\kern-.6\wd0}}{{\setbox0=\hbox{$\textstyle{\scriptstyle -}{\int}$ } \vcenter{\hbox{$\scriptstyle -$
}}\kern-.6\wd0}}{{\setbox0=\hbox{$\scriptstyle{\scriptscriptstyle -}{\int}$
} \vcenter{\hbox{$\scriptscriptstyle -$
}}\kern-.6\wd0}}{{\setbox0=\hbox{$\scriptscriptstyle{\scriptscriptstyle
-}{\int}$ } \vcenter{\hbox{$\scriptscriptstyle -$ }}\kern-.6\wd0}}%
\!\int_{Q_{R}}u\right\vert .
\end{equation*}%
Letting $k\rightarrow \infty $ we are led to (\ref{3.18}).
\end{proof}

The next result evaluates the difference between $A^{\ast }$ and $%
A_{T}^{\ast }$.

\begin{lemma}
\label{l3.3}Assume that $\chi _{j}$ (defined by \emph{(\ref{1.6})}) belongs
to $B_{\mathcal{A}}^{2}(\mathbb{R}^{d})$. There exists $C=C(d,A)$ such that 
\begin{equation}
\left\vert A^{\ast }-A_{T}^{\ast }\right\vert \leq CT^{-1}.  \label{3.19}
\end{equation}
\end{lemma}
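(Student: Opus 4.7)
The plan is to compute the entrywise difference $(A^{\ast }-A_{T}^{\ast })_{kj}$ exactly via two successive integrations by parts in the Besicovitch setting, and then control the resulting mean value by Cauchy--Schwarz together with the $L^{2}$ bound on $\chi_{T,j}$ provided by Lemma \ref{l4.1}. Starting point: since $A$ is symmetric,
\begin{equation*}
(A^{\ast }-A_{T}^{\ast })_{kj}=\left\langle Ae_{k}\cdot \nabla (\chi _{j}-\chi _{T,j})\right\rangle .
\end{equation*}

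First, under the standing hypothesis $\chi \in B_{\mathcal{A}}^{2}(\mathbb{R}^{d})^{d}$, each $\chi_{j}$ lies in $B_{\mathcal{A}}^{1,2}(\mathbb{R}^{d})$ (since its gradient already lies in $B_{\mathcal{A}}^{2}$), and a density argument based on the translation-invariance identity $M(u(\cdot +h))=M(u)$ shows that $M(\nabla u)=0$ for every $u\in B_{\mathcal{A}}^{1,2}(\mathbb{R}^{d})$, so in fact $B_{\mathcal{A}}^{1,2}(\mathbb{R}^{d})\subset B_{\#\mathcal{A}}^{1,2}(\mathbb{R}^{d})$. In particular, $\chi_{j}$, $\chi_{T,j}$ and $\chi_{j}-\chi_{T,j}$ are all admissible test functions in the weak forms of both corrector equations. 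I would then apply the $\chi_{k}$ equation (as derived in the spirit of (\ref{05})) with test $\phi =\chi_{j}-\chi_{T,j}$ to get
\begin{equation*}
\left\langle Ae_{k}\cdot \nabla (\chi _{j}-\chi _{T,j})\right\rangle =-\left\langle A\nabla \chi _{k}\cdot \nabla (\chi _{j}-\chi _{T,j})\right\rangle .
\end{equation*}
Second, subtracting the equations satisfied by $\chi _{j}$ and $\chi _{T,j}$ (from (\ref{1.6}) and (\ref{4.2}) with $h=0$, $H=Ae_{j}$) yields
\begin{equation*}
-\nabla \cdot A\nabla (\chi _{j}-\chi _{T,j})=T^{-2}\chi _{T,j}\text{ in }\mathbb{R}^{d},
\end{equation*}
and I would test this equation with $\chi _{k}\in B_{\#\mathcal{A}}^{1,2}(\mathbb{R}^{d})$ to obtain, using the symmetry of $A$,
\begin{equation*}
\left\langle A\nabla \chi _{k}\cdot \nabla (\chi _{j}-\chi _{T,j})\right\rangle =T^{-2}\left\langle \chi _{T,j}\chi _{k}\right\rangle .
\end{equation*}

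Combining the two identities collapses the expression to the explicit closed form
\begin{equation*}
(A^{\ast }-A_{T}^{\ast })_{kj}=-T^{-2}\left\langle \chi _{T,j}\chi _{k}\right\rangle .
\end{equation*}
Then Cauchy--Schwarz in the Marcinkiewicz seminorm gives $|(A^{\ast }-A_{T}^{\ast })_{kj}|\leq T^{-2}\Vert \chi _{T,j}\Vert _{2}\Vert \chi _{k}\Vert _{2}$. The hypothesis $\chi _{k}\in B_{\mathcal{A}}^{2}(\mathbb{R}^{d})$ makes $\Vert \chi _{k}\Vert _{2}$ a finite constant depending only on $A$, while estimate (\ref{4.3}) of Lemma \ref{l4.1} with $R=T$ (equivalently the bound (\ref{e5.6})) gives $\Vert \chi _{T,j}\Vert _{2}\leq CT$. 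Putting these together produces the desired bound $|A^{\ast }-A_{T}^{\ast }|\leq CT^{-1}$ with $C=C(d,A)$.

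The only real obstacle is the bookkeeping surrounding test-function classes: one must verify that $\chi _{k}$ (hypothetically in $B_{\mathcal{A}}^{2}$) is a legitimate test function in the weak form of the regularized equation for $\chi _{T,j}$, and that $\chi _{j}-\chi _{T,j}$ lies in $B_{\#\mathcal{A}}^{1,2}(\mathbb{R}^{d})$. Both follow once the inclusion $B_{\mathcal{A}}^{1,2}\subset B_{\#\mathcal{A}}^{1,2}$ is established as above; this is the step where the hypothesis $\chi \in B_{\mathcal{A}}^{2}$ (as opposed to merely $\nabla \chi \in B_{\mathcal{A}}^{2}$, which is automatic) is essential.
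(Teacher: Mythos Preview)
Your proof is correct and takes a genuinely different route from the paper's argument. The paper does not derive a closed-form identity; instead it sets $v=\chi_{T,j}-\chi_j$, observes that $v$ solves $-\nabla\cdot(A\nabla v)+T^{-2}v=-T^{-2}\chi_j$, and applies the energy estimate (\ref{4.3}) of Lemma~\ref{l4.1} to obtain
\[
\sup_{x\in\mathbb{R}^{d}}\left(\mathchoice {{\setbox0=\hbox{$\displaystyle{\textstyle -}{\int}$ } \vcenter{\hbox{$\textstyle -$ }}\kern-.6\wd0}}{{\setbox0=\hbox{$\textstyle{\scriptstyle -}{\int}$ } \vcenter{\hbox{$\scriptstyle -$ }}\kern-.6\wd0}}{{\setbox0=\hbox{$\scriptstyle{\scriptscriptstyle -}{\int}$ } \vcenter{\hbox{$\scriptscriptstyle -$ }}\kern-.6\wd0}}{{\setbox0=\hbox{$\scriptscriptstyle{\scriptscriptstyle -}{\int}$ } \vcenter{\hbox{$\scriptscriptstyle -$ }}\kern-.6\wd0}}\!\int_{Q_{T}(x)}\left\vert \nabla(\chi_{T,j}-\chi_j)\right\vert^{2}\right)^{1/2}\le CT^{-1},
\]
the assumption $\chi_j\in B_{\mathcal{A}}^{2}(\mathbb{R}^{d})$ being used to bound the right-hand side $\sup_x\mathchoice {{\setbox0=\hbox{$\displaystyle{\textstyle -}{\int}$ } \vcenter{\hbox{$\textstyle -$ }}\kern-.6\wd0}}{{\setbox0=\hbox{$\textstyle{\scriptstyle -}{\int}$ } \vcenter{\hbox{$\scriptstyle -$ }}\kern-.6\wd0}}{{\setbox0=\hbox{$\scriptstyle{\scriptscriptstyle -}{\int}$ } \vcenter{\hbox{$\scriptscriptstyle -$ }}\kern-.6\wd0}}{{\setbox0=\hbox{$\scriptscriptstyle{\scriptscriptstyle -}{\int}$ } \vcenter{\hbox{$\scriptscriptstyle -$ }}\kern-.6\wd0}}\!\int_{Q_T(x)}|\chi_j|^2$ uniformly. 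It then invokes Lemma~\ref{l3.2} and the triangle inequality to pass from the local average of $A\nabla(\chi-\chi_T)$ to the mean value $A^{\ast}-A_{T}^{\ast}$. Your argument, by contrast, bypasses Lemma~\ref{l3.2} and the gradient bound altogether: two applications of the weak formulations (for $\chi_k$ and for the difference equation, with test functions $\chi_j-\chi_{T,j}$ and $\chi_k$ respectively) collapse everything to the exact identity $(A^{\ast}-A_T^{\ast})_{kj}=-T^{-2}\langle\chi_{T,j}\chi_k\rangle$, after which Cauchy--Schwarz and $\Vert\chi_{T,j}\Vert_2\le CT$ finish the job. Your route is shorter and yields a pleasant closed form; the paper's route has the advantage of producing, as a byproduct, the quantitative gradient estimate (\ref{3.20}) on $\nabla(\chi-\chi_T)$, which is of independent interest (a quantity of this type reappears later, e.g.\ in (\ref{6.37})). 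Both proofs use the hypothesis $\chi_j\in B_{\mathcal A}^{2}$ in an essential but different place: the paper to control the source term in the energy estimate, you to make $\chi_k$ an admissible test function in $B_{\mathcal A}^{1,2}$ and to have $\Vert\chi_k\Vert_2<\infty$.
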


\begin{proof}
First, let us set $v=\chi _{T,j}-\chi _{j}$. Then $v$ solves the equation $%
-\nabla \cdot (A\nabla v)+T^{-2}v=-T^{-2}\chi _{j}$ in $\mathbb{R}^{d}$. It
follows from Lemma \ref{l4.1} that 
\begin{equation*}
\sup_{x\in \mathbb{R}^{d}}%
\mathchoice {{\setbox0=\hbox{$\displaystyle{\textstyle -}{\int}$ } \vcenter{\hbox{$\textstyle -$
}}\kern-.6\wd0}}{{\setbox0=\hbox{$\textstyle{\scriptstyle -}{\int}$ } \vcenter{\hbox{$\scriptstyle -$
}}\kern-.6\wd0}}{{\setbox0=\hbox{$\scriptstyle{\scriptscriptstyle -}{\int}$
} \vcenter{\hbox{$\scriptscriptstyle -$
}}\kern-.6\wd0}}{{\setbox0=\hbox{$\scriptscriptstyle{\scriptscriptstyle
-}{\int}$ } \vcenter{\hbox{$\scriptscriptstyle -$ }}\kern-.6\wd0}}%
\!\int_{Q_{T}(x)}\left( \left\vert \nabla v\right\vert ^{2}+T^{-2}\left\vert
v\right\vert ^{2}\right) \leq CT^{-2}\sup_{x\in \mathbb{R}^{d}}%
\mathchoice {{\setbox0=\hbox{$\displaystyle{\textstyle -}{\int}$ } \vcenter{\hbox{$\textstyle -$
}}\kern-.6\wd0}}{{\setbox0=\hbox{$\textstyle{\scriptstyle -}{\int}$ } \vcenter{\hbox{$\scriptstyle -$
}}\kern-.6\wd0}}{{\setbox0=\hbox{$\scriptstyle{\scriptscriptstyle -}{\int}$
} \vcenter{\hbox{$\scriptscriptstyle -$
}}\kern-.6\wd0}}{{\setbox0=\hbox{$\scriptscriptstyle{\scriptscriptstyle
-}{\int}$ } \vcenter{\hbox{$\scriptscriptstyle -$ }}\kern-.6\wd0}}%
\!\int_{Q_{T}(x)}\left\vert \chi _{j}\right\vert ^{2}\leq CT^{-2}.
\end{equation*}%
In the last inequality above, we have used the fact that $\chi _{j}\in B_{%
\mathcal{A}}^{2}(\mathbb{R}^{d})$, so that 
\begin{equation*}
\sup_{x\in \mathbb{R}^{d},T>0}%
\mathchoice {{\setbox0=\hbox{$\displaystyle{\textstyle -}{\int}$ } \vcenter{\hbox{$\textstyle -$
}}\kern-.6\wd0}}{{\setbox0=\hbox{$\textstyle{\scriptstyle -}{\int}$ } \vcenter{\hbox{$\scriptstyle -$
}}\kern-.6\wd0}}{{\setbox0=\hbox{$\scriptstyle{\scriptscriptstyle -}{\int}$
} \vcenter{\hbox{$\scriptscriptstyle -$
}}\kern-.6\wd0}}{{\setbox0=\hbox{$\scriptscriptstyle{\scriptscriptstyle
-}{\int}$ } \vcenter{\hbox{$\scriptscriptstyle -$ }}\kern-.6\wd0}}%
\!\int_{Q_{T}(x)}\left\vert \chi _{j}\right\vert ^{2}\leq C.
\end{equation*}%
The above inequality stems from the fact that $\lim_{T\rightarrow \infty }%
\mathchoice {{\setbox0=\hbox{$\displaystyle{\textstyle -}{\int}$ } \vcenter{\hbox{$\textstyle -$
}}\kern-.6\wd0}}{{\setbox0=\hbox{$\textstyle{\scriptstyle -}{\int}$ } \vcenter{\hbox{$\scriptstyle -$
}}\kern-.6\wd0}}{{\setbox0=\hbox{$\scriptstyle{\scriptscriptstyle -}{\int}$
} \vcenter{\hbox{$\scriptscriptstyle -$
}}\kern-.6\wd0}}{{\setbox0=\hbox{$\scriptscriptstyle{\scriptscriptstyle
-}{\int}$ } \vcenter{\hbox{$\scriptscriptstyle -$ }}\kern-.6\wd0}}%
\!\int_{Q_{T}(x)}\left\vert \chi _{j}\right\vert ^{2}$ exists uniformly in $%
x\in \mathbb{R}^{d}$. We infer 
\begin{equation}
\sup_{x\in \mathbb{R}^{d}}\left( 
\mathchoice {{\setbox0=\hbox{$\displaystyle{\textstyle -}{\int}$ } \vcenter{\hbox{$\textstyle -$
}}\kern-.6\wd0}}{{\setbox0=\hbox{$\textstyle{\scriptstyle -}{\int}$ } \vcenter{\hbox{$\scriptstyle -$
}}\kern-.6\wd0}}{{\setbox0=\hbox{$\scriptstyle{\scriptscriptstyle -}{\int}$
} \vcenter{\hbox{$\scriptscriptstyle -$
}}\kern-.6\wd0}}{{\setbox0=\hbox{$\scriptscriptstyle{\scriptscriptstyle
-}{\int}$ } \vcenter{\hbox{$\scriptscriptstyle -$ }}\kern-.6\wd0}}%
\!\int_{Q_{T}(x)}\left\vert A\nabla (\chi _{T,j}-\chi _{j})\right\vert
^{2}\right) ^{\frac{1}{2}}\leq \left\Vert A\right\Vert _{\infty }\sup_{x\in 
\mathbb{R}^{d}}\left( 
\mathchoice {{\setbox0=\hbox{$\displaystyle{\textstyle
-}{\int}$ } \vcenter{\hbox{$\textstyle -$
}}\kern-.6\wd0}}{{\setbox0=\hbox{$\textstyle{\scriptstyle -}{\int}$ } \vcenter{\hbox{$\scriptstyle -$
}}\kern-.6\wd0}}{{\setbox0=\hbox{$\scriptstyle{\scriptscriptstyle -}{\int}$
} \vcenter{\hbox{$\scriptscriptstyle -$
}}\kern-.6\wd0}}{{\setbox0=\hbox{$\scriptscriptstyle{\scriptscriptstyle
-}{\int}$ } \vcenter{\hbox{$\scriptscriptstyle -$ }}\kern-.6\wd0}}%
\!\int_{Q_{T}(x)}\left\vert \nabla (\chi _{T,j}-\chi _{j})\right\vert
^{2}\right) ^{\frac{1}{2}}\leq CT^{-1}.  \label{3.20}
\end{equation}%
Now, using Lemma \ref{l3.2} with $u=A\nabla (\chi _{T}-\chi )$, we obtain 
\begin{equation}
\left\vert 
\mathchoice {{\setbox0=\hbox{$\displaystyle{\textstyle -}{\int}$
} \vcenter{\hbox{$\textstyle -$
}}\kern-.6\wd0}}{{\setbox0=\hbox{$\textstyle{\scriptstyle -}{\int}$ } \vcenter{\hbox{$\scriptstyle -$
}}\kern-.6\wd0}}{{\setbox0=\hbox{$\scriptstyle{\scriptscriptstyle -}{\int}$
} \vcenter{\hbox{$\scriptscriptstyle -$
}}\kern-.6\wd0}}{{\setbox0=\hbox{$\scriptscriptstyle{\scriptscriptstyle
-}{\int}$ } \vcenter{\hbox{$\scriptscriptstyle -$ }}\kern-.6\wd0}}%
\!\int_{Q_{T}}A\nabla (\chi -\chi _{T})-(A^{\ast }-A_{T}^{\ast })\right\vert
\leq \sup_{y\in \mathbb{R}^{d}}%
\mathchoice {{\setbox0=\hbox{$\displaystyle{\textstyle -}{\int}$ } \vcenter{\hbox{$\textstyle -$
}}\kern-.6\wd0}}{{\setbox0=\hbox{$\textstyle{\scriptstyle -}{\int}$ } \vcenter{\hbox{$\scriptstyle -$
}}\kern-.6\wd0}}{{\setbox0=\hbox{$\scriptstyle{\scriptscriptstyle -}{\int}$
} \vcenter{\hbox{$\scriptscriptstyle -$
}}\kern-.6\wd0}}{{\setbox0=\hbox{$\scriptscriptstyle{\scriptscriptstyle
-}{\int}$ } \vcenter{\hbox{$\scriptscriptstyle -$ }}\kern-.6\wd0}}%
\!\int_{Q_{T}}\left\vert A\nabla (\chi -\chi _{T})(t+y)-A\nabla (\chi -\chi
_{T})(t)\right\vert dt.  \label{3.21}
\end{equation}%
However, from the equality 
\begin{equation*}
\mathchoice {{\setbox0=\hbox{$\displaystyle{\textstyle -}{\int}$ } \vcenter{\hbox{$\textstyle -$
}}\kern-.6\wd0}}{{\setbox0=\hbox{$\textstyle{\scriptstyle -}{\int}$ } \vcenter{\hbox{$\scriptstyle -$
}}\kern-.6\wd0}}{{\setbox0=\hbox{$\scriptstyle{\scriptscriptstyle -}{\int}$
} \vcenter{\hbox{$\scriptscriptstyle -$
}}\kern-.6\wd0}}{{\setbox0=\hbox{$\scriptscriptstyle{\scriptscriptstyle
-}{\int}$ } \vcenter{\hbox{$\scriptscriptstyle -$ }}\kern-.6\wd0}}%
\!\int_{Q_{T}}A\nabla (\chi -\chi _{T})(t+y)dt=%
\mathchoice {{\setbox0=\hbox{$\displaystyle{\textstyle -}{\int}$ } \vcenter{\hbox{$\textstyle -$
}}\kern-.6\wd0}}{{\setbox0=\hbox{$\textstyle{\scriptstyle -}{\int}$ } \vcenter{\hbox{$\scriptstyle -$
}}\kern-.6\wd0}}{{\setbox0=\hbox{$\scriptstyle{\scriptscriptstyle -}{\int}$
} \vcenter{\hbox{$\scriptscriptstyle -$
}}\kern-.6\wd0}}{{\setbox0=\hbox{$\scriptscriptstyle{\scriptscriptstyle
-}{\int}$ } \vcenter{\hbox{$\scriptscriptstyle -$ }}\kern-.6\wd0}}%
\!\int_{Q_{T}(y)}A\nabla (\chi -\chi _{T})(t)dt
\end{equation*}%
associated to the inequality 
\begin{equation*}
\mathchoice {{\setbox0=\hbox{$\displaystyle{\textstyle -}{\int}$ } \vcenter{\hbox{$\textstyle -$
}}\kern-.6\wd0}}{{\setbox0=\hbox{$\textstyle{\scriptstyle -}{\int}$ } \vcenter{\hbox{$\scriptstyle -$
}}\kern-.6\wd0}}{{\setbox0=\hbox{$\scriptstyle{\scriptscriptstyle -}{\int}$
} \vcenter{\hbox{$\scriptscriptstyle -$
}}\kern-.6\wd0}}{{\setbox0=\hbox{$\scriptscriptstyle{\scriptscriptstyle
-}{\int}$ } \vcenter{\hbox{$\scriptscriptstyle -$ }}\kern-.6\wd0}}%
\!\int_{Q_{T}(y)}\left\vert A\nabla (\chi -\chi _{T})(t)\right\vert dt\leq
\left( 
\mathchoice {{\setbox0=\hbox{$\displaystyle{\textstyle -}{\int}$ } \vcenter{\hbox{$\textstyle -$
}}\kern-.6\wd0}}{{\setbox0=\hbox{$\textstyle{\scriptstyle -}{\int}$ } \vcenter{\hbox{$\scriptstyle -$
}}\kern-.6\wd0}}{{\setbox0=\hbox{$\scriptstyle{\scriptscriptstyle -}{\int}$
} \vcenter{\hbox{$\scriptscriptstyle -$
}}\kern-.6\wd0}}{{\setbox0=\hbox{$\scriptscriptstyle{\scriptscriptstyle
-}{\int}$ } \vcenter{\hbox{$\scriptscriptstyle -$ }}\kern-.6\wd0}}%
\!\int_{Q_{T}(y)}\left\vert A\nabla (\chi -\chi _{T})\right\vert ^{2}\right)
^{\frac{1}{2}},
\end{equation*}%
we deduce that the right-hand side of (\ref{3.21}) is bounded by $%
2\sup_{y\in \mathbb{R}^{d}}\left( 
\mathchoice {{\setbox0=\hbox{$\displaystyle{\textstyle -}{\int}$ } \vcenter{\hbox{$\textstyle -$
}}\kern-.6\wd0}}{{\setbox0=\hbox{$\textstyle{\scriptstyle -}{\int}$ } \vcenter{\hbox{$\scriptstyle -$
}}\kern-.6\wd0}}{{\setbox0=\hbox{$\scriptstyle{\scriptscriptstyle -}{\int}$
} \vcenter{\hbox{$\scriptscriptstyle -$
}}\kern-.6\wd0}}{{\setbox0=\hbox{$\scriptscriptstyle{\scriptscriptstyle
-}{\int}$ } \vcenter{\hbox{$\scriptscriptstyle -$ }}\kern-.6\wd0}}%
\!\int_{Q_{T}(y)}\left\vert A\nabla (\chi -\chi _{T})\right\vert ^{2}\right)
^{\frac{1}{2}}$. Taking into account (\ref{3.20}), we get immediately 
\begin{equation*}
\left\vert 
\mathchoice {{\setbox0=\hbox{$\displaystyle{\textstyle -}{\int}$
} \vcenter{\hbox{$\textstyle -$
}}\kern-.6\wd0}}{{\setbox0=\hbox{$\textstyle{\scriptstyle -}{\int}$ } \vcenter{\hbox{$\scriptstyle -$
}}\kern-.6\wd0}}{{\setbox0=\hbox{$\scriptstyle{\scriptscriptstyle -}{\int}$
} \vcenter{\hbox{$\scriptscriptstyle -$
}}\kern-.6\wd0}}{{\setbox0=\hbox{$\scriptscriptstyle{\scriptscriptstyle
-}{\int}$ } \vcenter{\hbox{$\scriptscriptstyle -$ }}\kern-.6\wd0}}%
\!\int_{Q_{T}}A\nabla (\chi -\chi _{T})-(A^{\ast }-A_{T}^{\ast })\right\vert
\leq CT^{-1}.
\end{equation*}%
It follows that 
\begin{equation*}
\left\vert A^{\ast }-A_{T}^{\ast }\right\vert \leq \left\vert 
\mathchoice {{\setbox0=\hbox{$\displaystyle{\textstyle -}{\int}$ } \vcenter{\hbox{$\textstyle -$
}}\kern-.6\wd0}}{{\setbox0=\hbox{$\textstyle{\scriptstyle -}{\int}$ } \vcenter{\hbox{$\scriptstyle -$
}}\kern-.6\wd0}}{{\setbox0=\hbox{$\scriptstyle{\scriptscriptstyle -}{\int}$
} \vcenter{\hbox{$\scriptscriptstyle -$
}}\kern-.6\wd0}}{{\setbox0=\hbox{$\scriptscriptstyle{\scriptscriptstyle
-}{\int}$ } \vcenter{\hbox{$\scriptscriptstyle -$ }}\kern-.6\wd0}}%
\!\int_{Q_{T}}A\nabla (\chi -\chi _{T})-(A^{\ast }-A_{T}^{\ast })\right\vert
+%
\mathchoice {{\setbox0=\hbox{$\displaystyle{\textstyle -}{\int}$ } \vcenter{\hbox{$\textstyle -$
}}\kern-.6\wd0}}{{\setbox0=\hbox{$\textstyle{\scriptstyle -}{\int}$ } \vcenter{\hbox{$\scriptstyle -$
}}\kern-.6\wd0}}{{\setbox0=\hbox{$\scriptstyle{\scriptscriptstyle -}{\int}$
} \vcenter{\hbox{$\scriptscriptstyle -$
}}\kern-.6\wd0}}{{\setbox0=\hbox{$\scriptscriptstyle{\scriptscriptstyle
-}{\int}$ } \vcenter{\hbox{$\scriptscriptstyle -$ }}\kern-.6\wd0}}%
\!\int_{Q_{T}}\left\vert A\nabla (\chi -\chi _{T})\right\vert \leq CT^{-1}.
\end{equation*}
\end{proof}

We are now in a position to prove the theorem.

\begin{proof}[Proof of Theorem \protect\ref{t3.2}]
We decompose $A^{\ast }-A_{R,T}^{\ast }$ as follows:%
\begin{equation*}
A^{\ast }-A_{R,T}^{\ast }=(A^{\ast }-A_{T}^{\ast })+(A_{T}^{\ast
}-A_{R,T}^{\ast }).
\end{equation*}%
We consider each term separately.

Lemma \ref{l3.3} yields $\left\vert A^{\ast }-A_{T}^{\ast }\right\vert \leq
CT^{-1}$. As regard the term $A_{T}^{\ast }-A_{R,T}^{\ast }$, we observe
that $v=\chi _{T,j}-\chi _{T,j}^{R}$\ solves the equation 
\begin{equation*}
-\nabla \cdot A\nabla v+T^{-2}v=0\text{ in }Q_{R}\text{ and }v=\chi _{T,j}%
\text{ on }\partial Q_{R},
\end{equation*}%
so that, proceeding exactly as in \cite[Proof of Lemma 1]{BP2004} we obtain 
\begin{equation}
\left\vert A_{T}^{\ast }-A_{R,T}^{\ast }\right\vert ^{2}\leq C\left(
T^{2}\exp (-c_{1}TR^{\delta })+R^{\delta -1}\right)  \label{3.22}
\end{equation}%
where $0<\delta <1$, and $C$ and $c_{1}>0$ are independent of $R$ and $T$.
We emphasize that in \cite{BP2004}, the above inequality has been obtained
without any help stemming from the random character of the problem. It
relies only on the bounds of the Green function of the operator $-\nabla
\cdot A\nabla +T^{-2}$ and on the bounds of the regularized corrector $\chi
_{T}$.

Choosing $R=T$ in (\ref{3.22}), we define the function 
\begin{equation*}
\eta _{\delta }(t)=\frac{1}{t}+t\exp \left( -\frac{c_{1}}{2}t^{1+\delta
}\right) +t^{\frac{1}{2}(\delta -1)}\text{ for }t\geq 1\text{.}
\end{equation*}%
Then $\eta _{\delta }$ is continuous with $\lim_{t\rightarrow \infty }\eta
_{\delta }(t)=0$. We see that 
\begin{equation*}
\left\vert A^{\ast }-A_{T,T}^{\ast }\right\vert \leq C\eta _{\delta }(T)%
\text{ for any }T\geq 1\text{.}
\end{equation*}%
This concludes the proof of the theorem.
\end{proof}

\section{Convergence rates: the asymptotic periodic setting}

\subsection{Preliminary results}

Let us consider the corrector problem (\ref{1.6}) in which $A$ satisfies in
addition the assumptions (\ref{2.1}) and (\ref{2.2}) below: $A=A_{0}+A_{per}$
where 
\begin{equation}
A_{per}\in L_{per}^{2}(Y)^{d\times d}\text{ and }\left\{ 
\begin{array}{l}
A_{0}\in L^{2}(\mathbb{R}^{d})^{d\times d}\text{ for }d\geq 3 \\ 
A_{0}\in (L^{2}(\mathbb{R}^{2})\cap L^{2,1}(\mathbb{R}^{2}))^{2\times 2}%
\text{ for }d=2.%
\end{array}%
\right.  \label{2.1}
\end{equation}%
The matrix $A_{per}$ is symmetric and further 
\begin{equation}
\alpha \left\vert \lambda \right\vert ^{2}\leq A_{per}(y)\lambda \cdot
\lambda \leq \beta \left\vert \lambda \right\vert ^{2}\text{ for all }%
\lambda \in \mathbb{R}^{d}\text{ and a.e. }y\in \mathbb{R}^{d}.  \label{2.2}
\end{equation}%
Let $H_{\infty ,per}^{1}(\mathbb{R}^{d})=\{u\in L_{\infty ,per}^{2}(\mathbb{R%
}^{d}):\nabla u\in L_{\infty ,per}^{2}(\mathbb{R}^{d})^{d}\}$ where $%
L_{\infty ,per}^{2}(\mathbb{R}^{d})=L_{0}^{2}(\mathbb{R}^{d})+L_{per}^{2}(Y)$
and $L_{0}^{2}(\mathbb{R}^{d})$ is the completion of $\mathcal{C}_{0}(%
\mathbb{R}^{d})$ with respect to the seminorm (\ref{0.2}).

\begin{proposition}
\label{l2.1}Let $H$ be a function such that $H\in L^{2}(\mathbb{R}^{d})^{d}$
for $d\geq 3$ and $H\in (L^{2}(\mathbb{R}^{d})\cap L^{2,1}(\mathbb{R}%
^{d}))^{d}$ for $d=2$. Assume $A$ satisfies \emph{(\ref{1.2})}. Then there
exists $u_{0}\in L^{p}(\mathbb{R}^{d})$ with $\nabla u_{0}\in L^{2}(\mathbb{R%
}^{d})^{d}$ such that $u_{0}$ solves the equation 
\begin{equation}
-\nabla \cdot A\nabla u_{0}=\nabla \cdot H\text{ in }\mathbb{R}^{d}
\label{8.1}
\end{equation}%
where $p=2^{\ast }\equiv 2d/(d-2)$ for $d\geq 3$ and $p=\infty $ for $d=2$.
\end{proposition}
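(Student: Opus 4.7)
The proof proposal splits into the two dimensional cases explicitly indicated by the statement.

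Case $d \geq 3$. The plan is to apply Lax--Milgram on the Beppo-Levi (homogeneous Sobolev) space
\[
\mathcal{D}^{1,2}(\mathbb{R}^{d}) = \bigl\{u \in L^{2^{\ast}}(\mathbb{R}^{d}) : \nabla u \in L^{2}(\mathbb{R}^{d})^{d}\bigr\},
\]
equipped with the inner product $(u,v) = \int_{\mathbb{R}^{d}} \nabla u \cdot \nabla v\,dx$. Because of the Sobolev inequality $\|u\|_{L^{2^{\ast}}} \leq C\|\nabla u\|_{L^{2}}$ (valid only for $d \geq 3$), this is a Hilbert space. The bilinear form $a(u,v) = \int A\nabla u \cdot \nabla v$ is continuous (by $A \in L^{\infty}$) and coercive (by the lower bound in (\ref{1.2})), while $\ell(v) = -\int H \cdot \nabla v$ is continuous with $|\ell(v)| \leq \|H\|_{L^{2}} \|\nabla v\|_{L^{2}}$. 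Lax--Milgram produces a unique $u_{0} \in \mathcal{D}^{1,2}(\mathbb{R}^{d})$ satisfying $a(u_{0},v) = \ell(v)$ for all test $v$, which is the weak formulation of (\ref{8.1}); the membership $u_{0} \in L^{2^{\ast}}$ is built into the space.

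Case $d = 2$. The Sobolev embedding fails, so I would instead approximate by Dirichlet problems on enlarging balls and then extract a limit via the Green function. For each $R > 0$, let $u_{R} \in H_{0}^{1}(B_{R})$ solve $-\nabla \cdot A\nabla u_{R} = \nabla \cdot H$ in $B_{R}$; testing against $u_{R}$ and using (\ref{1.2}) yields the uniform energy bound $\|\nabla u_{R}\|_{L^{2}(\mathbb{R}^{2})} \leq \alpha^{-1}\|H\|_{L^{2}}$ (after extending $u_{R}$ by zero). Weak compactness in $L^{2}(\mathbb{R}^{2})^{2}$ gives a limit vector field which is curl-free, hence equal to $\nabla u_{0}$ for some distribution $u_{0}$; a routine passage to the limit in the variational formulation shows $u_{0}$ solves (\ref{8.1}).

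The delicate point is fixing the constant in $u_{0}$ and establishing the $L^{\infty}$ bound. I would single out the representative
\[
u_{0}(x) = -\int_{\mathbb{R}^{2}} \nabla_{y} G(x,y) \cdot H(y)\,dy,
\]
where $G$ is the Green function from Proposition \ref{p10.1}. By (\ref{10.2}), $\|\nabla_{y} G(x,\cdot)\|_{L^{2,\infty}(\mathbb{R}^{2})} \leq C$ uniformly in $x$, and by the Lorentz duality $(L^{2,\infty})' = L^{2,1}$,
\[
|u_{0}(x)| \leq C\,\|\nabla_{y} G(x,\cdot)\|_{L^{2,\infty}}\,\|H\|_{L^{2,1}} \leq C\,\|H\|_{L^{2,1}},
\]
so that $u_{0} \in L^{\infty}(\mathbb{R}^{2})$. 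A direct computation using (\ref{10.1}) confirms that this $u_{0}$ solves (\ref{8.1}) in $\mathcal{D}'(\mathbb{R}^{2})$, and matches the Lax--Milgram-type limit up to a constant that is absorbed by our choice of representative.

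The main obstacle is the two-dimensional case: the homogeneous Sobolev space modulo constants is not directly embedded into any $L^{p}$, so one cannot mimic the $d \geq 3$ argument. This is exactly why the extra hypothesis $H \in L^{2,1}(\mathbb{R}^{2})^{2}$ is imposed, enabling the Lorentz-space duality against $\nabla_{y} G(x,\cdot) \in L^{2,\infty}$; verifying that the Green-function representation truly delivers a weak solution with both $\nabla u_{0} \in L^{2}$ and $u_{0} \in L^{\infty}$ is the heart of the argument.
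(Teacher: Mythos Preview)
Your proposal is correct and follows essentially the same approach as the paper: Lax--Milgram on the homogeneous Sobolev space $\mathcal{D}^{1,2}(\mathbb{R}^{d})$ (the paper's $Y^{1,2}$) for $d\geq 3$, and the Green-function representation combined with the $L^{2,\infty}$--$L^{2,1}$ duality estimate (\ref{10.2}) for $d=2$. Your intermediate ball-approximation step in dimension two is an elaboration the paper omits, but the core argument is the same.
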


\begin{proof}
1) We first assume that $d\geq 3$. Let $Y^{1,2}=\{u\in L^{2^{\ast }}(\mathbb{%
R}^{d}):\nabla u\in L^{2}(\mathbb{R}^{d})^{d}\}$ (where $2^{\ast }=2d/(d-2)$%
), and equip $Y^{1,2}$ with the norm $\left\Vert u\right\Vert
_{Y^{1,2}}=\left\Vert u\right\Vert _{L^{2^{\ast }}(\mathbb{R}%
^{d})}+\left\Vert \nabla u\right\Vert _{L^{2}(\mathbb{R}^{d})}$, which makes
it a Banach space. By the Sobolev's inequality (see \cite[Theorem 4.31, page
102]{Adams}), there exists a positive constant $C=C(d)$ such that 
\begin{equation}
\left\Vert u\right\Vert _{L^{2^{\ast }}(\mathbb{R}^{d})}\leq C\left\Vert
\nabla u\right\Vert _{L^{2}(\mathbb{R}^{d})}\ \ \forall u\in Y^{1,2}.
\label{6.3}
\end{equation}%
We deduce from (\ref{6.3}) that (\ref{8.1}) possesses a unique solution in $%
Y^{1,2}$ satisfying the inequality 
\begin{equation}
\left\Vert u_{0}\right\Vert _{Y^{1,2}}\leq C\left\Vert H\right\Vert _{L^{2}(%
\mathbb{R}^{d})}.  \label{6.4}
\end{equation}

2) Now assume that $d=2$. We use $G(x,y)$ defined by (\ref{10.1}) to express 
$u_{0}$ as 
\begin{equation}
u_{0}(x)=-\int_{\mathbb{R}^{d}}\nabla _{y}G(x,y)\cdot H(y)dy.  \label{8.6}
\end{equation}%
The expression (\ref{8.6}) makes sense since we may proceed by approximation
by assuming first that $H\in \mathcal{C}_{0}^{\infty }(\mathbb{R}^{2})^{2}$
and next using the density of $\mathcal{C}_{0}^{\infty }(\mathbb{R}^{2})$ in 
$L^{2,1}(\mathbb{R}^{2})$ together with property (\ref{10.3}) to conclude.
So, using the generalized H\"{o}lder inequality, we get 
\begin{equation}
\left\Vert u_{0}\right\Vert _{L^{\infty }(\mathbb{R}^{2})}\leq \sup_{x\in 
\mathbb{R}^{2}}\left\Vert \nabla _{y}G(x,\cdot )\right\Vert _{L^{2,\infty }(%
\mathbb{R}^{2})}\left\Vert H\right\Vert _{L^{2,1}(\mathbb{R}^{2})}.
\label{8.2}
\end{equation}%
This completes the proof.
\end{proof}

\begin{lemma}
\label{l1.2}Assume that $A=A_{0}+A_{per}$ where $A$ and $A_{per}$ are
uniformly elliptic (see \emph{(\ref{1.2}) }and \emph{(\ref{2.2})}) with $%
A_{0}$ and $A_{per}$ being as in \emph{(\ref{2.1})}. Assume further that $%
A_{per}$ and $A$ are H\"{o}lder continuous. Let the number $p$ be as in
Proposition \emph{\ref{l2.1}}. Let $\chi _{j,per}\in H_{per}^{1}(Y)$ be the
unique solution of 
\begin{equation}
-\nabla _{y}\cdot \left( A_{per}(e_{j}+\nabla _{y}\chi _{j,per})\right) =0%
\text{ in }Y,\ \ \int_{Y}\chi _{j,per}dy=0.  \label{2.3}
\end{equation}%
Then \emph{(\ref{1.6})} possesses a unique solution $\chi _{j}\in H_{\infty
,per}^{1}(Y)$ (in the sense of Theorem \emph{\ref{t4.1}}) satisfying $\chi
_{j}=\chi _{j,0}+\chi _{j,per}$ where $\chi _{j,0}\in L^{p}(\mathbb{R}^{d})$
with $\nabla _{y}\chi _{j,0}\in L^{2}(\mathbb{R}^{d})^{d}$, and 
\begin{equation}
\left\Vert \chi _{j}\right\Vert _{L^{\infty }(\mathbb{R}^{d})}\leq C
\label{*2}
\end{equation}%
where $C=C(d,A)$.
\end{lemma}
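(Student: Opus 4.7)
The plan is to seek $\chi_j$ in the decomposed form $\chi_j = \chi_{j,0} + \chi_{j,per}$, where $\chi_{j,per}$ is the periodic corrector of \eqref{2.3} and $\chi_{j,0}$ absorbs the influence of the $L^2$-decaying perturbation $A_0$. First I would construct $\chi_{j,per}$ by Lax--Milgram in $H^1_{per}(Y)/\mathbb{R}$ and extend it periodically to $\mathbb{R}^d$. Hölder continuity of $A_{per}$ together with Schauder estimates on the cell upgrade $\chi_{j,per}$ to a $C^{1,\alpha}(\mathbb{R}^d)$ function whose $L^\infty$-norm, and that of $\nabla_y \chi_{j,per}$, is controlled by a constant depending only on $d$ and $A_{per}$.

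Substituting $\chi_j = \chi_{j,0} + \chi_{j,per}$ into \eqref{1.6} and using \eqref{2.3} to cancel the $A_{per}(e_j+\nabla_y\chi_{j,per})$ contribution reduces the problem for $\chi_{j,0}$ to
\begin{equation*}
-\nabla_y \cdot (A\nabla_y\chi_{j,0}) = \nabla_y \cdot H, \qquad H := A_0(e_j + \nabla_y \chi_{j,per}) \text{ in } \mathbb{R}^d.
\end{equation*}
Because $A_0 = A - A_{per}$ is bounded in $L^\infty$ (by ellipticity of $A$ and boundedness of $A_{per}$) and lies in $L^2$ by assumption, $H \in L^2 \cap L^\infty$; for $d=2$ a Lorentz-space interpolation places $H$ in $L^{2,1}$. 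Proposition~\ref{l2.1} then produces $\chi_{j,0} \in L^p(\mathbb{R}^d)$ with $\nabla\chi_{j,0} \in L^2(\mathbb{R}^d)^d$. Adding the defining equations of $\chi_{j,0}$ and $\chi_{j,per}$ recovers \eqref{1.6}. To place $\chi_j$ in $H^1_{\infty,per}(\mathbb{R}^d)$ I would verify that $\chi_{j,0} \in L^2_0(\mathbb{R}^d)$: for $d\geq 3$ this follows from Hölder's inequality applied to $\chi_{j,0} \in L^{2^\ast}(\mathbb{R}^d)$, which forces the ball averages $|B_R|^{-1}\int_{B_R}|\chi_{j,0}|^2$ to vanish as $R\to\infty$; for $d=2$ it follows from the decay at infinity of the Green's function representation below. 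The corresponding statement for $\nabla\chi_{j,0} \in L^2(\mathbb{R}^d)^d$ is immediate, and consequently $M(\nabla_y \chi_j)=0$.

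For the uniform bound \eqref{*2}, the periodic part is already bounded, so it suffices to control $\|\chi_{j,0}\|_{L^\infty}$. For $d=2$ this is immediate from Proposition~\ref{l2.1}. For $d\geq 3$, I would exploit the Hölder continuity of $A$ to invoke the pointwise bound $|\nabla_y G(x,y)| \leq C|x-y|^{1-d}$ from \eqref{10.4} and represent
\begin{equation*}
\chi_{j,0}(x) = -\int_{\mathbb{R}^d} \nabla_y G(x,y)\cdot H(y)\,dy;
\end{equation*}
a Hardy--Littlewood--Sobolev endpoint estimate for the Riesz kernel $|x-y|^{1-d}$ paired against $H$ then gives $\chi_{j,0} \in L^\infty(\mathbb{R}^d)$, with constant depending only on $d$ and $A$. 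Uniqueness (in the sense of Theorem~\ref{t4.1}) is inherited: any two solutions differ by a function whose gradient lies in $B_{\mathcal{A}}^{2}(\mathbb{R}^d)^d$ with vanishing mean and solves the homogeneous equation, and the argument in Part~2 of the proof of Theorem~\ref{t4.1} forces that gradient to vanish.

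The main obstacle I anticipate is the endpoint Lorentz-space pairing needed for the $L^\infty$ bound in dimension $d\geq 3$: the bare assumption $A_0\in L^2$ does not place $H$ in $L^{d,1}$, so a careful splitting of $A_0$ into a compactly supported near-field piece and a small $L^2$ tail, with separate Riesz-potential bounds on each, appears unavoidable. A secondary subtlety is the passage from the $L^p$-integrability of $\chi_{j,0}$ to membership in $L^2_0(\mathbb{R}^d)$, which I would handle by truncation and mollification using the fact that compactly supported $L^2$ functions have zero Marcinkiewicz seminorm.
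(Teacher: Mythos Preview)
Your proposal is correct and follows the paper's proof almost verbatim: the decomposition $\chi_j=\chi_{j,0}+\chi_{j,per}$, the reduced equation $-\nabla\cdot(A\nabla\chi_{j,0})=\nabla\cdot H$ with $H=A_0(e_j+\nabla\chi_{j,per})\in L^2\cap L^\infty$, the appeal to Proposition~\ref{l2.1}, and---for the $L^\infty$ bound when $d\geq 3$---the near/far splitting of the Green-function integral (pairing $H\in L^\infty$ with $|x-y|^{1-d}$ on $B_2(y)$, and $H\in L^2$ with $|x-y|^{1-d}\in L^2(\mathbb{R}^d\setminus B_1(y))$ outside) are precisely the paper's steps, so the obstacle you anticipate is resolved exactly as you predict, with no Hardy--Littlewood--Sobolev endpoint machinery needed. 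For $d=2$ the paper likewise routes through $L^{2,1}$ and the $L^{2,\infty}$--$L^{2,1}$ duality for $\nabla G$, so your approach coincides there as well.
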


\begin{proof}
First, we notice that if $\chi _{j,per}$ solves (\ref{2.3}) then $\chi
_{j,0} $ solves 
\begin{equation*}
-\nabla _{y}\cdot \left( A\nabla _{y}\chi _{j,0}\right) =\nabla _{y}\cdot
\left( A_{0}(e_{j}+\nabla _{y}\chi _{j,per})\right) \text{ in }\mathbb{R}%
^{d}.
\end{equation*}%
Assuming that $A_{per}$ is H\"{o}lder continuous, we get $\nabla _{y}\chi
_{j,per}\in L^{\infty }(Y)$. Because of the property of $A_{0}$ given by (%
\ref{2.1}), it follows that $g=A_{0}(e_{j}+\nabla _{y}\chi _{j,per})$
belongs to $L^{2}(\mathbb{R}^{d})^{d}$ (resp. $(L^{2}(\mathbb{R}^{2})\cap
L^{2}(\mathbb{R}^{2}))^{2}$) for $d\geq 3$ (resp. $d=2$). Proposition \ref%
{l2.1} implies that $\chi _{j,0}\in L^{p}(\mathbb{R}^{d})$ with $\nabla
_{y}\chi _{j,0}\in L^{2}(\mathbb{R}^{d})^{d}$ for $d\geq 3$. Hence in that
case one has $\left\langle \chi _{j,0}\right\rangle =0$ and $\left\langle
\nabla _{y}\chi _{j,0}\right\rangle =0$. This proves that $\chi _{j}=\chi
_{j,per}+\chi _{j,0}\in H_{\infty ,per}^{1}(Y)$ for $d\geq 3$. Now, for $d=2$
we have $\chi _{j,0}\in L_{0}^{2}(\mathbb{R}^{2})$ since $\chi _{j,0}$
vanishes at infinity. Indeed, we use (\ref{8.2}) to get 
\begin{equation*}
\left\Vert \chi _{j,0}\right\Vert _{L^{\infty }(\mathbb{R}^{2})}\leq
\sup_{x\in \mathbb{R}^{d}}\left\Vert \nabla _{y}G(x,\cdot )\right\Vert
_{L^{2,\infty }(\mathbb{R}^{2})}\left\Vert g\right\Vert _{L^{2,1}(\mathbb{R}%
^{2})}
\end{equation*}%
and proceed as in \cite[Section 3, page 14]{Lebris} (first approximate $g$
by smooth functions in $\mathcal{C}_{0}^{\infty }(\mathbb{R}^{2})^{2}$) to
show that $\chi _{j,0}\in L_{0}^{2}(\mathbb{R}^{2})$.

Let us now verify (\ref{*2}). We drop for a while the index $j$ and just
write $\chi =\chi _{0}+\chi _{per}$, where the couple $(\chi _{per},\chi
_{0})$ solves the system 
\begin{equation}
-\nabla _{y}\cdot \left( A_{per}(e_{j}+\nabla _{y}\chi _{per})\right) =0%
\text{ in }Y,  \label{2.4}
\end{equation}%
\begin{equation}
-\nabla _{y}\cdot \left( A\nabla _{y}\chi _{0}\right) =\nabla _{y}\cdot
\left( A_{0}(e_{j}+\nabla _{y}\chi _{per})\right) \text{ in }\mathbb{R}^{d}.
\label{2.5}
\end{equation}%
It is well known that $\chi _{per}$ is bounded in $L^{\infty }(\mathbb{R}%
^{d})$. Let us first deal with $\chi _{0}$. Let $g=A_{0}(e_{j}+\nabla
_{y}\chi _{per})$ and use the Green function defined in Proposition \ref%
{p10.1} to express $\chi _{0}$ as 
\begin{equation}
\chi _{0}(y)=-\int_{\mathbb{R}^{d}}\nabla _{x}G(y,x)g(x)dx.  \label{2.7}
\end{equation}%
We recall that $G$ satisfies the inequality (\ref{10.4}) for $d\geq 3$\ and (%
\ref{10.2}) for $d=2$, respectively.

We first assume that $d\geq 3$. Let $y\in \mathbb{R}^{d}$ and choose $\gamma
\in \mathcal{C}_{0}^{\infty }(B_{2}(y))$ such that $\gamma =1$ on $B_{1}(y)$
and $0\leq \gamma \leq 1$. We write $\chi _{0}$ as 
\begin{align*}
\chi _{0}(y)& =-\int_{\mathbb{R}^{d}}\nabla _{x}G(y,x)\cdot g(x)\gamma
(x)dx-\int_{\mathbb{R}^{d}}\nabla _{x}G(y,x)\cdot g(x)(1-\gamma (x))dx \\
& =v_{1}(y)+v_{2}(y).
\end{align*}%
As for $v_{1}$, owing to (\ref{10.4}), we have 
\begin{equation*}
\left\vert v_{1}(y)\right\vert \leq C\left\Vert g\right\Vert _{L^{\infty }(%
\mathbb{R}^{d})}\int_{B_{2}(y)}\left\vert x-y\right\vert ^{1-d}dx\leq
C\left\Vert g\right\Vert _{L^{\infty }(\mathbb{R}^{d})}
\end{equation*}%
where $C=C(d)$. As for $v_{2}$, (\ref{10.4}) and H\"{o}lder's inequality
imply, 
\begin{equation*}
\left\vert v_{2}(y)\right\vert \leq C\left\Vert g\right\Vert _{L^{2}(\mathbb{%
R}^{d})}\left( \int_{\mathbb{R}^{d}\backslash B_{2}(y)}\left\vert
x-y\right\vert ^{2-2d}dx\right) \leq C\left\Vert g\right\Vert _{L^{2}(%
\mathbb{R}^{d})}
\end{equation*}%
since $2d-2>d$ for $d\geq 3$.

When $d=2$, we use (\ref{10.2}) to get 
\begin{equation*}
\left\Vert \chi _{0}\right\Vert _{L^{\infty }(\mathbb{R}^{2})}\leq
\sup_{x\in \mathbb{R}^{2}}\left\Vert \nabla _{y}G(x,\cdot )\right\Vert
_{L^{2,\infty }(\mathbb{R}^{2})}\left\Vert g\right\Vert _{L^{2,1}(\mathbb{R}%
^{2})}\leq C\left\Vert g\right\Vert _{L^{2,1}(\mathbb{R}^{d})}\text{.}
\end{equation*}
\end{proof}

\begin{lemma}
\label{l1.4}\emph{(i)} Let $g\in L^{2}(\mathbb{R}^{d})+L_{per}^{2}(Y)$ be
such that $\left\langle g\right\rangle =0$. Then there exists at least one
function $u\in H_{\infty ,per}^{1}(Y)$ such that 
\begin{equation}
\Delta u=g\text{ in }\mathbb{R}^{d},\ \left\langle u\right\rangle =0.
\label{2.9}
\end{equation}%
\emph{(ii)} Assume further that $g\in L^{\infty }(\mathbb{R}^{d})$ and $u$
is bounded; then $u,\nabla u\in \mathcal{B}_{\infty ,per}(\mathbb{R}^{d})$
and 
\begin{equation}
\left\Vert \nabla u\right\Vert _{L^{\infty }(\mathbb{R}^{d})}\leq
C\left\Vert g\right\Vert _{L^{\infty }(\mathbb{R}^{d})},  \label{2.10}
\end{equation}%
where $C>0$ depends only on $d$.
\end{lemma}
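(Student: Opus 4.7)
The plan is to exploit the additive structure $g = g_0 + g_{\mathrm{per}}$ with $g_0 \in L^2(\mathbb{R}^d)$ and $g_{\mathrm{per}} \in L^2_{\mathrm{per}}(Y)$, solve the two decoupled Poisson problems independently, and reassemble. Since any $L^2(\mathbb{R}^d)$ function has vanishing Marcinkiewicz mean, the hypothesis $\langle g \rangle = 0$ forces $\int_Y g_{\mathrm{per}}\,dy = 0$. For the periodic piece one solves
\[
\Delta u_{\mathrm{per}} = g_{\mathrm{per}} \text{ in } Y,\qquad u_{\mathrm{per}} \in H^1_{\mathrm{per}}(Y),\qquad \int_Y u_{\mathrm{per}}\,dy = 0,
\]
by Lax--Milgram on the closed subspace of mean-zero functions in $H^1_{\mathrm{per}}(Y)$. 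For the whole-space piece one takes $u_0 = -\Gamma \ast g_0$, with $\Gamma$ the fundamental solution of $-\Delta$. In dimension $d \geq 3$, Sobolev embedding and Riesz-potential estimates yield $\nabla u_0 \in L^2(\mathbb{R}^d)^d$; in dimension $d = 2$, the weak-$L^2$ bound on $\nabla \Gamma$ from Proposition \ref{p10.1} combined with the Lorentz-space machinery used in Proposition \ref{l2.1} plays the corresponding role and secures $u_0$ as an element of $L^2_0(\mathbb{R}^2)$. Setting $u = u_0 + u_{\mathrm{per}}$ (adjusted by an additive constant so that $\langle u \rangle = 0$) delivers the required element of $H^1_{\infty,\mathrm{per}}(Y)$.

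For part (ii), one first arranges, by redefining the decomposition via the Bohr mean of $g$ if necessary, that both $g_0$ and $g_{\mathrm{per}}$ are separately bounded with norms controlled by $\|g\|_{L^\infty(\mathbb{R}^d)}$. Periodic Calder\'on--Zygmund and Schauder theory applied to $\Delta u_{\mathrm{per}} = g_{\mathrm{per}}$ in $Y$ then gives $\|u_{\mathrm{per}}\|_\infty + \|\nabla u_{\mathrm{per}}\|_\infty \leq C\|g\|_\infty$. For $u_0 = u - u_{\mathrm{per}}$, a near/far split of the Newton-potential representation about $|x-y| = 1$ uses $g_0 \in L^\infty$ on the near part and $g_0 \in L^2$ on the far part to yield $\|u_0\|_\infty \leq C\|g\|_\infty$ (directly for $d \geq 3$; via the Lorentz-space refinement of Proposition \ref{p10.1} for $d = 2$). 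The $L^\infty$ bound on $\nabla u_0$ is then read off from the standard interior gradient estimate
\[
|\nabla u_0(x)| \leq C\!\left(R^{-1}\|u_0\|_{L^\infty(B_R(x))} + R\|g_0\|_{L^\infty(B_R(x))}\right)\qquad (x \in \mathbb{R}^d,\ R > 0),
\]
optimized in $R$, which produces the Landau-type inequality $\|\nabla u_0\|_\infty \leq C\|u_0\|_\infty^{1/2}\|g_0\|_\infty^{1/2} \leq C\|g\|_\infty$. Membership $u, \nabla u \in \mathcal{B}_{\infty,\mathrm{per}}(\mathbb{R}^d)$ follows directly from the decomposition: $u_0$ and $\nabla u_0$ vanish at infinity (hence have zero Marcinkiewicz seminorm), while $u_{\mathrm{per}}$ and $\nabla u_{\mathrm{per}}$ are periodic.

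The main obstacle is securing the \emph{linear}-in-$\|g\|_\infty$ form of (2.10), rather than a mere Bernstein interpolation of the type $\|\nabla u\|_\infty \leq C\|u\|_\infty^{1/2}\|g\|_\infty^{1/2}$ which would introduce an undesired dependence on $\|u\|_\infty$. This forces the quantitative control of $\|u_0\|_\infty$ directly in terms of $\|g\|_\infty$ via the explicit Newton-potential representation; in dimension $d = 2$ the logarithmic growth of $\Gamma$ makes this step the most delicate, requiring one to trade the missing $L^1$ integrability of $\nabla \Gamma$ against Lorentz-space bounds, exactly in the spirit already used in the proofs of Proposition \ref{l2.1} and Lemma \ref{l1.2}.
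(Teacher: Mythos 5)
Your part (i) follows the paper's own route: decompose $g=g_{0}+g_{\mathrm{per}}$, solve the periodic cell problem by Lax--Milgram, solve the full-space problem by convolution against the fundamental solution (with the Lorentz-space detour when $d=2$), and add. No comment needed there.

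Your part (ii) is a genuinely different argument. The paper proves (ii) by viewing $u$ as the Newtonian potential of $g$ as a whole and quoting \cite[Problem 4.8(a), p.~71]{Gilbarg} to get $\nabla u\in\mathcal{C}^{1/2}_{loc}(\mathbb{R}^{d})$ with the associated quantitative estimate, then combining this continuity with $\nabla u\in L^{2}_{\infty,per}(\mathbb{R}^{d})$ to place $\nabla u$ in $\mathcal{B}_{\infty,per}(\mathbb{R}^{d})$. You instead keep the splitting $u=u_{0}+u_{\mathrm{per}}$ alive throughout, estimate the two pieces separately (periodic $W^{2,p}$/Schauder for $u_{\mathrm{per}}$; a near/far Newton-potential split plus the Landau interpolation $\|\nabla u_{0}\|_{\infty}\lesssim\|u_{0}\|_{\infty}^{1/2}\|g_{0}\|_{\infty}^{1/2}$ for $u_{0}$), and read off membership in $\mathcal{C}_{0}(\mathbb{R}^{d})\oplus\mathcal{C}_{per}(Y)$ directly from the two halves. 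Your route is arguably more self-contained and, in fact, more precise: the phrase ``Newtonian potential of $g$'' in the paper is awkward because the periodic part of $g$ does not decay and the integral $\int\Gamma(x-y)g(y)\,dy$ is not literally convergent; your decomposition sidesteps that.

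There is one step you wave past that must be tightened: the assertion that the decomposition $g=g_{0}+g_{\mathrm{per}}$ can be ``redefined via the Bohr mean'' so that both pieces are individually bounded with norms controlled by $\|g\|_{L^{\infty}}$. The decomposition is in fact \emph{unique} (the only function simultaneously in $L^{2}(\mathbb{R}^{d})$ and $L^{2}_{per}(Y)$ is zero), so there is nothing to redefine; what you actually need is the a priori fact that $g\in L^{\infty}(\mathbb{R}^{d})$ already forces $g_{0},g_{\mathrm{per}}\in L^{\infty}$. This is true, but for a reason you should spell out: for $k\in\mathbb{Z}^{d}$ one has $g(\cdot+k)=g_{0}(\cdot+k)+g_{\mathrm{per}}$, and $g_{0}(\cdot+k)\to 0$ in $L^{2}_{loc}$ as $|k|\to\infty$ because $g_{0}\in L^{2}(\mathbb{R}^{d})$; hence $g_{\mathrm{per}}$ is an $L^{2}_{loc}$-limit of the uniformly bounded family $g(\cdot+k)$, so $\|g_{\mathrm{per}}\|_{L^{\infty}}\le\|g\|_{L^{\infty}}$, and then $\|g_{0}\|_{L^{\infty}}\le 2\|g\|_{L^{\infty}}$. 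With that inserted, the rest of your argument (Calder\'on--Zygmund for the periodic piece, near/far split and Landau interpolation for $u_{0}$, decay of $u_{0}$ and $\nabla u_{0}$ from $L^{2^{*}}\cap L^{\infty}\cap C^{0}$ when $d\ge 3$ and from the Lorentz bound when $d=2$) closes the proof of (2.10) with the correct linear dependence on $\|g\|_{L^{\infty}}$.
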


\begin{proof}
(i) We write $g=g_{0}+g_{per}$ with $g_{0}\in L^{2}(\mathbb{R}^{d})$ and $%
g_{per}\in L_{per}^{2}(Y)$. Since $\left\langle g\right\rangle =0$, we have $%
\left\langle g_{per}\right\rangle =0$. So let $v_{per}\in H_{per}^{1}(Y)$ be
the unique solution of 
\begin{equation*}
\Delta v_{per}=g_{per}\text{ in }Y\text{, }\left\langle v_{per}\right\rangle
=0.
\end{equation*}%
We observe that if $u$ solves (\ref{2.9}), then $u$ has the form $%
u=v_{0}+v_{per}$ where $v_{0}\in H^{1}(\mathbb{R}^{d})$ solves the problem 
\begin{equation*}
\Delta v_{0}=g_{0}\text{ in }\mathbb{R}^{d}\text{, }v_{0}(x)\rightarrow 0%
\text{ as }\left\vert x\right\vert \rightarrow \infty .
\end{equation*}%
Since $g_{0}\in L^{2}(\mathbb{R}^{d})$, $v_{0}$ easily expresses as 
\begin{equation*}
v_{0}(x)=\int_{\mathbb{R}^{d}}\Gamma _{0}(x-y)g_{0}(y)dy
\end{equation*}%
where $\Gamma _{0}$ denotes the fundamental solution of the Laplacian in $%
\mathbb{R}^{d}$ (with pole at the origin). This shows the existence of $u$
in $H^{1}(\mathbb{R}^{d})+H_{per}^{1}(Y)\subset H_{\infty ,per}^{1}(\mathbb{R%
}^{d})$.

Let us check (ii). First, since (\ref{2.9}) is satisfied, $u$ is thus the
Newtonian potential of $g$ in $\mathbb{R}^{d}$, and by \cite[page 71,
Problem 4.8 (a)]{Gilbarg}, $\nabla u\in \mathcal{C}_{loc}^{1/2}(\mathbb{R}%
^{d})$. Using therefore the continuity of $\nabla u$ together with the fact
that $\nabla u$ also lies in $L_{\infty ,per}^{2}(\mathbb{R}^{d})$, we infer
that $\nabla u\in \mathcal{B}_{\infty ,per}(\mathbb{R}^{d})=\mathcal{C}_{0}(%
\mathbb{R}^{d})\oplus \mathcal{C}_{per}(Y)$. We then proceed as in the proof
of Lemma \ref{l1.2} to obtain $u\in \mathcal{B}_{\infty ,per}(\mathbb{R}%
^{d}) $. This completes the proof.
\end{proof}

The following result is a mere consequence of the preceding lemma. Its proof
is therefore left to the reader.

\begin{corollary}
\label{c1.1}Let $\mathbf{g}$\ be a solenoidal vector in $(L^{2}(\mathbb{R}%
^{d})+L_{per}^{2}(Y))^{d}$\ (i.e. $\nabla \cdot \mathbf{g}=0$) with $%
\left\langle \mathbf{g}\right\rangle =0$. Then there exists a skew symmetric
matrix $G$\ with entries in $L_{\infty ,per}^{2}(Y)$\ such that $\mathbf{g}%
=\nabla \cdot G$. If further $\mathbf{g}$\ belongs to $L^{\infty }(\mathbb{R}%
^{d})^{d}$, then $G$\ has entries in $\mathcal{B}_{\infty ,per}(\mathbb{R}%
^{d})$ and 
\begin{equation}
\left\Vert G\right\Vert _{L^{\infty }(\mathbb{R}^{d})}\leq C\left\Vert 
\mathbf{g}\right\Vert _{L^{\infty }(\mathbb{R}^{d})}.  \label{*5}
\end{equation}
\end{corollary}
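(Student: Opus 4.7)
The strategy is the classical scalar-potential construction: write each component of $\mathbf{g}$ as a Laplacian via Lemma \ref{l1.4}, then antisymmetrize.

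\emph{Step 1 (scalar potentials).} For each $i\in\{1,\dots,d\}$, the component $g_i$ belongs to $L^{2}(\mathbb{R}^d)+L^{2}_{per}(Y)$ and satisfies $\langle g_i\rangle=0$, since $\langle \mathbf{g}\rangle=0$. Lemma \ref{l1.4}(i) therefore furnishes $u_i\in H^{1}_{\infty,per}(\mathbb{R}^d)$ with $\langle u_i\rangle=0$ solving $\Delta u_i=g_i$ in $\mathbb{R}^d$.

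\emph{Step 2 (antisymmetrization).} Set $G_{ij}:=\partial_j u_i-\partial_i u_j$ for $1\le i,j\le d$. The matrix $G=(G_{ij})$ is skew-symmetric by construction, and each $G_{ij}\in L^{2}_{\infty,per}(\mathbb{R}^d)$ because $\nabla u_k\in L^{2}_{\infty,per}(\mathbb{R}^d)^{d}$ for each $k$.

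\emph{Step 3 (identification of $\nabla\cdot G$).} A direct computation gives
\[
(\nabla\cdot G)_{i}=\sum_{j}\partial_j(\partial_j u_i-\partial_i u_j)=\Delta u_i-\partial_i h=g_i-\partial_i h,
\]
where $h:=\nabla\cdot\mathbf{u}=\sum_i\partial_i u_i$. Since $\nabla\cdot\mathbf{g}=0$, applying $\Delta$ to $h$ yields $\Delta h=\sum_i\partial_i\Delta u_i=\nabla\cdot\mathbf{g}=0$, so $h$ is harmonic on $\mathbb{R}^d$. It remains to show $h\equiv 0$.

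\emph{Step 4 (vanishing of $h$ — the main obstacle).} The function $h$ lies in $L^{2}_{\infty,per}(\mathbb{R}^d)$ and has mean value $0$. Decompose $h=h_{0}+h_{per}$ with $h_{0}\in L^{2}(\mathbb{R}^d)$ and $h_{per}\in L^{2}_{per}(Y)$ normalized by $\langle h_{per}\rangle=0$ (this normalization makes the splitting unique). The identity $\Delta h_{0}+\Delta h_{per}=0$ expresses a periodic distribution of zero mean as a distribution in $H^{-1}(\mathbb{R}^d)$ which vanishes at infinity; such a coincidence forces $\Delta h_{per}=0$ and $\Delta h_{0}=0$ separately. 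A periodic harmonic function of zero mean is identically zero, hence $h_{per}=0$; a harmonic function in $L^{2}(\mathbb{R}^d)$ is zero by Plancherel, hence $h_{0}=0$. Therefore $h\equiv 0$ and $\nabla\cdot G=\mathbf{g}$.

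\emph{Step 5 (bounded case and the $L^{\infty}$ estimate).} Assume further $\mathbf{g}\in L^{\infty}(\mathbb{R}^d)^{d}$. Split $g_i=g_{i,0}+g_{i,per}$ with $g_{i,per}$ of mean zero; both parts inherit $L^{\infty}$ bounds. Schauder estimates on the periodic cell give $u_{i,per}$ bounded and continuous on $\mathbb{R}^d$, while the Green-function/Newtonian-potential estimates used in the proof of Lemma \ref{l1.2} (splitting into a local $L^{\infty}$ contribution and a distant $L^{2}$ tail handled via (\ref{10.4}) for $d\ge 3$ or (\ref{10.2}) for $d=2$) show $u_{i,0}\in L^{\infty}(\mathbb{R}^d)$. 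Hence $u_i$ is bounded, and Lemma \ref{l1.4}(ii) applies to yield $u_i,\nabla u_i\in\mathcal{B}_{\infty,per}(\mathbb{R}^d)$ together with $\|\nabla u_i\|_{L^{\infty}(\mathbb{R}^d)}\le C\|g_i\|_{L^{\infty}(\mathbb{R}^d)}$. Since $G_{ij}=\partial_j u_i-\partial_i u_j$, we conclude $G_{ij}\in\mathcal{B}_{\infty,per}(\mathbb{R}^d)$ and (\ref{*5}) follows with $C$ depending only on $d$ and the ellipticity constants.

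The delicate point is Step 4: one must exploit that no nontrivial harmonic function lives in $L^{2}_{\infty,per}(\mathbb{R}^d)$ with mean zero, the argument crucially relying on the fact that a nonzero periodic distribution of zero mean cannot coincide with a decaying one. The remaining steps are essentially bookkeeping on top of Lemma \ref{l1.4}.
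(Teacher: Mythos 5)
Your overall strategy is the standard scalar-potential construction and is almost certainly what the paper intends (the text says the corollary is ``a mere consequence of the preceding lemma''). Steps 1--4 are sound: in particular, Step 4, which you correctly flag as the delicate point, is handled properly -- the Fourier-support argument separating the periodic and decaying pieces of $h=\nabla\cdot\mathbf{u}$, combined with Liouville on the torus and Plancherel in $L^{2}(\mathbb{R}^{d})$, does give $h\equiv 0$.

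There is, however, a genuine gap in Step~5. You invoke Lemma~\ref{l1.4}(ii), whose hypotheses are $g\in L^{\infty}$ \emph{and} $u$ bounded, and you try to verify $u_{i,0}\in L^{\infty}(\mathbb{R}^{d})$ by ``the Green-function estimates used in the proof of Lemma~\ref{l1.2}, via (\ref{10.4}) for $d\ge 3$ or (\ref{10.2}) for $d=2$.'' Those estimates control $\nabla_{y}G$, with kernel size $|x-y|^{1-d}$. But in Lemma~\ref{l1.2} the right-hand side is a divergence, so the representation formula already carries a gradient of the Green function. Here, by contrast, $u_{i,0}$ solves $\Delta u_{i,0}=g_{i,0}$ with a \emph{non-divergence} right-hand side, so the relevant kernel is the fundamental solution itself, of size $|x-y|^{2-d}$. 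Splitting into a near and a far part, the far part is controlled by
\[
\|g_{i,0}\|_{L^{2}}\Bigl(\int_{|x-y|\ge 1}|x-y|^{2(2-d)}\,dy\Bigr)^{1/2},
\]
and the integral is finite only when $2d-4>d$, i.e.\ $d>4$. For $d=3,4$ the argument as written does not yield $u_{i,0}\in L^{\infty}$, and hence you have not verified the ``$u$ bounded'' hypothesis of Lemma~\ref{l1.4}(ii). (For $d=2$ the issue is the same: (\ref{10.2}) is a weak-$L^{2}$ bound on $\nabla_{y}G$, not on $G$.)

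The fix is to notice that the corollary only uses $\nabla u_{i}$, through $G_{ij}=\partial_{j}u_{i}-\partial_{i}u_{j}$, never $u_{i}$ itself. The gradient $\nabla u_{i,0}$ has the representation kernel $\nabla\Gamma_{0}\sim|x-y|^{1-d}$, for which the far-field $L^{2}$ tail $\int_{|x-y|\ge1}|x-y|^{2(1-d)}\,dy$ does converge for $d\ge3$ (and the $d=2$ case is handled with the $L^{2,1}$--$L^{2,\infty}$ duality, exactly as in (\ref{8.2})). So $\nabla u_{i}\in L^{\infty}(\mathbb{R}^{d})$. You should then either supply a separate argument that $u_{i}$ is bounded, or -- more economically -- observe that, in the proof of Lemma~\ref{l1.4}(ii), the conclusion $\nabla u\in\mathcal{B}_{\infty,per}(\mathbb{R}^{d})$ with the estimate (\ref{2.10}) is obtained \emph{before} the boundedness of $u$ is used (which only enters for the statement about $u$ itself), so that the part of the lemma you actually need does not require $u_{i}$ bounded. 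A final small remark: the constants do not involve ellipticity constants here, since the potential equation is for the Laplacian; and in principle the far-field tail brings in $\|\mathbf{g}_{0}\|_{L^{2}}$, so the constant $C$ in (\ref{*5}) should be understood as depending on the splitting of $\mathbf{g}$, not on $d$ alone.
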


\subsection{Convergence rates: proof of Theorem \protect\ref{t5.1}}

Let $u_{\varepsilon }$, $u_{0}\in H_{0}^{1}(\Omega )$ be the weak solutions
of (\ref{1.1}) and (\ref{1.4}) respectively. Assume further that $u_{0}\in
H^{2}(\Omega )$. We suppose in addition that $\Omega $ is sufficiently
smooth. For any function $h\in L_{loc}^{2}(\mathbb{R}^{d})$ and $\varepsilon
>0$ we define $h^{\varepsilon }$ by $h^{\varepsilon }(x)=h(x/\varepsilon )$
for $x\in \mathbb{R}^{d}$. We define the first order approximation of $%
u_{\varepsilon }$ by $v_{\varepsilon }=u_{0}+\varepsilon \chi ^{\varepsilon
}\nabla u_{0}$. Let $w_{\varepsilon }=u_{\varepsilon }-v_{\varepsilon
}+z_{\varepsilon }$ where $z_{\varepsilon }\in H^{1}(\Omega )$ is the weak
solution of the following problem 
\begin{equation}
-\nabla \cdot A^{\varepsilon }\nabla z_{\varepsilon }=0\text{ in }\Omega 
\text{, }z_{\varepsilon }=\varepsilon \chi ^{\varepsilon }\nabla u_{0}\text{
on }\partial \Omega .  \label{5.3}
\end{equation}%
$z_{\varepsilon }$ will be used to approximate the difference of $%
u_{\varepsilon }$ and its first order approximation $v_{\varepsilon }$.

\begin{lemma}
\label{l5.1}The function $w_{\varepsilon }$ solves the problem 
\begin{equation}
\left\{ 
\begin{array}{l}
-\nabla \cdot \left( A^{\varepsilon }\nabla w_{\varepsilon }\right) =\nabla
\cdot \left( A^{\varepsilon }(\nabla u_{0}+(\nabla _{y}\chi )^{\varepsilon
}\nabla u_{0}-\left\langle A(\nabla u_{0}+\nabla _{y}\chi \nabla
u_{0})\right\rangle \right) \\ 
\ \ \ \ \ \ \ \ \ \ \ \ \ \ \ \ \ \ \ \ \ \ \ \ \ \ \ \ \ \ \ \ +\varepsilon
\nabla \cdot \left( A^{\varepsilon }\nabla ^{2}u_{0}\chi ^{\varepsilon
}\right) \text{ in }\Omega \\ 
\ \ \ \ \ \ \ \ \ \ \ \ \ \ \ \ \ \ \ w_{\varepsilon }=0\text{ on }\partial
\Omega .%
\end{array}%
\right.  \label{5.4}
\end{equation}
\end{lemma}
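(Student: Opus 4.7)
The boundary condition for $w_\varepsilon$ is immediate: on $\partial\Omega$ we have $u_\varepsilon=0$, $u_0=0$, so $v_\varepsilon = \varepsilon \chi^\varepsilon \nabla u_0 = z_\varepsilon$, hence $w_\varepsilon=0$. Since by construction $-\nabla\cdot(A^\varepsilon \nabla z_\varepsilon)=0$, it suffices to compute $-\nabla\cdot\bigl(A^\varepsilon\nabla(u_\varepsilon-v_\varepsilon)\bigr)$ and recognize it as the right-hand side of (\ref{5.4}). The strategy is (i) differentiate the explicit expression for $v_\varepsilon$, (ii) use the corrector equation to cancel the nominally $O(\varepsilon^{-1})$ term, and (iii) subtract off the homogenized equation to collect the remaining terms in divergence form.

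First I would compute, from $v_\varepsilon(x)=u_0(x)+\varepsilon\chi_j(x/\varepsilon)\partial_j u_0(x)$,
\[
\partial_k v_\varepsilon = \partial_k u_0 + (\partial_{y_k}\chi_j)^\varepsilon\,\partial_j u_0 + \varepsilon\,\chi_j^\varepsilon\,\partial_k\partial_j u_0.
\]
Setting $b^{j,\varepsilon}(x) := A^\varepsilon(x)\bigl(e_j+(\nabla_y\chi_j)^\varepsilon(x)\bigr)$, this gives
\[
A^\varepsilon\nabla v_\varepsilon = b^{j,\varepsilon}\partial_j u_0 \;+\; \varepsilon\, A^\varepsilon\,\nabla^2 u_0\,\chi^\varepsilon.
\]
The crucial observation is that the corrector equation $\nabla_y\cdot\bigl(A(e_j+\nabla_y\chi_j)\bigr)=0$ in $\mathbb{R}^d$ rescales to $\nabla\cdot b^{j,\varepsilon}=0$, so that
\[
\nabla\cdot\bigl(b^{j,\varepsilon}\partial_j u_0\bigr) = b^{j,\varepsilon}\cdot\nabla(\partial_j u_0).
\]
This removes the apparently singular contribution when computing $\nabla\cdot(A^\varepsilon\nabla v_\varepsilon)$.

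Next I would combine with $-\nabla\cdot(A^\varepsilon\nabla u_\varepsilon)=f$ and the homogenized equation $-\nabla\cdot(A^\ast \nabla u_0)=f$. Since $A^\ast e_j=\langle b^j\rangle$ has constant entries, the latter reads $-\langle b^j\rangle\cdot\nabla(\partial_j u_0)=f$. Subtracting yields
\[
-\nabla\cdot\bigl(A^\varepsilon\nabla(u_\varepsilon-v_\varepsilon)\bigr) = \bigl(b^{j,\varepsilon}-\langle b^j\rangle\bigr)\cdot\nabla(\partial_j u_0) + \varepsilon\,\nabla\cdot\bigl(A^\varepsilon\nabla^2 u_0\,\chi^\varepsilon\bigr).
\]
Using $\nabla\cdot b^{j,\varepsilon}=0$ once more (together with the obvious $\nabla\cdot\langle b^j\rangle=0$), the first term rewrites as
\[
\nabla\cdot\Bigl(\bigl(b^{j,\varepsilon}-\langle b^j\rangle\bigr)\partial_j u_0\Bigr) = \nabla\cdot\Bigl(A^\varepsilon\bigl(\nabla u_0+(\nabla_y\chi)^\varepsilon\nabla u_0\bigr)-\bigl\langle A(\nabla u_0+\nabla_y\chi\,\nabla u_0)\bigr\rangle\Bigr),
\]
which is precisely the divergence-form right-hand side of (\ref{5.4}). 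Adding the identically zero contribution $-\nabla\cdot(A^\varepsilon\nabla z_\varepsilon)$ to the left-hand side finishes the derivation.

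\textbf{Main obstacle.} There is no deep analytic difficulty here; the lemma is a bookkeeping identity. The one point that requires care is organizing the indices so that the $O(\varepsilon^{-1})$ term produced by differentiating $\chi_j^\varepsilon$ against $A^\varepsilon$ is seen to vanish by virtue of the rescaled corrector equation. Once that cancellation is identified, expressing the remaining terms as a divergence is a direct consequence of $\nabla\cdot b^{j,\varepsilon}=0$ and of the identification $A^\ast\nabla u_0=\langle A(I+\nabla_y\chi)\rangle\nabla u_0$.
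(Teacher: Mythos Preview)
Your proof is correct and follows essentially the same route as the paper: compute $\nabla v_\varepsilon$ by the chain rule, use $-\nabla\cdot(A^\varepsilon\nabla u_\varepsilon)=f=-\nabla\cdot(A^\ast\nabla u_0)$, and collect terms, exploiting $A^\ast\nabla u_0=\langle A(I+\nabla_y\chi)\rangle\nabla u_0$.

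One remark: your detour through non-divergence form via the corrector equation $\nabla\cdot b^{j,\varepsilon}=0$ is unnecessary, and the ``$O(\varepsilon^{-1})$ term'' you flag as the main obstacle never actually appears. Once you have $A^\varepsilon\nabla v_\varepsilon=b^{j,\varepsilon}\partial_j u_0+\varepsilon A^\varepsilon\nabla^2u_0\,\chi^\varepsilon$, you can simply write $f=-\nabla\cdot(\langle b^j\rangle\partial_j u_0)$ and subtract directly in divergence form to obtain $-\nabla\cdot\bigl(A^\varepsilon\nabla(u_\varepsilon-v_\varepsilon)\bigr)=\nabla\cdot\bigl((b^{j,\varepsilon}-\langle b^j\rangle)\partial_j u_0\bigr)+\varepsilon\nabla\cdot(A^\varepsilon\nabla^2u_0\,\chi^\varepsilon)$, which is exactly (\ref{5.4}). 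This is what the paper does: it never leaves divergence form and does not invoke the corrector equation at all for this lemma. The corrector equation becomes relevant only afterwards, when one introduces the flux corrector $G_j$ to estimate the right-hand side.
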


\begin{proof}
Let $y=x/\varepsilon $. Then 
\begin{equation*}
A(y)\nabla w_{\varepsilon }=A(y)(\nabla u_{\varepsilon }-\nabla u_{0}-\nabla
_{y}\chi (y)\nabla u_{0}-\varepsilon (\nabla ^{2}u_{0})\chi (y)+\nabla
z_{\varepsilon }),
\end{equation*}%
hence 
\begin{align*}
\nabla \cdot A\left( y\right) \nabla w_{\varepsilon }& =\nabla \cdot
A(y)\nabla u_{\varepsilon }-\nabla \cdot A(y)\nabla u_{0}-\nabla \cdot
A(y)(\nabla _{y}\chi (y)\nabla u_{0}) \\
& -\varepsilon \nabla \cdot (A(y)(\nabla ^{2}u_{0})\chi (y)) \\
& =\nabla \cdot A^{\ast }\nabla u_{0}-\nabla \cdot A(y)\nabla u_{0}-\nabla
\cdot A(y)(\nabla _{y}\chi (y)\nabla u_{0}) \\
& -\varepsilon \nabla \cdot (A(y)(\nabla ^{2}u_{0})\chi (y)).
\end{align*}%
But 
\begin{equation*}
A^{\ast }\nabla u_{0}=\left\langle A(\nabla u_{0}+\nabla _{y}\chi \nabla
u_{0})\right\rangle \equiv \left\langle A(I+\nabla _{y}\chi )\nabla
u_{0}\right\rangle .
\end{equation*}

Thus 
\begin{align*}
-\nabla \cdot A^{\varepsilon }\nabla w_{\varepsilon }& =\nabla \cdot \left[
A\left( y\right) (\nabla u_{0}+\nabla _{y}\chi \nabla u_{0})-\left\langle
A(\nabla u_{0}+\nabla _{y}\chi \nabla u_{0})\right\rangle \right] \\
& +\varepsilon \nabla \cdot (A(y)(\nabla ^{2}u_{0})\chi (y)),
\end{align*}%
which is the statement of the lemma.
\end{proof}

Set 
\begin{equation*}
a_{ij}(y)=b_{ij}(y)+\sum_{k=1}^{d}b_{ik}(y)\frac{\partial \chi _{j}}{%
\partial y_{k}}(y)-b_{ij}^{\ast }
\end{equation*}%
where $A^{\ast }=(b_{ij}^{\ast })_{1\leq i,j\leq d}$ is the homogenized
matrix, and let $a_{j}=(a_{ij})_{1\leq i\leq d}$. Then $a_{j}\in \lbrack
L^{\infty }(\mathbb{R}^{d})\cap L_{\infty ,per}^{2}(Y)]^{d}$ with $\nabla
\cdot a_{j}=0$ and $\left\langle a_{j}\right\rangle =0$. Hence by Corollary %
\ref{c1.1}, there is a skew-symmetric matrix $G_{j}$ with entries in $%
\mathcal{A}=\mathcal{B}_{\infty ,per}(Y)$ such that $a_{j}=\nabla _{y}\cdot
G_{j}$. Moreover in view of (\ref{*5}) in Corollary \ref{c1.1}, we have 
\begin{equation*}
\left\Vert G_{j}\right\Vert _{\infty }\leq C\left\Vert a_{j}\right\Vert
_{\infty }.
\end{equation*}%
With this in mind and recalling that $G_{j}$ is skew-symmetric, Eq. (\ref%
{5.4}) becomes 
\begin{equation}
-\nabla \cdot A\left( \frac{x}{\varepsilon }\right) \nabla w_{\varepsilon
}=\varepsilon \nabla \cdot \left( r_{1}^{\varepsilon }+r_{2}^{\varepsilon
}\right)  \label{5.5}
\end{equation}%
where 
\begin{equation*}
r_{1}^{\varepsilon }(x)=\sum_{j=1}^{d}G_{j}(y)\nabla \frac{\partial u_{0}}{%
\partial x_{j}}(x)\text{ and }r_{2}^{\varepsilon }(x)=A(y)\nabla
^{2}u_{0}(x)\chi (y)\text{ with }y=\frac{x}{\varepsilon }.
\end{equation*}%
Now, since $w_{\varepsilon }\in H_{0}^{1}(\Omega )$, it follows from the
ellipticity of $A$ (see (\ref{1.2})) that 
\begin{align*}
\alpha \left\Vert \nabla w_{\varepsilon }\right\Vert _{L^{2}(\Omega )}& \leq
\varepsilon \left( \left\Vert r_{1}^{\varepsilon }\right\Vert _{L^{2}(\Omega
)}+\left\Vert r_{2}^{\varepsilon }\right\Vert _{L^{2}(\Omega )}\right) \\
& \leq C\varepsilon \left\Vert u_{0}\right\Vert _{H^{2}(\Omega )}
\end{align*}%
where $C=C(d,A,\Omega )$.

We have just proved the following result.

\begin{proposition}
\label{p5.1}Let $\Omega $ be a smooth bounded domain in $\mathbb{R}^{d}$.
Suppose that $A=A_{0}+A_{per}$ and $A$ and $A_{per}$ are uniformly elliptic
(see \emph{(\ref{1.2})} and \emph{(\ref{2.2})}). For $f\in L^{2}(\Omega )$,
let $u_{\varepsilon }$, $u_{0}$ and $v_{\varepsilon }$ be weak solutions of
Dirichlet problems \emph{(\ref{1.1})}, \emph{(\ref{1.4})} and \emph{(\ref%
{5.3})}, respectively. Assume $u_{0}\in H^{2}(\Omega )$. There $%
C=C(d,A,\Omega )$ such that 
\begin{equation}
\left\Vert u_{\varepsilon }-u_{0}-\varepsilon \chi ^{\varepsilon }\nabla
u_{0}+z_{\varepsilon }\right\Vert _{H_{0}^{1}(\Omega )}\leq C\varepsilon
\left\Vert u_{0}\right\Vert _{H^{2}(\Omega )}.  \label{5.6}
\end{equation}
\end{proposition}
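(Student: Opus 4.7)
The plan is to read the proof off Lemma~5.1 combined with the flux–corrector trick, which is the standard periodic scheme adapted to the splitting $A=A_0+A_{\mathrm{per}}$. First I would observe that $w_\varepsilon\in H^1_0(\Omega)$ since $u_\varepsilon=0$ on $\partial\Omega$, $v_\varepsilon-z_\varepsilon=u_0$ on $\partial\Omega$ by the choice of boundary data for $z_\varepsilon$, and $u_0\in H^1_0(\Omega)$. Thus $w_\varepsilon$ is a legitimate test function, and I can run the usual $H^1_0$ energy estimate once the right–hand side of its PDE is written in divergence form with an explicit $\varepsilon$–gain.

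Next I would exploit the structure $-\nabla\cdot A^\varepsilon\nabla w_\varepsilon=\nabla\cdot F_\varepsilon$ coming from Lemma~5.1 with
\[
F_\varepsilon(x)=\bigl[A(x/\varepsilon)(I+\nabla_y\chi(x/\varepsilon))-\langle A(I+\nabla_y\chi)\rangle\bigr]\nabla u_0(x)+\varepsilon A(x/\varepsilon)(\nabla^2 u_0(x))\chi(x/\varepsilon).
\]
The second term already carries the desired factor $\varepsilon$, and using the $L^\infty$ bound $\|\chi\|_\infty\le C$ from Lemma~1.2 its $L^2(\Omega)$ norm is controlled by $C\varepsilon\|u_0\|_{H^2(\Omega)}$. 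The first term is the dangerous one, and this is where the flux corrector enters. Setting $a_j(y)$ as the $j$th column of $A(I+\nabla_y\chi)-A^{*}$, the corrector equation (1.6) gives $\nabla_y\cdot a_j=0$ and the definition (1.5) of $A^{*}$ gives $\langle a_j\rangle=0$. Moreover $a_j\in L^\infty+L^2_{\infty,\mathrm{per}}$, and an elementary refinement (combining the Hölder continuity of $A_{\mathrm{per}}$ to obtain $\nabla\chi_{\mathrm{per}}\in L^\infty$ with the Green–function estimate used in Lemma~1.2 for $\nabla\chi_0$) places $a_j\in L^\infty(\mathbb R^d)^d$. Corollary~1.1 then furnishes skew–symmetric matrices $G_j=(G_{ij,k})$ with entries in $\mathcal B_{\infty,\mathrm{per}}(\mathbb R^d)$ such that $a_j=\nabla_y\cdot G_j$ and $\|G_j\|_\infty\le C\|a_j\|_\infty$.

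With $G_j$ in hand the standard skew–symmetry calculation converts the bad term into a divergence with the extra $\varepsilon$ factor: since $G_j$ is skew,
\[
\bigl[a_j(x/\varepsilon)\bigr]_i\,\partial_j u_0=\varepsilon\,\partial_i\!\Bigl(\sum_{k,j}G_{ij,k}(x/\varepsilon)\,\partial_k\partial_j u_0\Bigr)-\varepsilon\sum_{k,j}G_{ij,k}(x/\varepsilon)\,\partial_i\partial_k\partial_j u_0,
\]
and the second piece vanishes after contracting $i,k$ against the skew tensor, so the first term of $F_\varepsilon$ can be recast as $\varepsilon\,\nabla\cdot r_1^\varepsilon$ with $r_1^\varepsilon(x)=\sum_j G_j(x/\varepsilon)\nabla\partial_j u_0(x)$ and $\|r_1^\varepsilon\|_{L^2(\Omega)}\le C\|u_0\|_{H^2(\Omega)}$. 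Testing the equation $-\nabla\cdot A^\varepsilon\nabla w_\varepsilon=\varepsilon\nabla\cdot(r_1^\varepsilon+r_2^\varepsilon)$ against $w_\varepsilon\in H^1_0(\Omega)$ and invoking the ellipticity bound (1.2) yields $\alpha\|\nabla w_\varepsilon\|_{L^2(\Omega)}^2\le\varepsilon(\|r_1^\varepsilon\|_{L^2}+\|r_2^\varepsilon\|_{L^2})\|\nabla w_\varepsilon\|_{L^2}$, which after dividing gives (5.6).

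The main obstacle I anticipate is not the energy argument but the justification that $a_j$ lies in $L^\infty$, since only then does Corollary~1.1 produce $G_j\in L^\infty$ (and hence $r_1^\varepsilon\in L^2(\Omega)$ with the required bound). The periodic part is standard, but for the non-periodic contribution one must argue that $\nabla\chi_0\in L^\infty$; this should follow from a Schauder-type bound applied to (2.5) with the right-hand side $A_0(e_j+\nabla\chi_{\mathrm{per}})$, itself bounded because $A_0\in L^2\cap L^\infty$ is Hölder and $\nabla\chi_{\mathrm{per}}\in L^\infty$. Once that regularity is secured the remainder of the proof is the routine periodic scheme, and the constant $C$ depends only on $d$, $A$, and $\Omega$ as claimed.
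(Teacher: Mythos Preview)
Your proposal is correct and follows essentially the same route as the paper: derive the equation for $w_\varepsilon$ via Lemma~\ref{l5.1}, invoke Corollary~\ref{c1.1} to write the divergence-free, mean-zero flux $a_j=A(e_j+\nabla_y\chi_j)-A^\ast e_j$ as $\nabla_y\cdot G_j$ with $G_j$ skew-symmetric and bounded, use the skew-symmetry to convert the leading term into $\varepsilon\nabla\cdot r_1^\varepsilon$, and finish by testing against $w_\varepsilon$ and using ellipticity. Your added care in isolating the $L^\infty$ bound on $a_j$ (hence on $\nabla\chi$) as the only nontrivial ingredient is well placed; the paper simply asserts $a_j\in L^\infty(\mathbb{R}^d)^d$ at that step, relying implicitly on the H\"older assumptions of Lemma~\ref{l1.2}.
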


The estimate of the deviation of $u_{\varepsilon }$ and $v_{\varepsilon }$
is a consequence of the following lemma whose proof is postponed to the next
section and is obtained as a special case of the proof of a general result
formulated as Lemma \ref{l5.3}. Observe that in Lemma \ref{l5.3} we replace $%
T^{-1}\left\Vert \chi _{T}\right\Vert _{L^{\infty }(\mathbb{R}^{d})}$ by $%
\varepsilon $ (see Remark \ref{r5.2}).

\begin{lemma}
\label{l5.2'}Assume $u_{0}\in H^{2}(\Omega )$. Let $z_{\varepsilon }$ be the
solution of problem \emph{(\ref{5.3})}. There exists $C=C(d,A,\Omega )$ such
that 
\begin{equation}
\left\Vert z_{\varepsilon }\right\Vert _{H^{1}(\Omega )}\leq C\varepsilon ^{%
\frac{1}{2}}\left\Vert u_{0}\right\Vert _{H^{2}(\Omega )}.  \label{5.7}
\end{equation}
\end{lemma}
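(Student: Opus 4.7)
My approach follows the classical boundary-layer strategy: construct a lift in $H^{1}(\Omega)$ of the Dirichlet data $\varepsilon \chi^{\varepsilon} \nabla u_{0}|_{\partial\Omega}$ whose support is confined to an $O(\varepsilon)$-neighborhood of $\partial\Omega$, and then conclude by the energy estimate for the $A^{\varepsilon}$-harmonic equation. Concretely, let $\eta_{\varepsilon} \in \mathcal{C}_{0}^{\infty}(\mathbb{R}^{d})$ be a cutoff with $\eta_{\varepsilon} \equiv 1$ on $\Omega_{\varepsilon} := \{x \in \Omega : \mathrm{dist}(x,\partial\Omega) < \varepsilon\}$, $\mathrm{supp}\,\eta_{\varepsilon} \subset \Omega_{2\varepsilon}$, and $|\nabla\eta_{\varepsilon}| \leq C/\varepsilon$. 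I set $\phi_{\varepsilon}(x) := \eta_{\varepsilon}(x)\, \varepsilon \chi^{\varepsilon}(x)\, \nabla u_{0}(x)$; Lemma~\ref{l1.2} guarantees $\chi \in L^{\infty}(\mathbb{R}^{d})$, hence $\phi_{\varepsilon} \in H^{1}(\Omega)$ and it has the same trace on $\partial\Omega$ as $z_{\varepsilon}$. Since $z_{\varepsilon} - \phi_{\varepsilon} \in H_{0}^{1}(\Omega)$, testing $-\nabla\cdot(A^{\varepsilon}\nabla z_{\varepsilon}) = 0$ against this difference and using (\ref{1.2}) yields $\|\nabla z_{\varepsilon}\|_{L^{2}(\Omega)} \leq C\|\nabla\phi_{\varepsilon}\|_{L^{2}(\Omega)}$; combined with a Poincar\'{e} estimate on $z_{\varepsilon} - \phi_{\varepsilon}$, the proof reduces to showing $\|\phi_{\varepsilon}\|_{L^{2}(\Omega)} + \|\nabla\phi_{\varepsilon}\|_{L^{2}(\Omega)} \leq C\varepsilon^{1/2}\|u_{0}\|_{H^{2}(\Omega)}$.

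The workhorse is the classical strip estimate for $H^{2}$-functions on smooth domains: by slicing in normal coordinates and applying the trace inequality uniformly on the parallel surfaces, $\|\nabla u_{0}\|_{L^{2}(\Omega_{2\varepsilon})} \leq C\varepsilon^{1/2}\|u_{0}\|_{H^{2}(\Omega)}$. Together with $\|\chi\|_{\infty} \leq C$, this immediately gives $\|\phi_{\varepsilon}\|_{L^{2}(\Omega)} \leq C\varepsilon^{3/2}\|u_{0}\|_{H^{2}}$. Expanding
\[
\nabla\phi_{\varepsilon} = \varepsilon(\nabla\eta_{\varepsilon})\chi^{\varepsilon} \nabla u_{0} + \eta_{\varepsilon}(\nabla_{y}\chi)^{\varepsilon} \nabla u_{0} + \varepsilon\,\eta_{\varepsilon}\chi^{\varepsilon}\nabla^{2} u_{0},
\]
the first and third terms are controlled directly by the strip estimate and the $L^{\infty}$-bound on $\chi$, with respective $L^{2}$-norms bounded by $C\varepsilon^{1/2}\|u_{0}\|_{H^{2}}$ and $C\varepsilon\|u_{0}\|_{H^{2}}$.

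The delicate term is the middle one, $\eta_{\varepsilon}(\nabla_{y}\chi)^{\varepsilon} \nabla u_{0}$, because $(\nabla_{y}\chi)^{\varepsilon} = \varepsilon\,\nabla(\chi^{\varepsilon})$ hides a factor $\varepsilon^{-1}$ in scale. I will split $\chi = \chi_{per} + \chi_{0}$ as in Lemma~\ref{l1.2}. For the periodic part, the H\"{o}lder continuity of $A_{per}$ combined with Schauder theory yields $\nabla\chi_{per} \in L^{\infty}_{per}(Y)$, so the contribution from $\chi_{per}$ is bounded by $C\|\nabla u_{0}\|_{L^{2}(\Omega_{2\varepsilon})} \leq C\varepsilon^{1/2}\|u_{0}\|_{H^{2}}$. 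For the decaying correction, I exploit the \emph{global} integrability $\nabla\chi_{0} \in L^{2}(\mathbb{R}^{d})$ obtained in Lemma~\ref{l1.2}: the change of variable $y = x/\varepsilon$ yields
\[
\|(\nabla_{y}\chi_{0})^{\varepsilon}\|_{L^{2}(\Omega)}^{2} \leq \varepsilon^{d}\|\nabla\chi_{0}\|_{L^{2}(\mathbb{R}^{d})}^{2},
\]
and a H\"{o}lder inequality paired with the Sobolev embedding $H^{2}(\Omega) \hookrightarrow W^{1,q}(\Omega)$ (with $q = 2d/(d-2)$ for $d \geq 3$, arbitrary $q < \infty$ for $d = 2$) absorbs the residual rescaling and produces a bound of order $O(\varepsilon^{1/2})$.

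The main obstacle is precisely this middle term: the pointwise $\varepsilon^{-1}$ blow-up of $(\nabla_{y}\chi)^{\varepsilon}$ must be compensated by both the small measure $|\Omega_{2\varepsilon}| \sim \varepsilon$ and the extra regularity encoded in the asymptotic-periodic splitting of $\chi$. It is crucial that $\chi_{per}$ has a Schauder-bounded gradient while $\chi_{0}$ is \emph{globally} in $L^{\infty} \cap H^{1}$; this structural refinement is what allows one to bypass approximate correctors, which is the route necessary in the general almost periodic setting of Lemma~\ref{l5.3}, where $\chi$ itself may fail to lie in $B_{\mathcal{A}}^{2}(\mathbb{R}^{d})$.
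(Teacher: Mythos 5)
Your overall boundary-layer strategy matches the paper's: the paper proves this lemma as a special case of Lemma~\ref{l5.3}, which uses exactly the cutoff construction you describe (there denoted $\theta_{\delta}$ with $\delta$ replaced by $\varepsilon$ in the asymptotic periodic case per Remark~\ref{r5.2}), the lift $\Phi_{\varepsilon}=\varepsilon\theta_{\delta}\chi^{\varepsilon}\nabla u_{0}$, the same energy reduction to $\left\Vert\Phi_{\varepsilon}\right\Vert_{H^{1}}$, and the same three-term decomposition of $\nabla\Phi_{\varepsilon}$. Your treatment of the first and third terms via $\left\Vert\chi\right\Vert_{\infty}\leq C$ and the strip estimate, and the Poincar\'{e} step, coincide with the paper's.

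Where you genuinely diverge is the middle term $\eta_{\varepsilon}(\nabla_{y}\chi)^{\varepsilon}\nabla u_{0}$, and here there is a gap. You split $\chi=\chi_{per}+\chi_{0}$ and control the $\chi_{0}$ piece using only $\nabla\chi_{0}\in L^{2}(\mathbb{R}^{d})$, claiming that a H\"{o}lder inequality with the Sobolev embedding $H^{2}\hookrightarrow W^{1,2^{*}}$ closes the estimate. It does not: the rescaled bound $\left\Vert(\nabla_{y}\chi_{0})^{\varepsilon}\right\Vert_{L^{2}(\Omega)}\leq\varepsilon^{d/2}\left\Vert\nabla\chi_{0}\right\Vert_{L^{2}(\mathbb{R}^{d})}$ is an $L^{2}$ bound, and pairing it with $\nabla u_{0}\in L^{2^{*}}$ gives, since $\tfrac{1}{2}+\tfrac{1}{2^{*}}=\tfrac{d-1}{d}>\tfrac{1}{2}$, only an $L^{d/(d-1)}$ bound on the product, not $L^{2}$. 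To get $L^{2}$ via H\"{o}lder you would need $(\nabla_{y}\chi_{0})^{\varepsilon}\in L^{d}(\Omega)$, i.e.\ $\nabla\chi_{0}\in L^{d}(\mathbb{R}^{d})$, which does not follow from $\nabla\chi_{0}\in L^{2}$; even for $d=2$ the argument fails because $H^{2}(\Omega)\not\hookrightarrow W^{1,\infty}(\Omega)$. In your framework the repair would be to also invoke $\nabla\chi_{0}\in L^{\infty}(\mathbb{R}^{d})$ (available, under the H\"{o}lder hypotheses of Lemma~\ref{l1.2}, by Schauder theory applied to the equation $-\nabla\cdot(A\nabla\chi_{0})=\nabla\cdot(A_{0}(e_{j}+\nabla\chi_{per}))$ with H\"{o}lder data) and then bound the $\chi_{0}$ piece directly by $\left\Vert\nabla\chi_{0}\right\Vert_{\infty}\left\Vert\nabla u_{0}\right\Vert_{L^{2}(\Omega_{2\varepsilon})}\leq C\varepsilon^{1/2}\left\Vert u_{0}\right\Vert_{H^{2}}$.

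The paper avoids this issue entirely and needs no regularity of $\nabla\chi$ at all: Lemma~\ref{l5.2} (specialized in Remark~\ref{r5.2}) is an energy identity obtained by testing the corrector equation itself with $\chi^{\varepsilon}\left\vert w\right\vert^{2}$, yielding
\begin{equation*}
\int_{\Omega}\left\vert(\nabla_{y}\chi)^{\varepsilon}w\right\vert^{2}dx\leq C\int_{\Omega}\left(\left\vert w\right\vert^{2}+\varepsilon^{2}\left\vert\nabla w\right\vert^{2}\right)dx
\end{equation*}
under the sole hypothesis $\chi\in L^{\infty}$; combined with the Suslina strip estimate (Lemma~\ref{l5.4}) this closes the middle term. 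This route is more robust, and is what makes the argument portable to Lemma~\ref{l5.3} in the asymptotic almost periodic setting, where the gradient of the approximate corrector has no useful $L^{\infty}$ bound.
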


\begin{proof}[Proof of Theorem \protect\ref{t5.1}]
Since $\Omega $ is a $\mathcal{C}^{1,1}$-bounded domain in $\mathbb{R}^{d}$
and the matrix $A^{\ast }$ has constant entries, it is known that $u_{0}$
satisfies the inequality 
\begin{equation}
\left\Vert u_{0}\right\Vert _{H^{2}(\Omega )}\leq C\left\Vert f\right\Vert
_{L^{2}(\Omega )},\ C=C(d,\alpha ,\Omega )>0.  \label{5.7'}
\end{equation}%
Using (\ref{5.6}) together with (\ref{5.7}) and (\ref{5.7'}), we arrive at 
\begin{align*}
\left\Vert u_{\varepsilon }-u_{0}-\varepsilon \chi ^{\varepsilon }\nabla
u_{0}\right\Vert _{H^{1}(\Omega )}& \leq \left\Vert u_{\varepsilon
}-u_{0}-\varepsilon \chi ^{\varepsilon }\nabla u_{0}+z_{\varepsilon
}\right\Vert _{H_{0}^{1}(\Omega )}+\left\Vert z_{\varepsilon }\right\Vert
_{H^{1}(\Omega )} \\
& \leq C\varepsilon ^{\frac{1}{2}}\left\Vert u_{0}\right\Vert _{H^{2}(\Omega
)}\leq C\varepsilon ^{\frac{1}{2}}\left\Vert f\right\Vert _{L^{2}(\Omega )},
\end{align*}%
and derive the statement of (\ref{5.8}) in Theorem \ref{t5.1}. As for (\ref%
{1.14}) we proceed exactly as in the proof of (\ref{Eq02}) in the proof of
Theorem \ref{t1.4}; see in particular Remark \ref{r5.3} in the next section.
This concludes the proof of Theorem \ref{t5.1}.
\end{proof}

\section{Convergence rates: the asymptotic almost periodic setting}

\subsection{Preliminaries}

We treat the asymptotic almost periodic case in a general way, dropping
restrictions (\ref{2.1}) and (\ref{2.2}). The results in this section extend
those of the preceding section as well as those in the almost periodic
setting obtained in \cite{Shen}.

We recall that a bounded continuous function $u$ defined on $\mathbb{R}^{d}$
is asymptotically almost periodic if there exists a couple $(v,w)\in AP(%
\mathbb{R}^{d})\times \mathcal{C}_{0}(\mathbb{R}^{d})$ such that $u=v+w$. We
denote by $\mathcal{B}_{\infty ,AP}(\mathbb{R}^{d})=AP(\mathbb{R}^{d})+%
\mathcal{C}_{0}(\mathbb{R}^{d})$ the Banach algebra of such functions. We
denote by $H_{\infty ,AP}^{1}(\mathbb{R}^{d})$ the Sobolev-type space
attached to the Besicovitch space $B_{\mathcal{A}}^{2}(\mathbb{R}^{d})\equiv
L_{\infty ,AP}^{2}(\mathbb{R}^{d})=L_{0}^{2}(\mathbb{R}^{d})+B_{AP}^{2}(%
\mathbb{R}^{d})$: $H_{\infty ,AP}^{1}(\mathbb{R}^{d})=\{u\in L_{\infty
,AP}^{2}(\mathbb{R}^{d}):\nabla u\in L_{\infty ,AP}^{2}(\mathbb{R}%
^{d})^{d}\} $. Here $L_{0}^{2}(\mathbb{R}^{d})$ is the completion of $%
\mathcal{C}_{0}(\mathbb{R}^{d})$ with respect to the seminorm (\ref{0.2})
while $B_{AP}^{2}(\mathbb{R}^{d})$ is the Besicovitch space associated to
the algebra $AP(\mathbb{R}^{d})$. We also denote by $\mathcal{C}_{b}(\mathbb{%
R}^{d})$ the algebra of real-valued bounded continuous functions defined on $%
\mathbb{R}^{d}$.

The following characterization of $\mathcal{B}_{\infty ,AP}(\mathbb{R}^{d})$
is a useful tool for the considerations below.

\begin{proposition}
\label{p11.1}Let $u\in \mathcal{C}_{b}(\mathbb{R}^{d})$. Then $u\in \mathcal{%
B}_{\infty ,AP}(\mathbb{R}^{d})$ if and only if 
\begin{equation}
\sup_{y\in \mathbb{R}^{d}}\inf_{z\in \mathbb{R}^{d},\left\vert z\right\vert
\leq L}\left\Vert u(\cdot +y)-u(\cdot +z)\right\Vert _{L^{\infty }(\mathbb{R}%
^{d}\backslash B_{R})}\rightarrow 0\text{ as }L\rightarrow \infty \text{ and 
}R\rightarrow 0\text{.}  \label{11.1}
\end{equation}
\end{proposition}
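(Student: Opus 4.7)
The plan is to prove the two implications separately, exploiting the definition $\mathcal{B}_{\infty,AP}(\mathbb{R}^d) = AP(\mathbb{R}^d) + \mathcal{C}_0(\mathbb{R}^d)$ together with Bochner's characterization of almost periodicity: $v \in AP(\mathbb{R}^d)$ if and only if the orbit $\{v(\cdot+a) : a \in \mathbb{R}^d\}$ is precompact in $\mathcal{C}_b(\mathbb{R}^d)$ under the uniform norm, plus the obvious fact that any $w \in \mathcal{C}_0(\mathbb{R}^d)$ decays uniformly at infinity.

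For the necessity direction, I will write $u = v + w$ with $v \in AP$ and $w \in \mathcal{C}_0$ and use the triangle inequality to split $\|u(\cdot+y) - u(\cdot+z)\|_{L^\infty(\mathbb{R}^d\setminus B_R)}$ into a $v$-contribution, which is dominated by the full sup-norm $\|v(\cdot+y)-v(\cdot+z)\|_\infty$, and a $w$-contribution, which is bounded by $\|w(\cdot+y)\|_{L^\infty(\mathbb{R}^d\setminus B_R)} + \|w(\cdot+z)\|_{L^\infty(\mathbb{R}^d\setminus B_R)}$. Given $\varepsilon > 0$, Bochner's theorem yields finitely many translates $v(\cdot+z_1),\ldots,v(\cdot+z_N)$ which $\varepsilon$-cover the orbit of $v$; taking $L \ge \max_i |z_i|$ forces the first term to be $\le \varepsilon$ for every $y$, after a suitable choice of $z$. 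The $w$-contribution will be handled by exploiting the decay of $w$ at infinity and the geometry relating $R$, $|y|$ and $|z|$, so that the translated copies of $w$ evaluated on the exterior $\mathbb{R}^d\setminus B_R$ fall into a region where $|w|$ is small.

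For the sufficiency direction, I will use the hypothesis to run a Cantor diagonal extraction on the translates $\{u(\cdot+y_n)\}$ to produce a candidate limit $v$, which is forced to be almost periodic by checking Bochner's precompactness criterion directly from the uniform bound supplied by the condition. I will then define $w := u - v$ and show $w \in \mathcal{C}_0(\mathbb{R}^d)$ by re-using the localization on $\mathbb{R}^d \setminus B_R$: the hypothesis isolates the translation-compact (hence almost-periodic) component of $u$ and forces the residual $w$ to vanish at infinity. The three sub-steps — extraction of $v$, verification that $v \in AP$, and identification of $w$ as a $\mathcal{C}_0$-function — must be carried out in this order to ensure that the decomposition produced is unambiguous.

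The principal technical obstacle lies in the joint limit in $L$ and $R$, together with the interaction between the sup over unrestricted $y \in \mathbb{R}^d$ and the exterior domain $\mathbb{R}^d \setminus B_R$: when $|y|$ is large, the translated bump of $w(\cdot+y)$ is centred at $-y$, which lies precisely inside the tested region, and one cannot hope to make $\|w(\cdot+y)\|_{L^\infty(\mathbb{R}^d\setminus B_R)}$ small on its own. The proof must therefore couple the choice of $z$ to $y$ in such a way that the $v$-approximation and the relative position of the two $w$-bumps work in concert, which is what dictates the precise manner in which $L$ and $R$ are driven to their respective limits.
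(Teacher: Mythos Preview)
Your route differs substantially from the paper's. The paper does not argue through the decomposition $u=v+w$ and Bochner's criterion at all; it simply recalls a Bohr-type characterization due to Zaidman --- namely that $u\in\mathcal{B}_{\infty,AP}(\mathbb{R}^d)$ if and only if for every $\varepsilon>0$ there exists $R>0$ for which the set $\{\tau:\sup_{|t|\ge R}|u(t+\tau)-u(t)|<\varepsilon\}$ is relatively dense --- and then asserts that this is equivalent to (\ref{11.1}). The entire proof is thus a citation plus a reformulation, with no separate treatment of the $AP$ and $\mathcal{C}_0$ parts.

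Your necessity direction has a genuine gap exactly where you flag the ``principal technical obstacle,'' and the coupling of $z$ to $y$ that you propose cannot close it. Fix $L,R>0$, take any nonzero $w\in\mathcal{C}_0(\mathbb{R}^d)$, and let $|y|>\max(L,R)$. For every $|z|\le L$, the point $t=-y$ lies in $\mathbb{R}^d\backslash B_R$, and there
\[
|w(t+y)-w(t+z)|=|w(0)-w(z-y)|\;\longrightarrow\;|w(0)|\quad\text{as }|y|\to\infty,
\]
uniformly over $|z|\le L$, since $|z-y|\ge|y|-L\to\infty$. Hence $\sup_{y}\inf_{|z|\le L}\|w(\cdot+y)-w(\cdot+z)\|_{L^\infty(\mathbb{R}^d\backslash B_R)}\ge|w(0)|$ for all $L,R>0$, and no regime of $L\to\infty$, $R\to 0$ (or any other joint limit) sends this to zero. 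In short, once the $w$-contribution is isolated, no admissible $z$ places the bump of $w(\cdot+z)$ close enough to that of $w(\cdot+y)$ to produce cancellation on the exterior region: the two bumps are separated by at least $|y|-L$. The direct decomposition strategy therefore cannot establish (\ref{11.1}) as literally written; one must instead pass through the relatively-dense-$\varepsilon$-period formulation (comparing $u(\cdot+\tau)$ with $u$ itself rather than with a bounded translate $u(\cdot+z)$), which is the route the paper takes.
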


\begin{proof}
A set $E$ in $\mathbb{R}^{d}$ is relatively dense if there exists $L>0$ such
that $\mathbb{R}^{d}=E+B_{L}$ (where we recall that $B_{L}=B(0,L)$), that
is, any $x\in \mathbb{R}^{d}$ expresses as a sum $y+z$ with $y\in E$ and $%
z\in B_{L}$. This being so, it is known that $u\in \mathcal{C}_{b}(\mathbb{R}%
^{d})$ lies in $\mathcal{B}_{\infty ,AP}(\mathbb{R}^{d})$ if and only if for
any $\varepsilon >0$, there is $R=R(\varepsilon )>0$ such that the set 
\begin{equation*}
\left\{ \tau \in \mathbb{R}^{d}:\left\vert u(t+\tau )-u(t)\right\vert
<\varepsilon \ \ \forall \left\vert t\right\vert \geq R\right\}
\end{equation*}%
is relatively dense; see e.g. \cite[Chap. 5, Theorem 5]{Zaidman}. But this
is shown to be equivalent to (\ref{11.1}).
\end{proof}

\begin{remark}
\label{r11.1}\emph{We notice that, for any }$u\in \mathcal{C}_{b}(\mathbb{R}%
^{d})$\emph{, }%
\begin{equation*}
\lim_{R\rightarrow \infty }\left( \sup_{\left\vert y\right\vert \leq
R}\left\vert u(y)\right\vert \right) =\lim_{R\rightarrow 0}\left(
\sup_{\left\vert y\right\vert \geq R}\left\vert u(y)\right\vert \right) .
\end{equation*}%
\emph{In view of the above equality we may replace (\ref{11.1}) by }%
\begin{equation}
\sup_{y\in \mathbb{R}^{d}}\inf_{\left\vert z\right\vert \leq L}\left\Vert
u(\cdot +y)-u(\cdot +z)\right\Vert _{L^{\infty }(B_{R})}\rightarrow 0\text{%
\emph{\ as }}L,R\rightarrow \infty  \label{11.2}
\end{equation}%
\emph{since the limits in (\ref{11.1}) and (\ref{11.2}) are the same. In
practice we will rather use (\ref{11.2}).}
\end{remark}

\begin{definition}
\label{d5.1}\emph{For a function }$u\in \mathcal{B}_{\infty ,AP}(\mathbb{R}%
^{d})$\emph{\ we define the modulus of asymptotic almost periodicity of }$u$%
\emph{\ by }%
\begin{equation}
\rho _{u}(L,R)=\sup_{y\in \mathbb{R}^{d}}\inf_{\left\vert z\right\vert \leq
L}\left\Vert u(\cdot +y)-u(\cdot +z)\right\Vert _{L^{\infty }(B_{R})}\text{%
\emph{\ for }}L,R>0.  \label{11.3}
\end{equation}%
\emph{In particular we set }%
\begin{equation}
\rho (L,R)=\sup_{y\in \mathbb{R}^{d}}\inf_{\left\vert z\right\vert \leq
L}\left\Vert A(\cdot +y)-A(\cdot +z)\right\Vert _{L^{\infty }(B_{R})}\text{, 
}L,R>0.  \label{11.4}
\end{equation}
\end{definition}

\begin{remark}
\label{r5.1}\emph{Observe that if }$R=\infty $\emph{\ (that is, }$B_{R}=%
\mathbb{R}^{d}$\emph{) in (\ref{11.3}), then }$u\in \mathcal{B}_{\infty ,AP}(%
\mathbb{R}^{d})$\emph{\ is almost periodic if and only if }$\rho
_{u}(L,\infty )\rightarrow 0$\emph{\ as }$L\rightarrow \infty $\emph{.}
\end{remark}

\subsection{Estimates of approximate correctors}

First we recall that the approximate corrector $\chi _{T}=(\chi
_{T,j})_{1\leq j\leq d}$ is defined as the distributional solution of 
\begin{equation}
-\nabla \cdot \left( A(e_{j}+\nabla \chi _{T,j})\right) +T^{-2}\chi _{T,j}=0%
\text{ in }\mathbb{R}^{d},\ \ \chi _{T,j}\in H_{\infty ,AP}^{1}(\mathbb{R}%
^{d})  \label{11.5}
\end{equation}%
where $A\in (L_{\infty ,AP}^{2}(\mathbb{R}^{d})\cap L^{\infty }(\mathbb{R}%
^{d}))^{d\times d}$ is symmetric and uniformly elliptic.

In all that follows in this section we assume that $A\in (\mathcal{B}%
_{\infty ,AP}(\mathbb{R}^{d}))^{d\times d}$.

\begin{theorem}
\label{t11.1}Let $T\geq 1$. Then $\chi _{T}\in \mathcal{B}_{\infty ,AP}(%
\mathbb{R}^{d})$ and for any $x_{0},y,z\in \mathbb{R}^{d}$, 
\begin{equation}
\left\Vert \chi _{T}(\cdot +y)-\chi _{T}(\cdot +z)\right\Vert _{L^{\infty
}(B_{R}(x_{0}))}\leq CT\left\Vert A(\cdot +y)-A(\cdot +z)\right\Vert
_{L^{\infty }(B_{R}(x_{0}))}  \label{11.9}
\end{equation}%
for any $R>2T$, where $C=C(d,A)$.
\end{theorem}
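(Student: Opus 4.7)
My plan is to reduce \eqref{11.9} to an $L^\infty$ estimate for the difference $u := \chi_{T,j}(\cdot+y) - \chi_{T,j}(\cdot+z)$, and then deduce $\chi_T \in \mathcal{B}_{\infty,AP}(\mathbb{R}^d)^d$ from \eqref{11.9} via Proposition \ref{p11.1}. First, by translating \eqref{11.5} evaluated at $x+y$ and $x+z$ and subtracting, I verify that $u$ is a distributional solution of
$$-\nabla\cdot\bigl(A(\cdot+y)\nabla u\bigr) + T^{-2} u = \nabla\cdot F \text{ in } \mathbb{R}^d,$$
where $F(x) := \bigl(A(x+y) - A(x+z)\bigr)\bigl(e_j + \nabla\chi_{T,j}(x+z)\bigr)$. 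The crucial feature is that on $B_R(x_0)$ one has $|F| \leq \|A(\cdot+y) - A(\cdot+z)\|_{L^\infty(B_R(x_0))}\bigl(1 + |\nabla\chi_{T,j}(\cdot+z)|\bigr)$, while outside $B_R(x_0)$ only the crude bound $|F| \leq 2\|A\|_\infty\bigl(1 + |\nabla\chi_{T,j}(\cdot+z)|\bigr)$ is available.

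To establish \eqref{11.9} I would apply the weighted-energy technique from the proof of Lemma \ref{l4.1} with exponential weight $\eta(t) := \exp(-c|t-x_0|)$, $c = c(\alpha,\beta)T^{-1}$, yielding
$$\int_{\mathbb{R}^d}\eta^2\bigl(T^{-2}|u|^2 + |\nabla u|^2\bigr) \leq C\int_{\mathbb{R}^d}\eta^2 |F|^2.$$
Splitting the right-hand side over $B_R(x_0)$ and its complement, bounding $\int\eta^2|\nabla\chi_{T,j}(\cdot+z)|^2 \leq CT^d$ by a dyadic decomposition combined with \eqref{e5.7}, and using that $R > 2T$ forces the far-field weight to be damped, I obtain a local $L^2$-averaged bound for $u$ over $B_T(x_0)$ of the form $CT^2\|A(\cdot+y) - A(\cdot+z)\|_{L^\infty(B_R(x_0))}^2$, the residual tail being absorbed against the trivial global estimate $\|u\|_{L^\infty(\mathbb{R}^d)} \leq 2\|\chi_T\|_{L^\infty(\mathbb{R}^d)} \leq CT$ from \eqref{e5.6}. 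I would then upgrade this $L^2$ bound to the pointwise estimate \eqref{11.9} by applying the De Giorgi--Nash--Moser $L^\infty$-bound on balls $B_T(x) \subset B_R(x_0)$ to the equation for $u$, treating the $T^{-2} u$ term as a lower-order source controlled by the same global bound, exactly in the spirit of the derivation of \eqref{e5.6}.

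Once \eqref{11.9} is in hand, the membership $\chi_T \in \mathcal{B}_{\infty,AP}(\mathbb{R}^d)^d$ is essentially automatic via Proposition \ref{p11.1} and Remark \ref{r11.1}: asymptotic almost periodicity of a continuous bounded function is characterized by $\rho_{\chi_T}(L,R) \to 0$ as $L, R \to \infty$, and the Hölder bound \eqref{e5.8} supplies the required continuity of $\chi_T$. Since $A \in \mathcal{B}_{\infty,AP}(\mathbb{R}^d)^{d\times d}$, its modulus $\rho(L,R)$ defined in \eqref{11.4} tends to $0$ as $L, R \to \infty$; then \eqref{11.9} gives $\rho_{\chi_T}(L,R) \leq CT\rho(L,R)$ for all $R > 2T$ (with $T$ fixed), so $\rho_{\chi_T}(L,R) \to 0$ as well, which concludes the proof.

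The principal technical obstacle is the far-field portion of the weighted-energy step: the weight $\eta^2$ decays only at scale $T$, so for $R > 2T$ the damping factor $\eta^2 \leq e^{-cR/T}$ is only of order $e^{-O(1)}$ rather than arbitrarily small. To close the estimate one must carefully combine this fixed exponential damping with the polynomial-in-$r$ growth bound on $\int_{B_r(x_0)}|\nabla\chi_T|^2$ across dyadic annuli stemming from \eqref{e5.7}, and absorb the resulting finite-size tail into the near-field contribution using the trivial bound $\|u\|_{L^\infty(\mathbb{R}^d)} \leq CT$ (which is consistent with the $CT$ prefactor in \eqref{11.9}). Recourse to Green's function estimates for $-\nabla\cdot A(\cdot+y)\nabla + T^{-2}$, which enjoy genuine exponential decay at scale $T$, may be needed to make the tail absorption fully rigorous in the regime where $\|A(\cdot+y) - A(\cdot+z)\|_{L^\infty(B_R(x_0))}$ is near its minimum.
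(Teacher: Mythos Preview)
Your plan has a genuine gap at exactly the point you flag as the ``principal technical obstacle,'' and your proposed fix does not close it. The exponential weight $\eta$ decays at scale $T$, so over $\mathbb{R}^d\setminus B_R(x_0)$ with $R$ merely $>2T$ the damping factor is only $e^{-O(1)}$. The far-field part of $\int\eta^2|F|^2$ therefore contributes a term of order $T^d$ (equivalently, of order $1$ after normalizing to an average over $B_T(x_0)$) that is \emph{not} multiplied by $\|A(\cdot+y)-A(\cdot+z)\|_{L^\infty(B_R(x_0))}$. Absorbing against the trivial bound $\|u\|_{L^\infty}\leq CT$ gives you back only $|u|\leq CT$, which is consistent with \eqref{11.9} but does not prove it: you need the \emph{multiplicative} factor $\|A(\cdot+y)-A(\cdot+z)\|_{L^\infty(B_R(x_0))}$, and nothing in your argument produces it for the tail. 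Switching to the Green's function of $-\nabla\cdot A(\cdot+y)\nabla+T^{-2}$ does not help either, for the same reason: its exponential decay is again at scale $T$.

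The paper avoids this entirely by a different localization. Instead of an exponential weight it uses a \emph{compactly supported} cutoff $\varphi\in\mathcal{C}_0^\infty(B_{7T/4}(x_0))$ with $\varphi=1$ on $B_{3T/2}(x_0)$, together with the Green's function $G^y$ of $-\nabla\cdot(A(\cdot+y)\nabla)$ (no mass term). Because $\mathrm{supp}\,\varphi\subset B_{2T}(x_0)\subset B_R(x_0)$, every integral in the representation formula lives inside $B_{2T}(x_0)$ and the coefficient difference is controlled by $\|A(\cdot+y)-A(\cdot+z)\|_{L^\infty(B_R(x_0))}$ everywhere it appears; there is no far field at all. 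The price is boundary terms carrying $\nabla\varphi$, which are bounded by the local $L^2$ averages of $u$ and $\nabla u$ on $B_{2T}(x_0)$; those in turn come from a local (Caccioppoli-type) version of \eqref{4.3}. The $T^{-2}u$ term is moved to the right as a source, yielding the pointwise integral inequality
\[
|u(x)|\leq CT^{-2}\int_{B_{2T}}\frac{|u(t)|}{|x-t|^{d-2}}\,dt+CT\|A(\cdot+y)-A(\cdot+z)\|_{L^\infty(B_R)},
\]
which is then bootstrapped from $L^2$ to $L^\infty$ by fractional integral estimates. Note also that your proposed De Giorgi--Nash upgrade is delicate here because the divergence-form source $F$ contains $\nabla\chi_{T,j}(\cdot+z)$, which is only locally $L^2$; the paper's Green's function and fractional-integral iteration is precisely what circumvents this integrability issue.
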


\begin{proof}
Fix $R>2T$. We need to show that, for any $x_{0},y,z\in \mathbb{R}^{d}$ and $%
t\in B_{R}(x_{0})$, 
\begin{equation*}
\left\vert \chi _{T}(t+y)-\chi _{T}(t+z)\right\vert \leq CT\left\Vert
B(\cdot +y)-B(\cdot +z)\right\Vert _{L^{\infty }(B_{R}(x_{0}))}.
\end{equation*}%
We follow the same approach as in the proof of \cite[Theorem 6.3]{Shen}.
Without restriction, assume $x_{0}=0$. We choose $\varphi \in \mathcal{C}%
_{0}^{\infty }(B_{\frac{7}{4}T})$ such that $\varphi =1$ in $B_{\frac{3}{2}%
T} $, $0\leq \varphi \leq 1$ and $\left\vert \nabla \varphi \right\vert \leq
CT^{-1}$. We also assume that $d\geq 3$ (the case $d=2$ follows from the
case $d=3$ by adding a dummy variable). Define $u(x)=\chi _{T,j}(x+y)-\chi
_{T,j}(x+z)$ ($x\in \mathbb{R}^{d}$) and note that $u$ solves the equation 
\begin{eqnarray*}
-\nabla \cdot (A(\cdot +y)\nabla u)+T^{-2}u &=&\nabla \cdot (A(\cdot
+y)-A(\cdot +z))e_{j} \\
&&+\nabla \cdot \lbrack (A(\cdot +y)-A(\cdot +z))\nabla v]\text{ in }\mathbb{%
R}^{d}
\end{eqnarray*}%
where $v(x)=\chi _{T,j}(x+z)$. We have 
\begin{eqnarray}
-\nabla \cdot (A(\cdot +y)\nabla u) &=&-T^{-2}u\varphi +\nabla \cdot
(\varphi (A(\cdot +y)-A(\cdot +z))e_{j})  \label{11.10} \\
&&+\nabla \cdot (\varphi (A(\cdot +y)-A(\cdot +z))\nabla v)  \notag \\
&&-(A(\cdot +y)-A(\cdot +z))e_{j}\nabla \varphi -A(\cdot +y)\nabla u\cdot
\nabla \varphi  \notag \\
&&-\nabla \cdot (uA(\cdot +y)\nabla \varphi ).  \notag
\end{eqnarray}

Denoting by $G^{y}$ the fundamental solution of the operator $-\nabla \cdot
(A(\cdot +y)\nabla )$ in $\mathbb{R}^{d}$, we use the representation formula
in (\ref{11.10}) to get, for $x\in B_{T}$, 
\begin{eqnarray*}
u(x) &=&-T^{-2}\int_{\mathbb{R}^{d}}G^{y}(x,t)u(t)\varphi (t)dt-\int_{%
\mathbb{R}^{d}}\nabla _{t}G^{y}(x,t)\varphi (t)(A(t+y)-A(t+z))e_{j}dt \\
&&-\int_{\mathbb{R}^{d}}\nabla _{t}G^{y}(x,t)\varphi
(t)(A(t+y)-A(t+z))\nabla v(t)dt \\
&&-\int_{\mathbb{R}^{d}}G^{y}(x,t)(A(t+y)-A(t+z))e_{j}\nabla \varphi (t)dt \\
&&-\int_{\mathbb{R}^{d}}G^{y}(x,t)A(t+y)\nabla u(t)\cdot \nabla \varphi (t)dt
\\
&&+\int_{\mathbb{R}^{d}}\nabla _{t}G^{y}(x,t)A(t+y)u(t)\nabla \varphi (t)dt.
\end{eqnarray*}%
It follows that 
\begin{eqnarray}
\left\vert u(x)\right\vert &\leq &CT^{-2}\int_{B_{2T}}\left\vert
G^{y}(x,t)\right\vert \left\vert u(t)\right\vert dt+  \label{e11.10} \\
&&+C\left\Vert A(\cdot +y)-A(\cdot +z)\right\Vert _{L^{\infty
}(B_{R})}\int_{B_{2T}}\left\vert \nabla _{t}G^{y}(x,t)\right\vert dt  \notag
\\
&&+C\left\Vert A(\cdot +y)-A(\cdot +z)\right\Vert _{L^{\infty
}(B_{R})}\int_{B_{2T}}\left\vert \nabla _{t}G^{y}(x,t)\right\vert \left\vert
\nabla v(t)\right\vert dt  \notag \\
&&+C\left\Vert A(\cdot +y)-A(\cdot +z)\right\Vert _{L^{\infty
}(B_{R})}\int_{B_{2T}}\left\vert G^{y}(x,t)\right\vert \left\vert \nabla
\varphi (t)\right\vert dt  \notag \\
&&+C\left( \int_{B_{2T}}\left\vert G^{y}(x,t)\right\vert ^{2}\left\vert
\nabla \varphi (t)\right\vert ^{2}dt\right) ^{\frac{1}{2}}\left(
\int_{B_{2T}}\left\vert \nabla u\right\vert ^{2}\right) ^{\frac{1}{2}} 
\notag \\
&&+C\left( \int_{B_{2T}}\left\vert \nabla _{t}G^{y}(x,t)\right\vert
^{2}\left\vert \nabla \varphi (t)\right\vert ^{2}dt\right) ^{\frac{1}{2}%
}\left( \int_{B_{2T}}\left\vert u\right\vert ^{2}\right) ^{\frac{1}{2}}. 
\notag
\end{eqnarray}%
Let us first deal with the last two terms in (\ref{e11.10}). Let $0<\tau <1$
be such that $B_{\tau T}(x)\subset B_{T}$ (recall that $x\in B_{T}$). Then $%
B_{2T}\backslash B_{\tau T}(x)\subset B_{3T}(x)\backslash B_{\tau T}(x)$ and
since $\nabla \varphi =0$ in $B_{T}$ (and hence in $B_{\tau T}(x)$), it
holds that 
\begin{eqnarray*}
\left( \int_{B_{2T}}\left\vert G^{y}(x,t)\right\vert ^{2}\left\vert \nabla
\varphi (t)\right\vert ^{2}dt\right) ^{\frac{1}{2}} &\leq &CT^{-1}\left(
\int_{B_{3T}(x)\backslash B_{\tau T}(x)}\frac{dt}{\left\vert x-t\right\vert
^{2(d-2)}}\right) ^{\frac{1}{2}} \\
&\leq &CT^{1-\frac{d}{2}};
\end{eqnarray*}%
\begin{eqnarray*}
\left( \int_{B_{2T}}\left\vert \nabla _{t}G^{y}(x,t)\right\vert
^{2}\left\vert \nabla \varphi (t)\right\vert ^{2}dt\right) ^{\frac{1}{2}}
&\leq &CT^{-1}\left( \int_{B_{3T}(x)\backslash B_{\tau T}(x)}\left\vert
\nabla _{t}G^{y}(x,t)\right\vert ^{2}\right) ^{\frac{1}{2}} \\
&\leq &CT^{-1}\left( \sum_{i=\left[ \frac{\ln \tau }{\ln 2}\right]
}^{2}\int_{B_{2^{i+1}T}(x)\backslash B_{2^{i}T}(x)}\left\vert \nabla
_{t}G^{y}(x,t)\right\vert ^{2}dt\right) ^{\frac{1}{2}} \\
&\leq &CT^{-1}\left( \sum_{i=\left[ \frac{\ln \tau }{\ln 2}\right]
}^{2}(2^{i}T)^{2-d}\right) ^{\frac{1}{2}}\leq CT^{-\frac{d}{2}},
\end{eqnarray*}%
where $\left[ \frac{\ln \tau }{\ln 2}\right] $ stands for the integer part
of $\frac{\ln \tau }{\ln 2}$. We infer that the last two terms in (\ref%
{e11.10}) are bounded from above by $T\left( 
\mathchoice {{\setbox0=\hbox{$\displaystyle{\textstyle -}{\int}$ } \vcenter{\hbox{$\textstyle -$
}}\kern-.6\wd0}}{{\setbox0=\hbox{$\textstyle{\scriptstyle -}{\int}$ } \vcenter{\hbox{$\scriptstyle -$
}}\kern-.6\wd0}}{{\setbox0=\hbox{$\scriptstyle{\scriptscriptstyle -}{\int}$
} \vcenter{\hbox{$\scriptscriptstyle -$
}}\kern-.6\wd0}}{{\setbox0=\hbox{$\scriptscriptstyle{\scriptscriptstyle
-}{\int}$ } \vcenter{\hbox{$\scriptscriptstyle -$ }}\kern-.6\wd0}}%
\!\int_{B_{2T}}\left\vert \nabla u\right\vert ^{2}\right) ^{\frac{1}{2}%
}+\left( 
\mathchoice {{\setbox0=\hbox{$\displaystyle{\textstyle -}{\int}$ } \vcenter{\hbox{$\textstyle -$
}}\kern-.6\wd0}}{{\setbox0=\hbox{$\textstyle{\scriptstyle -}{\int}$ } \vcenter{\hbox{$\scriptstyle -$
}}\kern-.6\wd0}}{{\setbox0=\hbox{$\scriptstyle{\scriptscriptstyle -}{\int}$
} \vcenter{\hbox{$\scriptscriptstyle -$
}}\kern-.6\wd0}}{{\setbox0=\hbox{$\scriptscriptstyle{\scriptscriptstyle
-}{\int}$ } \vcenter{\hbox{$\scriptscriptstyle -$ }}\kern-.6\wd0}}%
\!\int_{B_{2T}}\left\vert u\right\vert ^{2}\right) ^{\frac{1}{2}}$. Next,
for any $R>2T$, we appeal to (\ref{4.3}) in Lemma \ref{l4.1} to get in (\ref%
{11.10}), 
\begin{eqnarray*}
\mathchoice {{\setbox0=\hbox{$\displaystyle{\textstyle -}{\int}$ } \vcenter{\hbox{$\textstyle -$
}}\kern-.6\wd0}}{{\setbox0=\hbox{$\textstyle{\scriptstyle -}{\int}$ } \vcenter{\hbox{$\scriptstyle -$
}}\kern-.6\wd0}}{{\setbox0=\hbox{$\scriptstyle{\scriptscriptstyle -}{\int}$
} \vcenter{\hbox{$\scriptscriptstyle -$
}}\kern-.6\wd0}}{{\setbox0=\hbox{$\scriptscriptstyle{\scriptscriptstyle
-}{\int}$ } \vcenter{\hbox{$\scriptscriptstyle -$ }}\kern-.6\wd0}}%
\!\int_{B_{2T}}\left( \left\vert \nabla u\right\vert ^{2}+T^{-2}\left\vert
u\right\vert ^{2}\right) &\leq &\sup_{x\in \mathbb{R}^{d}}%
\mathchoice {{\setbox0=\hbox{$\displaystyle{\textstyle -}{\int}$ } \vcenter{\hbox{$\textstyle -$
}}\kern-.6\wd0}}{{\setbox0=\hbox{$\textstyle{\scriptstyle -}{\int}$ } \vcenter{\hbox{$\scriptstyle -$
}}\kern-.6\wd0}}{{\setbox0=\hbox{$\scriptstyle{\scriptscriptstyle -}{\int}$
} \vcenter{\hbox{$\scriptscriptstyle -$
}}\kern-.6\wd0}}{{\setbox0=\hbox{$\scriptscriptstyle{\scriptscriptstyle
-}{\int}$ } \vcenter{\hbox{$\scriptscriptstyle -$ }}\kern-.6\wd0}}%
\!\int_{B_{2T}(x)}\left( \left\vert \nabla u\right\vert
^{2}+T^{-2}\left\vert u\right\vert ^{2}\right) \\
&\leq &C\sup_{x\in \mathbb{R}^{d}}%
\mathchoice {{\setbox0=\hbox{$\displaystyle{\textstyle -}{\int}$ } \vcenter{\hbox{$\textstyle -$
}}\kern-.6\wd0}}{{\setbox0=\hbox{$\textstyle{\scriptstyle -}{\int}$ } \vcenter{\hbox{$\scriptstyle -$
}}\kern-.6\wd0}}{{\setbox0=\hbox{$\scriptstyle{\scriptscriptstyle -}{\int}$
} \vcenter{\hbox{$\scriptscriptstyle -$
}}\kern-.6\wd0}}{{\setbox0=\hbox{$\scriptscriptstyle{\scriptscriptstyle
-}{\int}$ } \vcenter{\hbox{$\scriptscriptstyle -$ }}\kern-.6\wd0}}%
\!\int_{B_{2T}(x)}\left\vert A(t+y)-A(t+z)\right\vert ^{2}dt \\
&&+C\sup_{x\in \mathbb{R}^{d}}%
\mathchoice {{\setbox0=\hbox{$\displaystyle{\textstyle -}{\int}$ } \vcenter{\hbox{$\textstyle -$
}}\kern-.6\wd0}}{{\setbox0=\hbox{$\textstyle{\scriptstyle -}{\int}$ } \vcenter{\hbox{$\scriptstyle -$
}}\kern-.6\wd0}}{{\setbox0=\hbox{$\scriptstyle{\scriptscriptstyle -}{\int}$
} \vcenter{\hbox{$\scriptscriptstyle -$
}}\kern-.6\wd0}}{{\setbox0=\hbox{$\scriptscriptstyle{\scriptscriptstyle
-}{\int}$ } \vcenter{\hbox{$\scriptscriptstyle -$ }}\kern-.6\wd0}}%
\!\int_{B_{2T}(x)}\left\vert A(t+y)-A(t+z)\right\vert ^{2}\left\vert \nabla
v\right\vert ^{2}dt \\
&\leq &C\left\Vert A(\cdot +y)-A(\cdot +z)\right\Vert _{L^{\infty
}(B_{R})}^{2},
\end{eqnarray*}%
where we have used the facts that $R>2T$ and 
\begin{equation*}
\sup_{x\in \mathbb{R}^{d}}%
\mathchoice {{\setbox0=\hbox{$\displaystyle{\textstyle -}{\int}$ } \vcenter{\hbox{$\textstyle -$
}}\kern-.6\wd0}}{{\setbox0=\hbox{$\textstyle{\scriptstyle -}{\int}$ } \vcenter{\hbox{$\scriptstyle -$
}}\kern-.6\wd0}}{{\setbox0=\hbox{$\scriptstyle{\scriptscriptstyle -}{\int}$
} \vcenter{\hbox{$\scriptscriptstyle -$
}}\kern-.6\wd0}}{{\setbox0=\hbox{$\scriptscriptstyle{\scriptscriptstyle
-}{\int}$ } \vcenter{\hbox{$\scriptscriptstyle -$ }}\kern-.6\wd0}}%
\!\int_{B_{2T}(x)}\left\vert \nabla v\right\vert ^{2}dt\leq C\text{ (see (%
\ref{4.3}) in Lemma \ref{l4.1}).}
\end{equation*}%
It follows at once that 
\begin{equation}
T\left( 
\mathchoice {{\setbox0=\hbox{$\displaystyle{\textstyle -}{\int}$ } \vcenter{\hbox{$\textstyle -$
}}\kern-.6\wd0}}{{\setbox0=\hbox{$\textstyle{\scriptstyle -}{\int}$ } \vcenter{\hbox{$\scriptstyle -$
}}\kern-.6\wd0}}{{\setbox0=\hbox{$\scriptstyle{\scriptscriptstyle -}{\int}$
} \vcenter{\hbox{$\scriptscriptstyle -$
}}\kern-.6\wd0}}{{\setbox0=\hbox{$\scriptscriptstyle{\scriptscriptstyle
-}{\int}$ } \vcenter{\hbox{$\scriptscriptstyle -$ }}\kern-.6\wd0}}%
\!\int_{B_{2T}}\left\vert \nabla u\right\vert ^{2}\right) ^{\frac{1}{2}%
}+\left( 
\mathchoice {{\setbox0=\hbox{$\displaystyle{\textstyle -}{\int}$ } \vcenter{\hbox{$\textstyle -$
}}\kern-.6\wd0}}{{\setbox0=\hbox{$\textstyle{\scriptstyle -}{\int}$ } \vcenter{\hbox{$\scriptstyle -$
}}\kern-.6\wd0}}{{\setbox0=\hbox{$\scriptstyle{\scriptscriptstyle -}{\int}$
} \vcenter{\hbox{$\scriptscriptstyle -$
}}\kern-.6\wd0}}{{\setbox0=\hbox{$\scriptscriptstyle{\scriptscriptstyle
-}{\int}$ } \vcenter{\hbox{$\scriptscriptstyle -$ }}\kern-.6\wd0}}%
\!\int_{B_{2T}}\left\vert u\right\vert ^{2}\right) ^{\frac{1}{2}}\leq
CT\left\Vert A(\cdot +y)-A(\cdot +z)\right\Vert _{L^{\infty }(B_{R})}.
\label{e1.1}
\end{equation}

Concerning the second term in the right-hand side of (\ref{e11.10}), we have 
\begin{eqnarray}
\int_{B_{2T}}\left\vert \nabla _{t}G^{y}(x,t)\right\vert dt &\leq
&C\int_{B_{3T}(x)}\left\vert \nabla _{t}G^{y}(x,t)\right\vert dt
\label{e2.1} \\
&\leq &C\sum_{i=-\infty }^{1}\int_{B_{2^{i+1}T}(x)\backslash
B_{2^{i}T}(x)}\left\vert \nabla _{t}G^{y}(x,t)\right\vert dt\leq
C\sum_{i=-\infty }^{1}2^{i}T\leq CT,  \notag
\end{eqnarray}%
where we have used for the first inequality in (\ref{e2.1}), the fact that $%
B_{2T}\subset B_{3T}(x)$ (recall that $x\in B_{T}$), and for the last
inequality, (\ref{2.8}) (for $q=1$). It follows that 
\begin{equation*}
C\left\Vert A(\cdot +y)-A(\cdot +z)\right\Vert _{L^{\infty
}(B_{R})}\int_{B_{2T}}\left\vert \nabla _{t}G^{y}(x,t)\right\vert dt\leq
CT\left\Vert A(\cdot +y)-A(\cdot +z)\right\Vert _{L^{\infty }(B_{R})}.
\end{equation*}%
As for the third term in the right-hand side of (\ref{e11.10}) is concerned,
we concentrate on the control of the integral 
\begin{equation*}
I=\int_{B_{2T}}\left\vert \nabla _{t}G^{y}(x,t)\right\vert \left\vert \nabla
v(t)\right\vert dt.
\end{equation*}%
First, we note that the function $v$ solves the equation 
\begin{equation*}
-\nabla \cdot (A(\cdot +z)\nabla v)+T^{-2}v=\nabla \cdot (A(\cdot +z)e_{j})%
\text{ in }\mathbb{R}^{d}
\end{equation*}%
so that appealing to (\ref{4.3}), 
\begin{equation}
\left( 
\mathchoice {{\setbox0=\hbox{$\displaystyle{\textstyle -}{\int}$ } \vcenter{\hbox{$\textstyle -$
}}\kern-.6\wd0}}{{\setbox0=\hbox{$\textstyle{\scriptstyle -}{\int}$ } \vcenter{\hbox{$\scriptstyle -$
}}\kern-.6\wd0}}{{\setbox0=\hbox{$\scriptstyle{\scriptscriptstyle -}{\int}$
} \vcenter{\hbox{$\scriptscriptstyle -$
}}\kern-.6\wd0}}{{\setbox0=\hbox{$\scriptscriptstyle{\scriptscriptstyle
-}{\int}$ } \vcenter{\hbox{$\scriptscriptstyle -$ }}\kern-.6\wd0}}%
\!\int_{B_{2T}}\left\vert \nabla v\right\vert ^{2}\right) ^{\frac{1}{2}}\leq
C\text{.}  \label{e11.15}
\end{equation}

Next, H\"{o}lder inequality and (\ref{e11.15}) lead to 
\begin{eqnarray*}
I &\leq &CT^{\frac{d}{2}}\left( \int_{B_{2T}}\left\vert \nabla
_{t}G^{y}(x,t)\right\vert ^{2}dt\right) ^{\frac{1}{2}}\leq CT^{\frac{d}{2}%
}\left( \int_{B_{3T}(x)\backslash B_{\tau T}(x)}\left\vert \nabla
_{t}G^{y}(x,t)\right\vert ^{2}\right) ^{\frac{1}{2}} \\
&\leq &CT^{\frac{d}{2}}\left( \sum_{i=\left[ \frac{\ln \tau }{\ln 2}\right]
}^{2}\int_{B_{2^{i+1}T}(x)\backslash B_{2^{i}T}(x)}\left\vert \nabla
_{t}G^{y}(x,t)\right\vert ^{2}dt\right) ^{\frac{1}{2}}\leq CT^{\frac{d}{2}%
}\left( \sum_{i=\left[ \frac{\ln \tau }{\ln 2}\right] }^{2}(2^{i}T)^{2-d}%
\right) ^{\frac{1}{2}} \\
&\leq &CT^{\frac{d}{2}}T^{1-\frac{d}{2}}=CT.
\end{eqnarray*}%
For the fourth term in the right-hand side of (\ref{e11.10}), we have 
\begin{equation*}
\int_{B_{2T}}\left\vert G^{y}(x,t)\right\vert \left\vert \nabla \varphi
(t)\right\vert dt\leq CT^{-1}\int_{B_{3T}(x)}\frac{dt}{\left\vert
x-t\right\vert ^{d-2}}dt\leq CT.
\end{equation*}%
We have therefore shown that 
\begin{equation}
\left\vert u(x)\right\vert \leq CT^{-2}\int_{B_{2T}}\frac{\left\vert
u(t)\right\vert }{\left\vert x-t\right\vert ^{d-2}}dt+CT\left\Vert A(\cdot
+y)-A(\cdot +z)\right\Vert _{L^{\infty }(B_{R})}.  \label{e11.17}
\end{equation}%
Using the well known fractional integral estimates, (\ref{e11.17}) yields 
\begin{equation*}
\left( 
\mathchoice {{\setbox0=\hbox{$\displaystyle{\textstyle -}{\int}$ } \vcenter{\hbox{$\textstyle -$
}}\kern-.6\wd0}}{{\setbox0=\hbox{$\textstyle{\scriptstyle -}{\int}$ } \vcenter{\hbox{$\scriptstyle -$
}}\kern-.6\wd0}}{{\setbox0=\hbox{$\scriptstyle{\scriptscriptstyle -}{\int}$
} \vcenter{\hbox{$\scriptscriptstyle -$
}}\kern-.6\wd0}}{{\setbox0=\hbox{$\scriptscriptstyle{\scriptscriptstyle
-}{\int}$ } \vcenter{\hbox{$\scriptscriptstyle -$ }}\kern-.6\wd0}}%
\!\int_{B_{T}}\left\vert u\right\vert ^{q}\right) ^{\frac{1}{q}}\leq C\left( 
\mathchoice {{\setbox0=\hbox{$\displaystyle{\textstyle -}{\int}$ } \vcenter{\hbox{$\textstyle -$
}}\kern-.6\wd0}}{{\setbox0=\hbox{$\textstyle{\scriptstyle -}{\int}$ } \vcenter{\hbox{$\scriptstyle -$
}}\kern-.6\wd0}}{{\setbox0=\hbox{$\scriptstyle{\scriptscriptstyle -}{\int}$
} \vcenter{\hbox{$\scriptscriptstyle -$
}}\kern-.6\wd0}}{{\setbox0=\hbox{$\scriptscriptstyle{\scriptscriptstyle
-}{\int}$ } \vcenter{\hbox{$\scriptscriptstyle -$ }}\kern-.6\wd0}}%
\!\int_{B_{2T}}\left\vert u\right\vert ^{p}\right) ^{\frac{1}{p}%
}+CT\left\Vert A(\cdot +y)-A(\cdot +z)\right\Vert _{L^{\infty }(B_{R})}
\end{equation*}%
where $1<p<q\leq \infty $ with $\frac{1}{p}-\frac{1}{q}<\frac{2}{d}$.
However from (\ref{e1.1}) we derive the estimate 
\begin{equation*}
\left( 
\mathchoice {{\setbox0=\hbox{$\displaystyle{\textstyle -}{\int}$ } \vcenter{\hbox{$\textstyle -$
}}\kern-.6\wd0}}{{\setbox0=\hbox{$\textstyle{\scriptstyle -}{\int}$ } \vcenter{\hbox{$\scriptstyle -$
}}\kern-.6\wd0}}{{\setbox0=\hbox{$\scriptstyle{\scriptscriptstyle -}{\int}$
} \vcenter{\hbox{$\scriptscriptstyle -$
}}\kern-.6\wd0}}{{\setbox0=\hbox{$\scriptscriptstyle{\scriptscriptstyle
-}{\int}$ } \vcenter{\hbox{$\scriptscriptstyle -$ }}\kern-.6\wd0}}%
\!\int_{B_{2T}}\left\vert u\right\vert ^{2}\right) ^{\frac{1}{2}}\leq
CT\left\Vert A(\cdot +y)-A(\cdot +z)\right\Vert _{L^{\infty }(B_{R})},
\end{equation*}%
so that by an iteration argument, we are led to 
\begin{equation*}
\left\Vert u\right\Vert _{L^{\infty }(B_{T})}\leq CT\left\Vert A(\cdot
+y)-A(\cdot +z)\right\Vert _{L^{\infty }(B_{R})}.
\end{equation*}%
This yields (recalling that $x_{0}=0$) 
\begin{equation*}
\left\vert u(0)\right\vert \leq CT\left\Vert A(\cdot +y)-A(\cdot
+z)\right\Vert _{L^{\infty }(B_{R})}.
\end{equation*}%
Recalling that $0$ may be replaced by any $t\in B_{R}$, this completes the
proof.
\end{proof}

\begin{theorem}
\label{t11.2}Let $T\geq 1$ and $R>2T$. For any $0<L\leq T$ and $\sigma \in
(0,1)$, there is $C_{\sigma }=C_{\sigma }(\sigma ,A)$ such that 
\begin{equation}
T^{-1}\left\Vert \chi _{T}\right\Vert _{L^{\infty }(\mathbb{R}^{d})}\leq
C_{\sigma }\left( \rho (L,R)+\left( \frac{L}{T}\right) ^{\sigma }\right) .
\label{11.11}
\end{equation}
\end{theorem}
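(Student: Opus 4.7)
The plan is to combine three ingredients that are already at our disposal: the translation estimate of Theorem \ref{t11.1}, the interior H\"{o}lder bound (\ref{e5.8}) of Lemma \ref{l11.1}, and the vanishing of the mean value of $\chi _{T}$. The strategy is first to establish a uniform oscillation bound of the right size, and then upgrade it to a pointwise $L^{\infty }$ estimate by exploiting $\langle \chi _{T}\rangle =0$.

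The first step is to check that $\langle \chi _{T,j}\rangle =0$. This is read off from the variational characterization in Lemma \ref{l4.1}: since $\chi _{T,j}\in B_{\mathcal{A}}^{1,2}(\mathbb{R}^{d})$ solves (\ref{4.7}) with $h=0$ and $H=Ae_{j}$, testing against the constant $\phi \equiv 1\in \mathcal{A}$ (which has vanishing gradient) yields $T^{-2}M(\chi _{T,j})=0$, hence $M(\chi _{T,j})=0$.

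For the oscillation bound, fix any $x\in \mathbb{R}^{d}$. By the definition (\ref{11.4}) of $\rho (L,R)$, together with the compactness of $\overline{B_{L}}$ and the continuity of $A$ (which guarantee that the infimum is attained), one can pick $z_{x}\in \overline{B_{L}}$ with
\begin{equation*}
\Vert A(\cdot +x)-A(\cdot +z_{x})\Vert _{L^{\infty }(B_{R})}\leq \rho (L,R).
\end{equation*}
Since $R>2T$, Theorem \ref{t11.1} applied with base point $x_{0}=0$ and shifts $y=x$, $z=z_{x}$, specialized to $t=0\in B_{R}$, gives $|\chi _{T}(x)-\chi _{T}(z_{x})|\leq CT\rho (L,R)$. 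Since $|z_{x}|\leq L\leq T$, the H\"{o}lder estimate (\ref{e5.8}) yields $|\chi _{T}(z_{x})-\chi _{T}(0)|\leq C_{\sigma }T^{1-\sigma }L^{\sigma }=C_{\sigma }T(L/T)^{\sigma }$. The triangle inequality then produces
\begin{equation*}
\sup_{x\in \mathbb{R}^{d}}|\chi _{T}(x)-\chi _{T}(0)|\leq C_{\sigma }T\bigl(\rho (L,R)+(L/T)^{\sigma }\bigr).
\end{equation*}

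To finish, I would use that $\chi _{T}\in \mathcal{B}_{\infty ,AP}(\mathbb{R}^{d})$ (by Theorem \ref{t11.1}) has a well-defined mean equal to zero. Averaging the identity $\chi _{T}(0)-\chi _{T}(y)$ over $B_{R'}$ and letting $R'\to \infty $ gives $|\chi _{T}(0)|=|\chi _{T}(0)-\langle \chi _{T}\rangle|\leq \sup_{y\in \mathbb{R}^{d}}|\chi _{T}(0)-\chi _{T}(y)|$, so $|\chi _{T}(0)|$ is controlled by the same right-hand side as the oscillation. A last triangle inequality then yields $\Vert \chi _{T}\Vert _{L^{\infty }(\mathbb{R}^{d})}\leq 2C_{\sigma }T\bigl(\rho (L,R)+(L/T)^{\sigma }\bigr)$, from which (\ref{11.11}) follows upon dividing by $T$. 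The heavy analytic lifting has already been absorbed into Theorem \ref{t11.1}; the only conceptual hurdle here is recognizing that the vanishing mean of $\chi _{T}$ is precisely what converts an oscillation bound into an absolute $L^{\infty }$ bound.
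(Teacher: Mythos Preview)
Your proof is correct and follows essentially the same approach as the paper: both combine the translation estimate of Theorem~\ref{t11.1} (evaluated at the origin) with the H\"older bound (\ref{e5.8}) to control the oscillation $\sup_{x}|\chi_T(x)-\chi_T(0)|$, and then invoke $\langle\chi_T\rangle=0$ to upgrade this to an $L^\infty$ bound. Your explicit verification of $\langle\chi_{T,j}\rangle=0$ via testing against constants and your remark that the infimum in $\rho(L,R)$ is attained are welcome clarifications, but the overall architecture is identical to the paper's argument.
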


\begin{proof}
Let $y,z\in \mathbb{R}^{d}$ with $\left\vert z\right\vert \leq L\leq T$.
Then 
\begin{equation*}
\left\vert \chi _{T}(y)\right\vert \leq \left\vert \chi _{T}(y)-\chi
_{T}(0)\right\vert +\left\vert \chi _{T}(0)\right\vert
\end{equation*}%
and 
\begin{eqnarray*}
\left\vert \chi _{T}(y)-\chi _{T}(0)\right\vert &\leq &\left\vert \chi
_{T}(y)-\chi _{T}(z)\right\vert +\left\vert \chi _{T}(z)-\chi
_{T}(0)\right\vert \\
&=&\left\vert \chi _{T}(0+y)-\chi _{T}(0+z)\right\vert +\left\vert \chi
_{T}(z)-\chi _{T}(0)\right\vert \\
&\leq &\sup_{x\in B_{R}}\left\vert \chi _{T}(x+y)-\chi _{T}(x+z)\right\vert
+\left\vert \chi _{T}(z)-\chi _{T}(0)\right\vert \\
&\leq &CT\left\Vert A(\cdot +y)-A(\cdot +z)\right\Vert _{L^{\infty
}(B_{R})}+C_{\sigma }T^{1-\sigma }L^{\sigma },
\end{eqnarray*}%
where for the last inequality above we have used (\ref{e5.8}) (in Lemma \ref%
{l11.1}) and (\ref{11.9}) (in Theorem \ref{t11.1}). It follows readily that 
\begin{equation}
\sup_{y\in \mathbb{R}^{d}}\left\vert \chi _{T}(y)-\chi _{T}(0)\right\vert
\leq T\left( C\rho (L,R)+C_{\sigma }\left( \frac{L}{T}\right) ^{\sigma
}\right) .  \label{11.12}
\end{equation}%
On the other hand, observing that 
\begin{eqnarray*}
\left\vert \chi _{T}(0)\right\vert &\leq &\left\vert 
\mathchoice {{\setbox0=\hbox{$\displaystyle{\textstyle -}{\int}$ } \vcenter{\hbox{$\textstyle -$
}}\kern-.6\wd0}}{{\setbox0=\hbox{$\textstyle{\scriptstyle -}{\int}$ } \vcenter{\hbox{$\scriptstyle -$
}}\kern-.6\wd0}}{{\setbox0=\hbox{$\scriptstyle{\scriptscriptstyle -}{\int}$
} \vcenter{\hbox{$\scriptscriptstyle -$
}}\kern-.6\wd0}}{{\setbox0=\hbox{$\scriptscriptstyle{\scriptscriptstyle
-}{\int}$ } \vcenter{\hbox{$\scriptscriptstyle -$ }}\kern-.6\wd0}}%
\!\int_{B_{r}}(\chi _{T}(t)-\chi _{T}(0))dt\right\vert +\left\vert 
\mathchoice {{\setbox0=\hbox{$\displaystyle{\textstyle -}{\int}$ } \vcenter{\hbox{$\textstyle -$
}}\kern-.6\wd0}}{{\setbox0=\hbox{$\textstyle{\scriptstyle -}{\int}$ } \vcenter{\hbox{$\scriptstyle -$
}}\kern-.6\wd0}}{{\setbox0=\hbox{$\scriptstyle{\scriptscriptstyle -}{\int}$
} \vcenter{\hbox{$\scriptscriptstyle -$
}}\kern-.6\wd0}}{{\setbox0=\hbox{$\scriptscriptstyle{\scriptscriptstyle
-}{\int}$ } \vcenter{\hbox{$\scriptscriptstyle -$ }}\kern-.6\wd0}}%
\!\int_{B_{r}}\chi _{T}(t)dt\right\vert \\
&\leq &\sup_{y\in \mathbb{R}^{d}}\left\vert \chi _{T}(y)-\chi
_{T}(0)\right\vert +\left\vert 
\mathchoice {{\setbox0=\hbox{$\displaystyle{\textstyle -}{\int}$ } \vcenter{\hbox{$\textstyle -$
}}\kern-.6\wd0}}{{\setbox0=\hbox{$\textstyle{\scriptstyle -}{\int}$ } \vcenter{\hbox{$\scriptstyle -$
}}\kern-.6\wd0}}{{\setbox0=\hbox{$\scriptstyle{\scriptscriptstyle -}{\int}$
} \vcenter{\hbox{$\scriptscriptstyle -$
}}\kern-.6\wd0}}{{\setbox0=\hbox{$\scriptscriptstyle{\scriptscriptstyle
-}{\int}$ } \vcenter{\hbox{$\scriptscriptstyle -$ }}\kern-.6\wd0}}%
\!\int_{B_{r}}\chi _{T}(t)dt\right\vert
\end{eqnarray*}%
and letting $r\rightarrow \infty $, we use the fact that $\left\langle \chi
_{T}\right\rangle =0$ to get 
\begin{equation*}
\left\vert \chi _{T}(0)\right\vert \leq \sup_{y\in \mathbb{R}^{d}}\left\vert
\chi _{T}(y)-\chi _{T}(0)\right\vert .
\end{equation*}%
The above inequality associated to (\ref{11.12}) yield (\ref{11.11}).
\end{proof}

Now, we set (for $T\geq 1$ and $\sigma \in (0,1]$) 
\begin{equation}
\Theta _{\sigma }(T)=\inf_{0<L<T}\left( \rho (L,3T)+\left( \frac{L}{T}%
\right) ^{\sigma }\right)  \label{11.13}
\end{equation}%
where $\rho (L,R)$ is given by (\ref{11.4}). Then $T\mapsto \Theta _{\sigma
}(T)$ is a continuous decreasing function satisfying $\Theta _{\sigma
}(T)\rightarrow 0$ when $T\rightarrow \infty $ (this stems from the
asymptotic almost periodicity of $A$, so that $\rho (L,3T)\rightarrow 0$ as $%
T\rightarrow \infty $). We infer from (\ref{11.11}) that 
\begin{equation}
T^{-1}\left\Vert \chi _{T}\right\Vert _{L^{\infty }(\mathbb{R}^{d})}\leq
C_{\sigma }\Theta _{\sigma }(T)  \label{11.14}
\end{equation}%
and hence 
\begin{equation*}
T^{-1}\left\Vert \chi _{T}\right\Vert _{L^{\infty }(\mathbb{R}%
^{d})}\rightarrow 0\text{ as }T\rightarrow \infty .
\end{equation*}

As in \cite{Shen} we state the following result.

\begin{lemma}
\label{l11.2}Let $g\in L_{\infty ,AP}^{2}(\mathbb{R}^{d})\cap L^{\infty }(%
\mathbb{R}^{d})$ with $\left\langle g\right\rangle =0$ and 
\begin{equation}
\sup_{x\in \mathbb{R}^{d}}\left( 
\mathchoice {{\setbox0=\hbox{$\displaystyle{\textstyle -}{\int}$ } \vcenter{\hbox{$\textstyle -$
}}\kern-.6\wd0}}{{\setbox0=\hbox{$\textstyle{\scriptstyle -}{\int}$ } \vcenter{\hbox{$\scriptstyle -$
}}\kern-.6\wd0}}{{\setbox0=\hbox{$\scriptstyle{\scriptscriptstyle -}{\int}$
} \vcenter{\hbox{$\scriptscriptstyle -$
}}\kern-.6\wd0}}{{\setbox0=\hbox{$\scriptscriptstyle{\scriptscriptstyle
-}{\int}$ } \vcenter{\hbox{$\scriptscriptstyle -$ }}\kern-.6\wd0}}%
\!\int_{B_{r}(x)}\left\vert g\right\vert ^{2}\right) ^{\frac{1}{2}}\leq
C_{0}\left( \frac{T}{r}\right) ^{1-\sigma }\text{ for }0<r\leq T
\label{11.15}
\end{equation}%
where $\sigma \in (0,1]$. Then there is a unique $u\in H_{\infty ,AP}^{1}(%
\mathbb{R}^{d})$ such that 
\begin{equation}
-\Delta u+T^{-2}u=g\text{ in }\mathbb{R}^{d},\ \ \left\langle u\right\rangle
=0  \label{11.16}
\end{equation}%
and 
\begin{equation}
T^{-2}\left\Vert u\right\Vert _{L^{\infty }(\mathbb{R}^{d})}+T^{-1}\left%
\Vert \nabla u\right\Vert _{L^{\infty }(\mathbb{R}^{d})}\leq C,
\label{11.17}
\end{equation}%
\begin{equation}
\left\vert \nabla u(x)-\nabla u(y)\right\vert \leq C_{\sigma }T^{1-\sigma
}\left\vert x-y\right\vert ^{\sigma }\ \ \forall x,y\in \mathbb{R}^{d}
\label{11.18}
\end{equation}%
where $C=C(d)$ and $C_{\sigma }=C_{\sigma }(d,\sigma )$. Moreover $u$ and $%
\nabla u$ belong to $\mathcal{B}_{\infty ,AP}(\mathbb{R}^{d})$ with 
\begin{equation}
T^{-2}\left\Vert u\right\Vert _{L^{\infty }(\mathbb{R}^{d})}\leq C\Theta
_{1}(T)  \label{11.19}
\end{equation}%
and 
\begin{equation}
T^{-1}\left\Vert \nabla u\right\Vert _{L^{\infty }(\mathbb{R}^{d})}\leq
C\Theta _{\sigma }(T)  \label{11.20}
\end{equation}%
where $\Theta _{\sigma }(T)$ is defined by \emph{(\ref{11.13})} and $%
C=C(d,\sigma ,g)$.
\end{lemma}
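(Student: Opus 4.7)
The plan is to construct $u$ via convolution with the fundamental solution $\Gamma_T$ of $-\Delta + T^{-2}$ on $\mathbb{R}^d$, prove the pointwise and Hölder estimates by dyadic decomposition exploiting hypothesis (\ref{11.15}), and then deduce the asymptotic almost periodicity from the translation invariance of $\Gamma_T$ combined with Proposition \ref{p11.1}.

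Existence and uniqueness in $H^{1}_{\infty,AP}(\mathbb{R}^d)$ follow from Lax--Milgram applied in the Hilbert space $\mathcal{B}_{\mathcal{A}}^{1,2}(\mathbb{R}^d)$, exactly as in Part 1 of Lemma \ref{l4.1} (with the matrix $A$ replaced by $I$); the mean-zero property of $u$ is forced by $\langle g\rangle = 0$, since testing the weak form against a constant gives $T^{-2}\langle u\rangle = \langle g\rangle = 0$. Uniqueness is immediate from the coercivity of $a(u,\phi) = M(T^{-2}u\phi + \nabla u\cdot \nabla \phi)$.

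For (\ref{11.17})--(\ref{11.18}), I would use the representation $u(x) = \int_{\mathbb{R}^d} \Gamma_T(x-y)g(y)\,dy$, where $\Gamma_T$ is the Yukawa/Bessel kernel satisfying $|\Gamma_T(z)| \leq C|z|^{2-d} e^{-c|z|/T}$ for $d\geq 3$ (logarithmic analogue for $d=2$) and $|\nabla \Gamma_T(z)| \leq C|z|^{1-d} e^{-c|z|/T}$. Split the integral over dyadic shells $B_{2^{k+1}T}(x)\setminus B_{2^{k}T}(x)$ for $k\in\mathbb{Z}$, apply Cauchy--Schwarz on each shell, and plug in hypothesis (\ref{11.15}); the polynomial factors $(T/r)^{1-\sigma}$ are absorbed by the exponential decay of the kernel, yielding $T^{-2}\|u\|_{L^\infty} + T^{-1}\|\nabla u\|_{L^\infty} \leq C$. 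The Hölder bound (\ref{11.18}) then follows from the Schauder-type estimate (\ref{e5.10}) used to prove (\ref{e5.8}), applied to the constant-coefficient equation $-\Delta u = g - T^{-2} u$ with right-hand side already controlled in $L^\infty$.

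To obtain the refined bounds (\ref{11.19})--(\ref{11.20}), I would invoke the translation invariance of $\Gamma_T$ to write
\[
u(x+y) - u(x+z) = \int_{\mathbb{R}^d} \Gamma_T(x-t)\bigl[g(t+y) - g(t+z)\bigr]\,dt,
\]
and split the $t$-integral at radius $\sim T$ around $x$: inside $B_{3T}(x)$ the increment is controlled by $\|g(\cdot+y)-g(\cdot+z)\|_{L^\infty(B_{3T})}$ (the relevant modulus of asymptotic almost periodicity), while outside the exponential tail of $\Gamma_T$ tested against $\|g\|_{L^\infty}$ gives a negligible contribution. Taking the infimum over $|z|\leq L < T$ and combining with the Hölder bound (\ref{11.18}) on short scales $|y-z|\leq L$ --- exactly the interpolation step used in the proof of Theorem \ref{t11.2} --- produces the bounds in terms of $\Theta_{1}(T)$ and $\Theta_\sigma(T)$. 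Since these suprema vanish as $T\to\infty$, Proposition \ref{p11.1} certifies that $u,\nabla u\in \mathcal{B}_{\infty,AP}(\mathbb{R}^d)$.

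The main obstacle will be tracking the $T$-dependence carefully in the dyadic decomposition so that the polynomial growth $(T/r)^{1-\sigma}$ in (\ref{11.15}) is exactly offset by the exponential decay of $\Gamma_T$, and in the asymptotic almost-periodicity step, ensuring the interplay between the dyadic cutoff at radius $3T$ and the definition of $\Theta_\sigma$ produces the distinct scalings $T^{-2}\|u\|_\infty \lesssim \Theta_1(T)$ for $u$ and $T^{-1}\|\nabla u\|_\infty \lesssim \Theta_\sigma(T)$ for $\nabla u$ --- the latter losing one power of $T$ because the gradient kernel $\nabla \Gamma_T$ is more singular than $\Gamma_T$, forcing a different balance between the Hölder short-scale estimate and the translation long-scale estimate.
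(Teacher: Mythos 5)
Your plan follows the same route as the paper: Lax--Milgram (as in Lemma~\ref{l4.1} with $A$ replaced by $I$) for existence and the mean-zero property, the Bessel kernel representation with dyadic-shell estimates for~\eqref{11.17}, a Campanato--Morrey/Schauder argument for~\eqref{11.18}, and interpolation between short-scale regularity and long-scale translation control for~\eqref{11.19}--\eqref{11.20}. The paper obtains the translation estimates more economically by simply noting that $u(\cdot+y)-u(\cdot+z)$ solves the same equation with right-hand side $g(\cdot+y)-g(\cdot+z)$ and applying the already-proved bound~\eqref{11.17} to that difference, rather than re-expanding the kernel; your version amounts to the same calculation. Two points of your write-up need correction, though.

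First, your application of the Schauder estimate~\eqref{e5.10} is to $v=u$ with $h=g-T^{-2}u$ and $H=0$, which yields only a $\mathcal{C}^{0,\sigma}$ bound on $u$, not the gradient H\"older bound~\eqref{11.18}. To get the gradient estimate you must apply~\eqref{e5.10} to $v=\partial_k u$, which solves $-\Delta v = \nabla\cdot(g\,e_k) - T^{-2}v$, i.e.\ with $H=g\,e_k$ and $h=-T^{-2}\partial_k u$. Then the Morrey hypothesis~\eqref{11.15} controls the $H$-term $\sup_{0<r<T} r^{1-\sigma}\left(\fint_{B_r}|g|^2\right)^{1/2}\lesssim T^{1-\sigma}$, and taking $R\sim T$ together with the $L^\infty$ bounds from~\eqref{11.17} gives the $T^{1-\sigma}$ seminorm. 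Note also that the $L^\infty$ bound on $g$ alone is not enough to reach the exact exponent $\sigma$ with the stated constant; the Morrey condition~\eqref{11.15} is genuinely used.

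Second, the reason $u$ gets $\Theta_1(T)$ while $\nabla u$ gets $\Theta_\sigma(T)$ is not the singularity of $\nabla\Gamma_T$ versus $\Gamma_T$: that only explains the prefactors $T^{-2}$ versus $T^{-1}$. The different $\Theta$-exponents come from the short-scale term of the interpolation: $u$ is $CT$-Lipschitz by the gradient bound in~\eqref{11.17}, so $T^{-2}|u(z)-u(0)|\leq CT^{-1}L=C(L/T)^1$, producing the $\Theta_1$-type expression; whereas $\nabla u$ is only $\sigma$-H\"older with seminorm $T^{1-\sigma}$ by~\eqref{11.18}, so $T^{-1}|\nabla u(z)-\nabla u(0)|\leq C(L/T)^\sigma$, producing $\Theta_\sigma$. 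This is exactly what the paper's chain~\eqref{11.23}--\eqref{11.25} records and what you should make explicit before taking the $\inf$ over $0<L<T$ and using $\langle u\rangle=\langle\nabla u\rangle=0$ to pass from the oscillation to the value.
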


\begin{proof}
If we proceed as in the proof of Lemma \ref{l4.1}, we derive the existence
of a unique $u\in H_{\infty ,AP}^{1}(\mathbb{R}^{d})$ solving (\ref{11.16});
we may also refer to \cite{Po-Yu89} for another proof. Next using the
fundamental solution of $-\Delta +T^{-2}$, we easily get (\ref{11.17}). We
infer from (\ref{11.17}) that $u,\nabla u\in \mathcal{B}_{\infty ,AP}(%
\mathbb{R}^{d})$. In order to obtain (\ref{11.18}) we use (\ref{11.15}) and
proceed as in \cite[Lemma 7.1]{Shen}. It remains to check (\ref{11.19}) and (%
\ref{11.20}). To that end, we apply (\ref{11.17}) to the function 
\begin{equation*}
\frac{u(\cdot +y)-u(\cdot +z)}{\left\Vert A(\cdot +y)-A(\cdot +z)\right\Vert
_{L^{\infty }(B_{R})}}
\end{equation*}%
with $u$ solution of (\ref{11.16}). Then 
\begin{equation}
T^{-2}\left\Vert u(\cdot +y)-u(\cdot +z)\right\Vert _{L^{\infty
}(B_{R})}\leq C\left\Vert A(\cdot +y)-A(\cdot +z)\right\Vert _{L^{\infty
}(B_{R})}  \label{11.21}
\end{equation}%
and 
\begin{equation}
T^{-1}\left\Vert \nabla u(\cdot +y)-\nabla u(\cdot +z)\right\Vert
_{L^{\infty }(B_{R})}\leq C\left\Vert A(\cdot +y)-A(\cdot +z)\right\Vert
_{L^{\infty }(B_{R})}.  \label{11.22}
\end{equation}%
Using the boundedness of the gradient (see (\ref{11.17})), we obtain 
\begin{equation}
\left\vert u(x)-u(t)\right\vert \leq CT\left\vert x-t\right\vert \ \ \forall
x,t\in \mathbb{R}^{d}.  \label{11.23}
\end{equation}%
Next assuming that $\left\vert z\right\vert \leq L\leq T$, we have 
\begin{eqnarray*}
T^{-2}\left\vert u(y)-u(0)\right\vert &\leq &T^{-2}\left\vert
u(y)-u(z)\right\vert +T^{-2}\left\vert u(z)-u(0)\right\vert \\
&\leq &C\left\Vert A(\cdot +y)-A(\cdot +z)\right\Vert _{L^{\infty
}(B_{R})}+CT^{-1}L
\end{eqnarray*}%
where we used (\ref{11.21}) and (\ref{11.23}). Hence 
\begin{equation}
\sup_{y\in \mathbb{R}^{d}}T^{-2}\left\vert u(y)-u(0)\right\vert \leq C(\rho
(L,R)+T^{-1}L)  \label{11.24}
\end{equation}%
for any $R>2T$ and $L>0$. Also, using the inequality 
\begin{eqnarray*}
T^{-2}\left\vert u(0)\right\vert &\leq &T^{-2}\left\vert 
\mathchoice {{\setbox0=\hbox{$\displaystyle{\textstyle -}{\int}$ } \vcenter{\hbox{$\textstyle -$
}}\kern-.6\wd0}}{{\setbox0=\hbox{$\textstyle{\scriptstyle -}{\int}$ } \vcenter{\hbox{$\scriptstyle -$
}}\kern-.6\wd0}}{{\setbox0=\hbox{$\scriptstyle{\scriptscriptstyle -}{\int}$
} \vcenter{\hbox{$\scriptscriptstyle -$
}}\kern-.6\wd0}}{{\setbox0=\hbox{$\scriptscriptstyle{\scriptscriptstyle
-}{\int}$ } \vcenter{\hbox{$\scriptscriptstyle -$ }}\kern-.6\wd0}}%
\!\int_{B_{r}}(u(t)-u(0))dt\right\vert +T^{-2}\left\vert 
\mathchoice {{\setbox0=\hbox{$\displaystyle{\textstyle -}{\int}$ } \vcenter{\hbox{$\textstyle -$
}}\kern-.6\wd0}}{{\setbox0=\hbox{$\textstyle{\scriptstyle -}{\int}$ } \vcenter{\hbox{$\scriptstyle -$
}}\kern-.6\wd0}}{{\setbox0=\hbox{$\scriptstyle{\scriptscriptstyle -}{\int}$
} \vcenter{\hbox{$\scriptscriptstyle -$
}}\kern-.6\wd0}}{{\setbox0=\hbox{$\scriptscriptstyle{\scriptscriptstyle
-}{\int}$ } \vcenter{\hbox{$\scriptscriptstyle -$ }}\kern-.6\wd0}}%
\!\int_{B_{r}}u(t)dt\right\vert \\
&\leq &T^{-2}\sup_{y\in \mathbb{R}^{d}}\left\vert u(y)-u(0)\right\vert
+T^{-2}\left\vert 
\mathchoice {{\setbox0=\hbox{$\displaystyle{\textstyle -}{\int}$ } \vcenter{\hbox{$\textstyle -$
}}\kern-.6\wd0}}{{\setbox0=\hbox{$\textstyle{\scriptstyle -}{\int}$ } \vcenter{\hbox{$\scriptstyle -$
}}\kern-.6\wd0}}{{\setbox0=\hbox{$\scriptstyle{\scriptscriptstyle -}{\int}$
} \vcenter{\hbox{$\scriptscriptstyle -$
}}\kern-.6\wd0}}{{\setbox0=\hbox{$\scriptscriptstyle{\scriptscriptstyle
-}{\int}$ } \vcenter{\hbox{$\scriptscriptstyle -$ }}\kern-.6\wd0}}%
\!\int_{B_{r}}u(t)dt\right\vert
\end{eqnarray*}%
together with the fact that $\left\langle u\right\rangle =0$, we get (after
letting $r\rightarrow \infty $) 
\begin{equation}
T^{-2}\left\vert u(0)\right\vert \leq C(\rho (L,R)+T^{-1}L)\ \ \forall \
0<L\leq T  \label{11.25}
\end{equation}%
where we have also used (\ref{11.24}). Putting together (\ref{11.24}) and (%
\ref{11.25}), and choosing in the resulting inequality $R=3T$, and finally
taking the $\inf_{0<L<T}$, we are led to (\ref{11.19}).

Proceeding as above using this time (\ref{11.18}) and (\ref{11.22}) we
arrive at (\ref{11.20}).
\end{proof}

\begin{lemma}
\label{l5.2}Let $\chi _{T,j}$ be defined by \emph{(\ref{11.5})}, and let $%
\Omega $ be an open bounded set of class $\mathcal{C}^{1,1}$ in $\mathbb{R}%
^{d}$. Then 
\begin{equation}
\int_{\Omega }\left\vert \left( \nabla _{y}\chi _{T,j}\right) \left( \frac{x%
}{\varepsilon }\right) w(x)\right\vert ^{2}dx\leq C\int_{\Omega }(\left\vert
w\right\vert ^{2}+\delta ^{2}\left\vert \nabla w\right\vert ^{2})dx,\text{\
all }w\in H^{1}(\Omega )  \label{5.30}
\end{equation}%
where $\delta =T^{-1}\left\Vert \chi _{T}\right\Vert _{L^{\infty }(\mathbb{R}%
^{d})}$ with $T=\varepsilon ^{-1}$, and $C=C(A,\Omega ,d)>0$.
\end{lemma}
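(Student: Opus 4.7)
\textbf{Proof plan for Lemma \ref{l5.2}.} The key observation is a rescaling that turns the approximate corrector equation (\ref{11.5}) into an equation of fixed ellipticity/zeroth-order ratio. Setting $T = \varepsilon^{-1}$ and defining
\begin{equation*}
u_\varepsilon(x) := \varepsilon \chi_{T,j}(x/\varepsilon), \qquad x \in \mathbb{R}^d,
\end{equation*}
a direct change of variables in (\ref{11.5}) shows that $u_\varepsilon$ is a distributional solution of
\begin{equation*}
-\nabla \cdot (A^\varepsilon \nabla u_\varepsilon) + u_\varepsilon = \nabla \cdot (A^\varepsilon e_j) \quad \text{in } \mathbb{R}^d,
\end{equation*}
and satisfies $\nabla u_\varepsilon = (\nabla_y \chi_{T,j})^\varepsilon$ together with $\|u_\varepsilon\|_{L^\infty(\mathbb{R}^d)} = \varepsilon \|\chi_T\|_{L^\infty(\mathbb{R}^d)} = \delta$. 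Thus the desired inequality (\ref{5.30}) is a Caccioppoli-type estimate for $u_\varepsilon$ with the $L^\infty$-bound $\delta$ providing the key small parameter on the cross terms.

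Since $\Omega$ is $\mathcal{C}^{1,1}$, I would extend $w \in H^1(\Omega)$ to $\widetilde{w} \in H^1(\mathbb{R}^d)$ of compact support (multiplying by a fixed cutoff supported in a neighborhood of $\overline{\Omega}$) satisfying the standard extension bounds
\begin{equation*}
\|\widetilde{w}\|_{L^2(\mathbb{R}^d)} \leq C\|w\|_{L^2(\Omega)}, \qquad \|\widetilde{w}\|_{H^1(\mathbb{R}^d)} \leq C\|w\|_{H^1(\Omega)},
\end{equation*}
with $C = C(\Omega,d)$. Using $\varphi = u_\varepsilon \widetilde{w}^2$ as a test function in the weak formulation on $\mathbb{R}^d$ (which is admissible since $u_\varepsilon \in H^1_{\mathrm{loc}}$ and $\widetilde{w}$ has compact support), I obtain
\begin{equation*}
\int_{\mathbb{R}^d} A^\varepsilon \nabla u_\varepsilon \cdot \nabla u_\varepsilon\, \widetilde{w}^2 + 2\int_{\mathbb{R}^d} A^\varepsilon \nabla u_\varepsilon \cdot \nabla \widetilde{w}\, u_\varepsilon \widetilde{w} + \int_{\mathbb{R}^d} u_\varepsilon^2 \widetilde{w}^2 = -\int_{\mathbb{R}^d} A^\varepsilon e_j \cdot \nabla u_\varepsilon\, \widetilde{w}^2 - 2\int_{\mathbb{R}^d} A^\varepsilon e_j \cdot u_\varepsilon \widetilde{w}\nabla \widetilde{w}.
\end{equation*}

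Applying ellipticity to the leading term and Young's inequality to each cross term, while systematically using $|u_\varepsilon| \leq \delta$, the mixed terms can be bounded as
\begin{equation*}
|A^\varepsilon \nabla u_\varepsilon \cdot \nabla \widetilde{w}\, u_\varepsilon \widetilde{w}| \leq \tfrac{\alpha}{8} |\nabla u_\varepsilon|^2 \widetilde{w}^2 + C u_\varepsilon^2 |\nabla \widetilde{w}|^2 \leq \tfrac{\alpha}{8} |\nabla u_\varepsilon|^2 \widetilde{w}^2 + C\delta^2 |\nabla \widetilde{w}|^2,
\end{equation*}
and similarly $|A^\varepsilon e_j \cdot u_\varepsilon \widetilde{w}\nabla \widetilde{w}| \leq C(\widetilde{w}^2 + \delta^2 |\nabla \widetilde{w}|^2)$ (writing $2\beta |u_\varepsilon| |\widetilde{w}||\nabla \widetilde{w}| \leq 2\beta |\widetilde{w}|\cdot \delta|\nabla \widetilde{w}|$). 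Absorbing $|\nabla u_\varepsilon|^2 \widetilde{w}^2$ terms on the left yields
\begin{equation*}
\int_{\mathbb{R}^d} |\nabla u_\varepsilon|^2 \widetilde{w}^2 \leq C\int_{\mathbb{R}^d} \left(\widetilde{w}^2 + \delta^2 |\nabla \widetilde{w}|^2\right).
\end{equation*}

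Restricting the left-hand side to $\Omega$, identifying $\nabla u_\varepsilon = (\nabla_y \chi_{T,j})^\varepsilon$, and substituting the extension bounds (together with $\delta \leq C$ to absorb the extraneous $\delta^2 \|w\|_{L^2(\Omega)}^2$ into $\|w\|_{L^2(\Omega)}^2$) delivers (\ref{5.30}). The only genuine technical point is tuning Young's inequality so that $\delta$ appears precisely once per cross term, i.e.\ as $\delta^2$ multiplying $|\nabla \widetilde{w}|^2$; this is what distinguishes the estimate from a brutal $\|u_\varepsilon\|_\infty^2$ bound on $\int |\nabla \widetilde{w}|^2$ that would not produce the sharp $\delta^2$ weight.
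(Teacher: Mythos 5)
Your proof is correct and takes essentially the same route as the paper: both proofs test the weak form of the approximate corrector equation with $\chi_{T,j}^\varepsilon\,(\text{cutoff})^2$ and use Young's inequality with the $L^\infty$-bound $\delta=T^{-1}\|\chi_T\|_\infty$ on the cross terms. Your rescaling $u_\varepsilon=\varepsilon\chi_{T,j}(x/\varepsilon)$ is a presentational device that turns the paper's computation (which keeps the raw $\chi_{T,j}^\varepsilon$, $T^{-2}$ factors, and powers of $\varepsilon$ in play) into a clean normalized Caccioppoli estimate $-\nabla\cdot(A^\varepsilon\nabla u_\varepsilon)+u_\varepsilon=\nabla\cdot(A^\varepsilon e_j)$; up to notation, the test function $u_\varepsilon\widetilde w^2$ is a scalar multiple of the paper's $\chi_{T,j}^\varepsilon w^2$ and the resulting term-by-term bounds agree. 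One point where you do better: the paper says it suffices ``by density'' to take $w\in\mathcal C_0^\infty(\Omega)$, but $\mathcal C_0^\infty(\Omega)$ is dense in $H_0^1(\Omega)$, not in $H^1(\Omega)$, and the lemma is needed (e.g.\ in Lemma \ref{l5.3}) for $w$ that do not vanish on $\partial\Omega$; your explicit use of the $H^1$-extension operator for the $\mathcal C^{1,1}$ domain, producing a compactly supported $\widetilde w\in H^1(\mathbb R^d)$, is the correct way to handle boundary terms and makes the compact-support requirement on the global test function transparent.
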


\begin{proof}
By a density argument, it is sufficient to prove (\ref{5.30}) for $w\in 
\mathcal{C}_{0}^{\infty }(\Omega )$. We recall that $\chi _{T,j}$ solves the
equation 
\begin{equation}
-\nabla \cdot (A(e_{j}+\nabla \chi _{T,j}))+T^{-2}\chi _{T,j}=0\text{ in }%
\mathbb{R}^{d}\text{.}  \label{5.32}
\end{equation}%
Testing (\ref{5.32}) with $\psi (y)=\varphi (\varepsilon y)$ where $\varphi
\in H_{loc}^{1}(\mathbb{R}^{d})$ with compact support, and next making the
change of variable $x=\varepsilon y$, we get 
\begin{equation*}
\int_{\mathbb{R}^{d}}\left[ (A^{\varepsilon }(e_{j}+(\nabla _{y}\chi
_{T,j})^{\varepsilon })\cdot \nabla \varphi +T^{-2}\chi _{T,j}^{\varepsilon
}\varphi \right] dx=0
\end{equation*}%
where $u^{\varepsilon }(x)=u(x/\varepsilon )$ for $u\in H_{loc}^{1}(\mathbb{R%
}^{d})$. Choosing $\varphi (x)=\chi _{T,j}(x/\varepsilon )\left\vert
w(x)\right\vert ^{2}$ with $w\in \mathcal{C}_{0}^{\infty }(\Omega )$, we
obtain 
\begin{equation*}
\int_{\Omega }\left[ (A^{\varepsilon }(e_{j}+(\nabla _{y}\chi
_{T,j})^{\varepsilon })\cdot \left( \frac{1}{\varepsilon }(\nabla _{y}\chi
_{T,j})^{\varepsilon }\left\vert w\right\vert ^{2}+2w\chi
_{T,j}^{\varepsilon }\nabla w\right) +T^{-2}\left\vert \chi
_{T,j}^{\varepsilon }\right\vert ^{2}\left\vert w\right\vert ^{2}\right]
dx=0,
\end{equation*}%
or 
\begin{eqnarray}
\int_{\Omega }A^{\varepsilon }(\nabla _{y}\chi _{T,j})^{\varepsilon }w\cdot
(\nabla _{y}\chi _{T,j})^{\varepsilon }wdx &=&-2\varepsilon \int_{\Omega
}A^{\varepsilon }(\nabla _{y}\chi _{T,j})^{\varepsilon }w\cdot \chi
_{T,j}^{\varepsilon }\nabla wdx  \label{5.31} \\
&&-\int_{\Omega }w(A^{\varepsilon }e_{j})\cdot (\nabla _{y}\chi
_{T,j})^{\varepsilon }wdx  \notag \\
&&-2\varepsilon \int_{\Omega }w(A^{\varepsilon }e_{j})\cdot \chi
_{T,j}^{\varepsilon }\nabla wdx  \notag \\
&&-\varepsilon T^{-2}\int_{\Omega }\left\vert \chi _{T,j}^{\varepsilon
}\right\vert ^{2}\left\vert w\right\vert ^{2}dx  \notag \\
&=&I_{1}+I_{2}+I_{3}+I_{4}.  \notag
\end{eqnarray}%
The left hand-side of (\ref{5.31}) is estimated from below by $\alpha
\int_{\Omega }\left\vert (\nabla _{y}\chi _{T,j})^{\varepsilon }w\right\vert
^{2}dx$ while, for the respective terms of the right hand-side of (\ref{5.31}%
) we have, after the use of H\"{o}lder and Young inequalities, 
\begin{equation*}
\left\vert I_{1}\right\vert \leq \frac{\alpha }{3}\int_{\Omega }\left\vert
(\nabla _{y}\chi _{T,j})^{\varepsilon }w\right\vert ^{2}dx+C\varepsilon
^{2}\int_{\Omega }\left\vert \chi _{T,j}^{\varepsilon }\right\vert
^{2}\left\vert \nabla w\right\vert ^{2}dx;
\end{equation*}%
\begin{equation*}
\left\vert I_{2}\right\vert \leq \frac{\alpha }{3}\int_{\Omega }\left\vert
(\nabla _{y}\chi _{T,j})^{\varepsilon }w\right\vert ^{2}dx+C\int_{\Omega
}\left\vert w\right\vert ^{2}dx;
\end{equation*}%
\begin{equation*}
\left\vert I_{3}\right\vert \leq C\int_{\Omega }\left\vert w\right\vert
^{2}dx+C\varepsilon ^{2}\int_{\Omega }\left\vert \chi _{T,j}^{\varepsilon
}\right\vert ^{2}\left\vert \nabla w\right\vert ^{2}dx\text{ and }\left\vert
I_{4}\right\vert \leq C\int_{\Omega }\left\vert w\right\vert ^{2}dx.
\end{equation*}%
It follows that 
\begin{eqnarray*}
\int_{\Omega }\left\vert (\nabla _{y}\chi _{T,j})^{\varepsilon }w\right\vert
^{2}dx &\leq &C\varepsilon ^{2}\int_{\Omega }\left\vert \chi
_{T,j}^{\varepsilon }\right\vert ^{2}\left\vert \nabla w\right\vert
^{2}dx+C\int_{\Omega }\left\vert w\right\vert ^{2}dx \\
&\leq &C\varepsilon ^{2}\left\Vert \chi _{T,j}\right\Vert _{L^{\infty }(%
\mathbb{R}^{d})}^{2}\int_{\Omega }\left\vert \nabla w\right\vert
^{2}dx+C\int_{\Omega }\left\vert w\right\vert ^{2}dx.
\end{eqnarray*}%
Since $T=\varepsilon ^{-1}$, we get (\ref{5.30}), taking into account that $%
T^{-1}\left\Vert \chi _{T},j\right\Vert _{L^{\infty }(\mathbb{R}^{d})}\leq
T^{-1}\left\Vert \chi _{T}\right\Vert _{L^{\infty }(\mathbb{R}^{d})}$.
\end{proof}

\begin{remark}
\label{r5.2}I\emph{n the case of asymptotic periodic functions, we replace }$%
\chi _{T,j}$\emph{\ by }$\chi _{j}\in H_{\infty ,per}^{1}(Y)$\emph{\
solution of the corrector problem (\ref{1.6}) and we have (in view of Lemma %
\ref{l1.2}) }$\left\Vert \chi _{j}\right\Vert _{L^{\infty }(\mathbb{R}%
^{d})}\leq C$\emph{. It follows that }%
\begin{equation*}
\int_{\Omega }\left\vert \left( \nabla _{y}\chi _{j}\right) \left( \frac{x}{%
\varepsilon }\right) w(x)\right\vert ^{2}dx\leq C\int_{\Omega }(\left\vert
w\right\vert ^{2}+\varepsilon ^{2}\left\vert \nabla w\right\vert ^{2})dx,%
\text{\emph{\ for all }}w\in H^{1}(\Omega )
\end{equation*}%
\emph{where }$C=C(A,\Omega ,d)$\emph{.}
\end{remark}

Let $u_{0}\in H_{0}^{1}(\Omega )$ be the weak solution of (\ref{1.4}). Let $%
z_{\varepsilon }\in H^{1}(\Omega )$ be the unique weak solution of 
\begin{equation}
-\nabla \cdot (A^{\varepsilon }\nabla z_{\varepsilon })=0\text{ in }\Omega 
\text{, }z_{\varepsilon }=\varepsilon \chi _{T}^{\varepsilon }\nabla u_{0}%
\text{ on }\partial \Omega  \label{6.31}
\end{equation}%
where $\Omega $ is as in Lemma \ref{l5.2}. Then we have

\begin{lemma}
\label{l5.3}Let $z_{\varepsilon }$ be as in \emph{(\ref{6.31})} with $%
T=\varepsilon ^{-1}$. Then there exists $\varepsilon _{0}\in \lbrack 0,1)$
such that 
\begin{equation}
\left\Vert z_{\varepsilon }\right\Vert _{H^{1}(\Omega )}\leq C\left(
T^{-1}\left\Vert \chi _{T}\right\Vert _{L^{\infty }(\mathbb{R}^{d})}\right)
^{\frac{1}{2}}\left\Vert u_{0}\right\Vert _{H^{2}(\Omega )},\ 0<\varepsilon
\leq \varepsilon _{0},  \label{6.38}
\end{equation}%
where $C=C(A,\Omega )>0$.
\end{lemma}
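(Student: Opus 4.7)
The plan is to reduce to an interior estimate by subtracting off a carefully constructed extension of the boundary data. Let $\delta_{0}=T^{-1}\|\chi_{T}\|_{L^{\infty}(\mathbb{R}^{d})}$ (which tends to $0$ by Theorem \ref{t11.2}) and, for $\delta\in(0,\delta_{\ast}]$ where $\delta_{\ast}=\delta_{\ast}(\Omega)$ measures how close to $\partial\Omega$ a tubular neighborhood makes sense, introduce a boundary layer cut-off $\eta_{\delta}\in\mathcal{C}_{0}^{\infty}(\mathbb{R}^{d})$ with $\eta_{\delta}=1$ in a neighborhood of $\partial\Omega$, $\eta_{\delta}=0$ outside the strip $\Omega_{2\delta}=\{x\in\Omega:\mathrm{dist}(x,\partial\Omega)<2\delta\}$, $0\le\eta_{\delta}\le1$ and $|\nabla\eta_{\delta}|\le C\delta^{-1}$. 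Set
\[
\Phi_{\varepsilon}(x)=\eta_{\delta}(x)\,\varepsilon\,\chi_{T}^{\varepsilon}(x)\,\nabla u_{0}(x),
\]
so that $w_{\varepsilon}:=z_{\varepsilon}-\Phi_{\varepsilon}\in H_{0}^{1}(\Omega)$ satisfies $-\nabla\!\cdot\!(A^{\varepsilon}\nabla w_{\varepsilon})=\nabla\!\cdot\!(A^{\varepsilon}\nabla\Phi_{\varepsilon})$. By ellipticity \eqref{1.2} and testing against $w_{\varepsilon}$,
\[
\|\nabla w_{\varepsilon}\|_{L^{2}(\Omega)}\le C\|\nabla\Phi_{\varepsilon}\|_{L^{2}(\Omega)}.
\]

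The first step is then to decompose $\nabla\Phi_{\varepsilon}$ as a sum of three pieces:
\[
\nabla\Phi_{\varepsilon}=\underbrace{\varepsilon\chi_{T}^{\varepsilon}\nabla u_{0}\otimes\nabla\eta_{\delta}}_{(\mathrm{I})}+\underbrace{\eta_{\delta}(\nabla_{y}\chi_{T})^{\varepsilon}\nabla u_{0}}_{(\mathrm{II})}+\underbrace{\varepsilon\,\eta_{\delta}\,\chi_{T}^{\varepsilon}\,\nabla^{2}u_{0}}_{(\mathrm{III})}
\]
and estimate each in $L^{2}(\Omega)$. The tubular-neighborhood trace bound $\|\nabla u_{0}\|_{L^{2}(\Omega_{2\delta})}^{2}\le C\delta\,\|u_{0}\|_{H^{2}(\Omega)}^{2}$ handles (I) and the $L^{2}$-part of the $L^{2}$ bound for $\Phi_{\varepsilon}$ itself:
\[
\|(\mathrm{I})\|_{L^{2}(\Omega)}^{2}\le\frac{C\varepsilon^{2}\|\chi_{T}\|_{L^{\infty}}^{2}}{\delta^{2}}\cdot\delta\|u_{0}\|_{H^{2}(\Omega)}^{2}=\frac{C\delta_{0}^{2}}{\delta}\|u_{0}\|_{H^{2}(\Omega)}^{2}.
\]
For (III) the direct bound $\|(\mathrm{III})\|_{L^{2}(\Omega)}^{2}\le\varepsilon^{2}\|\chi_{T}\|_{L^{\infty}}^{2}\|u_{0}\|_{H^{2}(\Omega)}^{2}=\delta_{0}^{2}\|u_{0}\|_{H^{2}(\Omega)}^{2}$ is immediate.

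The crucial term is (II), and here is where Lemma \ref{l5.2} does the work: applied to $w=\eta_{\delta}\nabla u_{0}\in H^{1}(\Omega)$ (componentwise), it gives
\[
\|(\mathrm{II})\|_{L^{2}(\Omega)}^{2}\le C\int_{\Omega}\bigl(|\eta_{\delta}\nabla u_{0}|^{2}+\delta_{0}^{2}|\nabla(\eta_{\delta}\nabla u_{0})|^{2}\bigr)\le C\bigl(\delta+\delta_{0}^{2}/\delta+\delta_{0}^{2}\bigr)\|u_{0}\|_{H^{2}(\Omega)}^{2},
\]
again using the trace bound for the $\Omega_{2\delta}$-integrals of $|\nabla u_{0}|^{2}$ and $|\nabla^{2}u_{0}|^{2}$ together with $|\nabla\eta_{\delta}|\le C/\delta$. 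Combining the three estimates,
\[
\|\nabla w_{\varepsilon}\|_{L^{2}(\Omega)}^{2}\le C\Bigl(\delta+\frac{\delta_{0}^{2}}{\delta}+\delta_{0}^{2}\Bigr)\|u_{0}\|_{H^{2}(\Omega)}^{2}.
\]
Choosing $\delta=\delta_{0}$ (which is admissible provided $\varepsilon$ is so small that $\delta_{0}\le\delta_{\ast}$, hence the $\varepsilon_{0}$ in the statement) yields $\|\nabla w_{\varepsilon}\|_{L^{2}(\Omega)}\le C\delta_{0}^{1/2}\|u_{0}\|_{H^{2}(\Omega)}$. Finally, Poincar\'e gives the same bound for $\|w_{\varepsilon}\|_{L^{2}(\Omega)}$, and $\|\Phi_{\varepsilon}\|_{H^{1}(\Omega)}$ is controlled by the same three terms (with (I)--(III) used again for $\nabla\Phi_{\varepsilon}$ and a direct $L^{2}$ estimate $\|\Phi_{\varepsilon}\|_{L^{2}}^{2}\le C\delta_{0}^{3}\|u_{0}\|_{H^{2}}^{2}$ for the zeroth-order part). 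Adding these contributions gives $\|z_{\varepsilon}\|_{H^{1}(\Omega)}\le C\delta_{0}^{1/2}\|u_{0}\|_{H^{2}(\Omega)}$, which is \eqref{6.38}.

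The main obstacle is term (II): handling the product of the oscillating gradient $(\nabla_{y}\chi_{T})^{\varepsilon}$ with the (non-oscillating but localized) factor $\eta_{\delta}\nabla u_{0}$. Standard duality or Poincar\'e arguments would only yield $\|\nabla\chi_{T}\|_{L^{\infty}}$-type estimates, which are unavailable in the asymptotic almost periodic setting; it is precisely Lemma \ref{l5.2}, obtained via the corrector equation \eqref{11.5} tested against $\chi_{T}^{\varepsilon}|w|^{2}$, that converts the a priori uncontrolled $L^{\infty}$-size of $\nabla\chi_{T}$ into an $L^{2}$-bound governed by the much smaller quantity $\delta_{0}=T^{-1}\|\chi_{T}\|_{L^{\infty}}$.
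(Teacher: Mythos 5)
Your proof is correct and takes essentially the same approach as the paper: the same auxiliary function $\Phi_{\varepsilon}=\varepsilon\theta_{\delta}\chi_{T}^{\varepsilon}\nabla u_{0}$, the same three-term decomposition of $\nabla\Phi_{\varepsilon}$, the same invocation of Lemma \ref{l5.2} for the oscillating-gradient term, and the same boundary-layer trace estimate (Lemma \ref{l5.4}). The only cosmetic differences are that you reduce to an interior energy estimate for $z_{\varepsilon}-\Phi_{\varepsilon}\in H_{0}^{1}(\Omega)$ rather than citing the $H^{1/2}(\partial\Omega)$ trace bound directly, and that you carry $\delta$ as a free parameter and optimize at the end, whereas the paper sets $\delta=T^{-1}\|\chi_{T}\|_{L^{\infty}}$ from the outset; both yield the identical conclusion.

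One small imprecision: you say the $\delta_{0}^{2}$ contribution in the estimate of (II) comes from ``the trace bound for the $\Omega_{2\delta}$-integral of $|\nabla^{2}u_{0}|^{2}$,'' but since only $u_{0}\in H^{2}(\Omega)$ is available, Lemma \ref{l5.4} cannot be applied to $\nabla^{2}u_{0}$. Your arithmetic is nonetheless correct: that term is bounded by $\delta_{0}^{2}\int_{\Omega}|\nabla^{2}u_{0}|^{2}\le\delta_{0}^{2}\|u_{0}\|_{H^{2}(\Omega)}^{2}$ without the boundary-layer gain, which is exactly what the paper does, so this is just a slip of wording.
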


It follows from (\ref{6.38}) that for any $\sigma \in (0,1)$, there exists $%
C_{\sigma }=C_{\sigma }(\sigma ,A,\Omega )>0$ such that 
\begin{equation}
\left\Vert z_{\varepsilon }\right\Vert _{H^{1}(\Omega )}\leq C_{\sigma
}(\Theta _{\sigma }(\varepsilon ^{-1}))^{\frac{1}{2}}\left\Vert
u_{0}\right\Vert _{H^{2}(\Omega )},\ 0<\varepsilon \leq \varepsilon _{0}
\label{6.39}
\end{equation}%
where $\Theta _{\sigma }$ is defined by (\ref{11.13}).

For the proof of Lemma \ref{l5.3}, we need the following result whose proof
can be found in \cite{Suslina}.

\begin{lemma}[{\protect\cite[Lemma 5.1]{Suslina}}]
\label{l5.4}Let $\Omega $ be as in Lemma \emph{\ref{l5.2}}. Then there
exists $\delta _{0}\in (0,1]$ depending on $\Omega $ such that, for any $%
u\in H^{1}(\Omega )$, 
\begin{equation}
\int_{\Gamma _{\delta }}\left\vert u\right\vert ^{2}dx\leq C\delta
\left\Vert u\right\Vert _{L^{2}(\Omega )}\left\Vert u\right\Vert
_{H^{1}(\Omega )}\text{, }0<\delta \leq \delta _{0}  \label{5.34}
\end{equation}%
where $C=C(\Omega )$ and $\Gamma _{\delta }=\Omega _{\delta }\cap \Omega $
with $\Omega _{\delta }=\{x\in \mathbb{R}^{d}:\mathrm{dist}(x,\partial
\Omega )<\delta \}$.
\end{lemma}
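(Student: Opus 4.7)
The plan is to reduce, by density and a finite boundary straightening, to a flat strip estimate that can be handled by a one-dimensional averaging argument combined with the fundamental theorem of calculus. By density of $\mathcal{C}^{1}(\overline{\Omega })$ in $H^{1}(\Omega )$, it suffices to prove (\ref{5.34}) for $u\in \mathcal{C}^{1}(\overline{\Omega })$; the general case then follows by approximation, both sides of (\ref{5.34}) being continuous with respect to the $H^{1}(\Omega )$-norm.

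Since $\partial \Omega $ is $\mathcal{C}^{1,1}$ and compact, I would cover it by finitely many open sets $U_{1},\dots ,U_{N}$ such that, in each $U_{k}$, there is a $\mathcal{C}^{1,1}$-diffeomorphism $\Phi _{k}:U_{k}\rightarrow V_{k}\subset \mathbb{R}^{d}$ (with both $\Phi _{k}$ and $\Phi _{k}^{-1}$ having bounded derivatives) sending $U_{k}\cap \Omega $ onto $V_{k}\cap \{y_{d}>0\}$ and $U_{k}\cap \partial \Omega $ onto $V_{k}\cap \{y_{d}=0\}$; in addition $U_{0}\Subset \Omega $ covers the interior part. Because $\Phi _{k}$ is bi-Lipschitz, there exists $\delta _{0}=\delta _{0}(\Omega )>0$ such that for every $0<\delta \leq \delta _{0}$, the strip $\Gamma _{\delta }\cap U_{k}$ is contained in the image (under $\Phi _{k}^{-1}$) of a rectangular slab $W_{k}\times (0,c\delta )$ for some $c=c(\Omega )$, and is also contained in a larger ``reference'' slab $W_{k}\times (0,\delta _{0})\subset V_{k}\cap \{y_{d}>0\}$ whose preimage sits inside $\Omega $. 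Thus the problem reduces to establishing a uniform flat-strip estimate of the form
\begin{equation*}
\int_{W\times (0,\delta )}|v|^{2}dy\leq C\delta \|v\|_{L^{2}(W\times
(0,\delta _{0}))}\,\|v\|_{H^{1}(W\times (0,\delta _{0}))}
\end{equation*}
for $v\in \mathcal{C}^{1}(\overline{W\times (0,\delta _{0})})$, with constants independent of $v$; summing the local estimates with a partition of unity subordinate to $\{U_{k}\}$ (and handling the interior piece trivially via $\delta \leq C\|u\|_{L^{2}}^{2}\leq C\delta \|u\|_{L^{2}}\|u\|_{H^{1}}$ after shrinking $\delta _{0}$) yields (\ref{5.34}).

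For the flat-strip estimate, the key identity is the fundamental theorem of calculus in the normal variable: for every $y'\in W$ and $0<s<\delta $, $0<\tau <\delta _{0}$, one has
\begin{equation*}
|v(y',s)|^{2}=|v(y',\tau )|^{2}+2\int_{s}^{\tau }v(y',r)\,\partial
_{d}v(y',r)\,dr.
\end{equation*}
Averaging this over $\tau \in (\delta ,\delta _{0})$ (with $\delta \leq \delta _{0}/2$, say), bounding the last integral by $\int_{0}^{\delta _{0}}|v||\partial _{d}v|\,dr$, integrating $s\in (0,\delta )$, then integrating over $y'\in W$, I obtain
\begin{equation*}
\int_{W\times (0,\delta )}|v|^{2}\leq \frac{\delta }{\delta _{0}-\delta }
\int_{W\times (\delta ,\delta _{0})}|v|^{2}+2\delta \int_{W\times (0,\delta
_{0})}|v|\,|\nabla v|,
\end{equation*}
and Cauchy--Schwarz applied to the second term, together with $\delta \leq \delta _{0}$, produces
\begin{equation*}
\int_{W\times (0,\delta )}|v|^{2}\leq C\delta \,\|v\|_{L^{2}(W\times
(0,\delta _{0}))}\bigl(\|v\|_{L^{2}}+\|\nabla v\|_{L^{2}}\bigr),
\end{equation*}
as required.

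I do not expect any genuine obstacle: the $\mathcal{C}^{1,1}$ hypothesis is used only to ensure that the local straightening maps have bounded Jacobians (so $L^{2}$ and $H^{1}$ norms transfer with uniform constants) and that the tubular neighbourhood $\Gamma _{\delta }$ has the expected product structure for small $\delta $. The one subtlety to check is that the implicit constants depend only on $\Omega $ (through the cover $\{U_{k}\}$, the diffeomorphisms $\Phi _{k}$, and the chosen $\delta _{0}$) and not on $u$, which is transparent from the construction above.
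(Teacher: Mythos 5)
The paper does not prove this lemma at all: it is quoted verbatim from \cite[Lemma 5.1]{Suslina}, so there is no in-paper argument to compare against. Your proof is correct and is essentially the standard one behind that citation: straighten the boundary with finitely many bi-Lipschitz charts (available since $\partial\Omega$ is $\mathcal{C}^{1,1}$), reduce to a flat slab, and run the fundamental theorem of calculus in the normal variable averaged over $\tau\in(\delta,\delta_{0})$; the resulting bound $\int_{W\times(0,\delta)}|v|^{2}\leq \tfrac{\delta}{\delta_{0}-\delta}\int_{W\times(\delta,\delta_{0})}|v|^{2}+2\delta\int|v|\,|\nabla v|$ gives exactly the product structure $\delta\left\Vert u\right\Vert _{L^{2}}\left\Vert u\right\Vert _{H^{1}}$ after Cauchy--Schwarz. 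Two small blemishes, neither fatal: the sign in your FTC identity should be $|v(y',s)|^{2}=|v(y',\tau)|^{2}-2\int_{s}^{\tau}v\,\partial_{d}v\,dr$ (immaterial, since you immediately pass to absolute values), and the parenthetical chain ``$\delta\leq C\left\Vert u\right\Vert _{L^{2}}^{2}\leq C\delta\left\Vert u\right\Vert _{L^{2}}\left\Vert u\right\Vert _{H^{1}}$'' for the interior piece is a non sequitur as written --- the correct (and trivial) fix is simply to shrink $\delta_{0}$ so that $\Gamma_{\delta_{0}}\subset\bigcup_{k\geq 1}U_{k}$, i.e.\ the boundary charts alone cover the collar, so the interior chart contributes nothing to $\int_{\Gamma_{\delta}}$. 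With that sentence repaired, the argument is complete and the constants visibly depend only on the cover, the chart Lipschitz constants, and $\delta_{0}$, hence only on $\Omega$.
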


\begin{proof}[Proof of Lemma \protect\ref{l5.3}]
We set $w=\nabla u_{0}$ and $u=z_{\varepsilon }$. Assuming $u_{0}\in
H^{2}(\Omega )$, we have that $w\in H^{1}(\Omega )^{d}$. Since $\delta
:=T^{-1}\left\Vert \chi _{T}\right\Vert _{L^{\infty }(\mathbb{R}%
^{d})}\rightarrow 0$ as $T\rightarrow \infty $, we may assume that $0<\delta
\leq \delta _{0}$ where $\delta _{0}$ is as in Lemma \ref{l5.4}. Let $\theta
_{\delta }$ be a cut-off function in a neighborhood of $\partial \Omega $
with support in $\Omega _{2\delta }$ (a $2\delta $-neighborhood of $\partial
\Omega $), $\Omega _{\rho }$ being defined as in Lemma \ref{l5.4}: 
\begin{equation}
\theta _{\delta }\in \mathcal{C}_{0}^{\infty }(\mathbb{R}^{d})\text{, 
\textrm{supp}}\theta _{\delta }\subset \Omega _{2\delta }\text{, }0\leq
\theta _{\delta }\leq 1\text{, }\theta _{\delta }=1\text{ on }\Omega
_{\delta }\text{, }\theta _{\delta }=0\text{ on }\mathbb{R}^{d}\backslash
\Omega _{2\delta }\text{ and }\delta \left\vert \nabla \theta _{\delta
}\right\vert \leq C.  \label{5.35}
\end{equation}%
We set $\Phi _{\varepsilon }(x)=\varepsilon \theta _{\delta }(x)\chi
_{T}(x/\varepsilon )w(x)$. Then 
\begin{equation*}
\left\Vert u\right\Vert _{H^{1}(\Omega )}\leq C\varepsilon \left\Vert \chi
_{T}^{\varepsilon }w\right\Vert _{H^{1/2}(\partial \Omega )}\leq C\left\Vert
\Phi _{\varepsilon }\right\Vert _{H^{1}(\Omega )}.
\end{equation*}%
So we need to estimate $\left\Vert \nabla \Phi _{\varepsilon }\right\Vert
_{L^{2}(\Omega )}$. But 
\begin{eqnarray*}
\nabla \Phi _{\varepsilon } &=&\varepsilon \chi _{T}^{\varepsilon }w\nabla
\theta _{\delta }+(\nabla _{y}\chi _{T})^{\varepsilon }w\theta _{\delta
}+\varepsilon \chi _{T}^{\varepsilon }\theta _{\delta }\nabla w \\
&=&J_{1}+J_{2}+J_{3}.
\end{eqnarray*}%
We have 
\begin{eqnarray*}
\left\Vert J_{1}\right\Vert _{L^{2}(\Omega )}^{2} &\leq &C\varepsilon
^{2}\left\Vert \chi _{T}\right\Vert _{L^{\infty }(\mathbb{R}^{d})}^{2}\delta
^{-2}\int_{\Gamma _{2\delta }}\left\vert w\right\vert ^{2}dx \\
&\leq &C\int_{\Gamma _{2\delta }}\left\vert w\right\vert ^{2}dx\leq C\delta
\left\Vert w\right\Vert _{L^{2}(\Omega )}\left\Vert w\right\Vert
_{H^{1}(\Omega )}
\end{eqnarray*}%
where we have used (\ref{5.34}) for the last inequality above. For $J_{2}$,
we have (using (\ref{5.30}) and (\ref{5.34})) 
\begin{eqnarray*}
\left\Vert J_{2}\right\Vert _{L^{2}(\Omega )}^{2} &\leq &\int_{\Omega
}\left\vert (\nabla _{y}\chi _{T})^{\varepsilon }w\theta _{\delta
}\right\vert ^{2}dx\leq C\int_{\Omega }\left( \left\vert w\theta _{\delta
}\right\vert ^{2}+\delta ^{2}\left\vert \nabla (w\theta _{\delta
})\right\vert ^{2}\right) dx \\
&\leq &C\int_{\Gamma _{2\delta }}\left\vert w\right\vert ^{2}dx+C\delta
^{2}\int_{\Omega }\left\vert \nabla (w\theta _{\delta })\right\vert ^{2}dx \\
&\leq &C\delta \left\Vert w\right\Vert _{L^{2}(\Omega )}\left\Vert
w\right\Vert _{H^{1}(\Omega )}+C\delta ^{2}\int_{\Omega }\left\vert \nabla
(w\theta _{\delta })\right\vert ^{2}dx.
\end{eqnarray*}%
But $\nabla (w\theta _{\delta })=w\nabla \theta _{\delta }+\theta _{\delta
}\nabla w$, and 
\begin{eqnarray*}
\int_{\Omega }\left\vert \nabla (w\theta _{\delta })\right\vert ^{2}dx &\leq
&C\int_{\Gamma _{2\delta }}\left\vert \nabla \theta _{\delta }\right\vert
^{2}\left\vert w\right\vert ^{2}dx+C\int_{\Omega }\left\vert \theta _{\delta
}\nabla w\right\vert ^{2}dx \\
&\leq &C\delta ^{-1}\left\Vert w\right\Vert _{L^{2}(\Omega )}\left\Vert
w\right\Vert _{H^{1}(\Omega )}+C\int_{\Omega }\left\vert \nabla w\right\vert
^{2}dx.
\end{eqnarray*}%
Hence 
\begin{equation*}
\left\Vert J_{2}\right\Vert _{L^{2}(\Omega )}^{2}\leq C\delta \left\Vert
w\right\Vert _{L^{2}(\Omega )}\left\Vert w\right\Vert _{H^{1}(\Omega
)}+C\delta ^{2}\left\Vert w\right\Vert _{H^{1}(\Omega )}^{2}.
\end{equation*}%
As for $J_{3}$, 
\begin{equation*}
\left\Vert J_{3}\right\Vert _{L^{2}(\Omega )}^{2}\leq C\varepsilon
^{2}\int_{\Omega }\left\vert \chi _{T}^{\varepsilon }\right\vert
^{2}\left\vert \nabla w\right\vert ^{2}dx\leq C\delta ^{2}\left\Vert
w\right\Vert _{H^{1}(\Omega )}^{2}.
\end{equation*}%
Finally, using Young's inequality together with the fact that $\delta
^{2}\leq \delta $ we are led to 
\begin{eqnarray}
\left\Vert \nabla \Phi _{\varepsilon }\right\Vert _{L^{2}(\Omega )}^{2}
&\leq &C\delta \left\Vert w\right\Vert _{L^{2}(\Omega )}\left\Vert
w\right\Vert _{H^{1}(\Omega )}+C\delta ^{2}\left\Vert w\right\Vert
_{H^{1}(\Omega )}^{2}  \label{5.35'} \\
&\leq &C\delta \left\Vert w\right\Vert _{H^{1}(\Omega )}^{2}+C\delta
^{2}\left\Vert w\right\Vert _{H^{1}(\Omega )}^{2}  \notag \\
&\leq &C\delta \left\Vert w\right\Vert _{H^{1}(\Omega )}^{2}.  \notag
\end{eqnarray}%
So we choose $\varepsilon _{0}$ such that $0<\delta \leq \delta _{0}$ for $%
0<\varepsilon \leq \varepsilon _{0}$ (recall that $0<\delta \rightarrow 0$
as $0<\varepsilon \rightarrow 0$). We thus derive (\ref{6.38}) since $%
\left\Vert \Phi _{\varepsilon }\right\Vert _{L^{2}(\Omega )}^{2}\leq \delta
\left\Vert w\right\Vert _{H^{1}(\Omega )}^{2}$.
\end{proof}

\subsection{Convergence rates: proof of Theorem \protect\ref{t1.4}}

Assume that $\Omega $ is of class $\mathcal{C}^{1,1}$. Let $u_{\varepsilon }$%
, $u_{0}\in H_{0}^{1}(\Omega )$ be the weak solutions of (\ref{1.1}) and (%
\ref{1.4}) respectively. Let $\chi _{T}^{\varepsilon }(x)=\chi
_{T}(x/\varepsilon )$ for $x\in \Omega $ and define 
\begin{equation}
w_{\varepsilon }=u_{\varepsilon }-u_{0}-\varepsilon \chi _{T}^{\varepsilon
}\nabla u_{0}+z_{\varepsilon }  \label{6.30}
\end{equation}%
where $T=\varepsilon ^{-1}$ and $z_{\varepsilon }\in H^{1}(\Omega )$ is the
weak solution of (\ref{6.31}).

\begin{theorem}
\label{t6.1}Suppose that $A$ is as in the preceding subsection. Assume that $%
u_{0}\in H^{2}(\Omega )$. Then for any $\sigma \in (0,1)$ there exists $%
C_{\sigma }=C_{\sigma }(\sigma ,A,\Omega )$ such that 
\begin{equation}
\left\Vert w_{\varepsilon }\right\Vert _{H^{1}(\Omega )}\leq C_{\sigma
}\left( \left\Vert \nabla \chi -\nabla \chi _{\varepsilon ^{-1}}\right\Vert
_{2}+\Theta _{\sigma }(\varepsilon ^{-1})\right) \left\Vert u_{0}\right\Vert
_{H^{2}(\Omega )}.  \label{6.37}
\end{equation}
\end{theorem}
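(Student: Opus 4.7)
The plan is to translate \eqref{6.37} into an $L^2(\Omega)$ estimate for a reshuffled divergence-form right-hand side in the equation satisfied by $w_\varepsilon$, exploiting a skew-symmetric flux corrector built through Lemma~\ref{l11.2}. Since $z_\varepsilon$ is chosen precisely so that $w_\varepsilon$ vanishes on $\partial\Omega$, we have $w_\varepsilon\in H_0^1(\Omega)$. Using $-\nabla\cdot(A^\varepsilon\nabla u_\varepsilon)=f=-\nabla\cdot(A^\ast\nabla u_0)$ and $\nabla\cdot(A^\varepsilon\nabla z_\varepsilon)=0$, a direct computation analogous to that of Lemma~\ref{l5.1} yields
\[
-\nabla\cdot(A^\varepsilon\nabla w_\varepsilon)=\nabla\cdot(F_1+F_2),
\]
with $(F_1)_i:=\sum_j a_{T,ij}^\varepsilon\,\partial_j u_0$, $a_{T,ij}(y):=[A(y)(e_j+\nabla\chi_{T,j}(y))]_i-(A^\ast)_{ij}$, and $F_2:=\varepsilon A^\varepsilon\chi_T^\varepsilon\,\nabla^2 u_0$. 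Splitting $a_{T,ij}=\tilde a_{T,ij}+\langle a_{T,ij}\rangle$, the constant mean part satisfies $|A_T^\ast-A^\ast|=|\langle A(\nabla\chi-\nabla\chi_T)\rangle|\le\|A\|_\infty\|\nabla\chi-\nabla\chi_T\|_2$ and hence contributes at most $C\|\nabla\chi-\nabla\chi_T\|_2\,\|u_0\|_{H^2}$ in $L^2(\Omega)$; similarly, $\|F_2\|_{L^2}\le CT^{-1}\|\chi_T\|_\infty\|u_0\|_{H^2}\le C_\sigma\Theta_\sigma(T)\|u_0\|_{H^2}$ by Theorem~\ref{t11.2}.

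For the zero-mean oscillating part $\tilde a_{T,ij}$, I would invoke Lemma~\ref{l11.2}, whose Morrey hypothesis \eqref{11.15} is supplied (after fixing $1-\sigma$ as the Morrey exponent) by \eqref{e5.7} applied to $\nabla\chi_T$. This produces $f_{T,ij}\in H^1_{\infty,AP}(\mathbb{R}^d)$ solving $-\Delta f_{T,ij}+T^{-2}f_{T,ij}=\tilde a_{T,ij}$ with the sharp bounds $T^{-2}\|f_T\|_\infty\le C\Theta_1(T)$ and $T^{-1}\|\nabla f_T\|_\infty\le C\Theta_\sigma(T)$. Setting $\phi_{T,kij}:=\partial_k f_{T,ij}-\partial_i f_{T,kj}$ (skew-symmetric in $k,i$) and $g_{T,j}:=\partial_k f_{T,kj}$, a straightforward differentiation of the $f$-equation gives the key algebraic identity
\[
\tilde a_{T,ij}=-\partial_k\phi_{T,kij}+T^{-2}f_{T,ij}-\partial_i g_{T,j};
\]
taking the divergence of the same equation in the index $i$ and using $\nabla\cdot a_{T,j}=T^{-2}\chi_{T,j}$ shows that $g_{T,j}$ itself satisfies $-\Delta g_{T,j}+T^{-2}g_{T,j}=T^{-2}\chi_{T,j}$. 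The fundamental solution of $-\Delta+T^{-2}$, combined with the bound $T^{-1}\|\chi_T\|_\infty\le C_\sigma\Theta_\sigma(T)$ from Theorem~\ref{t11.2}, then delivers $T^{-1}\|g_T\|_\infty+\|\nabla g_T\|_\infty\le C\Theta_\sigma(T)$.

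Rescaling the algebraic identity to the $x$ variable produces $\tilde a_{T,ij}^\varepsilon=-\varepsilon\partial_k\phi_{T,kij}^\varepsilon+T^{-2}f_{T,ij}^\varepsilon-\varepsilon\partial_i g_{T,j}^\varepsilon$. Applying Leibniz to $\varepsilon\partial_k\phi_{T,kij}^\varepsilon\partial_j u_0$ and $\varepsilon\partial_i g_{T,j}^\varepsilon\partial_j u_0$, one writes $F_1=F_1'-\varepsilon\nabla\cdot M$, where $M_{ki}:=\sum_j\phi_{T,kij}^\varepsilon\partial_j u_0$ is skew-symmetric in $k,i$ and $F_1'$ gathers only non-divergence residuals of types $\varepsilon\phi_T^\varepsilon\nabla^2 u_0$, $\varepsilon g_T^\varepsilon\nabla^2 u_0$, $(\nabla g_T)^\varepsilon\nabla u_0$, $T^{-2}f_T^\varepsilon\nabla u_0$, and $(A_T^\ast-A^\ast)\nabla u_0$. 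The skew-symmetry of $M$ combined with the symmetry of mixed partials gives $\sum_{ik}\partial_i\partial_k M_{ki}=0$ in $\mathcal D'(\Omega)$, so $\nabla\cdot(F_1+F_2)=\nabla\cdot(F_1'+F_2)$ distributionally, and the variational formulation for $w_\varepsilon\in H_0^1(\Omega)$ reduces to
\[
\int_\Omega A^\varepsilon\nabla w_\varepsilon\cdot\nabla w_\varepsilon\,dx=-\int_\Omega(F_1'+F_2)\cdot\nabla w_\varepsilon\,dx.
\]
Each summand of $F_1'+F_2$ is then dominated in $L^2(\Omega)$ by $C_\sigma(\|\nabla\chi-\nabla\chi_T\|_2+\Theta_\sigma(T))\,\|u_0\|_{H^2}$ using the $L^\infty$ bounds collected above, and the ellipticity of $A^\varepsilon$ together with Poincar\'e's inequality deliver \eqref{6.37}. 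The most delicate step I anticipate is the rigorous passage from $F_1$ (which is not in $L^2$ without additional regularity of $\nabla\chi_T$) to the representative $F_1'+F_2$ that lies in $L^2(\Omega)$ via the distributional skew-symmetric cancellation; this is handled cleanly by testing the distributional identity $\nabla\cdot(F_1-F_1')=0$ against $\mathcal{C}_0^\infty(\Omega)$ functions and then passing to the $H^1_0(\Omega)$ limit.
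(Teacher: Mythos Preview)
Your proposal is correct and follows essentially the same route as the paper's proof: derive the equation for $w_\varepsilon$, split off the constant mean $\langle a_{T,ij}\rangle$ (controlled by $\|\nabla\chi-\nabla\chi_T\|_2$), introduce the auxiliary potentials $f_{T,ij}$ via Lemma~\ref{l11.2}, perform the skew-symmetric decomposition $\tilde a_{T,ij}=-\partial_k\phi_{T,kij}+T^{-2}f_{T,ij}-\partial_i g_{T,j}$, and bound the resulting $L^2$ residuals using the $L^\infty$ estimates on $f_T$, $\nabla f_T$ and $\nabla g_T$. Your $g_{T,j}$ is exactly the paper's $h_{T,j}$, and your equation $-\Delta g_{T,j}+T^{-2}g_{T,j}=T^{-2}\chi_{T,j}$ matches \eqref{6.36}; the paper obtains the gradient bound on it by re-invoking Lemma~\ref{l11.2}, while you use the fundamental solution directly, which is equivalent. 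One small bookkeeping point: since $-\varepsilon\partial_i g_{T,j}^\varepsilon\,\partial_j u_0=-(\partial_i g_{T,j})^\varepsilon\partial_j u_0$ is already an $L^2$ residual, there is no need to apply Leibniz to the $g$-term, and the listed contribution $\varepsilon g_T^\varepsilon\nabla^2 u_0$ is superfluous (though harmless for the final estimate).
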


\begin{proof}
Set 
\begin{equation*}
A_{T}=A+A\nabla _{y}\chi _{T}-A^{\ast }
\end{equation*}%
where $A^{\ast }$ is the homogenized matrix and where we have taken $%
T=\varepsilon ^{-1}$. Then by simple computations as in Lemma \ref{l5.1} we
get

\begin{equation*}
-\nabla \cdot \left( A^{\varepsilon }\nabla w_{\varepsilon }\right) =\nabla
\cdot \left( A_{T}^{\varepsilon }\nabla u_{0}\right) +\varepsilon \nabla
\cdot (A^{\varepsilon }\nabla ^{2}u_{0}\chi _{T}^{\varepsilon }).
\end{equation*}%
This implies that 
\begin{equation}
\left\Vert \nabla w_{\varepsilon }\right\Vert _{L^{2}(\Omega )}\leq
C\left\Vert A_{T}^{\varepsilon }\nabla u_{0}\right\Vert _{L^{2}(\Omega
)}+C\varepsilon \left\Vert A^{\varepsilon }\nabla ^{2}u_{0}\chi
_{T}^{\varepsilon }\right\Vert _{L^{2}(\Omega )}.  \label{10.6}
\end{equation}%
We use (\ref{11.14}) to get 
\begin{eqnarray}
\varepsilon \left\Vert A^{\varepsilon }\nabla ^{2}u_{0}\chi
_{T}^{\varepsilon }\right\Vert _{L^{2}(\Omega )} &\leq &C\varepsilon
\left\Vert \chi _{T}\right\Vert _{L^{\infty }(\mathbb{R}^{d})}\left\Vert
\nabla ^{2}u_{0}\right\Vert _{L^{2}(\Omega )}  \label{10.7} \\
&\leq &C\Theta _{\sigma }(T)\left\Vert \nabla ^{2}u_{0}\right\Vert
_{L^{2}(\Omega )}.  \notag
\end{eqnarray}%
Concerning the term $\left\Vert A_{T}^{\varepsilon }\nabla u_{0}\right\Vert
_{L^{2}(\Omega )}$, we need to replace $A_{T}$ by a matrix $\mathcal{A}_{T}$
whose mean value is zero. So, we let $\mathcal{A}_{T}=A_{T}-\left\langle
A_{T}\right\rangle $ so that $\left\langle \mathcal{A}_{T}\right\rangle =0$
and $A_{T}^{\varepsilon }\nabla u_{0}=\mathcal{A}_{T}^{\varepsilon }\nabla
u_{0}+\left\langle A_{T}\right\rangle \nabla u_{0}$. The inequality $%
\left\vert \left\langle A_{T}\right\rangle \right\vert \leq C\left\Vert
\nabla \chi -\nabla \chi _{T}\right\Vert _{2}$ yields readily 
\begin{equation}
\left\Vert \left\langle A_{T}\right\rangle \nabla u_{0}\right\Vert
_{L^{2}(\Omega )}\leq C\left\Vert \nabla \chi -\nabla \chi _{T}\right\Vert
_{2}\left\Vert \nabla u_{0}\right\Vert _{L^{2}(\Omega )}.  \label{10.8}
\end{equation}%
It remains to estimate $\left\Vert \mathcal{A}_{T}^{\varepsilon }\nabla
u_{0}\right\Vert _{L^{2}(\Omega )}$. We denote by $a_{T,ij}$ the entries of $%
\mathcal{A}_{T}$: $a_{T,ij}=b_{T,ij}-\left\langle b_{T,ij}\right\rangle
\equiv a_{ij}$ where 
\begin{equation*}
b_{T,ij}(y)=b_{ij}(y)+\sum_{k=1}^{d}b_{ik}(y)\frac{\partial \chi _{T,j}}{%
\partial y_{k}}(y)-b_{ij}^{\ast }.
\end{equation*}%
In view of Lemma \ref{l11.2}, let $f_{T,ij}\equiv f_{ij}\in H_{\infty
,AP}^{1}(\mathbb{R}^{d})$ be the unique solution of 
\begin{equation*}
-\Delta f_{ij}+T^{-2}f_{ij}=a_{ij}\text{ in }\mathbb{R}^{d},\ \ \left\langle
f_{ij}\right\rangle =0.
\end{equation*}%
Owing to (\ref{e5.7}), we see that $a_{ij}$ verifies (\ref{11.15}), so that (%
\ref{11.19}) and (\ref{11.20}) are satisfied, that is: 
\begin{equation}
T^{-2}\left\Vert f_{ij}\right\Vert _{L^{\infty }(\mathbb{R}^{d})}\leq
C\Theta _{1}(T)\text{ and }T^{-1}\left\Vert \nabla f_{ij}\right\Vert
_{L^{\infty }(\mathbb{R}^{d})}\leq C\Theta _{\sigma }(T).  \label{11.26}
\end{equation}%
We set $\mathbf{f}=(f_{ij})_{1\leq i,j\leq d}$. Then writing (formally) 
\begin{equation*}
a_{ij}=-\sum_{k=1}^{d}\left( \frac{\partial }{\partial y_{k}}\left( \frac{%
\partial f_{ij}}{\partial y_{k}}-\frac{\partial f_{kj}}{\partial y_{i}}%
\right) +\frac{\partial }{\partial y_{i}}\left( \frac{\partial f_{kj}}{%
\partial y_{k}}\right) \right) +T^{-2}f_{ij}
\end{equation*}%
and using the fact that 
\begin{equation*}
\sum_{i,k=1}^{d}\frac{\partial ^{2}}{\partial y_{i}\partial y_{k}}\left( 
\frac{\partial f_{ij}}{\partial y_{k}}-\frac{\partial f_{kj}}{\partial y_{i}}%
\right) =0,
\end{equation*}%
we readily get 
\begin{align}
-\nabla \cdot (\mathcal{A}_{T}^{\varepsilon }\nabla u_{0})& =\nabla \cdot
\left( (\Delta \mathbf{f})^{\varepsilon }\nabla u_{0}\right) -T^{-2}\nabla
\cdot (\mathbf{f}^{\varepsilon }\nabla u_{0})  \label{11.27} \\
& =\sum_{i,j,k=1}^{d}\frac{\partial }{\partial x_{i}}\left( \frac{\partial }{%
\partial x_{k}}\left( \frac{\partial f_{ij}}{\partial x_{k}}-\frac{\partial
f_{kj}}{\partial x_{i}}\right) \left( \frac{x}{\varepsilon }\right) \frac{%
\partial u_{0}}{\partial x_{j}}\right)  \notag \\
& +\sum_{i,j,k=1}^{d}\frac{\partial }{\partial x_{i}}\left( \frac{\partial
^{2}f_{kj}}{\partial x_{k}\partial x_{i}}\left( \frac{x}{\varepsilon }%
\right) \frac{\partial u_{0}}{\partial x_{j}}\right) -T^{-2}\nabla \cdot (%
\mathbf{f}^{\varepsilon }\nabla u_{0})  \notag \\
& =-\sum_{i,j,k=1}^{d}\frac{\partial }{\partial x_{i}}\left( \varepsilon
\left( \frac{\partial f_{ij}}{\partial x_{k}}-\frac{\partial f_{kj}}{%
\partial x_{i}}\right) \left( \frac{x}{\varepsilon }\right) \frac{\partial
^{2}u_{0}}{\partial x_{k}\partial x_{j}}\right)  \notag \\
& +\sum_{i,j,k=1}^{d}\frac{\partial }{\partial x_{i}}\left( \frac{\partial
^{2}f_{kj}}{\partial x_{k}\partial x_{i}}\left( \frac{x}{\varepsilon }%
\right) \frac{\partial u_{0}}{\partial x_{j}}\right) -T^{-2}\nabla \cdot (%
\mathbf{f}^{\varepsilon }\nabla u_{0}).  \notag
\end{align}%
Testing (\ref{11.27}) with $\varphi \in H_{0}^{1}(\Omega )$, we obtain 
\begin{eqnarray}
\left\Vert \mathcal{A}_{T}^{\varepsilon }\nabla u_{0}\right\Vert
_{L^{2}(\Omega )} &\leq &C\varepsilon \left( \int_{\Omega }\left\vert \nabla 
\mathbf{f}\left( \frac{x}{\varepsilon }\right) \right\vert ^{2}\left\vert
\nabla ^{2}u_{0}\right\vert ^{2}dx\right) ^{\frac{1}{2}}  \label{6.35} \\
&&+C\sum_{j=1}^{d}\left( \int_{\Omega }\left\vert \nabla h_{T,j}\left( \frac{%
x}{\varepsilon }\right) \right\vert ^{2}\left\vert \nabla u_{0}\right\vert
^{2}dx\right) ^{\frac{1}{2}}+\left\vert \left\langle A_{T}\right\rangle
\right\vert \left\Vert \nabla u_{0}\right\Vert _{L^{2}(\Omega )}  \notag \\
&&+CT^{-2}\left( \int_{\Omega }\left\vert \mathbf{f}^{\varepsilon
}\right\vert ^{2}\left\vert \nabla u_{0}\right\vert ^{2}dx\right) ^{\frac{1}{%
2}}  \notag \\
&=&I_{1}+I_{2}+I_{3}+I_{4}  \notag
\end{eqnarray}%
where $h_{T,j}=\sum_{k=1}^{d}\frac{\partial f_{kj}}{\partial y_{k}}\in
L_{\infty ,AP}^{2}(\mathbb{R}^{d})$. We estimate each term above separately.
Let us first deal with $I_{2}$. Observe that $h_{T,j}=\func{div}f_{.j}$
where $f_{.j}=(f_{kj})_{1\leq k\leq d}$. It follows from the definition of $%
f_{ij}$ that 
\begin{equation*}
-\Delta f_{.j}+T^{-2}f_{.j}=A(e_{j}+\nabla \chi _{T,j})-\left\langle
A(e_{j}+\nabla \chi _{T,j})\right\rangle ,
\end{equation*}%
so that, owing to the definition of $\chi _{T,j}$, 
\begin{equation}
-\Delta h_{T,j}+T^{-2}h_{T,j}=T^{-2}\chi _{T,j}.  \label{6.36}
\end{equation}%
Next, since the function $g=T^{-1}\chi _{T,j}$ satisfies assumption (\ref%
{11.15}) of Lemma \ref{l11.2} with $\sigma =1$, it follows that $h_{T,j}$
satisfies estimate (\ref{11.20}), that is, 
\begin{equation*}
T^{-1}\left\Vert \nabla h_{T,j}\right\Vert _{L^{\infty }(\mathbb{R}%
^{d})}\leq C_{\tau }\Theta _{\tau }(T)\ \ \forall \tau \in (0,1).
\end{equation*}%
Therefore 
\begin{equation*}
\left\vert I_{2}\right\vert \leq C\varepsilon \left\Vert \nabla
h_{T,j}\right\Vert _{L^{\infty }(\mathbb{R}^{d})}\left\Vert \nabla
u_{0}\right\Vert _{L^{2}(\Omega )}\leq C\Theta _{\sigma }(T)\left\Vert
\nabla u_{0}\right\Vert _{L^{2}(\Omega )}.
\end{equation*}

As regard $I_{1}$, we infer from (\ref{11.26}) that 
\begin{equation*}
\left\vert I_{1}\right\vert \leq C\varepsilon \left\Vert \nabla \mathbf{f}%
\right\Vert _{L^{\infty }(\mathbb{R}^{d})}\left\Vert \nabla
^{2}u_{0}\right\Vert _{L^{2}(\Omega )}\leq C\Theta _{\sigma }(T)\left\Vert
\nabla ^{2}u_{0}\right\Vert _{L^{2}(\Omega )}.
\end{equation*}%
Concerning $I_{4}$, we use the first inequality in (\ref{11.26}) to get 
\begin{equation*}
\left\vert I_{4}\right\vert \leq C\Theta _{1}(T)\left\Vert \nabla
u_{0}\right\Vert _{L^{2}(\Omega )}
\end{equation*}%
where we have put $T=\varepsilon ^{-1}$. Finally, using the inequality $%
\left\vert \left\langle A_{T}\right\rangle \right\vert \leq C\left\Vert
\nabla \chi -\nabla \chi _{T}\right\Vert _{2}$ we get 
\begin{equation*}
\left\vert I_{3}\right\vert \leq C\left\Vert \nabla \chi -\nabla \chi
_{T}\right\Vert _{2}\left\Vert \nabla u_{0}\right\Vert _{L^{2}(\Omega )}.
\end{equation*}%
The result follows thereby.
\end{proof}

We are now in a position to prove Theorem \ref{t1.4}.

\begin{proof}[Proof of Theorem \protect\ref{t1.4}]
Using (\ref{6.37}) together with (\ref{6.39}) we get, for any $\sigma \in
(0,1)$, 
\begin{equation*}
\begin{array}{l}
\left\Vert u_{\varepsilon }-u_{0}-\varepsilon \chi _{T=\varepsilon
^{-1}}^{\varepsilon }\nabla u_{0}\right\Vert _{H^{1}(\Omega )} \\ 
\leq \left\Vert u_{\varepsilon }-u_{0}-\varepsilon \chi _{T=\varepsilon
^{-1}}^{\varepsilon }\nabla u_{0}+z_{\varepsilon }\right\Vert _{H^{1}(\Omega
)}+\left\Vert z_{\varepsilon }\right\Vert _{H^{1}(\Omega )} \\ 
\leq C\left( \left\Vert \nabla \chi -\nabla \chi _{\varepsilon
^{-1}}\right\Vert _{2}+\Theta _{\sigma }(\varepsilon ^{-1})\right)
\left\Vert u_{0}\right\Vert _{H^{2}(\Omega )}+C_{\sigma }(\Theta _{\sigma
}(\varepsilon ^{-1}))^{\frac{1}{2}}\left\Vert u_{0}\right\Vert
_{H^{2}(\Omega )} \\ 
\leq C\left( \left\Vert \nabla \chi -\nabla \chi _{\varepsilon
^{-1}}\right\Vert _{2}+\left( \Theta _{1}(\varepsilon ^{-1})\right) ^{\sigma
}+\left( \Theta _{1}(\varepsilon ^{-1})\right) ^{\frac{\sigma }{2}}\right)
\left\Vert u_{0}\right\Vert _{H^{2}(\Omega )} \\ 
\leq C\left( \left\Vert \nabla \chi -\nabla \chi _{\varepsilon
^{-1}}\right\Vert _{2}+\left( \Theta _{1}(\varepsilon ^{-1})\right) ^{\sigma
}\right) ^{\frac{1}{2}}\left\Vert u_{0}\right\Vert _{H^{2}(\Omega )},%
\end{array}%
\end{equation*}%
the last inequality above stemming from the fact that $\left\Vert \nabla
\chi -\nabla \chi _{\varepsilon ^{-1}}\right\Vert _{2}+\left( \Theta
_{1}(\varepsilon ^{-1})\right) ^{\sigma }\rightarrow 0$ when $\varepsilon
\rightarrow 0$, so that we may assume 
\begin{equation*}
\left\Vert \nabla \chi -\nabla \chi _{\varepsilon ^{-1}}\right\Vert
_{2}+\left( \Theta _{1}(\varepsilon ^{-1})\right) ^{\sigma }<1\text{ for
sufficiently small }\varepsilon .
\end{equation*}%
Choosing $\sigma =\frac{1}{2}$, we obtain 
\begin{equation}
\left\Vert u_{\varepsilon }-u_{0}-\varepsilon \chi _{T=\varepsilon
^{-1}}^{\varepsilon }\nabla u_{0}\right\Vert _{H^{1}(\Omega )}\leq C\left(
\left\Vert \nabla \chi -\nabla \chi _{\varepsilon ^{-1}}\right\Vert
_{2}+\left( \Theta _{1}(\varepsilon ^{-1})\right) ^{\frac{1}{2}}\right) ^{%
\frac{1}{2}}\left\Vert u_{0}\right\Vert _{H^{2}(\Omega )}.  \label{5.37''}
\end{equation}%
We recall that, since $\Omega $ is a $\mathcal{C}^{1,1}$-bounded domain in $%
\mathbb{R}^{d}$ and the matrix $A^{\ast }$ has constant entries, it holds
that 
\begin{equation}
\left\Vert u_{0}\right\Vert _{H^{2}(\Omega )}\leq C\left\Vert f\right\Vert
_{L^{2}(\Omega )},\ C=C(d,\alpha ,\Omega )>0.  \label{5.37'}
\end{equation}%
Next, set for $\varepsilon \in (0,1]$, 
\begin{equation*}
\eta (\varepsilon )=\left( \left\Vert \nabla \chi -\nabla \chi _{\varepsilon
^{-1}}\right\Vert _{2}+\left( \Theta _{1}(\varepsilon ^{-1})\right) ^{\frac{1%
}{2}}\right) ^{\frac{1}{2}}.
\end{equation*}%
Since $\eta (\varepsilon )\rightarrow 0$ as $\varepsilon \rightarrow 0$, we
obtain from (\ref{5.37''}) and (\ref{5.37'}), the statement of (\ref{Eq03})
in Theorem \ref{t1.4}.

It remains to check the near optimal convergence rates result (\ref{Eq02}).
We proceed in two parts.\medskip

\textit{Part I}. We first check that 
\begin{equation}
\left\Vert u_{\varepsilon }\right\Vert _{H^{1}(\Gamma _{2\delta })}\leq
C\eta (\varepsilon )\left\Vert f\right\Vert _{L^{2}(\Omega )}\text{ where }%
\delta =\left( \eta (\varepsilon )\right) ^{2}.  \label{5.37}
\end{equation}%
Indeed, we have $u_{\varepsilon }=(u_{\varepsilon }-u_{0}-\varepsilon \chi
_{T}^{\varepsilon }\nabla u_{0})+u_{0}+\varepsilon \chi _{T}^{\varepsilon
}\nabla u_{0}$, so that 
\begin{equation*}
\left\Vert u_{\varepsilon }\right\Vert _{H^{1}(\Gamma _{2\delta })}\leq
\left\Vert u_{\varepsilon }-u_{0}-\varepsilon \chi _{T}^{\varepsilon }\nabla
u_{0}\right\Vert _{H^{1}(\Gamma _{2\delta })}+\left\Vert u_{0}\right\Vert
_{H^{1}(\Gamma _{2\delta })}+\left\Vert \varepsilon \chi _{T}^{\varepsilon
}\nabla u_{0}\right\Vert _{H^{1}(\Gamma _{2\delta })}.
\end{equation*}%
It follows from (\ref{Eq03}) and (\ref{5.37'}) that 
\begin{equation}
\left\Vert u_{\varepsilon }-u_{0}-\varepsilon \chi _{T}^{\varepsilon }\nabla
u_{0}\right\Vert _{H^{1}(\Gamma _{2\delta })}\leq C\eta (\varepsilon
)\left\Vert u_{0}\right\Vert _{H^{2}(\Omega )}\leq C\eta (\varepsilon
)\left\Vert f\right\Vert _{L^{2}(\Omega )}.  \label{Eq04}
\end{equation}%
Using (\ref{5.34}) we obtain 
\begin{equation}
\left\Vert u_{0}\right\Vert _{H^{1}(\Gamma _{2\delta })}\leq C\delta ^{\frac{%
1}{2}}\left\Vert u_{0}\right\Vert _{H^{2}(\Omega )}\leq C\delta ^{\frac{1}{2}%
}\left\Vert f\right\Vert _{L^{2}(\Omega )}.  \label{Eq05}
\end{equation}%
To estimate $\left\Vert \varepsilon \chi _{T}^{\varepsilon }\nabla
u_{0}\right\Vert _{H^{1}(\Gamma _{2\delta })}$, we consider a cut-off
function $\theta _{2\delta }$ of the same form as in (\ref{5.35}), but with $%
\delta $ replaced there by $2\delta $. Letting $w=\nabla u_{0}$, we observe
that $\varepsilon \chi _{T}^{\varepsilon }w=\varepsilon \theta _{2\delta
}\chi _{T}^{\varepsilon }w$ on $\Gamma _{2\delta }$, so that 
\begin{equation*}
\nabla (\varepsilon \chi _{T}^{\varepsilon }w)=\varepsilon \chi
_{T}^{\varepsilon }w\nabla \theta _{2\delta }+(\nabla _{y}\chi
_{T})^{\varepsilon }w\theta _{2\delta }+\varepsilon \chi _{T}^{\varepsilon
}\theta _{2\delta }\nabla w\text{ on }\Gamma _{2\delta }.
\end{equation*}%
Following the same procedure as in the proof of Lemma \ref{l5.3}, we get 
\begin{equation}
\left\Vert \varepsilon \chi _{T}^{\varepsilon }\nabla u_{0}\right\Vert
_{H^{1}(\Gamma _{2\delta })}\leq C\delta ^{\frac{1}{2}}\left\Vert
u_{0}\right\Vert _{H^{2}(\Omega )}\leq C\delta ^{\frac{1}{2}}\left\Vert
f\right\Vert _{L^{2}(\Omega )}.  \label{Eq06}
\end{equation}%
Choosing $\delta =\left( \eta (\varepsilon )\right) ^{2}$ in (\ref{Eq05})
and (\ref{Eq06}), and taking into account (\ref{Eq04}), we readily get (\ref%
{5.37}).\medskip

\textit{Part II}. Note that (\ref{6.37}) implies 
\begin{equation}
\left\Vert u_{\varepsilon }-u_{0}-\varepsilon \chi _{T}^{\varepsilon }\nabla
u_{0}+z_{\varepsilon }\right\Vert _{L^{2}(\Omega )}\leq C\left( \eta
(\varepsilon )\right) ^{2}\left\Vert f\right\Vert _{L^{2}(\Omega )}.
\label{5.38}
\end{equation}%
Thus, using the inequality 
\begin{equation}
\left\Vert \varepsilon \chi _{T}^{\varepsilon }\nabla u_{0}\right\Vert
_{L^{2}(\Omega )}\leq C\left( \Theta _{1}(\varepsilon ^{-1})\right) ^{\frac{1%
}{2}}\left\Vert u_{0}\right\Vert _{H^{2}(\Omega )}\leq C\left( \eta
(\varepsilon )\right) ^{2}\left\Vert f\right\Vert _{L^{2}(\Omega )},
\label{5.39}
\end{equation}%
we see that proving (\ref{Eq02}) amounts to prove that 
\begin{equation}
\left\Vert z_{\varepsilon }\right\Vert _{L^{2}(\Omega )}\leq C\left( \eta
(\varepsilon )\right) ^{2}\left\Vert f\right\Vert _{L^{2}(\Omega )}
\label{5.40}
\end{equation}%
where $C=C(d,A,\Omega )$. To that end, we consider the function 
\begin{equation}
v_{\varepsilon }=z_{\varepsilon }-\Phi _{\varepsilon }\text{, where }\Phi
_{\varepsilon }=\varepsilon \theta _{\delta }\chi _{T}^{\varepsilon }\nabla
u_{0}\text{ with }\delta =\left( \eta (\varepsilon )\right) ^{2}.
\label{5.41}
\end{equation}%
Then $v_{\varepsilon }\in H_{0}^{1}(\Omega )$ and $-\nabla \cdot
(A^{\varepsilon }\nabla v_{\varepsilon })=F_{\varepsilon }\equiv \nabla
\cdot (A^{\varepsilon }\nabla \Phi _{\varepsilon })$ in $\Omega $. As shown
in (\ref{5.35'}) (where we use the inequality (\ref{5.7'})), we have 
\begin{equation}
\left\Vert \nabla \Phi _{\varepsilon }\right\Vert _{L^{2}(\Omega )}\leq
C\delta ^{\frac{1}{2}}\left\Vert f\right\Vert _{L^{2}(\Omega )}\text{ and }%
\left\Vert \Phi _{\varepsilon }\right\Vert _{L^{2}(\Omega )}\leq C\delta
\left\Vert f\right\Vert _{L^{2}(\Omega )}.  \label{5.42}
\end{equation}%
Now, let $F\in L^{2}(\Omega )$ be arbitrarily fixed, and let $t_{\varepsilon
}\in H_{0}^{1}(\Omega )$ be the solution of 
\begin{equation}
-\nabla \cdot (A^{\varepsilon }\nabla t_{\varepsilon })=F\text{ in }\Omega .
\label{5.43}
\end{equation}%
Following the homogenization process of (\ref{1.1}) (see the proof of
Theorem \ref{t1.1} in Section 2), we deduce the existence of a function $%
t_{0}\in H_{0}^{1}(\Omega )$ such that $t_{\varepsilon }\rightarrow t_{0}$
in $H_{0}^{1}(\Omega )$-weak and $t_{0}$ solves uniquely the equation $%
-\nabla \cdot (A^{\ast }\nabla t_{0})=F$ in $\Omega $. It follows from (\ref%
{5.37}) that 
\begin{equation}
\left\Vert \nabla t_{\varepsilon }\right\Vert _{L^{2}(\Gamma _{2\delta
})}\leq C\eta (\varepsilon )\left\Vert F\right\Vert _{L^{2}(\Omega )}.
\label{5.44}
\end{equation}%
Taking in the variational form of (\ref{5.43}) $v_{\varepsilon }$ test
function, we obtain 
\begin{eqnarray}
\int_{\Omega }Fv_{\varepsilon }dx &=&\int_{\Omega }A^{\varepsilon }\nabla
t_{\varepsilon }\cdot \nabla v_{\varepsilon }dx=\int_{\Omega }\nabla
t_{\varepsilon }\cdot A^{\varepsilon }\nabla v_{\varepsilon }dx=\left(
F_{\varepsilon },t_{\varepsilon }\right)  \label{5.45} \\
&=&-\int_{\Omega }A^{\varepsilon }\nabla \Phi _{\varepsilon }\cdot \nabla
t_{\varepsilon }dx=-\int_{\Gamma _{2\delta }}A^{\varepsilon }\nabla \Phi
_{\varepsilon }\cdot \nabla t_{\varepsilon }dx  \notag
\end{eqnarray}%
where in (\ref{5.45}), the second equality stems from the fact that the
matrix $A$ is symmetric, and in the last equality we have used the
definition and properties of $\Phi _{\varepsilon }$. Hence, using together
(the first inequality in) (\ref{5.42}) and (\ref{5.44}), we are led to 
\begin{eqnarray*}
\left\vert \int_{\Omega }Fv_{\varepsilon }dx\right\vert &\leq &C\left\Vert
\nabla \Phi _{\varepsilon }\right\Vert _{L^{2}(\Omega )}\left\Vert \nabla
t_{\varepsilon }\right\Vert _{L^{2}(\Gamma _{2\delta })}\leq C\delta ^{\frac{%
1}{2}}\left\Vert f\right\Vert _{L^{2}(\Omega )}\delta ^{\frac{1}{2}%
}\left\Vert F\right\Vert _{L^{2}(\Omega )} \\
&\leq &C\delta \left\Vert f\right\Vert _{L^{2}(\Omega )}\left\Vert
F\right\Vert _{L^{2}(\Omega )}.
\end{eqnarray*}%
Since $F$ is arbitrary, it emerges 
\begin{equation}
\left\Vert v_{\varepsilon }\right\Vert _{L^{2}(\Omega )}\leq C\delta
\left\Vert f\right\Vert _{L^{2}(\Omega )}\text{ with }\delta =(\eta
(\varepsilon ))^{2}.  \label{5.46}
\end{equation}%
Combining (\ref{5.46}) with the second estimate in (\ref{5.42}) yields (\ref%
{5.40}). This concludes the proof of Theorem \ref{t1.4}.
\end{proof}

\begin{remark}
\label{r5.3}\emph{In the asymptotic periodic setting of the preceding
section, we replace }$\chi _{T}$\emph{\ by }$\chi $\emph{\ so that }$%
\left\Vert \nabla \chi -\nabla \chi _{\varepsilon ^{-1}}\right\Vert _{2}=0$%
\emph{. Moreover, if we look carefully at the proof of (\ref{Eq02}), we
notice that, in view of Remark \ref{r5.2}, we may replace }$\eta
(\varepsilon )$\emph{\ by }$\varepsilon ^{1/2}$\emph{, so that (\ref{Eq02})
becomes }%
\begin{equation*}
\left\Vert u_{\varepsilon }-u_{0}\right\Vert _{L^{2}(\Omega )}\leq
C\varepsilon \left\Vert f\right\Vert _{L^{2}(\Omega )}
\end{equation*}%
\emph{where }$C=C(d,\alpha ,\Omega )$\emph{. This shows the optimal }$L^{2}$%
\emph{-rates of convergence in Theorem \ref{t5.1}.}
\end{remark}

\section{Some examples}

\subsection{Applications of Theorem \protect\ref{t3.2}}

Theorem \ref{t3.2} has been proved under the assumption that the corrector $%
\chi _{j}$ lies in $B_{\mathcal{A}}^{2}(\mathbb{R}^{d})$ for each $1\leq
j\leq d$. We provide some examples in which this hypothesis is fulfilled.

\subsubsection{\textbf{The almost periodic setting}}

We assume here that the entries of the matrix $A$ are almost periodic in the
sense of Besicovitch \cite{Besicovitch}. Then this falls into the scope of
Theorem \ref{t1.1} by taking there $\mathcal{A}=AP(\mathbb{R}^{d})$.

Now, we distinguish two special cases.

\textbf{Case 1}. The entries of $A$ are continuous quasi-periodic functions
and satisfy the frequency condition (see \cite{Jikov2}). We recall that a
function $b$ defined on $\mathbb{R}^{d}$ is quasi-periodic if $b(y)=\mathcal{%
B}(\omega _{1}\cdot y,...,\omega _{m}\cdot y)$ where $\mathcal{B}\equiv 
\mathcal{B}(z_{1},...,z_{m})$ is a $1$-periodic function with respect to
every argument $z_{1}$,..., $z_{m}$. The $\omega ^{1},...,\omega ^{m}$ are
the frequency vectors, and $\omega _{j}\cdot y=\sum_{i=1}^{d}\omega
_{j}^{i}y_{i}$ is the inner product of vectors in $\mathbb{R}^{d}$. The
frequency condition on the vectors $\omega ^{1},...,\omega ^{m}\in \mathbb{R}%
^{d}$ amounts to the following assumption:

\begin{itemize}
\item[(\textbf{FC})] There is $c_{0},\tau >0$ such that 
\begin{equation}
\left\vert \sum_{j=1}^{m}k_{j}\omega _{i}^{j}\right\vert \geq
c_{0}\left\vert k\right\vert ^{-\tau }\text{ for all }k\in \mathbb{Z}%
^{m}\backslash \{0\}\text{ and }1\leq i\leq d.  \label{FC}
\end{equation}
\end{itemize}

It is clear that if (\textbf{FC}) is satisfied, then the vectors $\omega
^{1},...,\omega ^{m}$ \ are rationally independent, that is, 
\begin{equation*}
\sum_{j=1}^{m}k_{j}\omega _{i}^{j}\neq 0\text{ for every }1\leq i\leq d\text{
and all }k\in \mathbb{Z}^{m}\backslash \{0\}\text{.}
\end{equation*}%
Then as shown in \cite[Lemma 2.1]{Jikov2}, the corrector problem (\ref{1.6})
possesses a solution which is quasi-periodic. So, it belongs to $B_{AP}^{2}(%
\mathbb{R}^{d})$ (the space $B_{\mathcal{A}}^{2}(\mathbb{R}^{d})$ with $%
\mathcal{A}=AP(\mathbb{R}^{d})$) since any quasi-periodic function is almost
periodic. We may hence apply Theorem \ref{t3.2}.

\textbf{Case 2}. The entries of $A$ are continuous almost periodic
functions. In \cite[Theorem 1.1]{Armstrong} are formulated the assumptions
implying the existence of bounded almost periodic solution to the problem (%
\ref{1.6}). Hence the conclusion of Theorem \ref{t3.2} holds. Notice that
this class of solutions contains continuous quasi-periodic ones (provided
that the assumptions of \cite[Theorem 1.1]{Armstrong} are satisfied) but
also some other almost periodic functions that are not quasi-periodic as
shown in \cite[Section 4]{Armstrong}.

\subsubsection{\textbf{The asymptotic periodic setting}}

We assume that $A=A_{0}+A_{per}$ where $A_{0}\in L^{2}(\mathbb{R}%
^{d})^{d\times d}$ and $A_{per}\in L_{per}^{2}(Y)^{d\times d}$. We are here
in the framework of asymptotic periodic homogenization corresponding to $%
\mathcal{A}=\mathcal{B}_{\infty ,per}(\mathbb{R}^{d})=\mathcal{C}_{0}(%
\mathbb{R}^{d})\oplus \mathcal{C}_{per}(Y)$. In the proof of Lemma \ref{l1.2}%
, we showed that the corrector lies in $L_{\infty ,per}^{2}(Y)=L_{0}^{2}(%
\mathbb{R}^{d})+L_{per}^{2}(Y)$, which is nothing else but the space $B_{%
\mathcal{A}}^{2}(\mathbb{R}^{d})$ with $\mathcal{A}=\mathcal{B}_{\infty
,per}(\mathbb{R}^{d})$. So Theorem \ref{t3.2} applies to this setting.

\begin{remark}
\label{r3.1}\emph{Assume (i) }$A=A_{0}+A_{ap}$ \emph{with }$A_{0}\in 
\mathcal{C}_{0}(\mathbb{R}^{d})^{d\times d}$\emph{\ and }$A_{ap}\in AP(%
\mathbb{R}^{d})^{d\times d}$\emph{, (ii) the entries of }$A_{ap}$\emph{\
either are quasi-periodic and satisfy the frequency condition, or fulfill
the hypotheses of \cite[Theorem 1.1]{Armstrong}. we may use the same trick
as in Lemma \ref{l1.2} to show that the corrector lies, in each of these
cases, in }$B_{\infty ,AP}^{2}(\mathbb{R}^{d})=L_{0}^{2}(\mathbb{R}%
^{d})+B_{AP}^{2}(\mathbb{R}^{d})$\emph{. Therefore the conclusion of Theorem %
\ref{t3.2} holds true.}
\end{remark}

\subsection{Applications of Theorems \protect\ref{t1.4} and \protect\ref%
{t5.1}}

Here we give some concrete examples of functions for which Theorems \ref%
{t1.4} and \ref{t5.1} hold. Let $I_{d}$ denote the identity matrix in $%
\mathbb{R}^{d\times d}$.

\subsubsection{\textbf{The asymptotic periodic setting}}

We assume that $A=A_{0}+A_{per}$ where $A_{0}=b_{c}I_{d}$ with $%
b_{c}(y)=\exp (-c\left\vert y\right\vert ^{2})$ for any fixed $c>0$. $%
A_{per} $ is any continuous periodic symmetric matrix function satisfying
the ellipticity condition (\ref{2.2}). In the special $2$-dimension setting,
we may take $A_{0}=b_{1}I_{2}$ and 
\begin{equation*}
A_{per}=\left( 
\begin{array}{ll}
a_{1} & 0 \\ 
0 & a_{2}%
\end{array}%
\right) \text{ with }a_{1}(y)=4+\cos (2\pi y_{1})+\sin (2\pi y_{2}),\
a_{2}(y)=3+\cos (2\pi y_{1})+\cos (2\pi y_{2}).
\end{equation*}%
This special example is used for numerical tests in the next section.

\subsubsection{\textbf{The asymptotic almost periodic setting}}

As in the preceding subsection, we take $A_{0}=b_{c}I_{d}$ with $%
b_{c}(y)=\exp (-c\left\vert y\right\vert ^{2})$. We assume that $%
A=A_{0}+A_{ap}$ with $A_{ap}$ being any matrix with continuous almost
periodic entries such that $A$ satisfies hypothesis (\ref{1.2}). In the
special $2$-dimension setting used for numerical tests below, we take $%
A_{0}=b_{1}I_{2}$ and 
\begin{equation*}
A_{ap}=\left( 
\begin{array}{ll}
a_{1} & 0 \\ 
0 & a_{2}%
\end{array}%
\right) \text{ with }a_{1}(y)=4+\sin (2\pi y_{1})+\cos (\sqrt{2}\pi y_{2}),\
a_{2}(y)=3+\sin (\sqrt{3}\pi y_{1})+\cos (\pi y_{2}).
\end{equation*}

\section{Numerical simulations}

Our goal in this section is to check numerically the theoretical results
derived in the previous sections. We will consider the finite volume method
with two-point flux approximation. Of course multi-point flux approximation
can be considered when the matrix $A$ is non-diagonal. Even we will not
provide similar results for the discrete problem from numerical
approximation, similar results should normally be observed when the space
discretization step is small enough (fine grid) as the convergence of the
finite volume method for such elliptic problems is well known \cite{FV}.

\subsection{Finite volume methods}

The finite volume methods are widely applied when the differential equations
are in divergence form. To obtain a finite volume discretization, the domain 
$\Omega $ is subdivided into subdomains $(K_{i})_{i\in \mathcal{I}},\;%
\mathcal{I}$ being the corresponding set of indices, called control volumes
or control domains such that the collection of all those subdomains forms a
partition of $\Omega $. The common feature of all finite volume methods is
to integrate the equation over each control volume $K_{i},\;i\in \mathcal{I}$
and apply Gauss's divergence theorem to convert the volume integral to a
surface integral. An advantage of the two-point approximation is that it
provides monotonicity properties, under the form of a local maximum
principle. It is efficient and mostly used in industrial simulations. The
main drawback is that finite volume method with two-point approximation is
applicable in the so called admissible mesh \cite{FV, Antonio3} and not in a
general mesh. This drawback has been filled by finite volume methods with
multi-point flux approximations \cite{B, H} which allow to handle anisotropy
in more general geometries.

For illustration, we consider the problem find $u\in H_{0}^{1}(\Omega )$ 
\begin{equation}
-\nabla \cdot (A(x)\nabla u)=f\,\,\text{in}\,\,\,\Omega .  \label{pb1}
\end{equation}%
We assume that $f\in L^{2}(\Omega )$ and that $A$ is diagonal, so a
rectangular grid should be an admissible mesh \cite{FV, Antonio3}. Consider
an admissible mesh $\mathcal{T}$ with the corresponding control volume $%
(K_{i})_{i\in \mathcal{I}}$, we denote by $\mathcal{E}$ the set of edges of
control volumes of $\mathcal{T},\;\mathcal{E}_{int}$ the set of interior
edges of control volume of $\mathcal{T}$, $u_{i}$ the approximation of $u$
at the center (or at any point) of the control volume $K_{i}\in \mathcal{T}$
and $u_{\sigma }$ the approximation of $U$ at the center (or at any point)
of the edge $\sigma \in \mathcal{E}$. For a control volume $K_{i}\in 
\mathcal{T}$, we denote by $\mathcal{E}_{i}$ the set of edges of $K_{i}$, so
that $\partial K_{i}=\underset{\sigma \in \mathcal{E}_{i}}{\bigcup }\sigma $.

We integrate \eqref{pb1} over any control volume $K_{i}\in \mathcal{T}$, and
use the divergence theorem to convert the integral over $K_{i}$ to a surface
integral, 
\begin{equation*}
-\int_{\partial K_{i}}A(x)\nabla u\cdot \mathbf{n}_{i,\sigma
}ds=\int_{K_{i}}f(x)dx.
\end{equation*}%
To obtain the finite volume scheme with two-point approximation, the
following finite difference approximations are needed 
\begin{eqnarray}
\underset{\sigma \in \mathcal{E}_{i}}{\sum }F_{i,\sigma } &\approx
&\int_{\partial K_{i}}A(x)\nabla u\cdot \mathbf{n}_{i,\sigma }ds \\
F_{i,\sigma } &=&-\mathrm{meas}(\sigma )\;C_{i,\sigma }\dfrac{u_{\sigma
}-u_{i}}{d_{i,\sigma }} \\
C_{i,\sigma } &=&|C_{K_{i}}\,\mathbf{n}_{i,\sigma }|,\quad A_{K_{i}}=\dfrac{1%
}{\mathrm{meas}(K_{i})}\int_{K_{i}}A(x)dx
\end{eqnarray}%
Here \ $\mathbf{n}_{i,\sigma }$ is the normal unit vector to $\sigma $
outward to $K_{i}$, 
$\mathrm{meas}(\sigma )$ is the Lebesgue measure of the edge $\sigma \in 
\mathcal{E}_{i}$ and $d_{i,\sigma }$ the distance between the center of $%
K_{i}$ and the edge $\sigma $. Since the flux is continuous at the interface
of two control volumes $K_{i}$ and $K_{j}$ (denoted by $i\mid j$) we
therefore have $F_{i,\sigma }=-F_{j,\sigma }$ for $\sigma =i\mid j$\footnote{%
interface of the control volumes $K_{i}$ and $K_{j}$}, which yields 
\begin{equation*}
\left\{ 
\begin{array}{l}
F_{i,\sigma }=-\tau _{\sigma }\left( u_{j}-u_{i}\right) =-\dfrac{\mu
_{\sigma }\,\mathrm{meas}(\sigma )}{d_{i,j}}\left( u_{j}-u_{i}\right)
,\,\sigma =i\mid j\quad \newline
\\ 
\tau _{\sigma }=\mathrm{meas}(\sigma )\dfrac{C_{i,\sigma }C_{j,\sigma }}{%
C_{i,\sigma }d_{i,\sigma }+C_{j,\sigma }d_{j,\sigma }}\quad (\text{%
transmissibility through}\,\sigma )%
\end{array}%
\right.
\end{equation*}%
with 
\begin{equation*}
\mu _{\sigma }=d_{i,j}\dfrac{C_{i,\sigma }C_{j,\sigma }}{C_{i,\sigma
}d_{i,\sigma }+C_{j,\sigma }d_{j,\sigma }},
\end{equation*}%
where $d_{i,j}$ is the distance between the center of $K_{i}$ and center of $%
K_{j}$. We will set $d_{i,j}=d_{i,\sigma }$ for $\sigma =\mathcal{E}_{i}\cap
\partial \Omega $. For $\sigma \subset \partial \Omega $ ($\sigma \notin 
\mathcal{E}_{int}$ ), we also write 
\begin{eqnarray*}
F_{i,\sigma } &=&-\tau _{\sigma }\left( u_{\sigma }-u_{i}\right) \\
&=&-\dfrac{\mathrm{meas}(\sigma )\mu _{\sigma }}{d_{i,\sigma }}\left(
u_{\sigma }-u_{i}\right) .
\end{eqnarray*}%
The finite volume discretization is therefore given by 
\begin{eqnarray}
\underset{\sigma \in \mathcal{E}_{i}}{\sum }F_{i,\sigma } &=&f_{K_{i}}
\label{ode} \\
f_{K_{i}} &=&\int_{K_{i}}f(x)dx
\end{eqnarray}%
Let $h=~$size$(\mathcal{T})=\underset{i\in \mathcal{I}}{\sup }\underset{%
(x,y)\in K_{i}^{2}}{\sup }|x-y|$ be the maximum size of $\mathcal{T}$. We
set $u_{h}=(u_{i})_{i\in \mathcal{I}}$, $N_{h}=|\mathcal{I}|$ and $%
F=(f_{K_{i}})_{i\in \mathcal{I}}+bc$ , $bc$ being the contribution of the
boundary condition \footnote{%
Here $bc$ is null as we are looking for solution in $H_{0}^{1}(\Omega )$}.
Applying \eqref{ode} through all control volumes, the corresponding finite
volume scheme is given by 
\begin{equation}
A_{h}u_{h}=F,  \label{fv}
\end{equation}%
where $A_{h}$ is an $N_{h}\times N_{h}$ matrix. The structure of $A_{h}$
depends of the dimension $d$ and the geometrical shape of the control
volume. For diagonal $A$, if $\Omega $ is a rectangular or parallelepiped
domain, any rectangular grid ($d=2$) or parallelepiped grid ($d=3$) is an
admissible mesh and yields a 5-point scheme ($d=2$) or 7-point scheme ($d=3$%
) for the problem \eqref{pb1}. To solve efficiently the linear system %
\eqref{fv}, we have used the Matlab linear solver bicgstab with ILU(0)
preconditioners.

\subsection{Simulations in dimension 2}

\subsubsection{The Asymptotic periodic setting}

\label{ssect5} For the numerical tests, we consider problems (\ref{1.1}) and
(\ref{1.4}) in dimension $d=2$ with the finite volume method scheme %
\eqref{fv}. We denote by $I_{d}$ the square identity matrix in $\mathbb{R}%
^{d\times d}$. We take $A=A_{0}+A_{per}$ with 
\begin{align*}
A_{0}& =b_{0}I_{2}\text{ with }b_{0}(x_{1},x_{2})=\exp
(-(x_{1}^{2}+x_{2}^{2}))\text{ and } \\
A_{per}& =\left( 
\begin{array}{ll}
b_{1} & 0 \\ 
0 & b_{2}%
\end{array}%
\right) \text{ with }b_{1}=4+\cos (2\pi x_{1})+\sin (2\pi x_{2}),\
b_{2}=3+\cos (2\pi x_{1})+\cos (2\pi x_{2}).
\end{align*}%
The right-hand side function $f$ is given by $f=1$. 
The computational domain is $\Omega =(-1,1)^{2}$. 
We take $\varepsilon =1/N$ for some integer $N$. We will choose $N$ in the
set $\{2,3,4,5,6\}$.

The aim in this section is to compute numerically the \textquotedblright
exact solution\textquotedblright\ $u_{\varepsilon }$ (for a fixed $%
\varepsilon >0$) coming from the finite volume scheme with small $h$, and
compare it with its first order asymptotic periodic approximation $%
v_{\varepsilon }(x)=u_{0}(x_{1},x_{2})+\varepsilon \chi (\frac{x_{1}}{%
\varepsilon },\frac{x_{2}}{\varepsilon })\cdot \nabla u_{0}(x_{1},x_{2})$.

For this purpose, the strategy is carried out as follows:

\begin{enumerate}
\item We compute the exact solution of (\ref{1.1}) with our finite volume
scheme on a rectangular fine mesh of size $h>0$, with $h$ sufficiently small
to ensure that the discretization error is much smaller than $\varepsilon$,
which is the order of the error associated to the homogenization
approximation (see either Proposition \ref{p5.1} or Theorem \ref{t5.1}).

\item We compute the corrector functions $\chi_{1}$ and $\chi_{2}$
associated to the respective directions $e_{1}=(1,0)$ and $e_{2}=(0,1)$. To
this end, we rather consider their approximations by the finite volume
scheme \eqref{fv}, which are solutions to Eq. (\ref{3.3}), and we perform
this computation on the domain $Q_{6}=(-6,6)^{2}$ with Dirichlet boundary
conditions (as in (\ref{3.3})). We also compute their gradients $%
\nabla\chi_{1}$ and $\nabla\chi_{2}$. Here we take the mesh size $h=8\times
10^{-3}$ independent of $\varepsilon$.

\item With $\nabla \chi _{1}$ and $\nabla \chi _{2}$ computed as above, we
compute the homogenized matrix $A_{6}^{\ast }$ as in (\ref{eq5}), namely 
\begin{equation*}
A_{6}^{\ast }=\left( \frac{1}{12}\right) ^{2}\int_{Q_{6}}A(x)(I_{2}+\nabla
\chi (x))dx
\end{equation*}%
where here, $\chi =(\chi _{1},\chi _{2})$ so that $\nabla \chi $ is the
square matrix with entries $c_{ij}=\frac{\partial \chi _{j}}{\partial x_{i}}$%
.

\item With $A_{6}^{\ast }$ now being denoted by $A^{\ast }$, we compute the
exact solution $u_{0}$ of (\ref{1.4}).

\item Finally we compute the first order approximation $v_{%
\varepsilon}(x)=u_{0}(x)+\varepsilon \chi (x/\varepsilon )\cdot \nabla
u_{0}(x)$ and we compare it to the exact solution $u_{\varepsilon }$, which
has been computed at step 1.
\end{enumerate}

The goal is to check the convergence result in Theorem \ref{t5.1} given by %
\eqref{5.8}, but with the numerical solution using finite volume method.
Indeed we want to evaluate the following error 
\begin{equation}
Err(\varepsilon )=\dfrac{\Vert u_{\varepsilon }-u_{0}-\varepsilon \chi
^{\varepsilon }\nabla u_{0}\Vert _{H^{1}(\Omega )}}{\Vert u_{0}\Vert
_{H^{2}(\Omega )}}=\dfrac{\Vert u_{\varepsilon }-v_{\varepsilon }\Vert
_{H^{1}(\Omega )}}{\Vert u_{0}\Vert _{H^{2}(\Omega )}}.  \label{error}
\end{equation}%
As we already mentioned, $u_{0}$, $u_{\varepsilon }$ and $v_{\varepsilon }$
are computed numerical using the finite volume scheme for a fixed $h=8\times
10^{-3}$ independent of a fixed $\varepsilon $. All the norms involved in %
\eqref{error} are computed using their discrete forms \cite{FV, Antonio3}.
The coefficients of $A$ and $f$ are $\mathcal{C}^{\infty }(\Omega )$, so the
corresponding solutions $u_{0}$, $u_{\varepsilon }$ and $v_{\varepsilon }$
should be regular enough. Their graphs are given in Figure \ref{FIG03}. As
we can observe in Table \ref{phi}, the error decreases when $\varepsilon $
decreases, and therefore the convergence of $u_{\varepsilon }$ and $%
v_{\varepsilon }$ towards $u_{0}$ when $\varepsilon \rightarrow 0$ is
ensured. We can also observe that the corrector plays a key role as graph of 
$u_{\varepsilon }$ is close to the one of $v_{\varepsilon }$. The numerical
value of $A_{6}^{\ast }\equiv A^{\ast }$ obtained and used for $u_{0}$ and $%
v_{\varepsilon }$ is given by 
\begin{equation*}
A_{6}^{\ast }=\left( 
\begin{array}{ll}
3.895923 & 0.00001 \\ 
0 & 2.849959%
\end{array}%
\right) .
\end{equation*}

\begin{table}[h!]
\begin{center}
\begin{tabular}{|l|l|l|l|l|l|}
\hline
$1/\varepsilon $ & 2 & 3 & 4 & 5 & 6 \\ \hline
Err$(\varepsilon )$ & 0.5298 & 0.1382 & 0.0620 & 0.0577 & 0.0573 \\ \hline
\end{tabular}%
\end{center}
\caption{ $Err(\protect\varepsilon )$ with the corresponding $1/\protect%
\varepsilon $ for a fixed $h=2\times 10^{-3}$ independent of a fixed $%
\protect\varepsilon $.}
\label{phi}
\end{table}

\begin{figure}[h!]
\subfigure[]{
    \label{FIG03a}
    \includegraphics[width=0.45\textwidth]{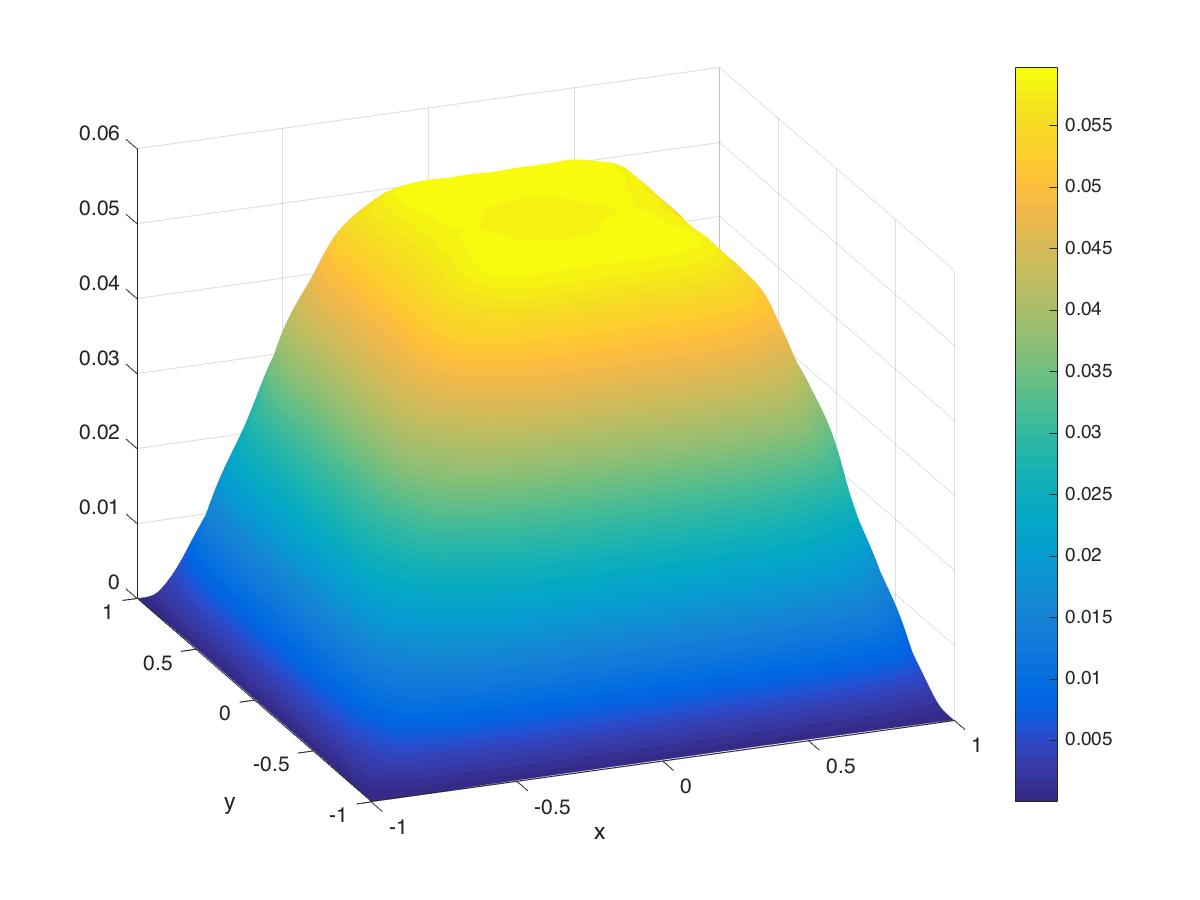}} \hskip 0.01\textwidth
\subfigure[]{
    \label{FIG03b}
    \includegraphics[width=0.45\textwidth]{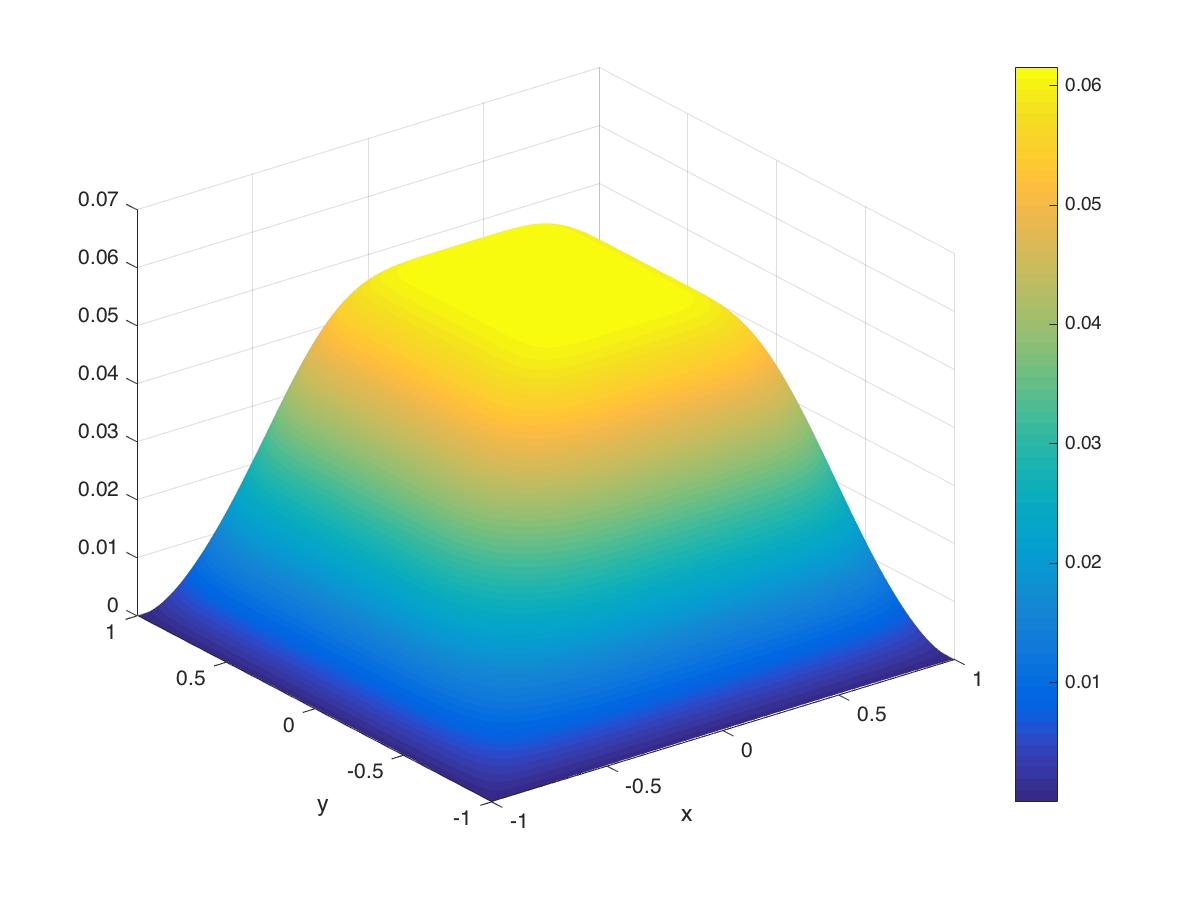}} \hskip 0.01\textwidth
\subfigure[]{
    \label{FIG03c}
    \includegraphics[width=0.45\textwidth]{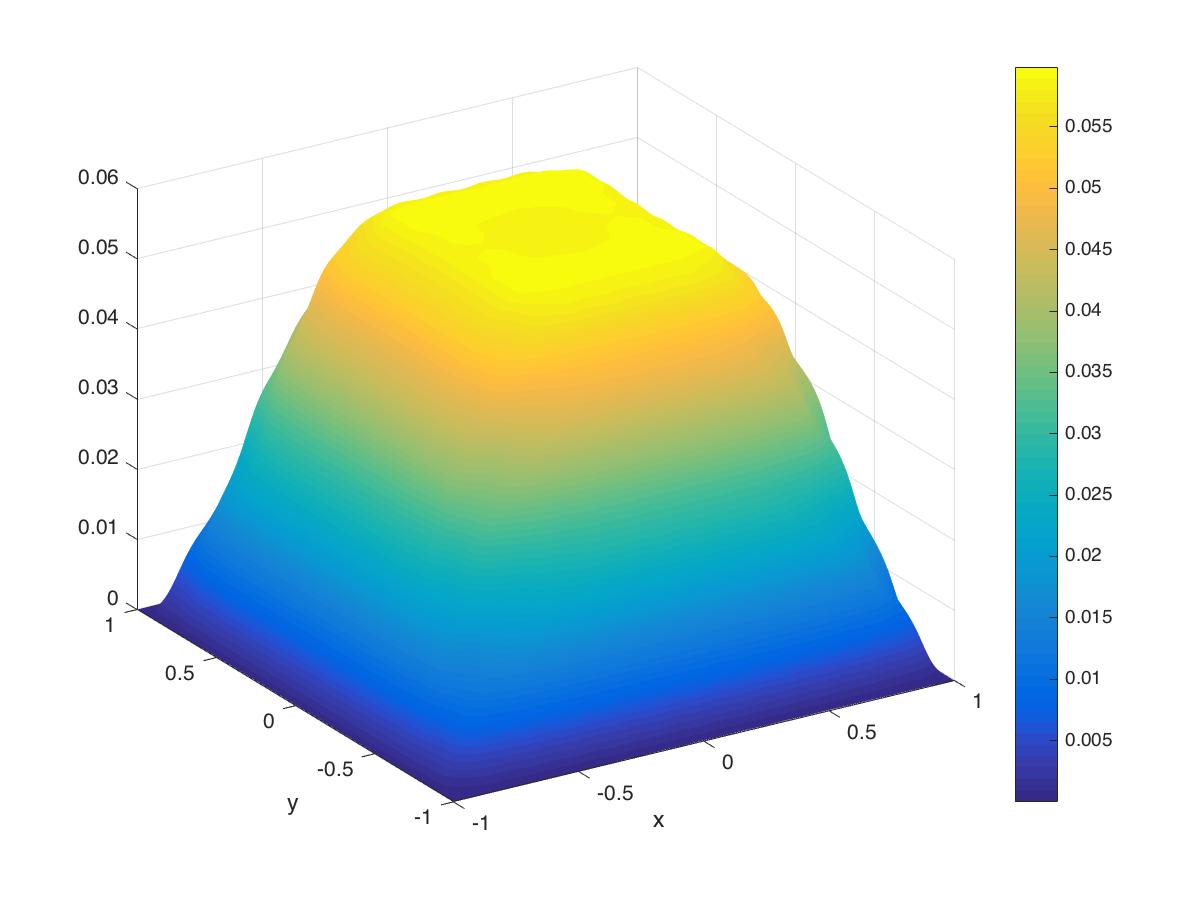}} \hskip 0.01\textwidth
\subfigure[]{
    \label{FIG03d}
    \includegraphics[width=0.45\textwidth]{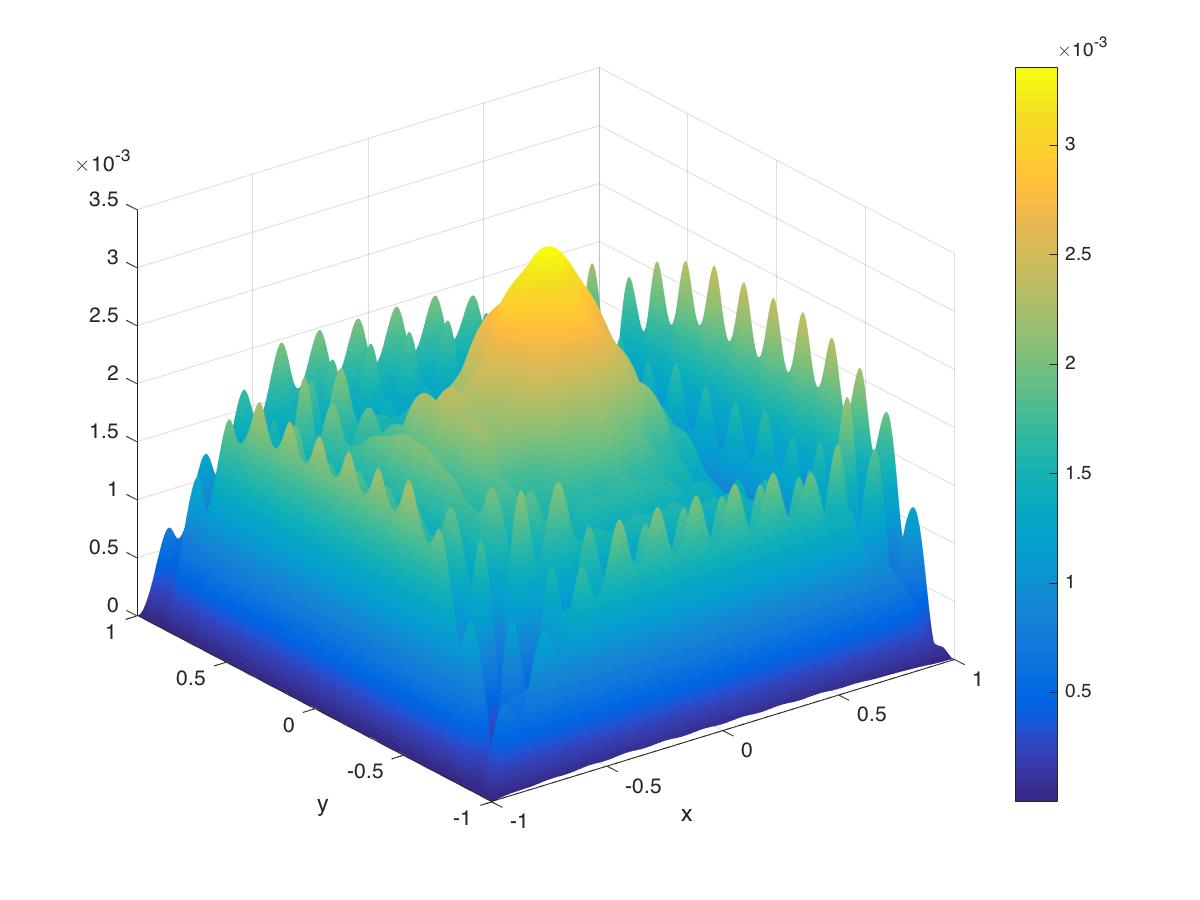}} 
\subfigure[]{
    \label{FIG03e}
    \includegraphics[width=0.45\textwidth]{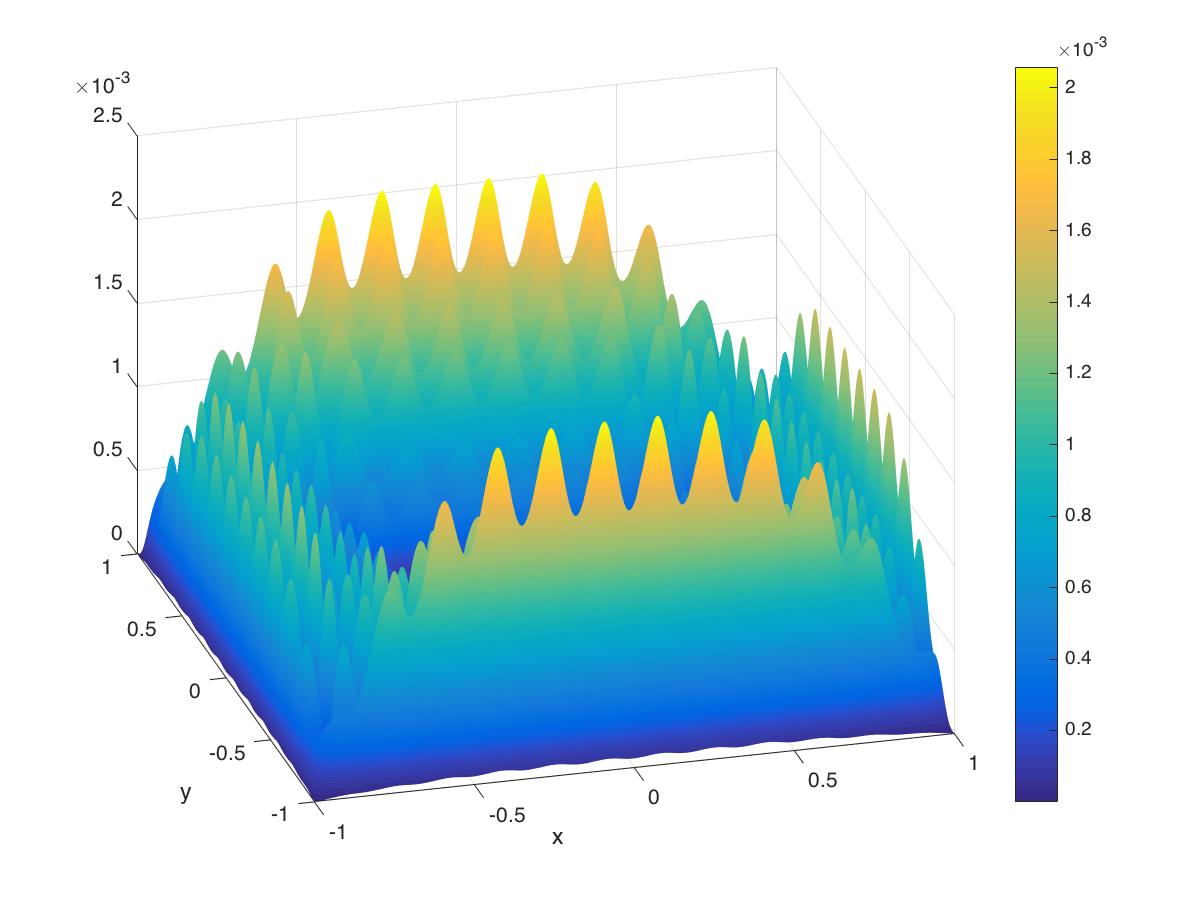}}
\caption{The graphs of $u_{\protect\varepsilon }$, $u_0$, $v_{\protect%
\varepsilon }$, $\vert u_{\protect\varepsilon}-u_0\vert $ and $\vert u_{%
\protect\varepsilon}-v_{\protect\varepsilon} \vert$ in the asymptotic
periodic setting, are shown in (a), (b), (c), (d) and (e) respectively for $%
\protect\varepsilon=1/6$ and $h=2\times10^{-3}$.}
\label{FIG03}
\end{figure}


\subsubsection{The asymptotic almost periodic setting}

Here we take $A=A_{0}+A_{ap}$ with 
\begin{align*}
A_{0}& =b_{0}I_{2}\text{ with }b_{0}(x_{1},x_{2})=\exp
(-(x_{1}^{2}+x_{2}^{2}))\text{ and } \\
A_{ap}& =\left( 
\begin{array}{ll}
b_{1} & 0 \\ 
0 & b_{2}%
\end{array}%
\right) \text{ with }b_{1}=4+\sin (2\pi x_{1})+\cos (\sqrt{2}\pi x_{2}),\
b_{2}=3+\sin (\sqrt{3}\pi x_{1})+\cos (\pi x_{2}).
\end{align*}%
The right-hand side function $f$ is given by $f(x_{1},x_{2})=\cos (\pi
x_{1})\cos (\sqrt{5}\pi x_{2})$. The computational domain is as above, that
is, $\Omega =(-1,1)^{2}$. We follow the same steps as above. The
corresponding value of $A_{6}^{\ast }$ is

\begin{equation*}
A_{6}^{\ast }=\left( 
\begin{array}{ll}
4.0118 & 0.0002 \\ 
0.0032 & 3.0206%
\end{array}%
\right) .
\end{equation*}%
We solve (\ref{1.4}) using finite volume method with multi-point flux
approximation \cite{B, H}. From Table \ref{phi2} and Figure \ref{FIG033}, we
can draw the same conclusion as in Section \ref{ssect5}. 
\begin{table}[h]
\begin{center}
\begin{tabular}{|l|l|l|l|l|l|}
\hline
$1/\varepsilon $ & 2 & 3 & 4 & 5 & 6 \\ \hline
Err$(\varepsilon )$ & 0.24 & 0.1520 & 0.1284 & 0.0768 & 0.0265 \\ \hline
\end{tabular}%
\end{center}
\caption{ $Err(\protect\varepsilon )$ with the corresponding $1/\protect%
\varepsilon $ for a fixed $h=2\times 10^{-3}$ independent of a fixed $%
\protect\varepsilon $.}
\label{phi2}
\end{table}

\begin{figure}[h!]
\subfigure[]{
    \label{FIG03a}
    \includegraphics[width=0.45\textwidth]{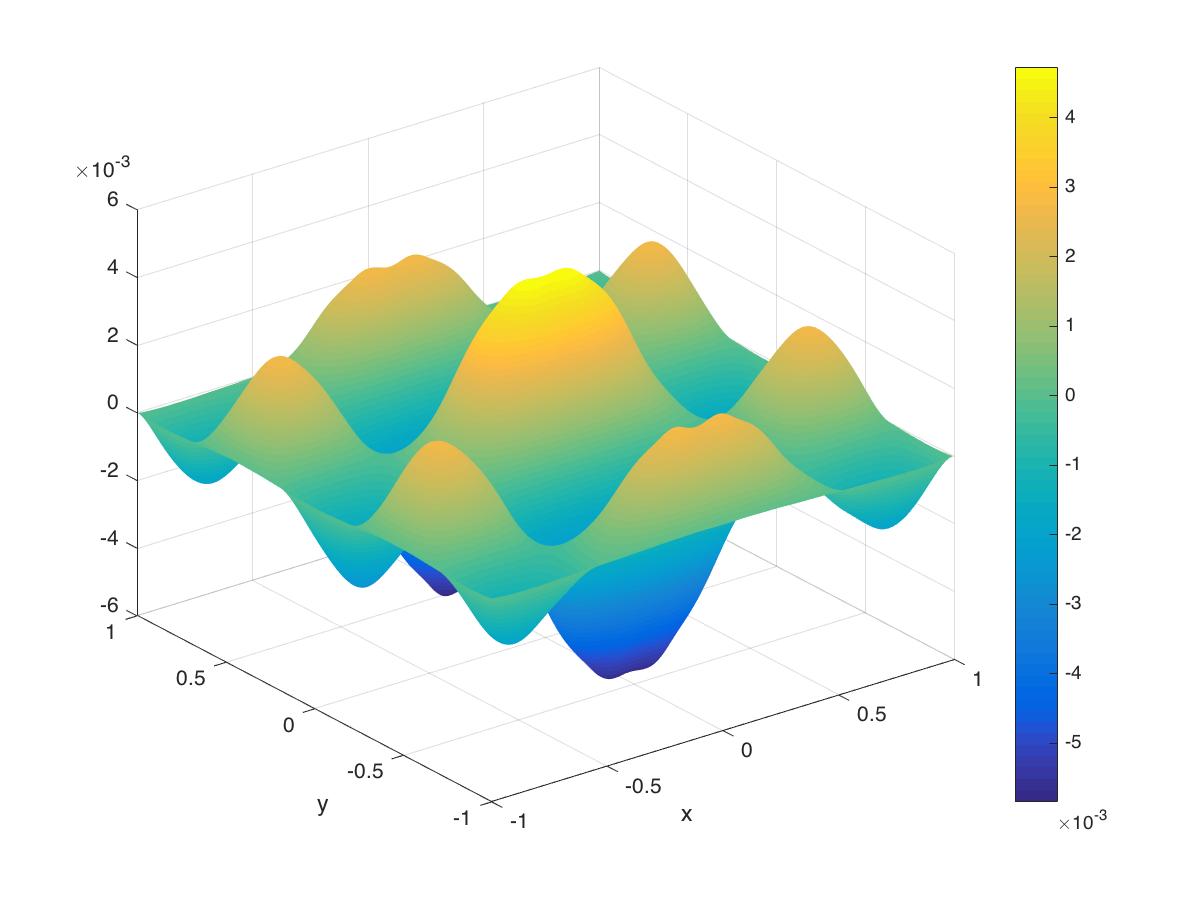}} \hskip 0.01\textwidth
\subfigure[]{
    \label{FIG03b}
    \includegraphics[width=0.45\textwidth]{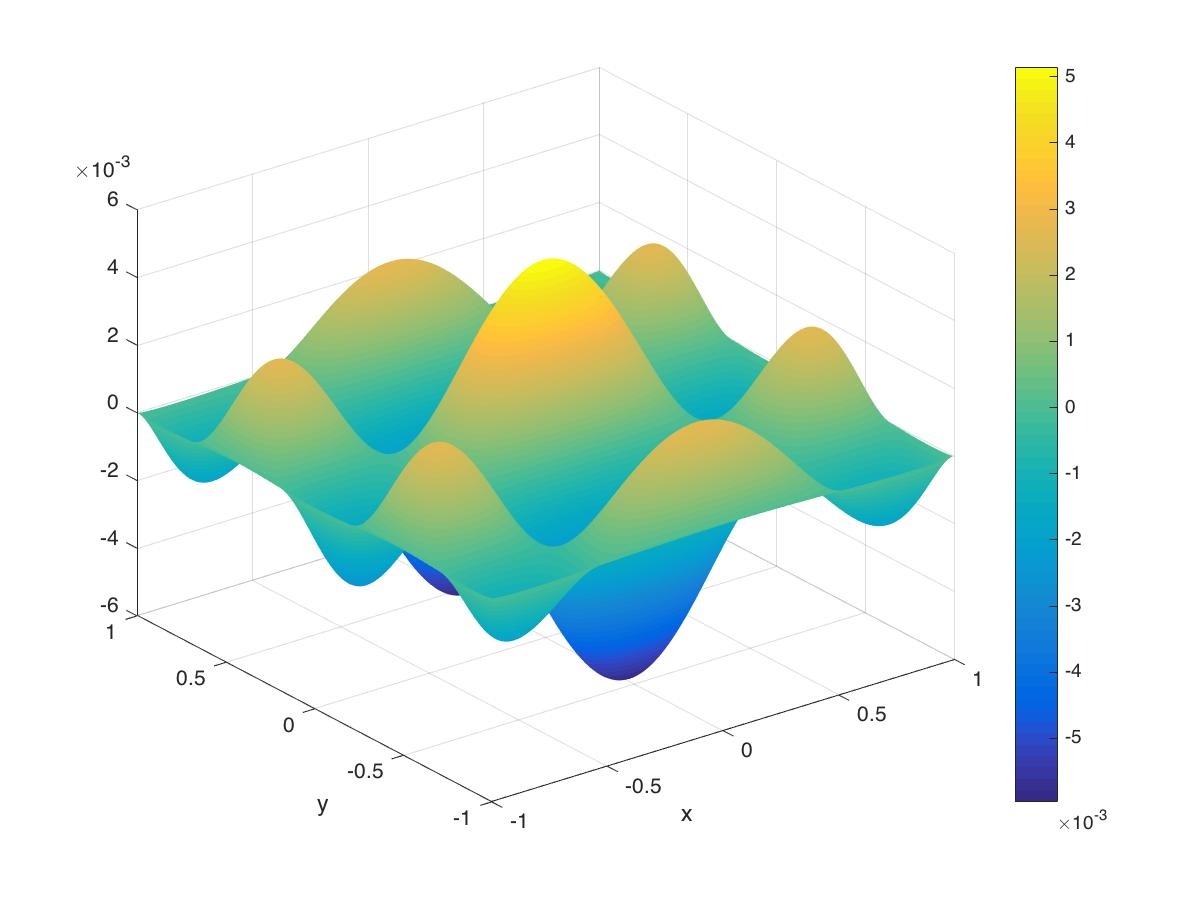}} \hskip 0.01\textwidth
\subfigure[]{
    \label{FIG03b}
    \includegraphics[width=0.45\textwidth]{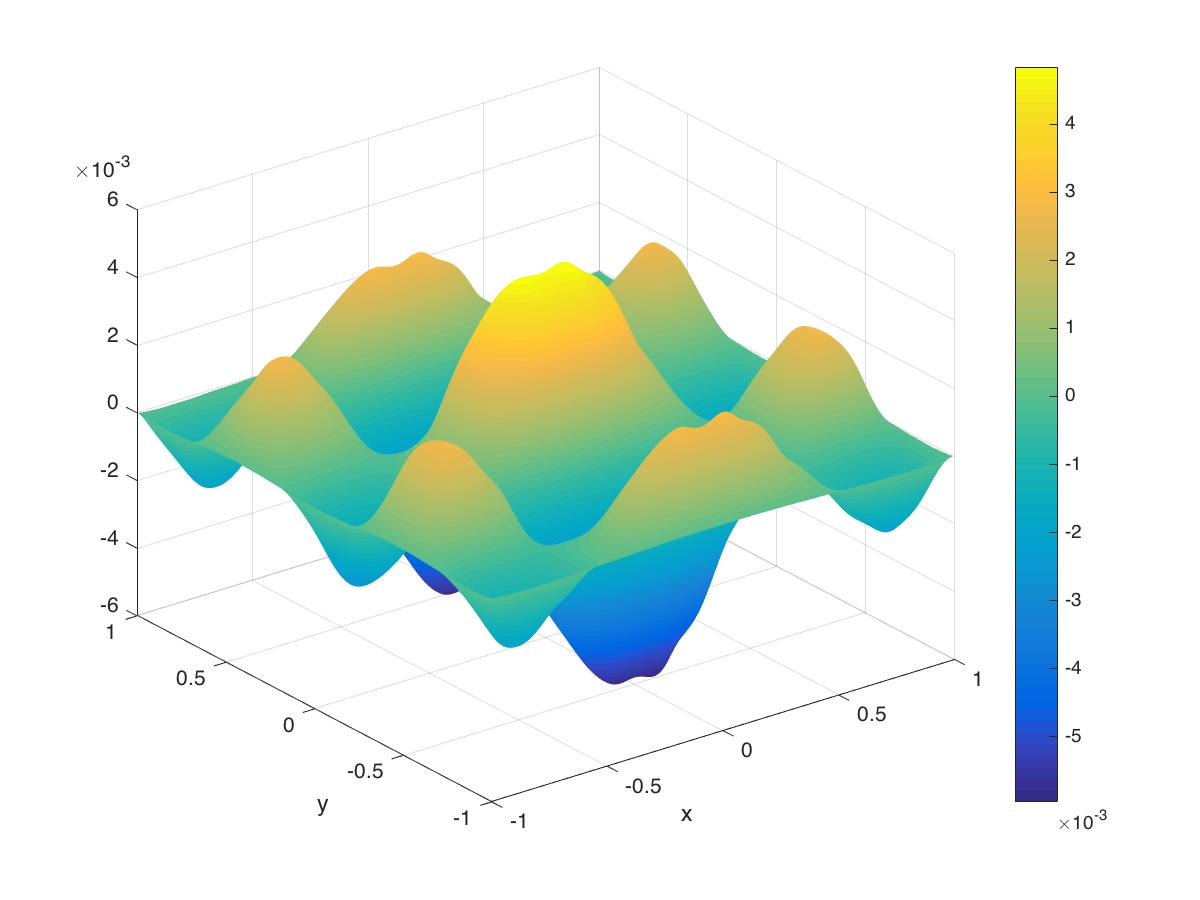}} \hskip 0.01\textwidth
\subfigure[]{
    \label{FIG03b}
    \includegraphics[width=0.45\textwidth]{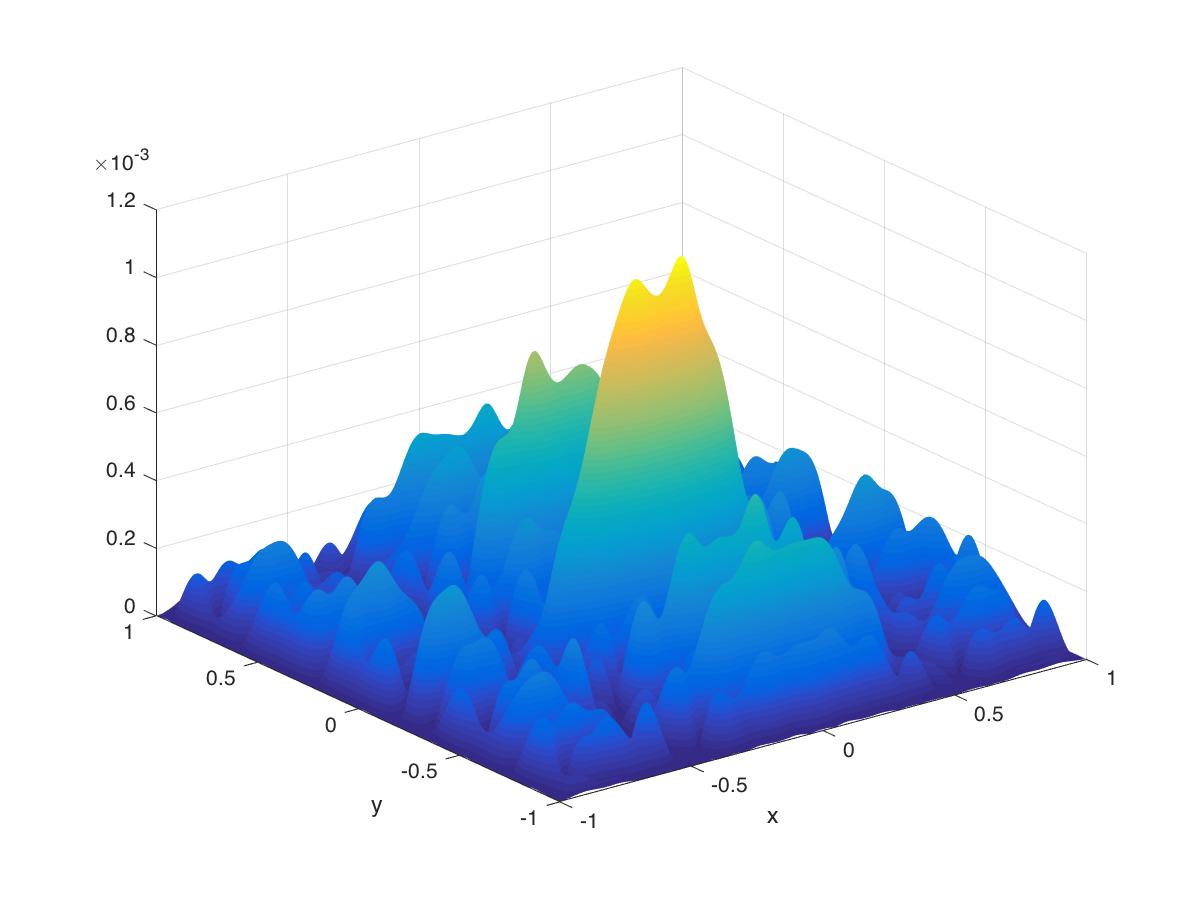}} 
\subfigure[]{
    \label{FIG03b}
    \includegraphics[width=0.45\textwidth]{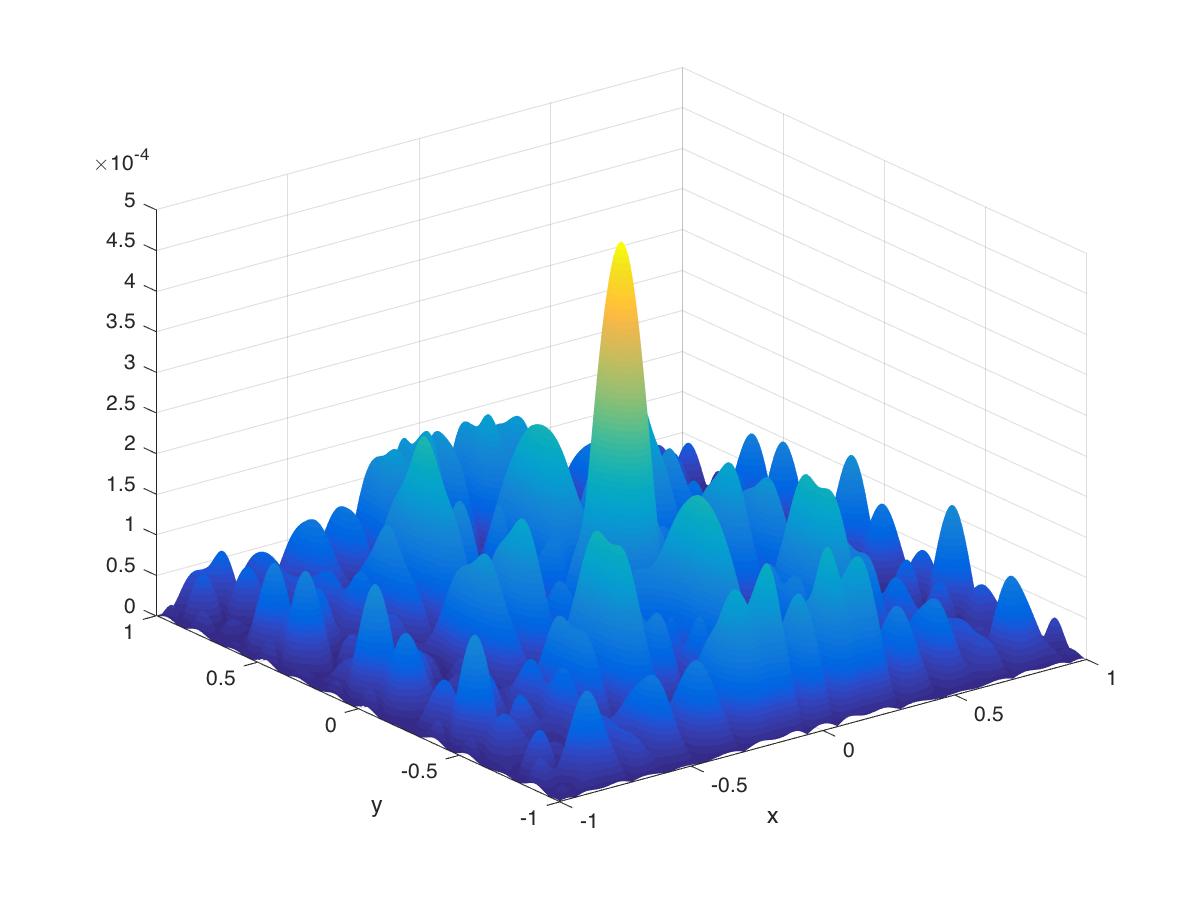}}
\caption{The graphs of $u_{\protect\varepsilon }$, $u_0$, $v_{\protect%
\varepsilon }$ $\vert u_{\protect\varepsilon}-u_0\vert $ and $\vert u_{%
\protect\varepsilon}-v_{\protect\varepsilon} \vert$ in the asymptotic almost
periodic setting, are shown in (a), (b), (c), (d) and (e) respectively for $%
\protect\varepsilon=1/6$ and $h=2\times10^{-3}$.}
\label{FIG033}
\end{figure}

\begin{acknowledgement}
\emph{The work of the second author has been supported by Robert Bosch
Stiftung through the AIMS ARETE chair programme (Grant No 11.5.8040.0033.0)
while the work of the third author has been carried out under the support of
the Alexander von Humboldt Foundation. They gratefully acknowledge the two
Foundations.}
\end{acknowledgement}

\textbf{Data availability}. Data sharing not applicable to this article as
no datasets were generated or analysed during the current study.

\textbf{Conflict of interest}. On behalf of all authors, the corresponding
author states that there is no conflict of interest.

\end{document}